\documentclass[oneside,english]{amsart}
\usepackage[T1]{fontenc}
\usepackage{geometry}
\geometry{verbose,tmargin=2.5cm,bmargin=2.5cm,lmargin=2.5cm,rmargin=2.5cm}
\usepackage{babel}
\usepackage{mathrsfs}
\usepackage{mathtools}
\usepackage{amstext}
\usepackage{amsthm}
\usepackage{amssymb}
\usepackage{stmaryrd}
\usepackage{setspace}
\setstretch{1.33}
\usepackage[unicode=true]
 {hyperref}

\makeatletter

\providecommand{\tabularnewline}{\\}

\numberwithin{equation}{section}
\numberwithin{figure}{section}
\theoremstyle{plain}
\newtheorem*{thm*}{\protect\theoremname}
\theoremstyle{plain}
\newtheorem{thm}{\protect\theoremname}[section]
\theoremstyle{definition}
\newtheorem{defn}[thm]{\protect\definitionname}
\theoremstyle{remark}
\newtheorem{rem}[thm]{\protect\remarkname}
\theoremstyle{plain}
\newtheorem{lem}[thm]{\protect\lemmaname}
\theoremstyle{plain}
\newtheorem{prop}[thm]{\protect\propositionname}
\theoremstyle{definition}
\newtheorem{example}[thm]{\protect\examplename}
\theoremstyle{plain}
\newtheorem{cor}[thm]{\protect\corollaryname}
\theoremstyle{plain}
\newtheorem{conjecture}[thm]{\protect\conjecturename}

\usepackage{tikz-cd}
\usepackage{comment}
\allowdisplaybreaks

\makeatother

\providecommand{\conjecturename}{Conjecture}
\providecommand{\corollaryname}{Corollary}
\providecommand{\definitionname}{Definition}
\providecommand{\examplename}{Example}
\providecommand{\lemmaname}{Lemma}
\providecommand{\propositionname}{Proposition}
\providecommand{\remarkname}{Remark}
\providecommand{\theoremname}{Theorem}

\begin{document}
\title{On the theory of $q$-characters for quantum affine superalgebras
of type $A$}
\author{Sin-Myung Lee}
\address{School of Mathematics, Korea Institute for Advanced Study, Seoul 02455,
Republic of Korea}
\email{sinmyunglee@kias.re.kr}
\thanks{This work is supported by a KIAS Individual Grant (MG095001, MG095002)
at Korea Institute for Advanced Study. }
\begin{abstract}
We develop the theory of $q$-characters for quantum affine superalgebras
of type $A$ in connection with deformed Cartan matrices. To achieve
this, we establish a Khoroshkin--Tolstoy-type multiplicative formula
of the universal $R$-matrix of the associated generalized quantum
group, from which one can read off a 2-parameter deformation of Cartan
matrices of super type $A$. We also propose a Frenkel--Mukhin-type
algorithm for $q$-characters of finite-dimensional simple modules
with integral highest $\ell$-weights.
\end{abstract}

\maketitle
\global\long\def\ket#1{\left|#1\right\rangle }%

\section{Introduction}

The study of finite-dimensional modules of quantum affine algebras,
especially of their monoidal structure, has long been an intriguing
subject with a variety of approaches and is still enjoying new ideas
from rich connections with other branches of mathematics and physics.
As a quantum affine analogue of the character of modules over finite-dimensional
simple Lie algebras, the theory of \textit{$q$-characters} \cite{FR1}
has played an essential role in the investigation of the intricate
structure of tensor products of two simple modules in both theoretical
and computational aspects. Furthermore, it provides a natural place
where various perspectives meet and hence exciting interactions may
arise, such as geometry of quiver variety \cite{Nak}, integrable
systems \cite{FR1} and cluster algebras \cite{HL}, to name a few.

\subsection{Quantum affine superalgebras and generalized quantum groups}

It is a natural problem to extend such a fruitful theory to variations
of quantum affine algebras. In this paper we are concerned with the
\textit{quantum affine superalgebra} of type $A$, a quantum group
associated to the affine Lie superalgebra $\widehat{\mathfrak{sl}}_{M|N}=\mathfrak{sl}_{M|N}\otimes\mathbb{C}[t^{\pm1}]\oplus\mathbb{C}K$
for the special linear Lie superalgebra $\mathfrak{sl}_{M|N}$. Surprisingly
enough, their representation theory is far less understood compared
with the non-super case and suffers from conceptual and technical
complications, let alone the difficulties already presented in the
study of Lie superalgebras. There has been a remarkable series of
works by Zhang \cite{Z1,Z2,Z3,Z4,Z5}, sometimes giving even new insights
to the original theory for quantum affine algebras; at other times
relying on the analysis of specific representations such as fundamental
representations and Kirillov--Reshetikhin modules. It is hence desirable
to have tools to understand finite-dimensional modules beyond them,
including a practical method to compute $q$-characters.

Recently, such a difficulty has been partially resolved by considering
a parallel theory for a \textit{generalized quantum group} \cite{KOS}.
It is a Hopf algebra which is not super but very much resembles the
quantum affine superalgebra. Roughly, the super structure of the latter
is confined by the systematic introduction of an `additional' quantum
parameter $\widetilde{q}=-q^{-1}$ in the former and it turns out
that their module categories are equivalent as abelian categories.
Although one has to be careful when dealing with tensor products as
it is not a monoidal equivalence a priori, the generalized quantum
group has an advantage that many constructions known for ordinary
quantum affine algebras can be applied. For instance, in the author's
past work with Kwon \cite{KL} we could make use of the universal
$R$-matrix of the generalized quantum group to reproduce key results
in \cite{KKKO}, which enables us to study the structure of tensor
products of polynomial modules in terms of the denominator of the
normalized $R$-matrices. Therefore, we aim to develop the theory
of $q$-characters for the quantum affine superalgebras by means of
generalized quantum groups in this paper.

\subsection{Main results}

We fix $M,N>0$ such that $M\neq N$. Let $\epsilon=(\epsilon_{1},\dots,\epsilon_{n})$
be a sequence of $M$ $0$'s and $N$ $1$'s which represents a choice
of a Borel subalgebra of $\mathfrak{sl}_{M|N}$ (see \cite[Section 1.3.2]{CW})
or the parities of simple root vectors. We associate to each $\epsilon$
a generalized quantum group $\mathcal{U}(\epsilon)$ of affine type
$A$ and also a quantum affine superalgebra $U(\epsilon)$.

\subsubsection{Braid action and a multiplicative formula for the universal $R$-matrix}

We begin by establishing several important properties of a braid action
on $\mathcal{U}(\epsilon)$ along the lines of \cite{Lb}. It is a
key tool to obtain Drinfeld presentation of $\mathcal{U}(\epsilon)$
(Theorem \ref{thm:GQG-Drinfeld}) and also to construct affine root
vectors and PBW bases (Section \ref{subsec:aff-PBW}) as in \cite{B2}
for non-super types and \cite{Y} for $U(\epsilon)$. In our super
case the isomorphism $T_{i}$ is defined between $\mathcal{U}(\epsilon)$
and $\mathcal{U}(\epsilon s_{i})$ where $\epsilon s_{i}$ is obtained
from $\epsilon$ by permuting $i$-th and $(i+1)$-st entries and
so not an automorphism if $\epsilon_{i}\neq\epsilon_{i+1}$. Nevertheless,
the original arguments for usual quantum groups \cite{Lb} work for
$\mathcal{U}(\epsilon)$ without any essential modification, which
we find it advantageous over $U(\epsilon)$. We also recall from \cite{KL}
an algebra isomorphism $\tau$ between $\mathcal{U}(\epsilon)$ and
$U(\epsilon)$ (up to a slight extension of both algebras) to compare
the generators in Drinfeld presentations of each algebras. This enables
us to lift known results for $U(\epsilon)$ to $\mathcal{U}(\epsilon)$
such as a triangular decomposition (\ref{eq:GQGDr-tri-decomp}) and
the evaluation homomorphism (Proposition \ref{prop:eval-hom}). In
short, the generalized quantum group is an intermediate object between
the quantum affine superalgebra and rather well-studied quantum groups.

Let $q$ be an indeterminate and $q_{i}=q$ if $\epsilon_{i}=0$ and
$q_{i}=-q^{-1}$ if $\epsilon_{i}=1$ for $i=1,\dots,n$. The first
main result of this paper is the following Khoroshkin--Tolstoy-type
multiplicative formula \cite{KT} of the universal $R$-matrix for
$\mathcal{U}(\epsilon)$ (Theorem \ref{thm:mult-formula-R}). 
\begin{thm*}
If $\epsilon$ is such that $(\epsilon_{i}+\epsilon_{i+1})(\epsilon_{i+1}+\epsilon_{i+2})=0$
for $1\leq i\leq n-2$, then the universal $R$-matrix $\mathcal{R}$
for $\mathcal{U}(\epsilon)$ has the following decomposition:
\begin{align*}
\mathcal{R} & =\mathcal{R}^{+}\mathcal{R}^{0}\mathcal{R}^{-}\overline{\Pi}_{\mathbf{q}},\\
\mathcal{R}^{\pm} & =\prod_{\beta\in\Phi_{+}(\pm\infty)}\exp_{\beta}\left((q^{-1}-q)e_{\beta}\otimes f_{\beta}\right),\\
\mathcal{R}^{0} & =\exp\left(-\sum_{r>0}\sum_{i,j=1}^{n-1}\frac{r(q-q^{-1})^{2}}{q_{i}^{r}-q_{i}^{-r}}\widetilde{C}_{ji}^{r}h_{i,r}\otimes h_{j,-r}\right)
\end{align*}
where $\widetilde{C}^{r}=(\widetilde{C}_{ij}^{r})_{i,j}$ is the inverse
matrix of 
\[
C^{r}=\left(\frac{\mathbf{q}(\alpha_{i},\alpha_{j})^{r}-\mathbf{q}(\alpha_{i},\alpha_{j})^{-r}}{q_{i}^{r}-q_{i}^{-r}}\right)_{i,j=1,\dots,n-1}.
\]
\end{thm*}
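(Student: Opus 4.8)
The plan is to realize $\mathcal{R}$ as the canonical element of the Hopf pairing underlying the quasitriangular structure of $\mathcal{U}(\epsilon)$ constructed in \cite{KL}, and to evaluate that canonical element against dual PBW bases adapted to the affine root structure. Write $\mathcal{R}=\Theta\,\overline{\Pi}_{\mathbf{q}}$, where $\overline{\Pi}_{\mathbf{q}}$ is the standard diagonal factor built from $\mathbf{q}$ that records the action on weight spaces --- already present in the description of $\mathcal{R}$ in \cite{KL} --- and $\Theta$ is the quasi-$R$-matrix, lying in a suitable completion of $\mathcal{U}^{+}\otimes\mathcal{U}^{-}$. Using the braid group action on $\mathcal{U}(\epsilon)$ and its Drinfeld presentation (Theorem \ref{thm:GQG-Drinfeld}), together with the triangular decomposition (\ref{eq:GQGDr-tri-decomp}) obtained via the isomorphism $\tau$ of \cite{KL}, I would fix a convex total order on the positive affine real roots accumulating to $\delta$ from both sides, the corresponding affine real root vectors $e_{\beta},f_{\beta}$, and the Drinfeld imaginary generators $h_{i,r}$; this is precisely the PBW data of Section \ref{sec:mult-formula-R}. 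The hypothesis $(\epsilon_{i}+\epsilon_{i+1})(\epsilon_{i+1}+\epsilon_{i+2})=0$ enters here: it forbids adjacent odd isotropic simple roots, which is what keeps the rank-two reductions behind the braid action and the PBW straightening relations of the same shape as in the distinguished case.

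Next I would establish the factorization of $\Theta$ along this PBW basis, following the Levendorskii--Soibelman/Khoroshkin--Tolstoy mechanism \cite{KT} (cf. \cite{B2} in the non-super affine case and \cite{Y} for super type $A$). The key input is that the Hopf pairing is ``block-triangular'' for the convex order: $\mathcal{U}^{+}_{\beta}$ pairs trivially with $\mathcal{U}^{-}_{\gamma}$ unless $\beta=\gamma$, distinct real root spaces are mutually orthogonal, and real root spaces are orthogonal to the imaginary ones. Granting this, the canonical element splits as an ordered product, over the PBW generators, of the canonical elements of the individual root subspaces. For a real root $\beta$ the root space is one-dimensional, spanned by $e_{\beta}$ and paired with $f_{\beta}$, and a rank-one computation --- reducing to a generalized quantum $\mathfrak{sl}_{2}$- or $\mathfrak{gl}_{1|1}$-type subalgebra according to the parity of $\beta$ --- identifies the corresponding factor as $\exp_{\beta}\!\left((q^{-1}-q)\,e_{\beta}\otimes f_{\beta}\right)$. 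Cutting the convex order at the imaginary wall separates these factors into the two ordered products $\mathcal{R}^{\pm}$ over $\Phi_{+}(\pm\infty)$ with the imaginary contribution $\mathcal{R}^{0}$ in between, yielding $\mathcal{R}=\mathcal{R}^{+}\mathcal{R}^{0}\mathcal{R}^{-}\overline{\Pi}_{\mathbf{q}}$.

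The heart of the matter is the explicit form of $\mathcal{R}^{0}$. Fixing $r>0$, the imaginary root space of degree $r\delta$ on the positive side is spanned by $\{h_{i,r}\}_{i\in\mathring{I}}$, and I would compute its Hopf-pairing Gram matrix against $\{h_{j,-r}\}_{j\in\mathring{I}}$ directly from the Drinfeld relations: the bracket of the current generators $[x^{+}_{i,r},x^{-}_{j,-r}]$ produces the Cartan currents with structure constants governed by $\dfrac{\mathbf{q}(\alpha_{i},\alpha_{j})^{r}-\mathbf{q}(\alpha_{i},\alpha_{j})^{-r}}{q_{i}^{r}-q_{i}^{-r}}$, so that, after converting from the Cartan-current normalization to the $h$-normalization, this Gram matrix equals $C(q^{r},\widetilde{q}^{r})$ up to the explicit scalar factor appearing in the statement. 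Inverting the Gram matrix, exactly as in the computation of a dual PBW basis, then yields the canonical element $-\sum_{r>0}\sum_{i,j\in\mathring{I}}\frac{r(q-q^{-1})^{2}}{q_{i}^{r}-q_{i}^{-r}}\widetilde{C}^{r}_{ji}\,h_{i,r}\otimes h_{j,-r}$ in the exponent defining $\mathcal{R}^{0}$. I expect this imaginary computation --- the analogue in our setting of Damiani's calculation for quantum affine $\widehat{\mathfrak{sl}}_{2}$, now complicated by the signs and by the second parameter $\widetilde{q}$ --- to be the main obstacle, both in pinning down the coproducts of the imaginary root vectors modulo lower-degree terms and in controlling the completion in which the infinite products converge.

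Finally, for invertibility of $C(q^{r},\widetilde{q}^{r})$ when $M\neq N$: since $\mathbf{q}(\alpha_{i},\alpha_{j})=1$ whenever $|i-j|\geq 2$, the matrix is tridiagonal, and its diagonal and off-diagonal entries depend only on the parities of the nodes, carrying signs $(-1)^{r}$ at the odd and ``$\widetilde{q}$-type'' nodes and a vanishing diagonal entry at each odd isotropic node. I would evaluate $\det C(q^{r},\widetilde{q}^{r})$ by the standard three-term recursion; the hypothesis that odd nodes are non-adjacent keeps the vanishing diagonal entries isolated, so the recursion can be solved in closed form, giving an expression which, up to a monomial and a sign, equals the $q^{r}$-integer $[\,M-N\,]_{q^{r}}$ --- in particular it vanishes identically when $M=N$ and is a nonzero element of the base field for every $r>0$ when $M\neq N$. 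Hence $\widetilde{C}^{r}$ exists and $\mathcal{R}^{0}$ is well defined, completing the proof.
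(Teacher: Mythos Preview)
Your overall architecture is right, but you misidentify both where the hypothesis enters and what the genuine obstacle is. The braid action, PBW basis, Levendorskii--Soibelman formula, and the comultiplication approximations for real and imaginary root vectors all go through for \emph{arbitrary} $\epsilon$: since the positive root system of $\widehat{\mathfrak{sl}}_{M|N}$ coincides with that of $\widehat{\mathfrak{sl}}_{M+N}$, Damiani's arguments transfer almost verbatim (Sections~\ref{subsec:aff-PBW}--\ref{subsec:comult-Dr}). The hypothesis is \emph{not} needed for any of that, so your stated reason for it (``keeps the rank-two reductions behind the braid action and the PBW straightening relations of the same shape'') is not where the constraint bites.

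The real gap is your proposed computation of the imaginary Gram matrix. You cannot read off the Hopf pairing $(h_{i,r},h_{j,-r})$ ``directly from the Drinfeld relations'': the commutators among loop generators are part of the presentation, but their relation to the Hopf pairing (Remark~\ref{rem:hh-q-Heis}) is an a~posteriori observation, not an input. The paper instead computes the pairing via the comultiplication and a projection onto $e_j$ (Lemma~\ref{lem:pair-imag-cijr}), reducing to constants $c_r^{ij}$ which Damiani's lemmas determine when $p(j)=0$. When $p(j)=1$, however, $\mathbf{q}(\alpha_j,\alpha_j)=-1$, so $\mathbf{q}(\alpha_j,\alpha_j)^r-\mathbf{q}(\alpha_j,\alpha_j)^{-r}=0$ appears in a denominator and the reduction collapses. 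The new idea---and the \emph{only} place the hypothesis is used---is Proposition~\ref{prop:symm-pair-hh}: one proves the symmetry $(e_{(r\delta,i)},f_{(r\delta,j)})=(e_{(r\delta,j)},f_{(r\delta,i)})$ by relating the Hopf pairing to Lusztig's symmetric bilinear form on $\mathbf{f}(\epsilon)$ (Lemma~\ref{lem:Hopf-pair-bil-form}) through an explicit comparison of the involutions $\omega$ and $\Omega$ on iterated $q$-brackets of root vectors. The hypothesis then guarantees that whenever $p(j)=1$ the adjacent index $i$ has $p(i)=0$, so the symmetry swaps the roles and reduces to the already-computed case. Without this step your plan has no mechanism to evaluate the imaginary block at odd nodes, and hence no way to obtain $\mathcal{R}^0$.
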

Here $e_{\beta}$ and $f_{\beta}$ (resp. $h_{i,r}$) are quantum
analogues in $\mathcal{U}(\epsilon)$ of real (resp. imaginary) root
vectors of the affine Lie superalgebra $\widehat{\mathfrak{sl}}_{M|N}$.
The formula, up to the input $\widetilde{C}^{r}$, looks the same
as those in the non-super types \cite{D1}. Indeed, this result relies
on a number of preliminary results (Section \ref{subsec:aff-PBW}--\ref{subsec:Pairing-imag-rvector})
to compute a Hopf pairing between the PBW basis vectors. Since the
root system with a convex ordering of $\widehat{\mathfrak{sl}}_{M|N}$
is identical to that of $\widehat{\mathfrak{sl}}_{M+N}$, almost every
proofs in \cite[Section 4-10]{D1} for non-super cases, mutatis mutandis,
work for $\mathcal{U}(\epsilon)$. The only exception occurs in the
computation of the Hopf pairing $(h_{i,r},h_{j,s})$ when $\epsilon_{j}+\epsilon_{j+1}=1$,
where we invoke a relation between the Hopf pairing and a symmetric
bilinear form on the negative half of $\mathcal{U}^{-}(\epsilon)$
and a $q$-bracket realization of imaginary root vectors (Section
\ref{subsec:Lusztig-f-bil-form}).

\subsubsection{The $q$-character map and its transfer matrix construction}

Now we assume $\epsilon=\epsilon_{M|N}$ is the standard $(01)$-sequence
(\ref{eq:01seq-std}) and begin to study finite-dimensional $\mathcal{U}(\epsilon)$-modules
in view of the Drinfeld presentation. We introduce basic notions from
the usual highest $\ell$-weight theory \cite{CP}. This approach
for $\mathcal{U}(\epsilon)$-modules is compatible with the one for
$U(\epsilon)$-modules \cite{Z1} through $\tau$, in the sense that
the pullback by $\tau$ maps simple highest $\ell$-weight modules
over $U(\epsilon)$ to those over $\mathcal{U}(\epsilon)$ and the
$\ell$-weights of the latter can be read easily from those of the
former, and vice versa (Remark \ref{rem:compare-l-wt-GQG-qasa}, \ref{rem:compare-fund-GQG-qasa}).
In particular, the fundamental problem of computing the $q$-character
of simple $U(\epsilon)$-modules is equivalent to that of $\mathcal{U}(\epsilon)$-modules,
which justifies our approach toward the theory of $q$-characters
for quantum affine superalgebras via generalized quantum groups.

To be precise, we are mostly focused on the category $\mathcal{F}(\epsilon)$
of the finite-dimensional $\mathcal{U}(\epsilon)$-modules with \textit{integral}
$\ell$-weights, that is every $\ell$-weight $\Psi$ is a product
of the \textit{fundamental $\ell$-weights} $Y_{i,a}^{\pm1}$ and
$\widetilde{Y}_{j,a}^{\pm1}$ for $i\leq M\leq j$, $a\in\mathbb{C}^{\times}$
defined as usual with the quantum parameter $q$ and $-q^{-1}$ respectively
(Definition \ref{def:fund-l-wt}). Beyond such modules are there more
finite-dimensional simple $\mathcal{U}(\epsilon)$-modules, parametrized
by $n-2$ Drinfeld polynomials (for each $i\neq M$) and one degree
zero rational function (for $i=M$) (\textit{cf.} highest weight classification
of finite-dimensional simple $\mathfrak{sl}_{M|N}$-modules \cite[Section 2.1.1]{CW}),
but we believe $\mathcal{F}(\epsilon)$ is of reasonable size to expect
nice theories on monoidal structures and also applications to mathematical
physics. For example, it is the smallest rigid monoidal Serre subcategory
that contains the natural representation, and contains all the \textit{fundamental
representations} $L(Y_{i,a})$, $L(\widetilde{Y}_{j,b})$ whose module
structure is explicitly described in Section \ref{subsec:fund-repn}.
We remark that the latter follows from the cone property (Corollary
\ref{cor:q-char-cone-prop}) for $\ell$-weights of highest $\ell$-weight
modules. This important property is a result of a general observation
\cite[Proposition 3.9]{MY} originally established for a highest $\ell$-weight
module over a quantum affine algebra $U_{q}(\widehat{\mathfrak{g}})$
that belongs to the BGG category as a $U_{q}(\mathfrak{g})$-module,
which is in accordance with the philosophy of the super duality \cite[Introduction]{CLW}. 

The \textit{$q$-character} of $V\in\mathcal{F}(\epsilon)$ is now
defined as the generating function of the $\ell$-weights of $V$.
Although it is in general very difficult to compute the $q$-character
in this setting, we give several examples of fundamental representations
and Kirillov--Reshetikhin-type modules in the rank 2 case $\epsilon=(001)$
that can be worked out directly. This results, together with another
case $\epsilon=(011)$ which can be studied in a parallel manner,
are crucially used in the description of Frenkel--Mukhin type algorithm
below.

By the standard argument \cite{FR2} the resulting $q$-character
map can be understood as an embedding
\[
\chi_{q}:K(\mathcal{F}(\epsilon))\longrightarrow\mathscr{Y}(\epsilon)=\frac{\mathbb{Z}[Y_{i,a}^{\pm},\widetilde{Y}_{j,a}^{\pm}]_{i\leq M\leq j,a\in\mathbb{C}^{\times}}}{(Y_{M,a}\widetilde{Y}_{M,-a}-Y_{M,b}\widetilde{Y}_{M,-b})_{a,b\in\mathbb{C}^{\times}}}
\]
of the Grothendieck ring $K(\mathcal{F}(\epsilon))$ of $\mathcal{F}(\epsilon)$
into the Laurent polynomial ring $\mathscr{Y}(\epsilon)$ (the relation
comes from the definition of $Y_{M,a}$ and $\widetilde{Y}_{M,a}$).
Using the multiplicative formula for the universal $R$-matrix above,
we can redefine this map along the transfer matrix method \cite{FR2}
to interpret $Y_{i,a}$, $\widetilde{Y}_{j,a}$ as formal series with
coefficients in the subalgebra generated by $h_{i,r}\in\mathcal{U}(\epsilon)$
($i=1,\dots,n-1$, $r\in\mathbb{Z}$), or free fields in the sense
of Remark \ref{rem:hh-q-Heis}. This construction should serve as
a natural background to understand $q$-characters for $\mathcal{U}(\epsilon)$,
and hence for quantum affine superalgebras, in connection with integrable
systems (see for example \cite{T,Z5}) and also the map $\chi_{q}$
with a free field realization of the $q$-deformed $Y$-algebra $Y_{0,N,M}$
as a `super' analogue of \cite[Section 8]{FR2} (see Section \ref{subsec:intro-def-Cartan}
below).

\subsubsection{Applications}

The transfer matrix construction of $\chi_{q}$ explains how the matrix
$\widetilde{C}^{r}$ controls the $q$-characters, and hence the representation
theory of $\mathcal{U}(\epsilon)$. As a demonstration, we compute
the universal coefficient of the universal $R$-matrices on $V\otimes W$
as in \cite[Section 4.3]{FR2}. It is recently used to define an invariant
$\Lambda(V,W)$ that plays a significant role in establishing a monoidal
categorification of cluster algebras \cite{KKOP} whose super generalization
will be discussed in the future work. Moreover, by comparing the result
with another method using the \textit{denominator} of the normalized
$R$-matrix \cite[Appendix A]{AK}, we can even deduce a formula for
the denominator for modules of the form $L(Y_{i,a})\otimes L(\widetilde{Y}_{j,b})$
(Theorem \ref{thm:denom-R-+-}). Such a tensor product has complicated
structures that does not arise in non-super cases, even as a module
over the subalgebra of finite type (\textit{i.e.} corresponding to
$U_{q}(\mathfrak{sl}_{M|N})$). Since the denominator is known to
contain much information on the tensor product structure, we hope
that this result will shed a new light on this problem.

At last, we propose an analogue of Frenkel--Mukhin algorithm \cite{FM}
to compute the $q$-character of simple $\mathcal{U}(\epsilon)$-modules
in $\mathcal{F}(\epsilon)$ by means of restrictions to each rank
1 subalgebra. Let $J=\{p,p+1,\dots,p^{\prime}-1\}\subset\{1,\dots,n\}$,
$\epsilon_{J}=(\epsilon_{p},\dots,\epsilon_{p^{\prime}})$ and $\mathcal{U}(\epsilon)_{J}$
be the subalgebra of $\mathcal{U}(\epsilon)$ which is essentially
$\mathcal{U}(\epsilon_{J})$. Given a $\mathcal{U}(\epsilon)$-module
$V$, the $q$-character of its restriction to $\mathcal{U}(\epsilon)_{J}$
can be obtained by substituting $Y_{i,a}=1=\widetilde{Y}_{j,b}$,
namely its image under the naive restriction map $\beta_{J}:\mathscr{Y}(\epsilon)\rightarrow\mathscr{Y}(\epsilon_{J})$.
As in \cite[Section 3]{FM} it can be refined to $\tau_{J}$ so that
it is injective if $J=\{p\}$ for $p\neq M$ and hence the restriction
to each \textit{even} rank 1 subalgebra $\mathcal{U}(\epsilon_{\{p\}})\cong U_{q_{p}}(\widehat{\mathfrak{sl}}_{2})$
can be inverted in the sense of the commutative diagram (\ref{eq:comm-diag-restriction}).
In the \textit{odd} rank 1 case $J=\{M\}$ where $\tau_{J}$ is not
injective we should consider the rank 2 subalgebras $J=\{M,M\pm1\}$
at the same time, which is the reason why we need to know the $q$-character
of several $\mathcal{U}(001)$- or $\mathcal{U}(011)$-modules to
determine the algorithm.

Now the algorithm is obtained from the original one by revising the
rule of the `expansion in the $M$-th direction' according to the
representation theory of $\mathcal{U}(\epsilon_{\{M\}})=\mathcal{U}(01)$
(Appendix \ref{sec:GQG-RT-rank1}). It is not very hard to find simple
modules for which the algorithm fails (Example \ref{exa:FM-alg-fails},
\textit{cf.} \cite[Example 5.6]{HL}), whereas it is also not an easy
problem to determine for which simples it operates well and produces
the correct $q$-character even in the non-super case. We can show
that the fundamental representations fall into the latter as in the
proof of \cite[Theorem 5.9]{FM}, but we do not have a handy sufficient
condition at this stage. For instance, the usual notion of `dominance'
as finite-dimensionality is not strong enough in the super case and
hence the same for the minuscule property which was originally assumed.
Nevertheless, the algorithm is verified by a lot of examples beyond
fundamentals and Kirillov--Reshetikhin modules and also serves as
an extremely useful method to deduce partial information on $q$-characters
even for those it fails.

\subsection{\label{subsec:intro-def-Cartan}Deformed Cartan matrices for $\mathfrak{sl}_{M|N}$}

Let $C$ be the Cartan matrix associated with a finite-dimensional
simple complex Lie algebra $\mathfrak{g}$. A 2-parameter deformation
$C(q,t)$ of $C$ was introduced by Frenkel and Reshetikhin \cite{FR1}
as an ingredient to define a $(q,t)$-deformation $W_{q,t}(\mathfrak{g})$
of the (classical principal) $W$-algebra of $\mathfrak{g}$ inside
a $(q,t)$-deformed Heisenberg algebra as the joint kernel of screening
operators. In pursuit of Langlands-type dualities they considered
various limits on the parameters $q,t$, where one also expects a
connection with representation theory of quantum groups through quantum
integrable models. A classical limit $t\rightarrow1$ is especially
well-understood as the algebra $W_{q,1}(\mathfrak{g})$ coincides
with the image of the Grothendieck ring of a module category of the
quantum affine algebra $U_{q}(\widehat{\mathfrak{g}})$ under the
$q$-character map \cite{FR2,FM}. In that sense, the $q$-deformed
Cartan matrix $C(q,1)$ can be seen as a numerical input to the theory
of $q$-characters and one may push this viewpoint further to construct
(quantum) virtual Grothendieck rings from other limits of $C(q,t)$,
see for example \cite{KO}. To find a representation theoretic realization
of the latters is an active area of research and we refer the reader
to \cite{FHR} for recent prospects.

From our discussion we observe that the matrix $C^{r}$ appears in
the multiplicative formula of the universal $R$-matrix of $\mathcal{U}(\epsilon)$
and so plays the role of $C(q^{r},1)$ in the non-super case. Interestingly,
$C^{r}$ already hints the existence of further deformation as it
cannot be written as $C(q^{r})$ for some 1-parameter matrix $C(z)$,
which also agrees with our viewpoint to generalized quantum groups
as a kind of 2-parameter quantum group. Indeed, we may read off a
(symmetrization of) matrix $C(q,\widetilde{q})$ from a careful examination
of our computation of $(h_{i,1},h_{j,-1})$, regarding $\widetilde{q}$
as a formal variable independent of $q$, so that $C(q^{r},\widetilde{q}^{r})$
specializes to $C^{r}$ by taking $\widetilde{q}=-q^{-1}$. We call
$C(q,\widetilde{q})$ a \textit{$(q,\widetilde{q})$-deformed Cartan
matrix} of $\mathfrak{sl}_{M|N}$ associated with $\epsilon$.

Our $(q,\widetilde{q})$-deformed Cartan matrix coincides with the
one recently introduced from the $W$-algebra side by Feigin, Jimbo,
Mukhin and Vilkoviskiy \cite{FJMV}. It is based on a `super' generalization
of a realization of $W_{q,t}(\mathfrak{sl}_{M})$ as a truncation
of the quantum toroidal algebra $\mathcal{E}$ associated to $\mathfrak{gl}_{1}$
\cite[Section 5]{BFM} which is also a $q$-deformation of \cite{SV}.
Namely, $\mathcal{E}$ has three kinds of Fock modules associated
to three quantum parameters, and $W_{q,t}(\mathfrak{sl}_{M})$ is
obtained as the image of the $\mathcal{E}$-action on the tensor product
of $M$ Fock modules of the same kind. Our super case is exactly when
one takes $M$ Fock modules of one kind and $N$ of another, where
the $(01)$-sequence $\epsilon$ indicates the order of the tensor
product. We thus have a natural interpretation of the 2-parameter
nature of $C^{r}$ and hence of the representation theory of $\mathcal{U}(\epsilon)$
and $U(\epsilon)$. Let us also remark that the resulting (deformed)
$W$-algebra in the super case is the so-called (deformed) $Y$-algebra
$Y_{0,N,M}$ which is quite different from the usual $W$-superalgebra
and not super a priori.

Furthermore, the last construction also provides the screening operator
description that induces a Frenkel--Mukhin-type algorithm to produce
an element of the deformed $Y_{0,N,M}$, a \textit{$qq$-character},
in a combinatorial way \cite{FJM1,FJM2}. While they work with a quite
general assumption, it is pointed out that for a special class of
$qq$-characters they should give rise to the $q$-characters of finite-dimensional
simple modules of the quantum affine superalgebras after a specialization
that corresponds to $\widetilde{q}=\pm q^{-1}$ in our setting (and
$t=1$ in the non-super theory). Since our algorithm is essentially
the specialization of theirs, this work provides a representation
theoretic explanation of the story at the $W$-algebra side. It also
strongly suggests that the image of the $q$-character map should
be given as the joint kernel of the screening operators for $q$-deformed
$Y_{0,N,M}$ \cite[Section 5.4]{BFM} at $\widetilde{q}=-q^{-1}$.
We hope to come back to this problem in the near future. 

\subsection{Organization}

This article is organized as follows. In Section \ref{sec:GQG} we
recall the definition of generalized quantum groups of affine type
$A$ and discuss fundamental structures, which include braid actions,
non-degenerate bilinear form on the negative half of the algebra,
Drinfeld presentation and a comparison result with quantum affine
superalgebras of type $A$. In Section \ref{sec:mult-formula-R} we
establish the multiplicative formula of the universal $R$-matrix,
from which we introduce a 2-parameter deformation of Cartan matrices
of $\mathfrak{sl}_{M|N}$. In Section \ref{sec:RT-GQG} we begin to
study finite-dimensional $\mathcal{U}(\epsilon)$-modules with respect
to Drinfeld presentation. In particular, we introduce fundametal representations
with an explicit description of their module structure and provide
examples of $q$-characters for Kirillov--Reshetikhin modules in
the rank 2 case. Finally in Section \ref{sec:FR-FM-qchar} we develop
the theory of $q$-characters for $\mathcal{U}(\epsilon)$ along the
lines of \cite{FR2,FM} using the multiplicative formula above. As
an application we compute the universal coefficient of the $R$-matrices
from which we can deduce a denominator formula for fundamental representations.
We also propose a Frenkel--Mukhin-type algorithm followed by examples
and non-examples. In the first two appendices we give a proof of the
existence of braid action in the rank 2 case and a construction of
Drinfeld presentation for $\mathcal{U}(\epsilon)$. The last Appendix
\ref{sec:GQG-RT-rank1} is devoted to the structure and representation
theory of $\mathcal{U}(\epsilon)$ for $\epsilon=(01)$ which is essential
in the description of the algorithm but not covered by our general
discussions.

\subsection*{Acknowledgement}

The author would like to express his sincere gratitude to Ryo Fujita
for a number of stimulating discussions and inspiring comments. He
is also grateful to Il-Seung Jang for his interests in this work and
discussions, and to Evgeny Mukhin for his valuable comments. This
work was partly done while the author was visiting Research Institute
for Mathematical Sciences, Kyoto University. He would like to thank
RIMS for their warm hospitality during his stay. 

\section{Generalized quantum groups of type $A$\label{sec:GQG}}

\subsection*{Notations and conventions}

Throughout this article, we fix $M,N\in\mathbb{Z}_{>0}$ such that
$M\neq N$ and $n=M+N\geq3$. Let $\Bbbk=\mathbb{Q}(q)$. The following
notations are frequently used:
\begin{itemize}
\item $\epsilon=(\epsilon_{1},\dots,\epsilon_{n})\in\{0,1\}^{n}$: a $(01)$-sequence
with $M$ $0$'s and $N$ $1$'s 
\item $\mathbb{I}=\{1<2<\cdots<n\}$: an ordered set with a partition $\mathbb{I}=\mathbb{I}_{0}\sqcup\mathbb{I}_{1}$,
where
\[
\mathbb{I}_{0}=\{i\,|\,\epsilon_{i}=0\},\quad\mathbb{I}_{1}=\{i\,|\,\epsilon_{i}=1\}.
\]
\item $\mathring{P}=\bigoplus_{i\in\mathbb{I}}\mathbb{Z}\delta_{i}$: the
($\mathfrak{gl}_{n}$-)weight lattice with a symmetric bilinear form
$(\,\cdot\,,\,\cdot\,)_{\epsilon}$ defined by
\[
(\delta_{i},\delta_{j})_{\epsilon}=(-1)^{\epsilon_{i}}\delta_{ij}
\]
\item $I=\{0,\dots,n-1\}$, $\mathring{I}=I\setminus\{0\}$.
\item $\alpha_{i}=\delta_{i}-\delta_{i+1}\in\mathring{P}$ for $i\in I$.
In particular, we have $\sum_{i\in I}\alpha_{i}=0$.
\item $p_{\epsilon}(i)=\epsilon_{i}+\epsilon_{i+1}\in\mathbb{Z}/2\mathbb{Z}$
for $i\in I$, which extends to a \textit{parity} function 
\[
p_{\epsilon}:\mathring{Q}=\sum\mathbb{Z}\alpha_{i}\rightarrow\mathbb{Z}/2\mathbb{Z},\quad p_{\epsilon}(\sum c_{i}\alpha_{i})=\sum c_{i}p_{\epsilon}(i).
\]
\item $q_{i}=(-1)^{\epsilon_{i}}q^{(-1)^{\epsilon_{i}}}$ ($i\in\mathbb{I}$).
We also write $\widetilde{q}=-q^{-1}$.
\item $\mathbf{q}_{\epsilon}(\,\cdot\,,\,\cdot\,):\mathring{P}\times\mathring{P}\longrightarrow\Bbbk^{\times}$:
a symmetric biadditive function defined by
\[
\mathbf{q}_{\epsilon}\left(\sum\lambda_{i}\delta_{i},\sum\mu_{i}\delta_{i}\right)=\prod q_{i}^{\lambda_{i}\mu_{i}}.
\]
\item For $a\in\Bbbk^{\times}$ or $\mathbb{C}^{\times}$, $n\in\mathbb{Z}_{\geq0}$
and $0\leq k\leq n$, we let 
\[
[n]_{a}=a^{n-1}+a^{n-3}+\cdots+a^{1-n},\quad[n]_{a}!=[n]_{a}[n-1]_{a}\cdots[1]_{a},\quad\begin{bmatrix}n\\
k
\end{bmatrix}_{a}=\frac{[n]_{a}!}{[k]_{a}![n-k]_{a}!}.
\]
\end{itemize}
In Section 2 and 3, we will also consider the affine weight lattice
$P$ and the affine root lattice $Q$ given by
\[
P=\mathbb{Z}\mathbf{\delta}\oplus\bigoplus_{i\in I}\mathbb{Z}\mathbf{\delta}_{i}\supset Q=\bigoplus_{i\in I}\mathbb{Z}\mathbf{\alpha}_{i},\quad\alpha_{i}=\delta_{i}-\delta_{i+1}+\delta_{i,0}\delta
\]
which are related to $\mathring{P}$ and $\mathring{Q}$ under the
projection 
\begin{align*}
\mathrm{cl}: & P\longrightarrow\mathring{P}\\
 & \mathbf{\delta}_{i}\longmapsto\delta_{i}\\
 & \mathbf{\delta}\longmapsto0.
\end{align*}
The above definitions naturally extend to $P$ and $Q$, for example
$\mathbf{q}_{\epsilon}(\lambda,\mu)\coloneqq\mathbf{q}_{\epsilon}(\mathrm{cl}(\lambda),\mathrm{cl}(\mu))$
for $\lambda,\mu\in P$ and $p_{\epsilon}(\beta)\coloneqq p_{\epsilon}(\mathrm{cl}(\beta))$
for $\beta\in Q$. Thus, for notational convenience, we will use the
same symbol $\delta_{i}$ and $\alpha_{i}$ to denote those in either
$P$ or $\mathring{P}$ (which will be made clear in each context). 

In Section 4 and 5, we will fix $\epsilon=\epsilon_{M|N}$, the standard
$(01)$-sequence
\begin{equation}
\epsilon_{M|N}=(\underbrace{0,0,\dots,0}_{M},\underbrace{1,1,\dots1}_{N}).\label{eq:01seq-std}
\end{equation}
In particular, $p(i)=1$ if and only if $i=0,M$. Whenever $\epsilon$
is obvious from the context, we suppress it from the notations.

\subsection{Generalized quantum groups and a braid action\label{subsec:GQG-braid}}
\begin{defn}[\cite{KOS,Machida}]
\label{def:GQG}The generalized quantum group $\mathcal{U}(\epsilon)$
of affine type $A$ associated with $\epsilon$ is defined to be a
$\Bbbk$-algebra generated by $e_{i},\,f_{i}$ and $k_{i}^{\pm1}$
for $i\in I$ subject to the following relations:
\begin{align}
 & k_{i}k_{i}^{-1}=k_{i}^{-1}k_{i}=1,\quad k_{i}k_{j}=k_{j}k_{i},\label{eq:GQG-DJ-kk}\\
 & k_{i}e_{j}k_{i}^{-1}=\mathbf{q}(\alpha_{i},\alpha_{j})e_{j},\quad k_{i}f_{j}k_{i}^{-1}=\mathbf{q}(\alpha_{i},\alpha_{j})^{-1}f_{j},\label{eq:GQG-DJ-kekf}\\
 & e_{i}f_{j}-f_{j}e_{i}=\delta_{ij}\frac{k_{i}-k_{i}^{-1}}{q_{i}-q_{i}^{-1}},\label{eq:GQG-DJ-ef}\\
 & [e_{i},e_{j}]_{\mathbf{q}(\alpha_{i},\alpha_{j})^{-1}}=[f_{i},f_{j}]_{\mathbf{q}(\alpha_{i},\alpha_{j})^{-1}}=0\quad\quad\text{if }j\neq i\ensuremath{\pm1},\label{eq:GQG-DJ-quadSerre}\\
 & \begin{array}{c}
[e_{i},[e_{i},e_{j}]_{\mathbf{q}(\alpha_{i},\alpha_{j})^{-1}}]_{\mathbf{q}(\alpha_{i},\alpha_{i}+\alpha_{j})^{-1}}=0\\{}
[f_{i},[f_{i},f_{j}]_{\mathbf{q}(\alpha_{i},\alpha_{j})^{-1}}]_{\mathbf{q}(\alpha_{i},\alpha_{i}+\alpha_{j})^{-1}}=0
\end{array}\quad\quad\text{if }p(i)=0\text{ and }j=i\pm1,\label{eq:GQG-DJ-evenSerre}\\
 & \begin{array}{c}
[[[e_{i-1},e_{i}]_{\mathbf{q}(\alpha_{i-1},\alpha_{i})^{-1}},e_{i+1}]_{\mathbf{q}(\alpha_{i-1}+\alpha_{i},\alpha_{i+1})^{-1}},e_{i}]=0\\{}
[[[f_{i-1},f_{i}]_{\mathbf{q}(\alpha_{i-1},\alpha_{i})^{-1}},f_{i+1}]_{\mathbf{q}(\alpha_{i-1}+\alpha_{i},\alpha_{i+1})^{-1}},f_{i}]=0
\end{array}\quad\quad\text{if }p(i)=1\text{ and }n>3\label{eq:GQG-DJ-oddSerre}
\end{align}
where $[x,y]_{a}=xy-ayx$. If $n=3$, there is exactly one $i\in I$
such that $p(i)p(i+1)=1$, and the quartic relation (\ref{eq:GQG-DJ-oddSerre})
is replaced by the following quintic one
\begin{align}
 & \begin{array}{c}
[e_{i+1},[e_{i},[e_{i+1},[e_{i},e_{i-1}]_{q_{i}}]_{q_{i+1}q_{i+2}}]]_{q_{i+1}}=[e_{i},[e_{i+1},[e_{i},[e_{i+1},e_{i-1}]_{q_{i-1}}]_{q_{i}q_{i+1}}]]_{q_{i+1}},\\{}
[f_{i+1},[f_{i},[f_{i+1},[f_{i},f_{i-1}]_{q_{i}}]_{q_{i+1}q_{i+2}}]]_{q_{i+1}}=[f_{i},[f_{i+1},[f_{i},[f_{i+1},f_{i-1}]_{q_{i-1}}]_{q_{i}q_{i+1}}]]_{q_{i+1}}.
\end{array}\label{eq:GQG-DJ-deg5Serre}
\end{align}
We often call (\ref{eq:GQG-DJ-oddSerre}) and (\ref{eq:GQG-DJ-deg5Serre})
\textit{odd} Serre relations in constrast with the usual \textit{even}
Serre relation (\ref{eq:GQG-DJ-evenSerre}), although there are variety
of Serre relations at odd simple roots beyond type $A$ (see \cite[Proposition 6.7.1]{Y}).

It is a Hopf algebra with the following comultiplication and antipode:
\begin{align*}
\Delta: & e_{i}\mapsto e_{i}\otimes1+k_{i}\otimes e_{i},\quad f_{i}\mapsto f_{i}\otimes k_{i}^{-1}+1\otimes f_{i},\quad k_{i}\mapsto k_{i}\otimes k_{i},\\
S: & e_{i}\mapsto-k_{i}^{-1}e_{i},\quad f_{i}\mapsto-f_{i}k_{i},\quad k_{i}\mapsto k_{i}^{-1}.
\end{align*}
Moreover, we assign a $Q$-grading to $\mathcal{U}(\epsilon)$ by
$\deg e_{i}=\mathbf{\alpha}_{i}=-\deg f_{i}$ and $\deg k_{\lambda}=0$.
Then for $x\in\mathcal{U}(\epsilon)_{\beta}$, we have $k_{i}xk_{i}^{-1}=\mathbf{q}(\alpha_{i},\mathrm{cl}(\beta))x.$
\end{defn}

Let $\mathcal{U}^{+}(\epsilon)$, $\mathcal{U}^{-}(\epsilon)$ and
$\mathcal{U}^{0}(\epsilon)$ be the subalgebras of $\mathcal{U}(\epsilon)$
generated by $e_{i}$'s, $f_{i}$'s and $k_{i}$'s for $i\in I$,
respectively. By a standard argument, we have a triangular decomposition
\begin{equation}
\mathcal{U}(\epsilon)\cong\mathcal{U}^{\mp}(\epsilon)\otimes\mathcal{U}^{0}(\epsilon)\otimes\mathcal{U}^{\pm}(\epsilon)\quad\text{as a vector space.}\label{eq:GQG-tri-decomp}
\end{equation}
Note that $\mathcal{U}^{\geq0}(\epsilon)\coloneqq\mathcal{U}^{0}(\epsilon)\mathcal{U}^{+}(\epsilon)$
and $\mathcal{U}^{\leq0}(\epsilon)\coloneqq\mathcal{U}^{-}(\epsilon)\mathcal{U}^{0}(\epsilon)$
are subalgebras as well.

There is a $\mathbb{Q}$-linear anti-involution $\Omega$ defined
by
\[
\Omega:e_{i}\mapsto f_{i},\quad f_{i}\mapsto e_{i},\quad k_{i}\mapsto k_{i}^{-1},\quad q\mapsto q^{-1}
\]
which exchanges $\mathcal{U}^{+}(\epsilon)$ and $\mathcal{U}^{-}(\epsilon)$
and satisfies $\Delta\circ\Omega=(\Omega\otimes\Omega)\circ\Delta^{\mathrm{op}}$.
\begin{rem}
The generalized quantum group $\mathcal{U}(\epsilon)$ was first introduced
in \cite{KOS} without Serre relations which were subsequently provided
in \cite{Machida}, implicitly assuming $n>3$. For completeness,
we propose the definition for $n=3$ by supplementing (\ref{eq:GQG-DJ-deg5Serre}),
motivated from the one for the quantum affine superalgebra $U_{q}(\widehat{\mathfrak{sl}}_{2|1})$
in \cite[(QS4)(10)]{Y}. The case $M=N=1$ is explained separately
in Appendix \ref{sec:GQG-RT-rank1} as it cannot be defined by the
above Drinfeld--Jimbo-type presentation. 
\end{rem}

The Hopf algebra $\mathcal{U}(\epsilon)$ can be seen as an analogue
of the quantum affine superalgebra $U_{q}(\widehat{\mathfrak{sl}}_{M|N})$
associated with the choice of a Borel subalgebra associated with $\epsilon$
\cite{Y} (see Section \ref{subsec:Drinfeld-pres-GQG} below for a
precise presentation, where it is denoted by $U(\epsilon)$), while
$\mathcal{U}(\epsilon)$ has no superalgebra structure a priori. A
precise relation between $\mathcal{U}(\epsilon)$ and $U_{q}(\widehat{\mathfrak{sl}}_{M|N})$
is given in \cite[Section 2.3]{KL} and will be recalled later.

on, $W$ acts from the right on the set of the $(01)$-sequences $\epsilon$
with $M$ $0$'s and $N$ $1$'s, where $s_{0}$ permutes the first
and the $n$-th entries of $\epsilon$. We always assume that $W$
acts on the affine root lattice $Q$ as in the non-super type $A$:
\begin{equation}
s_{i}(\alpha_{j})=\begin{cases}
-\alpha_{i} & \text{if }i=j\\
\alpha_{j}+\alpha_{i} & \text{if }i=j\pm1\\
\alpha_{j} & \text{otherwise}.
\end{cases}\label{eq:affW-action}
\end{equation}

There is a `braid action' $T_{i}$ ($i\in I$) which plays exactly
the same role as the one by Lusztig on the usual quantum groups ($T_{i}$
here corresponds to $T_{i,1}^{\prime\prime}$ in \cite[Chapter 37]{Lb}).
The proof for the remaining case of $n=3$ can be found in Appendix
\ref{sec:well-defd-T_i-rank2}.
\begin{thm}[\cite{Machida,Yu}]
 For each $i\in I$, there exists an algebra isomorphism $T_{i}:\mathcal{U}(\epsilon)\longrightarrow\mathcal{U}(\epsilon s_{i})$
given by
\begin{equation}
T_{i}(k_{j})=\begin{cases}
k_{i}^{-1}\\
k_{j}k_{i}\\
k_{j}
\end{cases},\quad T_{i}(e_{j})=\begin{cases}
-f_{i}k_{i}\\{}
[e_{i},e_{j}]_{\mathbf{q}_{\epsilon s_{i}}(\alpha_{i},\alpha_{j})}\\
e_{j}
\end{cases},\quad T_{i}(f_{j})=\begin{cases}
-k_{i}^{-1}e_{i} & \text{if }j=i,\\{}
[f_{j},f_{i}]_{\mathbf{q}_{\epsilon s_{i}}(\alpha_{i},\alpha_{j})^{-1}} & \text{if }j=i\pm1,\\
f_{j} & \text{otherwise}
\end{cases}\label{eq:def-braid-action}
\end{equation}
which satisfies the type $A$ braid relations:
\begin{align*}
T_{i}T_{j} & =T_{j}T_{i}:\mathcal{U}(\epsilon)\longrightarrow\mathcal{U}(\epsilon s_{i}s_{j})\quad\quad\text{if }\left|i-j\right|>1,\\
T_{i}T_{i+1}T_{i} & =T_{i+1}T_{i}T_{i+1}:\mathcal{U}(\epsilon)\longrightarrow\mathcal{U}(\epsilon s_{i}s_{i+1}s_{i}).
\end{align*}
The inverse $T_{i}^{-1}:\mathcal{U}(\epsilon s_{i})\longrightarrow\mathcal{U}(\epsilon)$
is given by
\[
T_{i}^{-1}(k_{j})=\begin{cases}
k_{i}^{-1}\\
k_{j}k_{i}\\
k_{j}
\end{cases},\quad T_{i}^{-1}(e_{j})=\begin{cases}
-k_{i}^{-1}f_{i}\\{}
[e_{j},e_{i}]_{\mathbf{q}_{\epsilon}(\alpha_{i},\alpha_{j})}\\
e_{j}
\end{cases},\quad T_{i}^{-1}(f_{j})=\begin{cases}
-e_{i}k_{i} & \text{if }j=i,\\{}
[f_{i},f_{j}]_{\mathbf{q}_{\epsilon}(\alpha_{i},\alpha_{j})^{-1}} & \text{if }j=i\pm1,\\
f_{j} & \text{otherwise}.
\end{cases}
\]
Moreover, $T_{i}^{\pm1}$ commutes with $\Omega$.
\end{thm}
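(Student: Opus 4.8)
The plan is to verify all four assertions --- well-definedness, bijectivity with the stated inverse, the braid relations, and commutation with $\Omega$ --- by direct computation on the Drinfeld--Jimbo generators, organised so that (i) the anti-involution $\Omega$ reduces every statement about the $f_j$'s to the corresponding one about the $e_j$'s, and (ii) well-definedness and the braid relations are localised to the parabolic subalgebras of rank $\le 3$ (rank $\le 4$ for the braid relation), inside which only finitely many parity patterns occur. Since $\mathcal{U}(\epsilon)$ is presented by generators and relations, for well-definedness it suffices to check that the proposed images of $e_j,f_j,k_j^{\pm1}$ satisfy the defining relations (\ref{eq:GQG-DJ-kk})--(\ref{eq:GQG-DJ-deg5Serre}) of $\mathcal{U}(\epsilon s_i)$. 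The preliminary bookkeeping is the identity, for the place-permutation action of $s_i$ on $(01)$-sequences and the $W$-action (\ref{eq:affW-action}) on $Q$, that $p_{\epsilon s_i}(\beta)=p_\epsilon(s_i\beta)$ and $\mathbf{q}_{\epsilon s_i}(\lambda,\mu)=\mathbf{q}_\epsilon(s_i\lambda,s_i\mu)$; in particular the parity at the reflecting node is preserved, $p_{\epsilon s_i}(i)=p_\epsilon(i)$, and $q_j-q_j^{-1}=q-q^{-1}$ is independent of $\epsilon$, while $p(i\pm1)$ may flip.

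Granting these identities, the relations (\ref{eq:GQG-DJ-kk}) and the $k$--$e$, $k$--$f$ relations are immediate from the $W$-equivariance of $\mathbf{q}$, and the relation (\ref{eq:GQG-DJ-ef}) is a short computation inside the rank $\le 2$ subalgebra attached to $\{i,j\}$ --- the denominators causing no trouble because $q_j-q_j^{-1}=q-q^{-1}$. The genuine obstacle is the Serre relations (\ref{eq:GQG-DJ-evenSerre})--(\ref{eq:GQG-DJ-deg5Serre}): for a node $l$ with $|l-i|>1$ the relation is untouched by $T_i$, so the work is confined to $l\in\{i-1,i,i+1\}$, where one must show that the $\mathbf{q}$-bracket produced by applying $T_i$ vanishes in $\mathcal{U}(\epsilon s_i)$. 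The subtlety is that the available Serre relation (even (\ref{eq:GQG-DJ-evenSerre}), quartic odd (\ref{eq:GQG-DJ-oddSerre}), or quintic odd (\ref{eq:GQG-DJ-deg5Serre})) need not be of the same type on the two sides, since $p(i\pm1)$ can change under $s_i$. I would resolve this by restricting to the parabolic generated by $e_{i-1},e_i,e_{i+1}$ and $f_{i-1},f_i,f_{i+1}$, enumerating the possible triples $(p_\epsilon(i-1),p_\epsilon(i),p_\epsilon(i+1))$, and deriving the required identity case by case from the presentation of that parabolic; for $n=3$ this is exactly where the quintic relation (\ref{eq:GQG-DJ-deg5Serre}) enters, and I would transport the needed identity from the $U_q(\widehat{\mathfrak{sl}}_{2|1})$ computation in \cite{Y}. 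This establishes $T_i$ as an algebra homomorphism $\mathcal{U}(\epsilon)\to\mathcal{U}(\epsilon s_i)$.

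For bijectivity, the map $T_i^{-1}$ given by the stated formulas is well defined by the same argument with $\epsilon s_i$ in place of $\epsilon$ (note $\epsilon s_i s_i=\epsilon$), and $T_i^{-1}\circ T_i=\mathrm{id}_{\mathcal{U}(\epsilon)}$, $T_i\circ T_i^{-1}=\mathrm{id}_{\mathcal{U}(\epsilon s_i)}$ are then checked on generators: both composites are the identity outside $\{i-1,i,i+1\}$, and on the remaining generators the verification reduces to the rank $\le 2$ subalgebra and uses only the bookkeeping identities above. The far-commutativity $T_iT_j=T_jT_i$ for $|i-j|>1$ is clear since the two maps agree on every generator. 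The braid relation $T_iT_{i+1}T_i=T_{i+1}T_iT_{i+1}$ is obtained by evaluating both composites on $e_l$ for $l\in\{i-1,i,i+1,i+2\}$ --- all other generators being fixed, and the $f_l$-cases following from the $\Omega$-equivariance below --- inside the parabolic on nodes $\{i-1,\dots,i+2\}$; this is the longest of the computations, but is finite and is again organised by the parity pattern on those nodes, the heaviest cases being those in which an odd node lies in $\{i,i+1\}$.

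Finally, commutation with $\Omega$ can be read off the formulas. Since $\Omega$ is anti-multiplicative and sends $q\mapsto q^{-1}$, it sends $\mathbf{q}_\epsilon(\cdot,\cdot)\mapsto\mathbf{q}_\epsilon(\cdot,\cdot)^{-1}$ and $[x,y]_a\mapsto -a^{-1}[\Omega x,\Omega y]_a$; a direct check shows this turns the formula for $T_i(e_j)$ into the formula for $T_i(f_j)=T_i(\Omega e_j)$ and conversely, and fixes the formula for $T_i(k_j)$, so $\Omega\circ T_i$ and $T_i\circ\Omega$ agree on the generators. As both are anti-homomorphisms, they agree on all of $\mathcal{U}(\epsilon)$, i.e. $\Omega T_i=T_i\Omega$; applying $\Omega$ to $T_i^{-1}\circ T_i=\mathrm{id}$ then yields $\Omega T_i^{-1}=T_i^{-1}\Omega$ as well. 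The single step I expect to require real care is the Serre-relation part of well-definedness together with the length-$3$ braid relation, precisely because the super type of the parabolic can change under the reflection and one cannot simply quote the non-super computation of \cite{Lb}.
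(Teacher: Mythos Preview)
The paper does not prove this theorem; it is stated with a citation to \cite{Machida,Yu} and used as a black box. Your outline is the standard direct-verification route one expects to find in those references (and in the non-super prototype \cite[Chapter 37]{Lb}): check the defining relations are preserved, verify the stated inverse on generators, verify the braid relations on generators inside the relevant rank~$\le 4$ parabolic, and reduce the $f$-side to the $e$-side via $\Omega$. Your bookkeeping identities $p_{\epsilon s_i}(\beta)=p_\epsilon(s_i\beta)$ and $\mathbf{q}_{\epsilon s_i}(\lambda,\mu)=\mathbf{q}_\epsilon(s_i\lambda,s_i\mu)$ are exactly Lemma~\ref{lem:W-pres-q} (and the parity analogue used in Lemma~\ref{lem:orrvector-sqzero}), so your plan is consistent with what the paper actually uses downstream.

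One small correction of emphasis: you write that for the braid relation $T_iT_{i+1}T_i=T_{i+1}T_iT_{i+1}$ you must evaluate on $e_l$ for $l\in\{i-1,i,i+1,i+2\}$. In fact the nontrivial checks are at $l\in\{i,i+1\}$; for $l=i-1$ and $l=i+2$ each side acts by a single $T$, and agreement is immediate once well-definedness is known. More substantively, your formula $\Omega([x,y]_a)=-a^{-1}[\Omega x,\Omega y]_a$ is correct (after using that $\Omega$ sends $\mathbf{q}\mapsto\mathbf{q}^{-1}$), and the paper uses exactly this compatibility later (see the computation preceding the proof of Proposition~\ref{prop:symm-pair-hh}). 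So there is nothing to compare against in the paper itself, and your proposal is the expected argument; the genuine labour, as you correctly anticipate, sits in the Serre-relation check when the parity pattern at $i\pm1$ flips.
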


\begin{rem}
Since $T_{i}$ is not an automorphism whenever $p(i)=1$, it defines
a braid \textit{groupoid} action on the collection $\{\mathcal{U}(\epsilon)\,|\,\epsilon\text{ has \ensuremath{M} 0's and \ensuremath{N} 1's}\}$.
Below we will establish analogues for $\mathcal{U}(\epsilon)$ of
various results for quantum groups and quantum affine algebras whose
proofs rely on braid group actions. Thanks to the groupoid analogue
of Matsumoto theorem \cite[Theorem 5]{HY} and the following easy
observation, almost all arguments still work for our braid action
by keeping track of how $\epsilon$ varies.
\end{rem}

\begin{lem}
\label{lem:W-pres-q}For $\alpha,\beta\in Q$, we have $\mathbf{q}_{\epsilon}(\alpha,\beta)=\mathbf{q}_{\epsilon s_{i}}(s_{i}\alpha,s_{i}\beta)$.
\end{lem}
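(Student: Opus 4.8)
The plan is to reduce the identity $\mathbf{q}_{\epsilon}(\alpha,\beta)=\mathbf{q}_{\epsilon s_i}(s_i\alpha,s_i\beta)$ to the case $\alpha=\alpha_j$, $\beta=\alpha_k$ for $j,k\in I$ by biadditivity: since both sides are biadditive in $(\alpha,\beta)$ and $s_i$ is linear on $Q$, it suffices to check equality on the basis $\{\alpha_j\}_{j\in I}$ of $\mathring{Q}$ (recall $\mathbf{q}_{\epsilon}$ factors through $\mathrm{cl}$, so the affine null root $\delta$ plays no role). First I would record the values $\mathbf{q}_{\epsilon}(\alpha_j,\alpha_k)$ directly from the definition $\mathbf{q}_{\epsilon}(\delta_a,\delta_b)=(-1)^{\epsilon_a}\delta_{ab}$ together with $\alpha_j=\delta_j-\delta_{j+1}$: one gets $\mathbf{q}_{\epsilon}(\alpha_j,\alpha_j)=q_j^2$ with $q_j^2=q^{2(-1)^{\epsilon_j}}$ (interpreting the indices $j$, $j+1$ cyclically mod $n$, so $\mathbf{q}_\epsilon(\alpha_j,\alpha_j)=q^{2(-1)^{\epsilon_j}}\cdot q^{2(-1)^{\epsilon_{j+1}}}\cdot\text{(cross terms)}$ — more precisely $\mathbf{q}_\epsilon(\alpha_j,\alpha_j)=q_j^{?}$, which I would write out cleanly), $\mathbf{q}_{\epsilon}(\alpha_j,\alpha_{j\pm1})=q^{-(-1)^{\epsilon_{\bullet}}}$ for the shared index, and $\mathbf{q}_{\epsilon}(\alpha_j,\alpha_k)=1$ when $|j-k|\geq 2$ (again cyclically). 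The key point is that these values depend only on the parities $\epsilon_a$ attached to the indices $a$ appearing in $\alpha_j,\alpha_k$.

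Next I would use the compatibility of the $W$-action on $Q$ (given by \eqref{eq:affW-action}) with the place-permutation action on $(01)$-sequences: the reflection $s_i$ sends $\epsilon$ to $\epsilon s_i$, which swaps $\epsilon_i$ and $\epsilon_{i+1}$, and simultaneously $s_i$ permutes the $\delta_a$'s by swapping $\delta_i\leftrightarrow\delta_{i+1}$ on the $\mathrm{cl}$-level. Concretely, $s_i(\delta_a)=\delta_{s_i(a)}$ where $s_i$ acts on $\mathbb{I}$ as the transposition $(i\ \ i+1)$, and $(\epsilon s_i)_{s_i(a)}=\epsilon_a$ for every $a$. Therefore
\[
\mathbf{q}_{\epsilon s_i}(s_i\delta_a, s_i\delta_b)=(-1)^{(\epsilon s_i)_{s_i(a)}}\delta_{s_i(a),s_i(b)}=(-1)^{\epsilon_a}\delta_{ab}=\mathbf{q}_{\epsilon}(\delta_a,\delta_b),
\]
which is precisely the claim at the level of the generators $\delta_a$ of $\mathring P$. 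Extending biadditively over $\mathring P\times\mathring P$ — hence in particular to all $\alpha_j$, and then to all of $\mathring Q\times\mathring Q$, and finally to $Q\times Q$ via $\mathrm{cl}$ — gives the lemma.

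I do not expect a genuine obstacle here; the only thing requiring a little care is bookkeeping of the cyclic indexing (the affine index $0$ with $\alpha_0=\delta_1-\delta_n+\delta$, and $s_0$ acting as the transposition $(1\ n)$ composed with the shift by $\delta$), making sure that the $\delta$-component is killed by $\mathrm{cl}$ and that $s_0$ genuinely intertwines the place-permutation on $\epsilon$ with the index-permutation on the $\delta_a$'s exactly as the interior reflections do. Once one checks that $s_i(\delta_a)=\delta_{\sigma_i(a)}$ on $\mathring P$ for the transposition $\sigma_i=(i\ \ i{+}1)$ (indices mod $n$) and that $\epsilon s_i = \epsilon\circ\sigma_i^{-1}=\epsilon\circ\sigma_i$ as functions $\mathbb{I}\to\{0,1\}$, the displayed computation finishes the proof uniformly in $i\in I$.
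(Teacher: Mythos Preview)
Your approach is correct and is essentially the only natural way to verify this lemma; the paper itself does not give a proof, calling it an ``easy observation'' in the remark preceding the statement. Your key reduction---extending $s_i$ to $\mathring{P}$ as the transposition $\sigma_i=(i\ i{+}1)$ on the indices of the $\delta_a$, and then using $(\epsilon s_i)_{\sigma_i(a)}=\epsilon_a$---is exactly right and handles all $i\in I$ (including $i=0$) uniformly after passing through $\mathrm{cl}$.

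There is, however, a slip in your displayed computation: you have written $\mathbf{q}_\epsilon(\delta_a,\delta_b)=(-1)^{\epsilon_a}\delta_{ab}$, but that is the value of the bilinear form $(\delta_a,\delta_b)_\epsilon$, not of $\mathbf{q}_\epsilon$. From the paper's definition $\mathbf{q}_\epsilon\bigl(\sum\lambda_i\delta_i,\sum\mu_i\delta_i\bigr)=\prod_i q_i^{\lambda_i\mu_i}$ one has $\mathbf{q}_\epsilon(\delta_a,\delta_b)=q_a^{\,\delta_{ab}}$, where $q_a=(-1)^{\epsilon_a}q^{(-1)^{\epsilon_a}}$ depends on $\epsilon$. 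The corrected line reads
\[
\mathbf{q}_{\epsilon s_i}(s_i\delta_a,s_i\delta_b)
=\mathbf{q}_{\epsilon s_i}(\delta_{\sigma_i(a)},\delta_{\sigma_i(b)})
=\bigl(q^{\epsilon s_i}_{\sigma_i(a)}\bigr)^{\delta_{\sigma_i(a),\sigma_i(b)}}
=\bigl(q^{\epsilon}_{a}\bigr)^{\delta_{ab}}
=\mathbf{q}_\epsilon(\delta_a,\delta_b),
\]
using $(\epsilon s_i)_{\sigma_i(a)}=\epsilon_a$ in the third equality. With this fix your argument is complete; the muddled computations of $\mathbf{q}_\epsilon(\alpha_j,\alpha_k)$ in your first paragraph are unnecessary and can be dropped.
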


By the braid relation, each $w\in W$ defines without ambiguity
\[
T_{w}\coloneqq T_{i_{1}}T_{i_{2}}\cdots T_{i_{l}}:\mathcal{U}(\epsilon)\rightarrow\mathcal{U}(\epsilon w^{-1})
\]
 for any reduced expression $w=s_{i_{1}}s_{i_{2}}\cdots s_{i_{l}}$. 
\begin{prop}[{cf. \cite[Proposition 1.8 (d)]{L}}]
\label{prop:Tw-on-ef} If $i,j\in I$ and $w\in W$ is such that
$w(\alpha_{i})=\alpha_{j}$, then we have $T_{w}(e_{i})=e_{j}$ and
$T_{w}(f_{i})=f_{j}$.
\end{prop}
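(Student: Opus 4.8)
The plan is to reduce the statement to the case of a single simple reflection and then argue by induction on the length of $w$, mimicking the classical argument in \cite[Proposition 1.8]{L} but carefully tracking how the $(01)$-sequence $\epsilon$ changes along the way. First I would record the base case: if $\ell(w)=0$ then $w=\mathrm{id}$, $\alpha_i=\alpha_j$ forces $i=j$, and there is nothing to prove. For the inductive step, write $w=s_k w'$ with $\ell(w')=\ell(w)-1$ for a suitable $k\in I$; the key dichotomy is whether $w'(\alpha_i)$ is the simple root $\alpha_k$ or not.

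The crucial elementary fact, which I would isolate as a sub-lemma, is: if $w(\alpha_i)=\alpha_j$ is a \emph{simple} root and $\ell(s_k w)=\ell(w)-1$, then $w'(\alpha_i):=s_k w(\alpha_i)=s_k(\alpha_j)$ is again a positive root, and in fact one can choose $k$ so that $w'(\alpha_i)=\alpha_{j'}$ remains simple — more precisely, since $\alpha_j$ is simple and $s_k$ lowers the length of $w$, $w$ does not send $\alpha_i$ to $\alpha_k$ unless... here one must be slightly careful. The clean route, as in Lusztig, is: among all reduced expressions $w=s_{i_1}\cdots s_{i_l}$, the condition $w(\alpha_i)=\alpha_j$ simple implies $s_{i_l}(\alpha_i)$ is positive, hence $i_l\neq i$, so $s_{i_l}(\alpha_i)$ is either $\alpha_i$ (when $|i_l-i|>1$) or $\alpha_i+\alpha_{i_l}$ (when $i_l=i\pm1$). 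In the first case $T_{i_l}(e_i)=e_i$ by the third branch of the definition of $T_i$, and we apply induction with $w'=s_{i_1}\cdots s_{i_{l-1}}$, noting $w'(\alpha_i)=\alpha_j$ and that $w'$ acts on the shifted sequence $\epsilon s_{i_l}$. In the second case $T_{i_l}(e_i)=[e_{i_l},e_i]_{\mathbf{q}_{\epsilon s_{i_l}}(\alpha_{i_l},\alpha_i)}$ is (up to the known normalization) the root vector attached to $\alpha_i+\alpha_{i_l}=s_{i_l}(\alpha_i)$, and one continues. Iterating, after $l$ steps one lands on $T_w(e_i)=e_j$; the same computation with $f$ in place of $e$, using the $f$-branches of $T_i$ and the fact that $T_i^{\pm1}$ commutes with $\Omega$, gives $T_w(f_i)=f_j$ (alternatively apply $\Omega$ directly to the $e$-identity).

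To make the induction actually close, I would phrase the statement being inducted on as: \emph{for all $w\in W$ and all $i$ with $w(\alpha_i)$ simple, $T_w$ sends the generator $e_i\in\mathcal{U}(\epsilon)$ to the generator $e_{j}\in\mathcal{U}(\epsilon w^{-1})$ where $\alpha_j=w(\alpha_i)$} — this is exactly the statement, but keeping the domain/codomain sequences explicit is what needs vigilance. Lemma \ref{lem:W-pres-q} is the tool that guarantees the various $\mathbf{q}_{\epsilon s_{i}}$-brackets appearing in intermediate steps are the \emph{right} ones, i.e. that the partially-applied braid operators produce honest normalized root vectors independent of the chosen reduced word; combined with the braid relations for $T_i$ (so that $T_w$ is well-defined), this lets the argument proceed word-by-word. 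The main obstacle I anticipate is precisely the bookkeeping in the second case above: one must verify that the element $[e_{i_l},e_i]_{\mathbf{q}(\cdots)}$ produced by $T_{i_l}$ is literally the element that a \emph{further} application of the remaining $T$'s will carry to $e_j$, i.e. that these quantum-bracket root vectors are stable under the later braid operators in the same way the simple generators are. This is a routine but slightly delicate compatibility check; in the super/generalized setting the only genuinely new feature is that the scalar in the bracket depends on $\epsilon s_{i_l}$ rather than $\epsilon$, which is exactly accounted for by Lemma \ref{lem:W-pres-q}, so no essentially new phenomenon arises beyond the non-super case.
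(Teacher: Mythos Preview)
There is a genuine gap in your handling of the non-commuting case. When $i_l = i \pm 1$, you obtain $T_{i_l}(e_i) = [e_{i_l}, e_i]_{\mathbf{q}(\cdots)}$, which is \emph{not} a Chevalley generator $e_k$. Your induction hypothesis, however, only concerns the image of Chevalley generators under $T_{w'}$ for shorter $w'$; it says nothing about $T_{w'}([e_{i_l}, e_i])$. Writing ``one continues'' here is precisely where the argument is missing: you would need to know how the remaining $T_{i_1} \cdots T_{i_{l-1}}$ act on this non-simple root vector, and that is not a routine compatibility check but the entire content of the proposition.

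The fix --- Lusztig's actual argument, and what the paper does --- is not to peel off the rightmost reflection of a fixed reduced expression, but to use a coset trick. Since $w \neq 1$, choose $k$ with $w(\alpha_k) < 0$; necessarily $k \neq i$. Let $w'$ be the minimal-length element of the coset $w\langle s_i, s_k\rangle$. If $|i - k| > 1$ then $w = w' s_k$, $T_w(e_i) = T_{w'}(e_i)$, and induction applies since $w'(\alpha_i) = \alpha_j$. If $|i - k| = 1$, the possibility $w = w' s_k$ is ruled out because it would force the simple root $\alpha_j = w'(\alpha_i + \alpha_k) = w'(\alpha_i) + w'(\alpha_k)$ to decompose as a sum of two positive roots; hence $w = w' s_i s_k$ with $\ell(w') = \ell(w) - 2$, and $T_w(e_i) = T_{w'}\bigl(T_i T_k(e_i)\bigr)$. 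The whole problem thus reduces to the single identity
\[
T_i T_{i\pm1}(e_i) = e_{i\pm1}
\]
in $\mathcal{U}(\epsilon)$ for arbitrary $\epsilon$, after which induction applies to $T_{w'}(e_k)$ since $w'(\alpha_k) = \alpha_j$. The paper's proof is exactly this direct computation, using only the definition of $T_i$ and relation (\ref{eq:GQG-DJ-ef}); this identity is what your outline lacks.
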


\begin{proof}
Following the induction argument in the proof of \cite[Proposition 1.8 (d)]{L},
we only need to check whether the following identity 
\[
T_{i}T_{i\pm1}(e_{i})=e_{i\pm1}
\]
holds in $\mathcal{U}(\epsilon)$ for any $\epsilon$. Indeed,
\begin{align*}
T_{i}T_{i-1}(e_{i}) & =T_{i}\left(e_{i-1}e_{i}-\mathbf{q}_{\epsilon s_{i}}(\alpha_{i-1},\alpha_{i})e_{i}e_{i-1}\right)\\
 & =(e_{i}e_{i-1}-\mathbf{q}_{\epsilon}(\alpha_{i},\alpha_{i-1})e_{i-1}e_{i})(-f_{i}k_{i})-q_{i+1}^{-1}(-f_{i}k_{i})(e_{i}e_{i-1}-\mathbf{q}_{\epsilon}(\alpha_{i},\alpha_{i-1})e_{i-1}e_{i})\\
 & =-q_{i}e_{i}f_{i}k_{i}e_{i-1}+q_{i}^{-1}e_{i-1}e_{i}f_{i}k_{i}+q_{i}f_{i}e_{i}k_{i}e_{i-1}-q_{i}^{-1}e_{i-1}f_{i}e_{i}k_{i}\\
 & =-q_{i}\frac{k_{i}-k_{i}^{-1}}{q-q^{-1}}k_{i}e_{i-1}+q_{i}^{-1}e_{i-1}\frac{k_{i}-k_{i}^{-1}}{q-q^{-1}}k_{i}=e_{i-1}
\end{align*}
where in the first line we compute $T_{i-1}(e_{i})$ in $\mathcal{U}(\epsilon s_{i})$
as we are verifying the identity in $\mathcal{U}(\epsilon)$. The
other one is parallel.
\end{proof}
Since $T_{i}$'s satisfy the type $A$ braid relation, this computation
also proves Lemma 40.1.1 in \cite{Lb} for $\mathcal{U}(\epsilon)$,
from which the following statement follows by induction on the length
$l(w)$ of $w\in W$.
\begin{prop}[{cf. \cite[Lemma 40.1.2]{Lb}}]
\label{prop:Lus-Lemma-40.1.2} For $w\in W$ and $i\in I$ satisfying
$l(ws_{i})=l(w)+1$, we have $T_{w}(e_{i})\in\mathcal{U}^{+}(\epsilon)$.
\end{prop}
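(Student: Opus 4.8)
For $w \in W$ and $i \in I$ with $l(ws_i) = l(w)+1$, we have $T_w(e_i) \in \mathcal{U}^+(\epsilon)$.

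\begin{proof}[Proof sketch]
The plan is to induct on the length $l(w)$, following the proof of \cite[Lemma 40.1.2]{Lb} and carrying along the $(01)$-sequence decorating each algebra, since $T_{w}$ is a map $\mathcal{U}(\epsilon)\to\mathcal{U}(\epsilon w^{-1})$ (so the assertion really concerns $\mathcal{U}^{+}(\epsilon w^{-1})$). For $l(w)=0$ there is nothing to prove. For $l(w)\geq1$, choose $j\in I$ with $l(s_{j}w)=l(w)-1$ and write $w=s_{j}v$ with $v=s_{j}w$, so that $l(v)=l(w)-1$ and $T_{w}=T_{j}\circ T_{v}$. A Coxeter-group computation shows $l(vs_{i})=l(v)+1$: if instead $l(vs_{i})=l(v)-1$, then $l(ws_{i})=l(s_{j}vs_{i})\leq l(vs_{i})+1=l(v)=l(w)-1$, contradicting $l(ws_{i})=l(w)+1$. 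Hence by the inductive hypothesis $T_{v}(e_{i})\in\mathcal{U}^{+}$, and it remains to check that $T_{j}$ sends this element into $\mathcal{U}^{+}$.

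For that last step I would separate two cases, exactly as in \cite{Lb}. If $T_{v}(e_{i})$ already lies in the subalgebra generated by $\{e_{h}:h\in I,\,h\neq j\}$, then we are done at once, since $T_{j}(e_{h})$ lies in $\mathcal{U}^{+}$ for every $h\neq j$ --- it equals $e_{h}$, or a quantum commutator of $e_{j}$ and $e_{h}$ --- and this holds irrespective of the parities $p_{\epsilon}(h),p_{\epsilon}(j)$, which is exactly why the super case is no harder here than the non-super one. Otherwise one rearranges a reduced expression for $w$ by means of the braid relations for the $T_{h}$ together with the identities $T_{h}T_{h\pm1}(e_{h})=e_{h\pm1}$ --- this is \cite[Lemma 40.1.1]{Lb}, which holds for $\mathcal{U}(\epsilon)$ by the computation preceding this proposition --- so as to reduce to the previous case or to Proposition \ref{prop:Tw-on-ef}. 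Here one uses that $v(\alpha_{i})\neq\alpha_{j}$, equivalently (recalling that $T_{w}$ multiplies the $Q$-degree by $w$) that $T_{v}(e_{i})\neq e_{j}$, so that the obstruction $T_{j}(e_{j})=-f_{j}k_{j}\notin\mathcal{U}^{+}$ is never encountered: indeed $v(\alpha_{i})=\alpha_{j}$ would force $s_{j}v=vs_{i}$, hence $ws_{i}=v$ and $l(ws_{i})=l(w)-1$, a contradiction. With these inputs the verification is formally identical to the one in \cite{Lb}.

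The one genuinely new point, and the step I expect to require the most care, is the bookkeeping of the $(01)$-sequences. Because $T_{i}\colon\mathcal{U}(\epsilon)\to\mathcal{U}(\epsilon s_{i})$ fails to be an automorphism whenever $p_{\epsilon}(i)=1$, every algebra occurring in the recursion must be tagged with the correct sequence, and every scalar $\mathbf{q}_{\epsilon}(\alpha,\beta)$ appearing in a defining relation or in a braid-operator formula must be transported accordingly. This is precisely what Lemma \ref{lem:W-pres-q} supplies, namely $\mathbf{q}_{\epsilon}(\alpha,\beta)=\mathbf{q}_{\epsilon s_{i}}(s_{i}\alpha,s_{i}\beta)$, while the groupoid form of Matsumoto's theorem \cite[Theorem 5]{HY} guarantees that $T_{w}$ is well defined independently of the chosen reduced expression. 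Once these compatibilities are recorded, the argument of \cite{Lb} goes through verbatim, the underlying algebra being the same as in the non-super case.
\end{proof}
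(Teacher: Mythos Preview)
Your proposal is correct and follows essentially the same approach as the paper: both argue by induction on $l(w)$, with the key input being the identity $T_{i}T_{i\pm1}(e_{i})=e_{i\pm1}$ (the type $A$ instance of \cite[Lemma 40.1.1]{Lb}) established in the computation for Proposition \ref{prop:Tw-on-ef}, together with the bookkeeping of $(01)$-sequences via Lemma \ref{lem:W-pres-q} and the groupoid Matsumoto theorem. The paper's proof is simply the one-line remark that this induction goes through; your sketch spells out the Coxeter-group manipulations and the case split more explicitly, but the content is the same.
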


\subsection{Lusztig's algebra $\mathbf{f}(\epsilon)$ and a bilinear form\label{subsec:Lusztig-f-bil-form}}

Let $^{\prime}\mathbf{f}(\epsilon)$ be the free associative $\Bbbk$-algebra
generated by $\theta_{i}$ for $i\in I$ with a $Q$-grading $\deg\theta_{i}=\alpha_{i}$.
We also use the notation $\left|x\right|=\mathrm{cl}\deg(x)\in Q$
for homogeneous $x\in{}^{\prime}\mathbf{f}(\epsilon)$. Define a twisted
multiplication on $^{\prime}\mathbf{f}(\epsilon)\otimes{}^{\prime}\mathbf{f}(\epsilon)$
by
\[
(x_{1}\otimes x_{2})(y_{1}\otimes y_{2})=\mathbf{q}(\left|x_{2}\right|,\left|y_{1}\right|)(x_{1}y_{1})\otimes(x_{2}y_{2})
\]
for homogeneous $x_{2}$ and $y_{1}$, and let $r:^{\prime}\mathbf{f}(\epsilon)\rightarrow{}^{\prime}\mathbf{f}(\epsilon)\otimes{}^{\prime}\mathbf{f}(\epsilon)$
be the unique algebra homomorphism defined by $r(\theta_{i})=\theta_{i}\otimes1+1\otimes\theta_{i}$.
We also define an anti-automorphism $\sigma$ by $\sigma(\theta_{i})=\theta_{i}$.

Following \cite[Proposition 1.2.3]{Lb}, there exists a unique $\Bbbk$-valued
symmetric bilinear form on $^{\prime}\mathbf{f}(\epsilon)$ satisfying
\begin{align*}
(1,1)=1 & ,\quad(\theta_{i},\theta_{j})=\delta_{ij}\frac{1}{q_{i}^{-1}-q_{i}}\\
(x,yy^{\prime})= & (r(x),y\otimes y^{\prime}),\quad(xx^{\prime},y)=(x\otimes x^{\prime},r(y))
\end{align*}
where the form is naturally extended to $^{\prime}\mathbf{f}(\epsilon)\otimes{}^{\prime}\mathbf{f}(\epsilon)$
by $(x_{1}\otimes x_{2},y_{1}\otimes y_{2})=(x_{1},y_{1})(x_{2},y_{2})$.
Note that for $x\in{}^{\prime}\mathbf{f}(\epsilon)_{\beta}$ and $y\in{}^{\prime}\mathbf{f}(\epsilon)_{\gamma}$,
$(x,y)=0$ unless $\beta=\gamma$. Moreover, we have $(\sigma(x),\sigma(y))=(x,y)$
(see \cite[Lemma 1.2.8]{Lb}).

From these properties, one can readily see that its radical $\mathcal{I}$
is a two-sided ideal of $^{\prime}\mathbf{f}(\epsilon)$ and $r(\mathcal{I})\subset\mathcal{I}\otimes{}^{\prime}\mathbf{f}(\epsilon)+{}^{\prime}\mathbf{f}(\epsilon)\otimes\mathcal{I}$,
$\sigma(\mathcal{I})\subset\mathcal{I}$. Hence $r$ and $\sigma$
descends to the quotient algebra $\mathbf{f}(\epsilon)={}^{\prime}\mathbf{f}(\epsilon)/\mathcal{I}$.
The following proposition, essentially proved in \cite[Theorem 6.8.1]{Y},
says that $\mathcal{I}$ is generated by the Serre relations in Definition
\ref{def:GQG}.
\begin{prop}[{\cite[Theorem 2.9]{KL}}]
 Whenever $M\neq N$, there exist algebra isomorphisms
\[
^{\pm}:\mathbf{f}(\epsilon)\longrightarrow\mathcal{U}^{\pm}(\epsilon),\quad\theta_{i}^{+}=e_{i},\,\theta_{i}^{-}=f_{i}.
\]
\end{prop}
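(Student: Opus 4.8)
The plan is to follow the strategy of Lusztig's proof that $\mathbf{f}$ maps isomorphically onto $U^{+}$ (\cite[Proposition 3.2.4, Theorem 33.1.3]{Lb}), adapting each step to the generalized quantum group $\mathcal{U}(\epsilon)$. First I would define the two candidate homomorphisms. The assignment $\theta_{i}\mapsto e_{i}$ extends to an algebra homomorphism $^{\prime}\mathbf{f}(\epsilon)\to\mathcal{U}^{+}(\epsilon)$ simply because $^{\prime}\mathbf{f}(\epsilon)$ is free; surjectivity is clear since the $e_{i}$ generate $\mathcal{U}^{+}(\epsilon)$. The real content is that this map factors through $\mathbf{f}(\epsilon)$, i.e.\ kills the radical $\mathcal{I}$, and that the induced map is injective. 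For the map $^{-}$, one uses $\theta_{i}\mapsto f_{i}$ and the anti-involution $\Omega$ (or $\sigma$) to transfer everything from the $+$ case, so it suffices to treat $^{+}$.

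The key step is to relate the bilinear form $(\,\cdot\,,\,\cdot\,)$ on $^{\prime}\mathbf{f}(\epsilon)$ to the Hopf pairing (or a Drinfeld-type pairing) between $\mathcal{U}^{\geq 0}(\epsilon)$ and $\mathcal{U}^{\leq 0}(\epsilon)$. Concretely, I would introduce the non-degenerate skew-Hopf pairing $\langle\,\cdot\,,\,\cdot\,\rangle:\mathcal{U}^{\leq 0}(\epsilon)\times\mathcal{U}^{\geq 0}(\epsilon)\to\Bbbk$ determined by $\langle f_{i},e_{j}\rangle=\delta_{ij}/(q_{i}^{-1}-q_{i})$, $\langle k_{\mu},k_{\nu}\rangle=\mathbf{q}(\mu,\nu)^{-1}$ (the Drinfeld double pairing, which exists by the standard argument since the defining relations are preserved — here one uses that $\mathcal{U}(\epsilon)$ is an honest Hopf algebra, not a super one). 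Restricting to the grade-by-grade pairing of $\mathcal{U}^{-}(\epsilon)_{-\beta}$ with $\mathcal{U}^{+}(\epsilon)_{\beta}$, the compatibility of $\langle\,\cdot\,,\,\cdot\,\rangle$ with comultiplication translates precisely into the defining axioms $(x,yy^{\prime})=(r(x),y\otimes y^{\prime})$ of Lusztig's form. Hence the composite $^{\prime}\mathbf{f}(\epsilon)\xrightarrow{\theta\mapsto e}\mathcal{U}^{+}(\epsilon)$, paired against the analogous $^{\prime}\mathbf{f}(\epsilon)\xrightarrow{\theta\mapsto f}\mathcal{U}^{-}(\epsilon)$ via $\langle\,\cdot\,,\,\cdot\,\rangle$, recovers the bilinear form on $^{\prime}\mathbf{f}(\epsilon)$. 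This forces the kernel of $^{\prime}\mathbf{f}(\epsilon)\to\mathcal{U}^{+}(\epsilon)$ to be contained in the radical $\mathcal{I}$.

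For the reverse inclusion — that $\mathcal{I}$ itself is killed, so that $\mathbf{f}(\epsilon)\to\mathcal{U}^{+}(\epsilon)$ is well-defined — I would invoke the result cited from \cite[Theorem 6.8.1]{Y} (and \cite[Theorem 2.9]{KL}), namely that $\mathcal{I}$ is generated as a two-sided ideal by exactly the Serre elements appearing in Definition~\ref{def:GQG} (the quadratic, even cubic, odd quartic, and — for $n=3$ — the quintic Serre relations); since those elements are by construction relations in $\mathcal{U}^{+}(\epsilon)$, the ideal they generate lies in the kernel. Combined with the previous paragraph, $\ker(^{\prime}\mathbf{f}(\epsilon)\to\mathcal{U}^{+}(\epsilon))=\mathcal{I}$ exactly, giving an isomorphism $\mathbf{f}(\epsilon)\xrightarrow{\ \sim\ }\mathcal{U}^{+}(\epsilon)$. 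Applying $\Omega$ gives $\mathbf{f}(\epsilon)\xrightarrow{\ \sim\ }\mathcal{U}^{-}(\epsilon)$; here one checks $\Omega$ intertwines $\sigma$ with the transpose structure and that $(\sigma x,\sigma y)=(x,y)$, which is already recorded above.

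The main obstacle I anticipate is verifying that the Drinfeld double pairing $\langle\,\cdot\,,\,\cdot\,\rangle$ is well-defined, i.e.\ that it respects all the defining relations of $\mathcal{U}(\epsilon)$ — in particular the odd Serre relations (\ref{eq:GQG-DJ-oddSerre}) and the $n=3$ quintic relation (\ref{eq:GQG-DJ-deg5Serre}), which have no analogue in Lusztig's setting. The cleanest route is to sidestep this: rather than building the pairing directly, use the braid action and Proposition~\ref{prop:Lus-Lemma-40.1.2} to construct PBW-type spanning sets for $\mathcal{U}^{\pm}(\epsilon)$ and compare dimensions of graded pieces with those of $\mathbf{f}(\epsilon)$, exactly as \cite{Y} does — which is why the statement is attributed to \cite[Theorem 6.8.1]{Y} and \cite[Theorem 2.9]{KL}. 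So in practice the proof is: cite the PBW/defining-relations result of \cite{Y}, observe it identifies $\mathcal{I}$ with the Serre ideal, and note that the $M\neq N$ hypothesis is what guarantees non-degeneracy of the relevant form (equivalently invertibility of the Cartan-type data) so that no further relations are needed; the triangular decomposition (\ref{eq:GQG-tri-decomp}) then upgrades the surjection to an isomorphism.
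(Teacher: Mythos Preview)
Your proposal outlines a plausible general strategy, but it misses what the paper's proof actually does and overcomplicates the task. The statement is already attributed to \cite[Theorem 2.9]{KL} (which in turn rests on \cite[Theorem 6.8.1]{Y}), so for $n>3$ there is nothing to prove beyond citing those references---and indeed the paper simply does so. The entire content of the paper's proof is the \emph{remaining case $n=3$}: since Definition~\ref{def:GQG} introduces a new quintic Serre relation (\ref{eq:GQG-DJ-deg5Serre}) not present in \cite{KL,Machida}, one must check that this quintic element lies in the radical $\mathcal{I}$. The paper does this by a direct (tedious) computation, after first using $\theta_i^2,\theta_{i+1}^2\in\mathcal{I}$ to rewrite the quintic commutator as an explicit nine-term linear combination of monomials of degree five.

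Your Hopf-pairing route, by contrast, would establish $\mathrm{(Serre)}\subset\mathcal{I}$ indirectly, but you yourself flag the obstacle: well-definedness of that pairing requires checking it respects the odd and quintic Serre relations, which is at least as hard as the direct verification. Moreover, in the paper's logical order the Hopf pairing is introduced only in Section~\ref{sec:mult-formula-R}, \emph{after} this proposition, and its non-degeneracy (Lemma~\ref{lem:Hopf-pair-bil-form}) is deduced \emph{from} this proposition---so invoking it here would invert the dependency. For the reverse inclusion $\mathcal{I}\subset\mathrm{(Serre)}$ you end up citing \cite{Y,KL} anyway, which already gives both inclusions when $n>3$; the only gap those references leave open is precisely the quintic check for $n=3$, which your proposal does not address. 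In short: drop the pairing and PBW machinery, cite \cite{KL} for $n>3$, and carry out the explicit radical computation for the quintic element when $n=3$.
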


\begin{proof}
The proof in \cite[Theorem 2.9]{KL} also works for the remaining
case of $n=3$ once we verify that $\theta_{i}$'s in $\mathbf{f}(\epsilon)$
satisfy the relation (\ref{eq:GQG-DJ-deg5Serre}). Namely, we have
to show the following element
\[
[\theta_{i+1},[\theta_{i},[\theta_{i+1},[\theta_{i},\theta_{i-1}]_{q_{i}}]_{q_{i+1}q_{i+2}}]]_{q_{i+1}}-[\theta_{i},[\theta_{i+1},[\theta_{i},[\theta_{i+1},\theta_{i-1}]_{q_{i-1}}]_{q_{i}q_{i+1}}]]_{q_{i+1}}
\]
is contained in the radical $\mathcal{I}$. Since $\theta_{i}^{2},\theta_{i+1}^{2}\in\mathcal{I}$,
this is equivalent to that
\begin{align*}
 & \:\theta_{i+1}\theta_{i}\theta_{i+1}\theta_{i+1}\theta_{i-1}-(q+q^{-1})\theta_{i+1}\theta_{i}\theta_{i+1}\theta_{i-1}\theta_{i}-(q+q^{-1})\theta_{i+1}\theta_{i}\theta_{i-1}\theta_{i}\theta_{i+1}\\
 & +(q+q^{-1})\theta_{i+1}\theta_{i-1}\theta_{i}\theta_{i+1}\theta_{i}+(q+q^{-1})\theta_{i}\theta_{i-1}\theta_{i+1}\theta_{i}\theta_{i-1}+\theta_{i-1}\theta_{i}\theta_{i+1}\theta_{i}\theta_{i+1}\\
 & -\theta_{i}\theta_{i+1}\theta_{i}\theta_{i+1}\theta_{i-1}+(q+q^{-1})\theta_{i}\theta_{i+1}\theta_{i-1}\theta_{i+1}\theta_{i}-\theta_{i-1}\theta_{i+1}\theta_{i}\theta_{i+1}\theta_{i}
\end{align*}
belongs to $\mathcal{I}$, which can be verified by straightforward
but tedious computation.
\end{proof}
For each $i\in I$, let $_{i}r:{}^{\prime}\mathbf{f}(\epsilon)\longrightarrow{}^{\prime}\mathbf{f}(\epsilon)$
be the $\Bbbk$-linear map uniquely defined by $_{i}r(1)=0$, $_{i}r(\theta_{j})=\delta_{ij}$
and 
\[
_{i}r(xy)={}_{i}r(x)\cdot y+\mathbf{q}(\left|x\right|,\left|y\right|)x\cdot{}_{i}r(y)
\]
for homogeneous $x,y$. Similarly, $r_{i}:{}^{\prime}\mathbf{f}(\epsilon)\longrightarrow{}^{\prime}\mathbf{f}(\epsilon)$
is defined to satisfy
\[
r_{i}(1)=0,\,r_{i}(\theta_{j})=\delta_{ij},\quad r_{i}(xy)=x\cdot r_{i}(y)+\mathbf{q}(\left|x\right|,\left|y\right|)r_{i}(x)\cdot y.
\]
Then we have 
\[
(\theta_{i}x,y)=(\theta_{i},\theta_{i})(x,{}_{i}r(y)),\quad(x\theta_{i},y)=(\theta_{i},\theta_{i})(x,r_{i}(y)).
\]
In particular, $_{i}r$ and $r_{i}$ leave $\mathcal{I}$ stable and
hence induce linear maps on $\mathbf{f}(\epsilon)$ (denoted by the
same notation).
\begin{lem}[{cf. \cite[Lemma 1.2.15]{Lb}}]
 Let $x\in\mathbf{f}(\epsilon)_{\beta}$ where $\beta\neq0$.
\begin{enumerate}
\item If $r_{i}(x)=0$ for all $i\in I$, then $x=0$.
\item If $_{i}r(x)=0$ for all $i\in I$, then $x=0$.
\end{enumerate}
\end{lem}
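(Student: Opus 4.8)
The plan is to follow verbatim Lusztig's proof of \cite[Lemma 1.2.15]{Lb}, whose only inputs are the non-degeneracy of the bilinear form on $\mathbf{f}(\epsilon)$ and the adjunction identities $(\theta_i x,y)=(\theta_i,\theta_i)(x,{}_ir(y))$ and $(x\theta_i,y)=(\theta_i,\theta_i)(x,r_i(y))$ recorded above. First recall that by construction $\mathbf{f}(\epsilon)={}^{\prime}\mathbf{f}(\epsilon)/\mathcal{I}$ with $\mathcal{I}$ the radical of the form, so the induced symmetric form on $\mathbf{f}(\epsilon)$ is non-degenerate; since the form kills $\mathbf{f}(\epsilon)_\beta\times\mathbf{f}(\epsilon)_{\gamma}$ for $\beta\neq\gamma$, the radical is $Q$-graded and the form stays non-degenerate on each graded piece $\mathbf{f}(\epsilon)_\beta$. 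Note also $(\theta_i,\theta_i)=(q_i^{-1}-q_i)^{-1}$ is a nonzero element of $\Bbbk=\mathbb{Q}(q)$ because $q_i^{2}\neq1$.

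For part (1), suppose $x\in\mathbf{f}(\epsilon)_\beta$ with $\beta\neq0$ and $r_i(x)=0$ for all $i\in I$. Because $\beta\neq0$, every monomial of degree $\beta$ ends in some $\theta_i$, so $\mathbf{f}(\epsilon)_\beta=\sum_{i\in I}\mathbf{f}(\epsilon)_{\beta-\alpha_i}\,\theta_i$. Given an arbitrary $y\in\mathbf{f}(\epsilon)_\beta$, write $y=\sum_{i\in I}z_i\theta_i$ with $z_i\in\mathbf{f}(\epsilon)_{\beta-\alpha_i}$; then using symmetry of the form together with $(z\theta_i,x)=(\theta_i,\theta_i)(z,r_i(x))$ we obtain
\[
(x,y)=\sum_{i\in I}(z_i\theta_i,x)=\sum_{i\in I}(\theta_i,\theta_i)(z_i,r_i(x))=0.
\]
Since $y$ was arbitrary and the form is non-degenerate on $\mathbf{f}(\epsilon)_\beta$, this forces $x=0$.

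Part (2) is entirely parallel: now use $\mathbf{f}(\epsilon)_\beta=\sum_{i\in I}\theta_i\,\mathbf{f}(\epsilon)_{\beta-\alpha_i}$ (again valid since $\beta\neq0$), write $y=\sum_{i\in I}\theta_i z_i$, and apply $(\theta_i z_i,x)=(\theta_i,\theta_i)(z_i,{}_ir(x))$ to conclude $(x,y)=0$ for all $y$, hence $x=0$.

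I expect no real obstacle here — the substance is already in the machinery assembled above. The only points to keep straight are that the form genuinely descends to a \emph{non-degenerate} form on the quotient $\mathbf{f}(\epsilon)$ (so that vanishing against every $y$ truly implies $x=0$), that the hypothesis $\beta\neq0$ is used essentially to decompose $\mathbf{f}(\epsilon)_\beta$ into words ending (resp.\ beginning) in some $\theta_i$, and that $(\theta_i,\theta_i)$ is invertible so that it can be dropped from the sums.
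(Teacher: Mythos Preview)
Your proof is correct and follows exactly the approach the paper intends: the paper gives no proof of its own but simply cites \cite[Lemma~1.2.15]{Lb}, and your argument is precisely Lusztig's, with the needed inputs (non-degeneracy of the induced form on $\mathbf{f}(\epsilon)$, the adjunction identities for $r_i$ and ${}_ir$, and invertibility of $(\theta_i,\theta_i)$) already supplied in the paper's Section~\ref{subsec:Lusztig-f-bil-form}.
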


\begin{prop}[{cf. \cite[Proposition 3.1.6]{Lb}}]
 For $x\in\mathbf{f}(\epsilon)$ and $i\in I$, we have in $\mathcal{U}(\epsilon)$
\begin{align*}
x^{+}f_{i}-f_{i}x^{+} & =\frac{r_{i}(x)^{+}\cdot k_{i}-k_{i}^{-1}\cdot{}_{i}r(x)^{+}}{q-q^{-1}},\\
x^{-}e_{i}-e_{i}x^{-} & =\frac{r_{i}(x)^{-}\cdot k_{i}^{-1}-k_{i}\cdot{}_{i}r(x)^{-}}{q-q^{-1}}.
\end{align*}
\end{prop}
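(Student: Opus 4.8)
The plan is to imitate the proof of \cite[Proposition 3.1.6]{Lb} almost verbatim, using only the two bilinear-form characterizations of $r_i$ and ${}_ir$ together with the non-degeneracy of the form on $\mathbf{f}(\epsilon)$.

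First I would set up notation: for fixed $i\in I$, introduce the operator
\[
D_i(x) \;=\; x^{+}f_i - f_i x^{+} \;-\; \frac{r_i(x)^{+}\cdot k_i - k_i^{-1}\cdot{}_ir(x)^{+}}{q-q^{-1}}
\]
on $\mathbf{f}(\epsilon)$, and observe that it suffices to show $D_i \equiv 0$. The statement is clear for $x=1$ (both sides vanish since $r_i(1)={}_ir(1)=0$) and for $x=\theta_j$ it is exactly the defining relation \eqref{eq:GQG-DJ-ef}, because $r_i(\theta_j)={}_ir(\theta_j)=\delta_{ij}$ and both $\theta_j^{+}f_i - f_i\theta_j^{+}$ and $\delta_{ij}(k_i-k_i^{-1})/(q_i-q_i^{-1})$ agree once one accounts for $q_i-q_i^{-1} = q - q^{-1}$ when $\epsilon_i+\epsilon_{i+1}=0$ and the appropriate sign when it is $1$; this is the one place one must check the constant $(q-q^{-1})^{-1}$ is the right normalization in the super case. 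Then I would argue by induction on the $Q$-degree $\beta = |x|$, writing a general homogeneous $x$ of degree $\beta\neq 0$ as a sum of products $\theta_j y$ with $y$ of strictly smaller degree.

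The inductive step is the computational heart. Using the Leibniz-type rules
\[
r_i(\theta_j y) = \theta_j\, r_i(y) + \delta_{ij}\,\mathbf{q}(\alpha_j,|y|)\,y,\qquad
{}_ir(\theta_j y) = \delta_{ij}\, y + \mathbf{q}(\alpha_j,|y|)\,\theta_j\,{}_ir(y)
\]
(the second following from the homomorphism property of ${}_ir$ and ${}_ir(\theta_j)=\delta_{ij}$), together with the $k$-commutation relation $k_i\theta_j^{+} = \mathbf{q}(\alpha_i,\alpha_j)\,\theta_j^{+}k_i$ from \eqref{eq:GQG-DJ-kekf}, I would expand $(\theta_j y)^{+}f_i - f_i(\theta_j y)^{+} = e_j(y^{+}f_i - f_i y^{+}) + (e_j f_i - f_i e_j)y^{+}$, substitute the inductive hypothesis $D_i(y)=0$ into the first summand and \eqref{eq:GQG-DJ-ef} into the second, and collect terms. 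The $\mathbf{q}$-factors coming from moving $k_i^{\pm1}$ past $e_j$ should precisely match the $\mathbf{q}(\alpha_j,|y|)$-factors appearing in the Leibniz rules for $r_i,{}_ir$, so that everything reassembles into $r_i(\theta_j y)^{+}k_i - k_i^{-1}\,{}_ir(\theta_j y)^{+}$ over $q-q^{-1}$, giving $D_i(\theta_j y)=0$. The second identity of the proposition follows by applying the anti-involution $\Omega$ (which swaps $e_i\leftrightarrow f_i$, sends $k_i\mapsto k_i^{-1}$ and $q\mapsto q^{-1}$, and under which one checks $r_i$ and ${}_ir$ are interchanged appropriately) — alternatively by a symmetric induction using the other Leibniz rule.

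The main obstacle I anticipate is purely bookkeeping: making sure every $\mathbf{q}(\,\cdot\,,\,\cdot\,)$ coefficient produced by commuting $k_i$'s past $e_j$ cancels exactly against the twist factors built into $r_i$ and ${}_ir$, and that the normalization $q - q^{-1}$ (rather than $q_i - q_i^{-1}$) is consistent throughout — this is the only genuinely super-flavored subtlety, since in Lusztig's setting $q_i=q$. There is no conceptual difficulty beyond what is already in \cite{Lb}; the root system and grading are the same as in non-super type $A$, so I expect the induction to close without surprises once the rank-one base case is checked against \eqref{eq:GQG-DJ-ef}.
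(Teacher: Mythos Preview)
Your proposal is correct and follows exactly the approach the paper has in mind: the paper gives no explicit proof, merely citing \cite[Proposition~3.1.6]{Lb}, and your inductive argument is precisely the Lusztig argument adapted to the $\mathbf{q}$-twisted Leibniz rules. One remark: your flagged ``super-flavored subtlety'' about $q-q^{-1}$ versus $q_i-q_i^{-1}$ is in fact a non-issue, since $q_i-q_i^{-1}=q-q^{-1}$ holds for both $\epsilon_i=0$ (where $q_i=q$) and $\epsilon_i=1$ (where $q_i=-q^{-1}$, so $q_i-q_i^{-1}=-q^{-1}+q$); thus the base case $x=\theta_j$ reduces directly to \eqref{eq:GQG-DJ-ef} with no sign adjustment needed.
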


For each $i\in I$, let $\mathbf{f}(\epsilon)[i]$ be the subalgebra
of $\mathbf{f}(\epsilon)$ generated by the following elements
\[
f(i,j;m)=\sum_{r+s=m}(-1)^{r}\mathbf{q}(\alpha_{i},\alpha_{j})^{r}\mathbf{q}(\alpha_{i},\alpha_{i})^{\frac{r(m-1)}{2}}\begin{bmatrix}m\\
r
\end{bmatrix}_{\mathbf{q}(\alpha_{i},\alpha_{i})^{1/2}}\theta_{i}^{r}\theta_{j}\theta_{i}^{s},\quad j\in I\setminus\{i\},\,m\in\mathbb{Z}_{\geq0}.
\]
Here the term $\mathbf{q}(\alpha_{i},\alpha_{i})^{\frac{r(m-1)}{2}}\begin{bmatrix}m\\
r
\end{bmatrix}_{\mathbf{q}(\alpha_{i},\alpha_{i})^{1/2}}$ is a polynomial in $\mathbf{q}(\alpha_{i},\alpha_{i})$. Similarly,
we define $^{\sigma}\mathbf{f}(\epsilon)[i]$ to be the subalgebra
generated by $\sigma(f(i,j;m))$ for $j\in I\setminus\{i\}$ and $m\in\mathbb{Z}_{\ge0}$,
so that $^{\sigma}\mathbf{f}(\epsilon)[i]=\sigma(\mathbf{f}(\epsilon)[i])$.

Note that if $p(i)=0$, then $f(i,j;m)=0$ for $m\geq2$, where $f(i,j;2)=0$
is exactly the even Serre relation (see \cite[Proposition 7.1.5]{Lb},
where our $f(i,j;m)$ is $[m]_{q_{i}}!$ times $f_{i,j;1,m;-1}$ there).
On the other hand, when $p(i)=1$ we still have $f(i,j;m)=0$ as $f_{i}^{2}=0$
and $\mathbf{q}(\alpha_{i},\alpha_{i})=-1$. With these in mind, the
following statements can be proved as in \cite[Chapter 38]{Lb}.
\begin{lem}[{cf. \cite[Lemma 38.1.3]{Lb}}]
\label{lem:T_i-on-Lusztig-f} There is a unique algebra isomorphism
$g:\mathbf{f}(\epsilon)[i]\rightarrow{}^{\sigma}\mathbf{f}(\epsilon)[i]$
such that $T_{i}(x^{+})=g(x)^{+}$ for all $x\in\mathbf{f}(\epsilon)[i]$.
\end{lem}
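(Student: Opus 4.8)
The plan is to transcribe the proof of \cite[Lemma~38.1.3]{Lb}, using that $T_i$ (resp. $T_i^{-1}$) obeys the same formulas as Lusztig's $T_{i,1}''$ (resp. $T_{i,1}'$) and that the preparatory facts collected above are available in our setting. Throughout I read $T_i$ as the algebra isomorphism $\mathcal{U}(\epsilon)\to\mathcal{U}(\epsilon s_i)$, so that the target is strictly ${}^{\sigma}\mathbf{f}(\epsilon s_i)[i]$; when $p(i)=0$ we have $\epsilon s_i=\epsilon$ and this is the statement verbatim, while for $p(i)=1$ the only change is that some structure constants $\mathbf{q}_{\epsilon}$ become $\mathbf{q}_{\epsilon s_i}$ on the target side, harmlessly by Lemma \ref{lem:W-pres-q}.

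Everything except a computation on the generators is formal. Since ${}^{+}\colon\mathbf{f}(\epsilon)\xrightarrow{\sim}\mathcal{U}^{+}(\epsilon)$ is an algebra isomorphism (and likewise over $\epsilon s_i$) and $T_i$ is an algebra isomorphism, once we know $T_i\bigl(\mathbf{f}(\epsilon)[i]^{+}\bigr)\subseteq\mathcal{U}^{+}(\epsilon s_i)$ the composite $g\coloneqq({}^{+})^{-1}\circ T_i\circ{}^{+}$ restricted to $\mathbf{f}(\epsilon)[i]$ is a well-defined injective algebra homomorphism, unique with the property $T_i(x^{+})=g(x)^{+}$ (because ${}^{+}$ is injective); and its surjectivity onto ${}^{\sigma}\mathbf{f}(\epsilon s_i)[i]$ will follow once $g$ is shown to send the distinguished generating set $\{f(i,j;m)\}_{j\neq i,\,m\ge0}$ of $\mathbf{f}(\epsilon)[i]$ onto the distinguished generating set $\{\sigma(f(i,j;m))\}$ of the target. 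So it suffices to evaluate $T_i$ on the $f(i,j;m)^{+}$.

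By the discussion following the definition of $f(i,j;m)$ the Serre relations force $f(i,j;m)=0$ for $m\ge2$, so only $f(i,j;0)=\theta_j$ ($j\neq i$) and $f(i,i\pm1;1)=\theta_{i\pm1}\theta_i-\mathbf{q}(\alpha_i,\alpha_{i\pm1})\theta_i\theta_{i\pm1}$ are at stake. For $j\neq i,i\pm1$ one has $T_i(\theta_j^{+})=T_i(e_j)=e_j=\sigma(\theta_j)^{+}$ (that $T_i(e_j)\in\mathcal{U}^{+}$ is also an instance of Proposition \ref{prop:Lus-Lemma-40.1.2}). For $j=i\pm1$: reading off the displayed formula for $T_i^{-1}$, the element $f(i,i\pm1;1)^{+}=e_{i\pm1}e_i-\mathbf{q}_{\epsilon}(\alpha_i,\alpha_{i\pm1})e_ie_{i\pm1}$ is literally $T_i^{-1}(e_{i\pm1})$, so $T_i\bigl(f(i,i\pm1;1)^{+}\bigr)=e_{i\pm1}=\sigma\bigl(f(i,i\pm1;0)\bigr)^{+}$; and from the formula for $T_i$, $T_i\bigl(f(i,i\pm1;0)^{+}\bigr)=T_i(e_{i\pm1})=[e_i,e_{i\pm1}]_{\mathbf{q}_{\epsilon s_i}(\alpha_i,\alpha_{i\pm1})}=\sigma\bigl(f(i,i\pm1;1)\bigr)^{+}$, where the last element is $f(i,i\pm1;1)$ formed over $\epsilon s_i$ and reflected by $\sigma$. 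Thus all $T_i(f(i,j;m)^{+})$ lie in $\mathcal{U}^{+}(\epsilon s_i)$ — settling the one nonformal input above — and $g$ interchanges $f(i,i\pm1;0)\leftrightarrow\sigma(f(i,i\pm1;1))$ and $f(i,i\pm1;1)\leftrightarrow\sigma(f(i,i\pm1;0))$ while fixing every other $\theta_j$ up to $\sigma$: exactly the bijection of generating sets required. This finishes the proof.

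I do not expect a real obstacle. The braid formulas are explicit, and the substantive input — the vanishing $f(i,j;m)=0$ for $m\ge2$, which confines the entire computation to $m\le1$ and rests on the type-$A$ Serre relations together with $\theta_i^{2}=0$ when $p(i)=1$ — has already been recorded above. The only points genuinely demanding attention are bookkeeping: keeping $\mathbf{f}(\epsilon)$ and $\mathbf{f}(\epsilon s_i)$ (equivalently, $\mathbf{q}_{\epsilon}$ versus $\mathbf{q}_{\epsilon s_i}$) straight when $p(i)=1$, which Lemma \ref{lem:W-pres-q} makes mechanical; and, if one prefers to compute $T_i\bigl(f(i,i\pm1;1)^{+}\bigr)$ directly from the $T_i$-formula instead of recognizing it as $T_i\circ T_i^{-1}$, the cancellation of the $f_i$-terms, which is the same manipulation already made in the proof of Proposition \ref{prop:Tw-on-ef}.
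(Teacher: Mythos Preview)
Your proposal is correct and is exactly what the paper intends: it defers to \cite[Chapter~38]{Lb}, and you have carried out Lusztig's argument with the type-$A$ simplification $f(i,j;m)=0$ for $m\ge2$ that the paper records just before the lemma. Your observation that the target should strictly be ${}^{\sigma}\mathbf{f}(\epsilon s_i)[i]$ when $p(i)=1$ is a genuine clarification of the paper's statement.
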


\begin{prop}[{cf. \cite[Proposition 38.1.6]{Lb}}]
 For each $i\in I$, we have
\begin{align*}
\mathbf{f}(\epsilon)[i] & =\left\{ x\in\mathbf{f}(\epsilon)\,|\,T_{i}(x^{+})\in\mathcal{U}^{+}(\epsilon s_{i})\right\} =\left\{ x\in\mathbf{f}(\epsilon)\,|\,{}_{i}r(x)=0\right\} ,\\
^{\sigma}\mathbf{f}(\epsilon)[i] & =\left\{ x\in\mathbf{f}(\epsilon)\,|\,T_{i}^{-1}(x^{+})\in\mathcal{U}^{+}(\epsilon s_{i})\right\} =\left\{ x\in\mathbf{f}(\epsilon)\,|\,r_{i}(x)=0\right\} .
\end{align*}
\end{prop}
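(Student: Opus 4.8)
The plan is to follow \cite[Chapter~38]{Lb} closely, the only genuinely new features being the case $p(i)=1$ and the fact that the braid isomorphism $T_{i}$ moves the $(01)$-sequence from $\epsilon$ to $\epsilon s_{i}$. Set $\mathcal{K}_{i}=\{x\in\mathbf{f}(\epsilon)\mid{}_{i}r(x)=0\}$ and $\mathcal{B}_{i}=\{x\in\mathbf{f}(\epsilon)\mid T_{i}(x^{+})\in\mathcal{U}^{+}(\epsilon s_{i})\}$. It suffices to establish the four inclusions $\mathbf{f}(\epsilon)[i]\subseteq\mathcal{B}_{i}$, $\mathbf{f}(\epsilon)[i]\subseteq\mathcal{K}_{i}$, $\mathcal{B}_{i}\subseteq\mathbf{f}(\epsilon)[i]$ and $\mathcal{K}_{i}\subseteq\mathbf{f}(\epsilon)[i]$, which gives the first identity; the second one, concerning ${}^{\sigma}\mathbf{f}(\epsilon)[i]$, $r_{i}$ and $T_{i}^{-1}$, then follows formally by applying the anti-automorphism $\sigma$, using $\sigma(\mathcal{I})\subseteq\mathcal{I}$, the invariance $(\sigma x,\sigma y)=(x,y)$, the identity ${}^{\sigma}\mathbf{f}(\epsilon)[i]=\sigma(\mathbf{f}(\epsilon)[i])$, and the commutation of $T_{i}^{\pm1}$ with $\Omega$.

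The inclusions $\mathbf{f}(\epsilon)[i]\subseteq\mathcal{K}_{i}$ and $\mathbf{f}(\epsilon)[i]\subseteq\mathcal{B}_{i}$ are the easy halves. For the first, the twisted Leibniz rule for ${}_{i}r$ shows at once that $\mathcal{K}_{i}$ is a subalgebra, so it is enough to verify ${}_{i}r(f(i,j;m))=0$ on the generators $f(i,j;m)$; this is the rank-one computation of \cite[Chapter~38]{Lb} when $p(i)=0$, and when $p(i)=1$ it reduces to the cases $m\leq1$ (since $f(i,j;m)=0$ for $m\geq2$), where ${}_{i}r(\theta_{j})=0$ and the symmetry of $\mathbf{q}$ suffice. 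The inclusion $\mathbf{f}(\epsilon)[i]\subseteq\mathcal{B}_{i}$ is exactly Lemma~\ref{lem:T_i-on-Lusztig-f}: for $x\in\mathbf{f}(\epsilon)[i]$ one has $T_{i}(x^{+})=g(x)^{+}\in\mathcal{U}^{+}(\epsilon s_{i})$ with $g$ an algebra isomorphism.

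For the reverse inclusions the key input is the super analogue of \cite[Lemma~38.1.2]{Lb}: $\mathbf{f}(\epsilon)$ is free as a left $\mathbf{f}(\epsilon)[i]$-module,
\[
\mathbf{f}(\epsilon)=\bigoplus_{m\geq0}\theta_{i}^{(m)}\,\mathbf{f}(\epsilon)[i],
\]
where $\theta_{i}^{(m)}$ are the divided powers of $\theta_{i}$, the sum running over all $m$ when $p(i)=0$ and only over $m\in\{0,1\}$ with $\theta_{i}^{(1)}=\theta_{i}$ when $p(i)=1$ (as then $\theta_{i}^{2}\in\mathcal{I}$). That $\mathbf{f}(\epsilon)[i]$ and $\theta_{i}$ generate $\mathbf{f}(\epsilon)$, and that the sum is direct, is proved as in \cite[Chapter~38]{Lb}, most conveniently by pairing against the analogous decomposition of ${}^{\sigma}\mathbf{f}(\epsilon)[i]$ under the non-degenerate form on $\mathbf{f}(\epsilon)$. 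Granting this, write $x=\sum_{m}\theta_{i}^{(m)}x_{m}$ with $x_{m}\in\mathbf{f}(\epsilon)[i]$. Since ${}_{i}r(\theta_{i})=1$ and ${}_{i}r(x_{m})=0$, the Leibniz rule gives ${}_{i}r(\theta_{i}^{(m)}x_{m})=c_{m}\,\theta_{i}^{(m-1)}x_{m}$ for scalars $c_{m}\neq0$, so ${}_{i}r(x)=\sum_{m\geq1}c_{m}\theta_{i}^{(m-1)}x_{m}$, and directness forces $x_{m}=0$ for $m\geq1$ whenever ${}_{i}r(x)=0$, i.e. $x=x_{0}\in\mathbf{f}(\epsilon)[i]$; this proves $\mathcal{K}_{i}\subseteq\mathbf{f}(\epsilon)[i]$. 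For $\mathcal{B}_{i}\subseteq\mathbf{f}(\epsilon)[i]$ one runs the same argument through $T_{i}$: applying $T_{i}$ to $x^{+}=\sum_{m}e_{i}^{(m)}x_{m}^{+}$, the term with $m=0$ lies in $\mathcal{U}^{+}(\epsilon s_{i})$ by Lemma~\ref{lem:T_i-on-Lusztig-f}, while for $m\geq1$ the factor $T_{i}(e_{i}^{(m)})$ — a nonzero multiple of $f_{i}^{(m)}k_{i}^{m}$, since $T_{i}(e_{i})=-f_{i}k_{i}$ — contributes a nontrivial $\mathcal{U}^{-}(\epsilon s_{i})$-component; by the triangular decomposition of $\mathcal{U}(\epsilon s_{i})$ these terms cannot cancel, so $T_{i}(x^{+})\in\mathcal{U}^{+}(\epsilon s_{i})$ forces $x_{m}=0$ for all $m\geq1$.

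I expect the main obstacle to lie in the free-module decomposition $\mathbf{f}(\epsilon)=\bigoplus_{m}\theta_{i}^{(m)}\mathbf{f}(\epsilon)[i]$ when $p(i)=1$: whereas Lusztig obtains the non-super version via divided powers and a rank-one Verma module, here one has only the two-step filtration coming from $\theta_{i}^{2}\in\mathcal{I}$, and directness has to be established by a bilinear-form computation, which is ultimately where the standing hypothesis $M\neq N$ enters, through the non-degeneracy of the form and the isomorphisms $^{\pm}$. A secondary, purely bookkeeping nuisance is that, since $T_{i}\colon\mathcal{U}(\epsilon)\to\mathcal{U}(\epsilon s_{i})$ is not an endomorphism, the isomorphism $g$ of Lemma~\ref{lem:T_i-on-Lusztig-f} and every occurrence of $\mathcal{U}^{\pm}$ in the argument must be read over the shifted sequence $\epsilon s_{i}$; this is harmless by Lemma~\ref{lem:W-pres-q} but needs to be made explicit.
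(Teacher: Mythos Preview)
Your proposal is correct and follows essentially the same route as the paper, which simply defers to \cite[Chapter~38]{Lb} after observing that $f(i,j;m)=0$ for $m\geq2$ in both parities. Two minor remarks: the directness of $\mathbf{f}(\epsilon)=\mathbf{f}(\epsilon)[i]\oplus\theta_i\mathbf{f}(\epsilon)[i]$ when $p(i)=1$ is actually easier than you suggest (once $\mathbf{f}(\epsilon)[i]\subseteq\mathcal{K}_i$ is known, applying ${}_ir$ to $x_0+\theta_ix_1=0$ immediately gives $x_1=0$), and the non-degeneracy of the form on $\mathbf{f}(\epsilon)$ is automatic by construction --- the hypothesis $M\neq N$ enters only through the isomorphisms $^{\pm}$ identifying $\mathbf{f}(\epsilon)$ with $\mathcal{U}^{\pm}(\epsilon)$.
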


\begin{prop}[{cf. \cite[Proposition 38.2.1]{Lb}}]
\label{prop:braid-isometry} For any $x,y\in\mathbf{f}(\epsilon)[i]$,
we have $(g(x),g(y))=(x,y)$.
\end{prop}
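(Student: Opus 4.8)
The plan is to follow Lusztig's argument in \cite[Proposition 38.2.1]{Lb} essentially verbatim, since all the structural ingredients needed there have analogues in our setting. The form $(\,\cdot\,,\,\cdot\,)$ on $\mathbf{f}(\epsilon)$ is determined by its compatibility with $r$, or equivalently by the recursions through $_ir$ and $r_i$; so proving $(g(x),g(y))=(x,y)$ amounts to showing that $g$ intertwines these recursions up to the scalars recorded in Lemma \ref{lem:T_i-on-Lusztig-f} and the surrounding discussion. The subalgebra $\mathbf{f}(\epsilon)[i]$ is generated by the elements $f(i,j;m)$, so by bilinearity and the multiplicativity of $g$ it suffices to control the form on products of these generators.

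First I would record the key computational identities. Writing $x$ as a product of generators $f(i,j;m)$, one expresses $(x,y)$ by repeatedly applying the adjunction formulas $(\theta_i z,w)=(\theta_i,\theta_i)(z,{}_ir(w))$ and $(z\theta_i,w)=(\theta_i,\theta_i)(z,r_i(w))$, together with the explicit action of $_ir$ and $r_i$ on the $f(i,j;m)$, which is the super analogue of the formulas in \cite[Chapter 38]{Lb} (these are the same formulas since the only new phenomenon at $p(i)=1$ is $\mathbf{q}(\alpha_i,\alpha_i)=-1$ and $\theta_i^2\in\mathcal{I}$, which merely truncates the range of $m$). On the other side, $g(f(i,j;m))=\sigma(f(i,j;m'))$ for the appropriate complementary index $m'$ determined by the rank-$2$ combinatorics, and one checks directly that the corresponding form values match. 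Since $\sigma$ is an isometry (\cite[Lemma 1.2.8]{Lb}, recalled in Section \ref{subsec:Lusztig-f-bil-form}), it is enough to verify the identity on the $f(i,j;m)$ themselves and on how the recursions for $_ir$, $r_i$ interact with $g$; this reduces everything to a finite rank-$2$ verification for each pair $(i,j)$ with $j=i\pm1$, which in turn is governed by whether $p(i)=0$ or $p(i)=1$.

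The main obstacle — and the only place the super structure enters nontrivially — is the rank-$2$ case with $p(i)=1$, where the generator set $\{f(i,j;m)\}$ is genuinely different from the non-super case ($f(i,j;m)$ can vanish for small $m$ while $f(j,i;m)$ survives, because $\theta_i^2=0$ but $\theta_j^2\neq0$). There one must recompute the base cases of the intertwining by hand, using the quadratic and odd (quartic or quintic) Serre relations from Definition \ref{def:GQG} to identify $g(f(i,j;m))$ precisely, and then check that the form values produced by the two sides of $(g(x),g(y))=(x,y)$ coincide. Because $T_i$ is not an automorphism but an isomorphism $\mathcal{U}(\epsilon)\to\mathcal{U}(\epsilon s_i)$, one also has to be mindful that the form on the right-hand side is the one attached to $\epsilon s_i$; but by Lemma \ref{lem:W-pres-q} the defining data of the two forms transport into one another under $s_i$, so this causes no difficulty.

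Once the rank-$2$ identities are in hand, the general case follows by the induction on degree exactly as in \cite[Proposition 38.2.1]{Lb}: write $x=\theta_{i'}x'$ or $x=x''\theta_{i'}$ with $i'$ chosen so that the relevant derivative is nonzero, push $g$ through using Lemma \ref{lem:T_i-on-Lusztig-f}, apply the adjunction to strip off the boundary generator, and invoke the inductive hypothesis on the lower-degree factor. I expect the write-up to be short modulo the rank-$2$ bookkeeping, which can either be done explicitly or quoted from the corresponding computation in \cite[Theorem 6.8.1]{Y} for $U_q(\widehat{\mathfrak{sl}}_{M|N})$ and transported via the isomorphism of Proposition \ref{prop:braid-isometry}'s predecessors.
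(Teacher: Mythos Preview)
Your proposal is correct and takes essentially the same approach as the paper, which simply defers to Lusztig's argument in \cite[Chapter 38]{Lb} after the observation (recorded just before Lemma~\ref{lem:T_i-on-Lusztig-f}) that $f(i,j;m)=0$ for $m\geq 2$ uniformly in both the even and odd cases. One small point: you slightly overstate the novelty of the $p(i)=1$ case --- precisely because $\theta_i^2=0$ and $\mathbf{q}(\alpha_i,\alpha_i)=-1$ force the same vanishing $f(i,j;m)=0$ for $m\geq 2$ as the even Serre relation does when $p(i)=0$, the generating set of $\mathbf{f}(\epsilon)[i]$ is no different in shape from the simply-laced non-super situation, and no separate rank-$2$ recomputation with the odd Serre relations is needed.
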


\subsection{\label{subsec:Drinfeld-pres-GQG}Drinfeld-type presentation of $\mathcal{U}(\epsilon)$}

The \textit{quantum affine superalgebra} of type $A$ associated with
$\epsilon$ is a Hopf superalgebra $U(\epsilon)$ generated by $E_{i},F_{i},K_{i}^{\pm1}$
($i\in I$) and central $C^{\pm1/2}=\left(\prod_{i\in I}K_{i}\right)^{\pm1/2}$
with $\mathbb{Z}/2\mathbb{Z}$-grading $p(i),p(i),0$ and $0$ respectively,
subject to the relations
\begin{align*}
 & K_{i}K_{i}^{-1}=K_{i}^{-1}K_{i}=1,\quad K_{i}K_{j}=K_{j}K_{i},\\
 & K_{i}E_{j}K_{i}^{-1}=q^{(\alpha_{i},\alpha_{j})}E_{j},\quad K_{i}F_{j}K_{i}^{-1}=q^{-(\alpha_{i},\alpha_{j})}F_{j},\\
 & E_{i}F_{j}-(-1)^{p(i)p(j)}F_{j}E_{i}=\delta_{ij}(-1)^{\epsilon_{i}}\frac{K_{i}-K_{i}^{-1}}{q-q^{-1}},\\
 & [E_{i},E_{j}]_{q^{-(\alpha_{i},\alpha_{j})}}=[F_{i},F_{j}]_{q^{-(\alpha_{i},\alpha_{j})}}=0\quad\quad\text{if }j\neq i\ensuremath{\pm1},
\end{align*}
and analogues of (\ref{eq:GQG-DJ-evenSerre}--\ref{eq:GQG-DJ-deg5Serre})
(see \cite[Proposition 6.7.1]{Y}). Here $[x,y]_{u}=xy-(-1)^{p(x)p(y)}uyx$
denotes the supercommutator only in the relations of $U(\epsilon)$
and another superalgebra $U^{\mathrm{Dr}}(\epsilon)$ defined as follows:
\begin{itemize}
\item generators: $X_{i,k}^{\pm}$, $K_{i}^{\pm1}$, $H_{i,r}$ for $i\in\mathring{I}$,
$k\in\mathbb{Z}$, $r\in\mathbb{Z}\setminus\{0\}$ and $C^{\pm1/2}=\left(\prod_{i\in I}K_{i}\right)^{\pm1/2}$,
\item $\mathbb{Z}/2\mathbb{Z}$-grading: $p(X_{i,k}^{\pm})=p(i)$, $p(K_{i}^{\pm1})=p(H_{i,s})=p(C^{1/2})=0$, 
\item relations:
\begin{align*}
K_{i}K_{i}^{-1}= & K_{i}^{-1}K_{i}=1,\,[K_{i},K_{j}]=[K_{i},H_{j,r}]=0,\,C^{\pm1/2}\text{\text{ is central}},\\
K_{i}X_{j,k}^{\pm}K_{i}^{-1} & =q^{(\alpha_{i},\pm\alpha_{j})}X_{j,k}^{\pm},\\{}
[H_{i,r},H_{j,s}] & =\delta_{r,-s}\frac{q^{r(\alpha_{i},\alpha_{j})}-q^{-r(\alpha_{i},\alpha_{j})}}{r(q-q^{-1})}\cdot\frac{C^{r}-C^{-r}}{q-q^{-1}}\\{}
[H_{i,r},X_{j,k}^{\pm}] & =\pm\frac{q^{r(\alpha_{i},\alpha_{j})}-q^{-r(\alpha_{i},\alpha_{j})}}{r(q-q^{-1})}C^{\pm\left|r\right|/2}X_{j,r+k}^{\pm},\\{}
[X_{i,k}^{+},X_{j,l}^{-}] & =(-1)^{\epsilon_{i}}\delta_{ij}\frac{C^{(k-l)/2}\phi_{i,k+l}^{+}-C^{(l-k)/2}\phi_{i,k+l}^{-}}{q-q^{-1}},\\
X_{i,k+1}^{\pm}X_{j,l}^{\pm}-(-1)^{p(i)p(j)}q^{\pm(\alpha_{i},\alpha_{j})} & X_{j,l}^{\pm}X_{i,k+1}^{\pm}=(-1)^{p(i)p(j)}q^{\pm(\alpha_{i},\alpha_{j})}X_{i,k}^{\pm}X_{j,l+1}^{\pm}-X_{j,l+1}^{\pm}X_{i,k}^{\pm},\\{}
[X_{i,k}^{\pm},X_{j,l}^{\pm}] & =0\quad\quad\text{if }(\alpha_{i},\alpha_{j})=0,\\{}
[X_{i,k_{1}}^{\pm},[X_{i,k_{2}}^{\pm},X_{j,l}^{\pm}]_{q^{-(\alpha_{i},\alpha_{j})}}]_{q^{-(\alpha_{i},\alpha_{i}+\alpha_{j})}} & +[X_{i,k_{2}}^{\pm},[X_{i,k_{1}}^{\pm},X_{j,l}^{\pm}]_{q^{-(\alpha_{i},\alpha_{j})}}]_{q^{-(\alpha_{i},\alpha_{i}+\alpha_{j})}}=0\\
 & \hspace{1em}\quad\quad\quad\quad\quad\quad\quad\quad\quad\text{if }p(i)=0\text{ and }j=i\pm1,\\{}
[[[X_{i-1,l}^{\pm},X_{i,k_{1}}^{\pm}]_{q^{\epsilon_{i}}},X_{i+1,m}^{\pm}]_{q^{\epsilon_{i+1}}},X_{i,k_{2}}^{\pm}] & +[[[X_{i-1,l}^{\pm},X_{i,k_{2}}^{\pm}]_{q^{\epsilon_{i}}},X_{i+1,m}^{\pm}]_{q^{\epsilon_{i+1}}},X_{i,k_{1}}^{\pm}]=0\quad\text{if }p(i)=1,
\end{align*}
where $\phi_{i,k}^{\pm}$ ($i\in\mathring{I}$, $k\in\mathbb{Z}$)
is defined by the equation
\begin{align*}
\sum_{k\in\mathbb{Z}}\phi_{i,k}^{\pm}z^{k} & =K_{i}^{\pm1}\exp\left(\pm(q-q^{-1})\sum_{r>0}H_{i,\pm r}z^{\pm r}\right).
\end{align*}
\end{itemize}
\begin{rem}
For the presentation of $U(\epsilon)$ we are following \cite[Definition 2.5]{KL},
which recovers the one in \cite{Y} by the rescaling $E_{i}\mapsto(-1)^{\epsilon_{i}}E_{i}$
with the other generators fixed. The formulas in this article including
the presentation of $U^{\mathrm{Dr}}(\epsilon)$ are adapted to our
convention. For instance, the relations in \cite[Section 8]{Y} are
recovered by $X_{i,k}^{+}\mapsto(-1)^{\epsilon_{i}}X_{i,k}^{+}$ (other
generators fixed). On the other hand, compared with \cite[Definition 3.1]{Z1}
where $\epsilon$ is chosen to be the standard one $\epsilon_{M|N}$,
our generator $H_{i,r}$ of $U^{\mathrm{Dr}}(\epsilon)$ is identified
with $(-1)^{\epsilon_{i}}h_{i,r}$ therein and the others are just
the same.
\end{rem}

The algebra $U^{\mathrm{Dr}}(\epsilon)$ is called the \textit{Drinfeld
presentation} of $U(\epsilon)$ and used in the study of its finite-dimensional
representations \cite{Z1}. More precisely, a surjective algebra map
$B:U^{\mathrm{Dr}}(\epsilon)\longrightarrow U(\epsilon)$ was constructed
in \cite[Section 8]{Y} along the lines of \cite{B2}. It is recently
shown \cite{LYZ} to be an isomorphism whenever
\begin{equation}
p(i)p(i+1)=0\quad\text{for all }i=1,2,\dots,n-1,\label{eq:LYZ-cond}
\end{equation}
equivalently $\epsilon$ does not contain a subsequence $(010)$ or
$(101)$ (regarding $\epsilon_{n+1}=\epsilon_{1}$). To develop the
representation theory for generalized quantum groups in a parallel
manner, we first establish the following Drinfeld presentation for
$\mathcal{U}(\epsilon)$ and related it to $U^{\mathrm{Dr}}(\epsilon)$. 
\begin{thm}
\label{thm:GQG-Drinfeld}Let $\mathcal{U}^{\mathrm{Dr}}(\epsilon)$
be the $\Bbbk$-algebra defined by the following presentation:
\begin{itemize}
\item generators: $x_{i,k}^{\pm}$, $k_{i}^{\pm1}$, $h_{i,r}$ for $i\in\mathring{I}$,
$k\in\mathbb{Z}$, $r\in\mathbb{Z}\setminus\{0\}$ and $c^{\pm1/2}=\left(\prod_{i\in I}k_{i}\right)^{\pm1/2}$,
\item relations:
\begin{align}
k_{i}k_{i}^{-1}= & k_{i}^{-1}k_{i}=1,\,[k_{i},k_{j}]=[k_{i},h_{j,r}]=0,\,c^{\pm1/2}\text{\text{ is central}},\label{eq:GQG-Drinfeld-hh}\\
k_{i}x_{j,k}^{\pm}k_{i}^{-1} & =\mathbf{q}(\alpha_{i},\pm\alpha_{j})x_{j,k}^{\pm},\\{}
[h_{i,r},h_{j,s}] & =\delta_{r,-s}\frac{\mathbf{q}(\alpha_{i},\alpha_{j})^{r}-\mathbf{q}(\alpha_{i},\alpha_{j})^{-r}}{r(q-q^{-1})}\cdot\frac{c^{r}-c^{-r}}{q-q^{-1}}\label{eq:GQG-Drinfeld-q-Heis}\\{}
[h_{i,r},x_{j,k}^{\pm}] & =\pm\frac{\mathbf{q}(\alpha_{i},\alpha_{j})^{r}-\mathbf{q}(\alpha_{i},\alpha_{j})^{-r}}{r(q-q^{-1})}c^{\pm\left|r\right|/2}x_{j,r+k}^{\pm},\label{eq:GQG-Drinfeld-hx}\\{}
[x_{i,k}^{+},x_{j,l}^{-}] & =\delta_{ij}\frac{c^{(k-l)/2}\psi_{i,k+l}^{+}-c^{(l-k)/2}\psi_{i,k+l}^{-}}{q-q^{-1}},\label{eq:GQG-Drinfeld-x+x-}\\
x_{i,k+1}^{\pm}x_{j,l}^{\pm}-\mathbf{q}(\alpha_{i},\alpha_{j})^{\pm1} & x_{j,l}^{\pm}x_{i,k+1}^{\pm}=\mathbf{q}(\alpha_{i,}\alpha_{j})^{\pm1}x_{i,k}^{\pm}x_{j,l+1}^{\pm}-x_{j,l+1}^{\pm}x_{i,k}^{\pm},\label{eq:GQG-Drinfeld-q-locality}\\{}
[x_{i,k}^{\pm},x_{j,l}^{\pm}]_{\mathbf{q}(\alpha_{i},\alpha_{j})} & =0\quad\quad\text{if }(\alpha_{i},\alpha_{j})=0,\label{eq:GQG-Drinfeld-quadSerre}\\{}
[x_{i,k_{1}}^{\pm},[x_{i,k_{2}}^{\pm},x_{j,l}^{\pm}]_{\mathbf{q}(\alpha_{i},\alpha_{j})^{-1}}]_{\mathbf{q}(\alpha_{i},\alpha_{i}+\alpha_{j})^{-1}} & +[x_{i,k_{2}}^{\pm},[x_{i,k_{1}}^{\pm},x_{j,l}^{\pm}]_{\mathbf{q}(\alpha_{i},\alpha_{j})^{-1}}]_{\mathbf{q}(\alpha_{i},\alpha_{i}+\alpha_{j})^{-1}}=0\label{eq:GQG-Drinfeld-eSerre}\\
 & \hspace{1em}\quad\quad\quad\quad\quad\quad\quad\quad\quad\text{if }p(i)=0\text{ and }j=i\pm1,\\{}
[[[x_{i-1,l}^{\pm},x_{i,k_{1}}^{\pm}]_{q_{i}},x_{i+1,m}^{\pm}]_{q_{i+1}},x_{i,k_{2}}^{\pm}] & +[[[x_{i-1,l}^{\pm},x_{i,k_{2}}^{\pm}]_{q_{i}},x_{i+1,m}^{\pm}]_{q_{i+1}},x_{i,k_{1}}^{\pm}]=0\quad\text{if }p(i)=1,\label{eq:GQG-Drinfeld-oSerre}
\end{align}
 where $\psi_{i,k}^{\pm}$ is given by the following equation
\begin{align}
\sum_{k\in\mathbb{Z}}\psi_{i,\pm k}^{\pm}z^{\pm k} & =k_{i}^{\pm1}\exp\left(\pm(q-q^{-1})\sum_{r>0}h_{i,\pm r}z^{\pm r}\right).\label{eq:GQG-psi-halfVO}
\end{align}
\end{itemize}
Adjoining a central element $c^{\pm1/2}$ to $\mathcal{U}(\epsilon)$,
there exists a surjective algebra map $\mathcal{B}:\mathcal{U}^{\mathrm{Dr}}(\epsilon)\rightarrow\mathcal{U}(\epsilon)$
which is also an isomorphism whenever $B:U^{\mathrm{Dr}}(\epsilon)\rightarrow U(\epsilon)$
is so.
\end{thm}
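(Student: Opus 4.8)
The plan is to lift Beck's construction of Drinfeld generators \cite{B2} to $\mathcal{U}(\epsilon)$ by means of the braid groupoid action of Section~\ref{subsec:GQG-braid}, obtaining the surjection $\mathcal{B}$, and then to deduce the conditional injectivity by comparison with $U^{\mathrm{Dr}}(\epsilon)$. Concretely, I would adjoin $c^{\pm1/2}=(\prod_{i\in I}k_i)^{\pm1/2}$ to $\mathcal{U}(\epsilon)$, define the loop generators $x_{i,k}^{\pm}$ ($i\in\mathring{I}$) by applying the translation elements of the braid groupoid to $e_i,f_i$, and define the imaginary generators $h_{i,r}$ from the $x_{i,k}^{\pm}$ through the generating series (\ref{eq:GQG-psi-halfVO}), exactly as in \cite{B2} (with the imaginary part as in \cite{D1} and the super case as in \cite[Section~8]{Y}). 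Here Propositions~\ref{prop:Tw-on-ef} and \ref{prop:Lus-Lemma-40.1.2} place these elements in the correct halves $\mathcal{U}^{\pm}(\epsilon)$, Lemma~\ref{lem:W-pres-q} together with the groupoid Matsumoto theorem \cite[Theorem~5]{HY} ensures that the braid combinatorics is the one of non-super type $A$ despite $\epsilon$ varying along the way, and the isometry Proposition~\ref{prop:braid-isometry} enters the derivation of the $q$-Heisenberg relation (\ref{eq:GQG-Drinfeld-q-Heis}). Verifying the remaining relations (\ref{eq:GQG-Drinfeld-hh})--(\ref{eq:GQG-Drinfeld-oSerre}) then amounts to repeating the arguments of \cite{B2,Y} with the scalars $\mathbf{q}(\alpha_i,\alpha_j)$ in place of $q^{(\alpha_i,\alpha_j)}$, which defines $\mathcal{B}$; it is surjective since $x_{i,0}^{\pm}=e_i,f_i$ and $\psi_{i,0}^{+}=k_i$ for $i\in\mathring{I}$, the elements $e_0,f_0$ are recovered from the loop generators as in \cite{B2,Y}, and $c^{\pm1/2}$ is a generator.

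For the final assertion I would compare the two Drinfeld presentations directly. Inspection shows that the relations of $\mathcal{U}^{\mathrm{Dr}}(\epsilon)$ and those of $U^{\mathrm{Dr}}(\epsilon)$ coincide after rescaling the generators by signs and powers of $q$: with $q_i=(-1)^{\epsilon_i}q^{(-1)^{\epsilon_i}}$ and $\widetilde{q}=-q^{-1}$, the super-sign $(-1)^{p(i)p(j)}$ is absorbed into the rescaling and every supercommutator $[\,\cdot\,,\,\cdot\,]_u$ turns into an ordinary commutator with parameter a monomial in the $\mathbf{q}(\alpha_i,\alpha_j)$, precisely as in the Drinfeld--Jimbo comparison \cite[Section~2.3]{KL}. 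This produces a formal isomorphism $\tau^{\mathrm{Dr}}\colon\mathcal{U}^{\mathrm{Dr}}(\epsilon)\xrightarrow{\sim}U^{\mathrm{Dr}}(\epsilon)$ after the mild toral extension of \cite{KL}. Since the comparison isomorphism $\tau$ of \cite{KL} identifies the Drinfeld--Jimbo generators and is compatible with the braid operators on $\mathcal{U}(\epsilon)$ and $U(\epsilon)$, it carries the braid-twisted generators used to build $\mathcal{B}$ to the corresponding rescalings of Yamane's affine root vectors, so that $\tau\circ\mathcal{B}$ and $B\circ\tau^{\mathrm{Dr}}$ agree on generators and hence $\mathcal{B}=\tau^{-1}\circ B\circ\tau^{\mathrm{Dr}}$. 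Consequently $\mathcal{B}$ is an isomorphism whenever $B$ is, and since the generator rescaling is already defined over the unextended algebras, bijectivity descends back to $\mathcal{U}(\epsilon)[c^{\pm1/2}]$; by \cite{LYZ}, $B$ is an isomorphism precisely when (\ref{eq:LYZ-cond}) holds.

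I expect the main difficulty to be the uniform sign-and-parameter bookkeeping rather than any single identity: one must fix one normalization relating $x_{i,k}^{\pm},h_{i,r},k_i,c^{1/2}$ to $X_{i,k}^{\pm},H_{i,r},K_i,C^{1/2}$ and check its consistency across \emph{all} relations at once --- most delicately across the two half-vertex-operator series defining $\psi_{i,k}^{\pm}$ and $\phi_{i,k}^{\pm}$, where the factor $(-1)^{\epsilon_i}$ inside the exponential and the identity $\widetilde{q}=-q^{-1}$ intervene, and across the $q$-locality relation (\ref{eq:GQG-Drinfeld-q-locality}) --- while keeping track throughout of how $\epsilon$ moves under the braid groupoid. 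A separate minor point is the case $n=3$ with $p(i)p(i+1)=1$, not treated in \cite{Machida,Y}: the degree-$5$ odd Serre relation (\ref{eq:GQG-DJ-deg5Serre}) must be incorporated, but since the corresponding element was already shown to lie in the radical $\mathcal{I}$ in the proof that $\mathbf{f}(\epsilon)\cong\mathcal{U}^{\pm}(\epsilon)$, the same transport argument applies once $B$ is taken with the $n=3$ normalization of \cite[(QS4)]{Y}.
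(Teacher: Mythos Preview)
Your overall architecture matches the paper's: construct $\mathcal{B}$ by Beck's braid method adapted to the groupoid action (the paper does this in Appendix~\ref{sec:GQG-Drinfeld-constr}, following \cite[Section~8]{Y} verbatim), and deduce conditional injectivity by comparison with $U^{\mathrm{Dr}}(\epsilon)$ through $\tau$. The surjectivity argument also agrees (the paper recovers $e_0$ via $T_{\varpi_1}$ as in \cite[Theorem~12.11]{D2}).

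Where you diverge is in the injectivity step. You propose to build $\tau^{\mathrm{Dr}}\colon\mathcal{U}^{\mathrm{Dr}}(\epsilon)\xrightarrow{\sim}U^{\mathrm{Dr}}(\epsilon)$ \emph{directly} by rescaling generators and checking that all Drinfeld relations match, and then to verify the square $\tau\circ\mathcal{B}=B\circ\tau^{\mathrm{Dr}}$ by asserting that $\tau$ intertwines the braid operators. The paper runs this in the opposite order and thereby avoids the relation-by-relation check you flag as the main difficulty: it first shows (Remark~\ref{rem:tau-pres-q-bracket} plus the iterated $q$-bracket realizations of loop generators, (\ref{eq:ind-formula-xir}) and Lemma~\ref{lem:formula-x-_i,1}) that $\tau$ sends each braid-constructed $x_{i,k}^{\pm},\psi_{i,r}^{\pm}$ to $X_{i,k}^{\pm},\phi_{i,r}^{\pm}$ up to an explicit nonzero scalar and a $\sigma$-monomial. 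Then, \emph{assuming} $B$ is an isomorphism, it \emph{defines} $\tau^{\mathrm{Dr}}:=B^{-1}\circ\tau\circ\mathcal{B}$, which is automatically a homomorphism of the shape $x_{i,k}^{\pm}\mapsto c_{i,k}^{\pm}X_{i,k}^{\pm}\sigma(\cdots)$; the formal inverse $X_{i,k}^{\pm}\mapsto(c_{i,k}^{\pm})^{-1}x_{i,k}^{\pm}\sigma(\cdots)^{-1}$ is then well-defined for free, and $\mathcal{B}$ is injective.

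Your route is workable but one step needs correction: the claim that $\tau$ is ``compatible with the braid operators'' is not literally true. The $T_i$ on $U(\epsilon)$ (\cite[Proposition~8.2.1]{Y}) carry parity-dependent signs absent from the $T_i$ on $\mathcal{U}(\epsilon)$, so $\tau T_i\neq T_i\tau$ on the nose. What holds, and what the paper uses instead, is that $\tau$ preserves $q$-brackets (Remark~\ref{rem:tau-pres-q-bracket}) and the loop generators on both sides are iterated $q$-brackets of Chevalley generators (Proposition~\ref{prop:rootv-q-brac}, Remark~\ref{rem:q-bracket-loop-gen-qasa}); that is the correct substitute for your compatibility claim. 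Note also that the extension needed is the $\Sigma$-extension $\mathcal{U}(\epsilon)[\sigma]$ of \cite{KL}, not merely a toral one: the $\sigma_i$ are precisely what absorb the super-signs $(-1)^{p(i)p(j)}$ you want to eliminate.
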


We will call the generators in this presentation (and also in $U^{\mathrm{Dr}}(\epsilon)$)
the \textit{loop generators}. The construction of the surjective algebra
map $\mathcal{B}$ is completely parallel with \cite[Section 8]{Y}
and summarized in Appendix \ref{sec:GQG-Drinfeld-constr}, focusing
on presenting precise definitions and formulas for $\mathcal{U}(\epsilon)$.
\begin{rem}
\label{rem:logical-independence}We remark that almost every results
below do not rely on the injectivity of $\tau$. Specifically, it
will be used in triangular decomposition with respect to loop generators
(\ref{eq:GQGDr-tri-decomp}) and in the classification of finite-dimensional
simple $\mathcal{U}(\epsilon)$-modules in Section \ref{subsec:classification-irrep}.
For the other parts of this article, whenever loop generators appear
in the statement, it is enough to identify them with their images
under $\mathcal{B}$ so that they satisfy the above relations in $\mathcal{U}(\epsilon)$.
\end{rem}

Let us recall a comparison result between $\mathcal{U}(\epsilon)$
and $U(\epsilon)$ from \cite[Section 2.3]{KL}. Introduce the commutative
$\Bbbk$-bialgebra $\Sigma$ generated by $\sigma_{i}$ for $i\in\mathbb{I}_{1}$
with $\sigma_{i}^{2}=1$, $\Delta(\sigma_{i})=\sigma_{i}\otimes\sigma_{i}$,
which acts on $\mathcal{U}(\epsilon)$ by
\[
\sigma_{i}e_{j}=(-1)^{(\delta_{i},\alpha_{j})}e_{j},\quad\sigma_{i}f_{j}=(-1)^{-(\delta_{i},\alpha_{j})}f_{j},\quad\sigma_{i}k_{j}=k_{j},
\]
 and in the same way on $U(\epsilon)$. This makes $\mathcal{U}(\epsilon)$
and $U(\epsilon)$ $\Sigma$-module algebras and so defines semidirect
products 
\[
\mathcal{U}(\epsilon)[\sigma]=\mathcal{U}(\epsilon)\rtimes\Sigma,\quad U(\epsilon)[\sigma]=U(\epsilon)\rtimes\Sigma.
\]
We also let $\sigma_{i}=1$ for $i\in\mathbb{I}_{0}$, serving as
a placeholder. 

Given $\epsilon$, there exists a unique sequence $1\leq i_{1}<i_{2}<\cdots<i_{l}\leq n$
such that 
\[
\epsilon_{i_{k}-1}\neq\epsilon_{i_{k}}=\epsilon_{i_{k+1}}=\cdots=\epsilon_{i_{k+1}-1}\neq\epsilon_{i_{k+1}}\quad\text{for }1\leq k\leq l,
\]
where we understand the subscripts modulo $n$ and we set $i_{l+1}\coloneqq i_{1}$.
Put $\sigma_{\leq i}=\sigma_{1}\sigma_{2}\cdots\sigma_{j}.$ To each
of generators $e_{i}$, $f_{i}$ and $k_{i}$ ($i\in I$) of $\mathcal{U}(\epsilon)$,
we assign elements $\sigma(e_{i})$, $\sigma(f_{i})$ and $\sigma(k_{i})$
of $\Sigma$ as follows. First we put $\sigma(k_{i}^{\pm1})=\sigma_{i}\sigma_{i+1}$.
The others are given in the case-by-case manner:
\begin{enumerate}
\item For $i\in I_{\mathrm{even}}$ with $(\epsilon_{i},\epsilon_{i+1})=(0,0)$,
we set $\sigma(e_{i})=\sigma_{i}$ and $\sigma(f_{i})=\sigma_{i+1}$.
\item For $i\in I_{\mathrm{odd}}$ with $(\epsilon_{i},\epsilon_{i+1})=(0,1)$,
\[
\sigma(e_{i})=\sigma_{\leq i},\quad\sigma(f_{i})=\sigma_{\leq i}\sigma_{i}\sigma_{i+1}.
\]
\item For $i\in I_{\mathrm{even}}$ with $(\epsilon_{i},\epsilon_{i+1})=(1,1)$,
we have $i\in\{i_{t},i_{t}+1,\dots,i_{t+1}-2\}\subset\mathbb{I}$
for some $t$. When $t<l$, we set
\[
\sigma(e_{i})=(-\sigma_{i}\sigma_{i+1})^{i-i_{t}+1},\quad\sigma(f_{i})=(-\sigma_{i}\sigma_{i+1})^{i-i_{t}}
\]
and when $t=l$,
\[
\sigma(e_{i})=\begin{cases}
(-\sigma_{i}\sigma_{i+1})^{i-i_{l}+1}\\
(-\sigma_{i}\sigma_{i+1})^{i-(i_{l}-n)+1},
\end{cases},\quad\sigma(f_{i})=\begin{cases}
(-\sigma_{i}\sigma_{i+1})^{i-i_{l}} & \text{if }i_{l}\leq i\leq n,\\
(-\sigma_{i}\sigma_{i+1})^{i-(i_{l}-n)} & \text{if }1\leq i\leq i_{1}-2.
\end{cases}
\]
\item For $i\in I_{\mathrm{odd}}$ with $(\epsilon_{i},\epsilon_{i+1})=(1,0)$,
we have $i=i_{t+1}-1$ for some $t$ and set
\[
\sigma(e_{i})=\begin{cases}
\sigma_{\leq i}(-\sigma_{i}\sigma_{i+1})^{i_{t+1}-i_{t}}\\
\sigma_{\leq i}(-\sigma_{i}\sigma_{i+1})^{i_{1}-(i_{l}-n)}
\end{cases},\quad\sigma(f_{i})=\begin{cases}
\sigma_{\leq i}(\sigma_{i}\sigma_{i+1})^{i_{t+1}-i_{t}+1} & \text{if }t<l,\\
\sigma_{\leq i}(\sigma_{i}\sigma_{i+1})^{i_{1}-(i_{l}-n)+1} & \text{if }t=l.
\end{cases}
\]
\end{enumerate}
\begin{example}
To $\epsilon=(11001101)$ corresponds a sequence $i_{1}=3<5<7<8=i_{4}$,
and then $\tau(x_{i})$ ($x=e,f$) is given as in the table below
(recall $\sigma_{i}=1$ for $i\in\mathbb{I}_{0}$).
\begin{center}
\begin{tabular}{|c|c|c|c|c|c|c|c|c|}
\hline 
$i\in I$ & $1$ & $2$ & $3$ & $4$ & $5$ & $6$ & $7$ & $8=0$\tabularnewline
\hline 
\hline 
$\sigma(e_{i})$ & $1$ & $-\sigma_{1}\sigma_{3}$ & $\sigma_{3}$ & $\sigma_{\leq4}$ & $-\sigma_{5}\sigma_{6}$ & $\sigma_{\leq6}$ & $\sigma_{\leq7}$ & $-\sigma_{8}\sigma_{1}$\tabularnewline
\hline 
$\sigma(f_{i})$ & $-\sigma_{1}\sigma_{2}$ & $\sigma_{\leq2}$ & $\sigma_{4}$ & $\sigma_{\leq3}\sigma_{5}$ & $1$ & $\sigma_{\leq5}\sigma_{7}$ & $\sigma_{\leq6}\sigma_{8}$ & $1$\tabularnewline
\hline 
\end{tabular} 
\par\end{center}

\end{example}

Now we let $\tau(x_{i})=X_{i}\sigma(x_{i})$ for $i\in I$ and $x=e,f,k^{\pm1}$,
$X=E,F,K^{\pm1}$ respectively and $\tau(c^{1/2})=C^{1/2}$, $\tau(\sigma_{i})=\sigma_{i}$.
This defines a well-defined algebra isomorphism $\tau:\mathcal{U}(\epsilon)[\sigma]\longrightarrow U(\epsilon)[\sigma]$,
whose inverse is also given as $\tau^{-1}(X_{i})=x_{i}\sigma(x_{i})$
\cite[Theorem 2.7]{KL}.
\begin{rem}
\label{rem:tau-pres-q-bracket}The map $\tau$ is devised to preserve
$q$-brackets on $U(\epsilon)$ and $\mathcal{U}(\epsilon)$. Namely,
for homogeneous $x,y\in\mathcal{U}^{+}(\epsilon)$ of $Q$-degree
$\beta,\gamma$, if we write $\tau(x)=X\sigma,\,\tau(Y)=Y\sigma^{\prime}$
for some $X,Y\in U^{+}(\epsilon)$ and monomials $\sigma,\sigma^{\prime}$
in $\sigma_{i}$'s, then we have 
\[
\tau(xy-\mathbf{q}(\beta,\gamma)^{\pm1}yx)=s\left(XY-(-1)^{p(\beta)p(\gamma)}q^{\pm(\beta,\gamma)}YX\right)\sigma\sigma^{\prime},
\]
where $s\in\{\pm1\}$ is determined from $\sigma Y=sY\sigma$. Since
the relations (\ref{eq:GQG-DJ-quadSerre}-\ref{eq:GQG-DJ-deg5Serre})
in $\mathcal{U}^{\pm}(\epsilon)$ and the corresponding ones in $U^{\pm}(\epsilon)$
are given in terms of $q$-brackets, they match well under $\tau$.
The other relations can be checked by case-by-case manner.
\end{rem}

The remark also allows us to compare through $\tau$ the image of
loop generators under $\mathcal{B}$ and $B$, using their realizations
as iterated $q$-brackets. Although the latter will be explained in
Section \ref{subsec:Pairing-imag-rvector}, for the sake of coherence
we assume the results there (which are independent of the remaining
contents of this section, see Remark \ref{rem:logical-independence})
and continue our discussion. 

We define the $q$-bracket for $\mathcal{U}(\epsilon)$ (resp. $U(\epsilon)$)
as in (\ref{eq:q-bracket-def}) (resp. (\ref{eq:q-bracket-def-qasa}))
below. According to (\ref{eq:ind-formula-xir}) and Lemma \ref{lem:formula-x-_i,1}
(resp. Remark \ref{rem:q-bracket-loop-gen-qasa}), the image of loop
generators $x_{i,k}^{\pm}$ and $\psi_{i,r}^{\pm}$ for $\mathcal{U}(\epsilon)$
under $\mathcal{B}$ and of $X_{i,k}^{\pm}$ and $\phi_{i,r}^{\pm}$
for $U(\epsilon)$ under $B$ can be described as iterated $q$-brackets
of Chevalley generators (see Remark \ref{rem:compare-rootv-Drinfeld}
to translate affine root vectors into loop generators). Therefore,
$\tau$ maps $x_{i,k}^{\pm}$ and $\psi_{i,r}^{\pm}$ to $X_{i,k}^{\pm}$
and $\phi_{i,r}^{\pm}$ respectively up to non-zero scalar multiples
and $\sigma_{i}$-factors. Since we have precise formulas, it is possible
to compare them exactly in specific cases, and we record here one
of them for a later use.
\begin{prop}
\label{prop:compare-Cartan-current}For $\epsilon=\epsilon_{M|N}$,
we have $\tau(\psi_{i,r}^{\pm})=(-1)^{rn+r(M+i+1)\epsilon_{i}}\phi_{i,r}^{\pm}\sigma_{i}\sigma_{i+1}$.
\end{prop}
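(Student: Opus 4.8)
The plan is to reduce the statement to the comparison of the imaginary root vectors $h_{i,r}$ with $H_{i,r}$, since $\psi^{\pm}_{i,r}$ and $\phi^{\pm}_{i,r}$ are both extracted from these by an exponential. I would begin with two elementary remarks: $q_i-q_i^{-1}=q-q^{-1}$ for all $i$ (immediate from $q_i=(-1)^{\epsilon_i}q^{(-1)^{\epsilon_i}}$), and $\sigma_i\sigma_{i+1}=\sigma(k_i)$ acts trivially on every homogeneous element whose $Q$-degree is a multiple of $\delta$, so that it is central against $K_i$ and all $H_{j,r}$ and the series $\sum_k\phi^{\pm}_{i,k}z^{k}$ commutes with $\sigma_i\sigma_{i+1}$ in $U(\epsilon)[\sigma]$. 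Applying $\tau$ to \eqref{eq:GQG-psi-halfVO}, comparing with the defining equation of $\phi^{\pm}_{i,k}$ after the substitution $z\mapsto(-1)^{\epsilon_i}z$, cancelling the common factors $\sigma_i\sigma_{i+1}$ and $K_i^{\pm1}$, and taking logarithms (legitimate since by \eqref{eq:GQG-Drinfeld-hh} the $h_{i,r}$, hence the $\tau(h_{i,r})$, mutually commute for $r$ of a fixed sign, as do the $H_{i,r}$), one sees that the proposition for all $r$ is \emph{equivalent} to
\[
\tau(h_{i,r})=(-1)^{(r+1)\epsilon_i}H_{i,r}\qquad(r\in\mathbb Z\setminus\{0\}),
\]
its $r=0$ instance being $\tau(k_i^{\pm1})=K_i^{\pm1}\sigma_i\sigma_{i+1}$, true by the definition of $\tau$.

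To establish the displayed identity I would use the explicit realizations of the loop generators. Through $\mathcal B$ (Appendix \ref{sec:GQG-Drinfeld-constr}, with \eqref{eq:ind-formula-xir} and Lemma \ref{lem:formula-x-_i,1}) the $x^{\pm}_{i,k}$, and hence via \eqref{eq:GQG-Drinfeld-x+x-} and \eqref{eq:GQG-psi-halfVO} also the $h_{i,r}$, are written as explicit iterated $q$-brackets of the Chevalley generators $e_j,f_j$; the same recipe over $E_j,F_j$ gives $B(X^{\pm}_{i,k})$ and $H_{i,r}$ for $U(\epsilon)$ (Remarks \ref{rem:q-bracket-loop-gen-qasa} and \ref{rem:compare-rootv-Drinfeld}). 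By Remark \ref{rem:tau-pres-q-bracket}, $\tau$ sends every $q$-bracket occurring here to the matching super $q$-bracket of $U(\epsilon)$ up to a sign in $\{\pm1\}$ and a monomial in the $\sigma_j$'s; propagating this through the formulas yields $\tau(h_{i,r})=c_{i,r}H_{i,r}\mu_{i,r}$ with $c_{i,r}\in\Bbbk^{\times}$ and $\mu_{i,r}$ a $\sigma$-monomial, and one is reduced to showing $c_{i,r}=(-1)^{(r+1)\epsilon_i}$ and $\mu_{i,r}=1$ for $\epsilon=\epsilon_{M|N}$ (so that $p(j)=1$ only for $j=0,M$).

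The base case $r=\pm1$ is a direct computation: the order-one term of \eqref{eq:GQG-psi-halfVO} gives $\psi^{\pm}_{i,\pm1}=\pm(q-q^{-1})k_i^{\pm1}h_{i,\pm1}$, and similarly $\phi^{\pm}_{i,\pm1}=\pm(-1)^{\epsilon_i}(q-q^{-1})K_i^{\pm1}H_{i,\pm1}$, so substituting $\tau(x^{+}_{i,\pm1})$ from Lemma \ref{lem:formula-x-_i,1} and $\tau(k_i^{\pm1})=K_i^{\pm1}\sigma_i\sigma_{i+1}$ forces $c_{i,\pm1}=1=(-1)^{2\epsilon_i}$ and $\mu_{i,\pm1}=1$ — here the $(-1)^{\epsilon_i}$ in the $U(\epsilon)$-relations cancels the one coming from $q_i=(-1)^{\epsilon_i}q^{(-1)^{\epsilon_i}}$. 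One then climbs in $r$ via \eqref{eq:ind-formula-xir}: each step multiplies by a $q_i$-power on the $\mathcal U(\epsilon)$ side against a $q^{\pm1}$-power on the $U(\epsilon)$ side, and the discrepancy $q_i=(-1)^{\epsilon_i}q^{(-1)^{\epsilon_i}}$ inserts exactly one factor $(-1)^{\epsilon_i}$ per step, so $c_{i,r}=(-1)^{(r+1)\epsilon_i}$ while $\mu_{i,r}$ remains trivial. I expect the sign bookkeeping to be the genuine difficulty — one has to follow simultaneously the signs internal to the $U(\epsilon)$-relations and the signs $s\in\{\pm1\}$ produced whenever a $\sigma$-monomial is commuted past an $E_j$ or $F_j$ in Remark \ref{rem:tau-pres-q-bracket}, and it is precisely the cancellation of the latter that keeps $\mu_{i,r}=1$. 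Finally, the node $i=M$ needs a detour since the recursion $[h_{M,1},x^{+}_{M,k}]\propto x^{+}_{M,k+1}$ degenerates there ($(\alpha_M,\alpha_M)=0$, i.e.\ $\mathbf q(\alpha_M,\alpha_M)=-1$); instead one produces $x^{\pm}_{M,k}$ from the quadratic relations \eqref{eq:GQG-Drinfeld-q-locality} with an adjacent even node, to which the previous argument applies, obtaining $\tau(h_{M,r})=H_{M,r}$ in accordance with $\epsilon_M=0$ and $\sigma(k_M)=\sigma_{M+1}$.
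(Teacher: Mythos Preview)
Your approach is essentially the one the paper itself indicates: the paper does not spell out a proof but refers to the iterated $q$-bracket realizations in \eqref{eq:ind-formula-xir}, Lemma~\ref{lem:formula-x-_i,1}, Remark~\ref{rem:compare-rootv-Drinfeld} and Remark~\ref{rem:q-bracket-loop-gen-qasa}, together with Remark~\ref{rem:tau-pres-q-bracket}, and says ``since we have precise formulas, it is possible to compare them exactly.'' Your induction via \eqref{eq:ind-formula-xir} is exactly this computation.

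Two small points. First, the reduction to $h_{i,r}$ is an unnecessary detour: the paper's formulas express $\psi_{i,r}^{+}$ directly via $\widetilde{e}_{(r\delta,i)}=\llbracket e_{r\delta-\alpha_i},e_i\rrbracket$ (Remark~\ref{rem:compare-rootv-Drinfeld}) and likewise $\phi_{i,r}^{+}$ via $\llbracket E_{r\delta-\alpha_i},E_i\rrbracket$ (Remark~\ref{rem:q-bracket-loop-gen-qasa}), so one may track $\tau(e_{r\delta-\alpha_i})$ through \eqref{eq:ind-formula-xir} and read off $\tau(\psi_{i,r}^{+})$ immediately, without passing through the exponential and back. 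Second, your worry about $i=M$ is misplaced: the recursion \eqref{eq:ind-formula-xir} is already stated with $j=i\pm1$ (not $j=i$), so the degeneration of $[h_{M,1},x^{+}_{M,k}]$ never enters and no separate detour is needed there. With these simplifications the computation is exactly what the paper leaves implicit.
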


Now assume $B$ is an isomorphism and consider the map $\tau^{\mathrm{Dr}}=B^{-1}\circ\tau\circ\mathcal{B}:\mathcal{U}^{\mathrm{Dr}}(\epsilon)\longrightarrow U^{\mathrm{Dr}}(\epsilon)$.
By the observation above, there exists $c_{i,k}^{\pm},d_{i,r}^{\pm}\in\Bbbk^{\times}$
such that 
\[
\tau^{\mathrm{Dr}}:x_{i,k}^{\pm}\longmapsto c_{i,k}^{\pm}X_{i,k}^{\pm}\sigma(k\delta\pm\alpha_{i}),\quad\psi_{i,r}^{\pm}\longmapsto d_{i,r}^{\pm}\phi_{i,r}^{\pm}\sigma(r\delta)\:(r\neq0)
\]
where for $\beta=\sum_{i\in I}c_{i}\alpha_{i}\in Q_{+}$ (resp. $\beta\in Q_{-}$),
we define $\sigma(\beta)=\prod\sigma(e_{i})^{c_{i}}$ (resp. $\sigma(\beta)=\prod\sigma(f_{i})^{d_{i}}$).
Since $\tau^{\mathrm{Dr}}$ is a well-defined algebra homomorphism,
we know it respects the defining relations. But this also implies
that if we simply define its inverse $U^{\mathrm{Dr}}(\epsilon)\longrightarrow\mathcal{U}^{\mathrm{Dr}}(\epsilon)$
\[
X_{i,k}^{\pm}\longmapsto(c_{i,k}^{\pm})^{-1}x_{i,k}^{\pm}\sigma(k\delta\pm\alpha_{i})^{-1},\quad\phi_{i,r}^{\pm}\longmapsto(d_{i,r}^{\pm})^{-1}\psi_{i,r}^{\pm}\sigma(r\delta)^{-1}
\]
it is a well-defined algebra homomorphism. Hence, we conclude $\mathcal{B}$
is an isomorphism as well.

Thanks to the comparison isomorphism, we may lift various results
on quantum affine superalgebras to our generalized quantum groups.
Let us record here two of them which will be used in the later sections
on representation theory of $\mathcal{U}(\epsilon)$. First, we obtain
another triangular decomposition of $\mathcal{U}(\epsilon)$ which
is more suitable for the study of finite-dimensional $\mathcal{U}(\epsilon)$-modules.
Let $\mathcal{U}^{\mathrm{Dr},\pm}(\epsilon)$ (resp. $\mathcal{U}^{\mathrm{Dr},0}(\epsilon)$)
be the subalgebra of $\mathcal{U}(\epsilon)$ generated by $x_{i,k}^{\pm}$
for $i\in\mathring{I}$ and $k\in\mathbb{Z}$ (resp. $\psi_{i,\pm r}^{\pm}$
for $i\in\mathring{I}$ and $r\geq0$).
\begin{cor}
Assume $\mathcal{B}$ is an isomorphism. Then we have an isomorphism
of vector spaces
\begin{equation}
\mathcal{U}(\epsilon)\cong\mathcal{U}^{\mathrm{Dr},-}(\epsilon)\otimes\mathcal{U}^{\mathrm{Dr},0}(\epsilon)\otimes\mathcal{U}^{\mathrm{Dr},+}(\epsilon)\label{eq:GQGDr-tri-decomp}
\end{equation}
given by the multiplication.
\end{cor}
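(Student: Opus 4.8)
The plan is to transport the statement from the quantum affine superalgebra side along the comparison isomorphism. Since $\mathcal{B}$ is assumed to be an isomorphism, it is equivalent to prove that the multiplication map
\[
m\colon\mathcal{U}^{\mathrm{Dr},-}(\epsilon)\otimes\mathcal{U}^{\mathrm{Dr},0}(\epsilon)\otimes\mathcal{U}^{\mathrm{Dr},+}(\epsilon)\longrightarrow\mathcal{U}^{\mathrm{Dr}}(\epsilon)
\]
is a linear isomorphism, the three factors being the evident subalgebras of the abstractly presented $\mathcal{U}^{\mathrm{Dr}}(\epsilon)$ (with $c^{\pm1/2}$ placed in $\mathcal{U}^{\mathrm{Dr},0}(\epsilon)$). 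Surjectivity I would read off the defining relations: by (\ref{eq:GQG-Drinfeld-x+x-}) one has $[x_{i,k}^{+},x_{j,l}^{-}]\in\mathcal{U}^{\mathrm{Dr},0}(\epsilon)$; by (\ref{eq:GQG-Drinfeld-hh}) and (\ref{eq:GQG-Drinfeld-hx}) the adjoint action of $\mathcal{U}^{\mathrm{Dr},0}(\epsilon)$ preserves each of $\mathcal{U}^{\mathrm{Dr},\pm}(\epsilon)$; and $c^{\pm1/2}$ is central. A routine reordering — induction on the number of $x^{\pm}$-letters of a monomial in the generators, and on the number of adjacent pairs that are out of order — then brings any such monomial into $\mathcal{U}^{\mathrm{Dr},-}(\epsilon)\cdot\mathcal{U}^{\mathrm{Dr},0}(\epsilon)\cdot\mathcal{U}^{\mathrm{Dr},+}(\epsilon)$, so $m$ is onto.

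For injectivity I would pass to the semidirect products with $\Sigma$. The discussion preceding the statement shows that, under the hypothesis, $\tau^{\mathrm{Dr}}$ is a well-defined algebra isomorphism $\mathcal{U}^{\mathrm{Dr}}(\epsilon)[\sigma]\xrightarrow{\sim}U^{\mathrm{Dr}}(\epsilon)[\sigma]$ sending $x_{i,k}^{\pm}\mapsto c_{i,k}^{\pm}X_{i,k}^{\pm}\sigma(k\delta\pm\alpha_{i})$ and $\psi_{i,r}^{\pm}\mapsto d_{i,r}^{\pm}\phi_{i,r}^{\pm}\sigma(r\delta)$ with $c_{i,k}^{\pm},d_{i,r}^{\pm}\in\Bbbk^{\times}$, and fixing $\Sigma$ pointwise. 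Because these $\sigma$-factors lie in $\Sigma$ and $\Sigma$ acts on each $\mathcal{U}^{\mathrm{Dr},\bullet}(\epsilon)$ by scalars on homogeneous elements, $\tau^{\mathrm{Dr}}$ carries the subalgebra $\mathcal{U}^{\mathrm{Dr},\bullet}(\epsilon)\rtimes\Sigma$ of $\mathcal{U}^{\mathrm{Dr}}(\epsilon)[\sigma]$ isomorphically onto $U^{\mathrm{Dr},\bullet}(\epsilon)\rtimes\Sigma$, for $\bullet\in\{-,0,+\}$. Now $m$ is injective if and only if $m\otimes\mathrm{id}_{\Sigma}$ is, since $\Sigma$ is a nonzero free $\Bbbk$-module; and under the natural identification $\bigl(\mathcal{U}^{\mathrm{Dr},-}(\epsilon)\otimes\mathcal{U}^{\mathrm{Dr},0}(\epsilon)\otimes\mathcal{U}^{\mathrm{Dr},+}(\epsilon)\bigr)\otimes\Sigma\cong(\mathcal{U}^{\mathrm{Dr},-}(\epsilon)\rtimes\Sigma)\otimes_{\Sigma}(\mathcal{U}^{\mathrm{Dr},0}(\epsilon)\rtimes\Sigma)\otimes_{\Sigma}(\mathcal{U}^{\mathrm{Dr},+}(\epsilon)\rtimes\Sigma)$ — valid because $(A\otimes\Sigma)\otimes_{\Sigma}(B\otimes\Sigma)=A\otimes B\otimes\Sigma$ for $\Bbbk$-modules $A,B$ — the map $m\otimes\mathrm{id}_{\Sigma}$ becomes the multiplication map of the three subalgebras $\mathcal{U}^{\mathrm{Dr},\bullet}(\epsilon)\rtimes\Sigma$ inside $\mathcal{U}^{\mathrm{Dr}}(\epsilon)[\sigma]$, which $\tau^{\mathrm{Dr}}$ identifies with the analogous map for $U^{\mathrm{Dr}}(\epsilon)[\sigma]$. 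Reading these identifications backwards on the $U$-side, injectivity of $m$ reduces to injectivity of $U^{\mathrm{Dr},-}(\epsilon)\otimes U^{\mathrm{Dr},0}(\epsilon)\otimes U^{\mathrm{Dr},+}(\epsilon)\to U^{\mathrm{Dr}}(\epsilon)$, that is, to the known triangular decomposition of $U^{\mathrm{Dr}}(\epsilon)$ (the PBW theorem; see \cite{Y,Z1}).

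The delicate point is the handling of $\Sigma$: the map $\tau^{\mathrm{Dr}}$ does \emph{not} respect a tensor splitting $\mathcal{U}^{\mathrm{Dr}}(\epsilon)[\sigma]=\mathcal{U}^{\mathrm{Dr}}(\epsilon)\otimes\Sigma$ (the $\sigma$-factor that $\tau^{\mathrm{Dr}}$ attaches to a product of loop generators depends on more than its degree), but it does preserve the three subalgebras once $\Sigma$ is adjoined to each of them, and the $\otimes_{\Sigma}$ bookkeeping then makes the superfluous copies of $\Sigma$ cancel. So the real content is in verifying that $\tau^{\mathrm{Dr}}$ is available under the hypothesis and restricts to isomorphisms on the $\pm$ and $0$ parts exactly as stated; the reordering argument and the ensuing reductions are formal. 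If one wishes to avoid the $\otimes_{\Sigma}$ formalism, the same input can be repackaged: apply $\tau^{\mathrm{Dr}}$ to a hypothetical relation $\sum_{a}u_{a}^{-}u_{a}^{0}u_{a}^{+}=0$, decompose it along the character decomposition of the semisimple algebra $\Sigma$, and invoke the triangular decomposition of $U^{\mathrm{Dr}}(\epsilon)$ on each summand.
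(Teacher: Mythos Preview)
Your proposal is correct and follows essentially the same route as the paper: reduce to the $\Sigma$-extended version via the $\otimes_{\Sigma}$ identification, use that the comparison isomorphism restricts to isomorphisms $\mathcal{U}^{\mathrm{Dr},\bullet}(\epsilon)[\sigma]\cong U^{\mathrm{Dr},\bullet}(\epsilon)[\sigma]$ for $\bullet\in\{-,0,+\}$, and then invoke the known triangular decomposition of $U^{\mathrm{Dr}}(\epsilon)$ from \cite[Theorem~3.3]{Z1}. Your write-up supplies more detail than the paper (the explicit reordering for surjectivity and the $\otimes_{\Sigma}$ bookkeeping), but the argument is the same.
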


\begin{proof}
The claim is equivalent to that the multiplication also gives rise
to a $\Bbbk$-vector space isomorphism
\[
\mathcal{U}(\epsilon)[\sigma]\cong\mathcal{U}^{\mathrm{Dr},-}(\epsilon)[\sigma]\otimes_{\Sigma}\mathcal{U}^{\mathrm{Dr},0}(\epsilon)[\sigma]\otimes_{\Sigma}\mathcal{U}^{\mathrm{Dr},+}(\epsilon)[\sigma],
\]
and the latter follows from the fact that $\tau$ restricts to isomorphisms
between $\mathcal{U}^{\mathrm{Dr},*}(\epsilon)[\sigma]$ and $U^{\mathrm{Dr},*}(\epsilon)[\sigma]$
($*=+,-,0$) and the corresponding triangular decomposition of $U^{\mathrm{Dr}}(\epsilon)$
\cite[Theorem 3.3]{Z1}.
\end{proof}
As another application, we obtain the evaluation homomorphism for
$\mathcal{U}(\epsilon)$ which will allow us to explicitly describe
certain $\mathcal{U}(\epsilon)$-modules. Consider the subalgebra
of $U(\epsilon)$ generated by $E_{i}$, $F_{i}$ and $K_{i}^{\pm1}$
for $\mathring{I}$, and let $\mathring{U}(\epsilon)$ denote its
extension by adding $K_{\delta_{i}}^{\pm1}$ ($i\in\mathbb{I}$) subject
to relations
\[
K_{i}=K_{\delta_{i}}K_{\delta_{i+1}}^{-1},\quad K_{\delta_{i}}E_{j}K_{\delta_{i}}^{-1}=q^{(\delta_{i},\alpha_{j})}E_{j},\quad K_{\delta_{i}}F_{j}K_{\delta_{i}}^{-1}=q^{-(\delta_{i},\alpha_{j})}F_{j}.
\]
The algebra $\mathring{U}(\epsilon)$ is the quantum group associated
with the Lie superalgebra $\mathfrak{gl}_{M|N}$ and $\epsilon$.
Similarly we introduce $\mathring{\mathcal{U}}(\epsilon)$ by introducing
$k_{\delta_{i}}^{\pm1}$ with
\[
k_{i}=k_{\delta_{i}}k_{\delta_{i+1}}^{-1},\quad k_{\delta_{i}}e_{j}k_{\delta_{i}}^{-1}=\mathbf{q}(\delta_{i},\alpha_{j})e_{j},\quad k_{\delta_{i}}f_{j}k_{\delta_{i}}^{-1}=\mathbf{q}(\delta_{i},\alpha_{j})^{-1}f_{j}.
\]
For the standard sequence $\epsilon=\epsilon_{M|N}$ (whose associated
sequence in the definition of $\tau$ is $1=i_{1}<i_{2}=M+1$), $\tau:\mathcal{U}(\epsilon)[\sigma]\longrightarrow U(\epsilon)[\sigma]$
is given by (recall $\sigma_{i}=1$ for $i\leq M$)
\[
\tau(k_{i})=K_{i}\sigma_{i}\sigma_{i+1},\quad\tau(e_{i})=\begin{cases}
E_{i}\\
E_{M}\\
E_{i}(-\sigma_{i}\sigma_{i+1})^{i-M}\\
E_{0}\sigma_{\leq n}(-\sigma_{n})^{N}
\end{cases},\quad\tau(f_{i})=\begin{cases}
F_{i} & \text{if }1\leq i<M\\
F_{M}\sigma_{M+1} & \text{if }i=M\\
F_{i}(-\sigma_{i}\sigma_{i+1})^{i-M-1} & \text{if }M\leq i<n\\
F_{0}\sigma_{\leq n}\sigma_{n}^{N-1} & \text{if }i=n=0.
\end{cases}
\]
It first restricts to the subalgebras above and then extends to an
isomorphism $\mathring{\tau}:\mathring{\mathcal{U}}(\epsilon)[\sigma]\longrightarrow\mathring{U}(\epsilon)[\sigma]$
by $\mathring{\tau}(k_{\delta_{i}})=K_{\delta_{i}}\sigma_{i}.$ 
\begin{prop}
\label{prop:eval-hom}For $\epsilon=\epsilon_{M|N}$, there exists
an algebra homomorphism $\mathrm{ev}_{a}:\mathcal{U}(\epsilon)\longrightarrow\mathring{\mathcal{U}}(\epsilon)$
defined by
\begin{align*}
 & \mathrm{ev}_{a}(x_{j})=x_{j}\quad\text{for }x=e,f,k\text{ and }j\in\mathring{I},\\
 & \mathrm{ev}_{a}(k_{0})=k_{1}^{-1}\cdots k_{n-1}^{-1}=k_{\delta_{n}}k_{\delta_{1}}^{-1},\\
 & \mathrm{ev}_{a}(e_{0})=a(-q)^{N-M}[[\cdots[f_{1},f_{2}]_{q_{2}},\dots]_{q_{n-2}},f_{n-1}]_{q_{n-1}}k_{\delta_{1}}k_{\delta_{n}},\\
 & \mathrm{ev}_{a}(f_{0})=a^{-1}k_{\delta_{1}}^{-1}k_{\delta_{n}}^{-1}[[\cdots[e_{1},e_{2}]_{q_{2}},\cdots]_{q_{n-2}},e_{n-1}]_{q_{n-1}}.
\end{align*}
\end{prop}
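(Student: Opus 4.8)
The plan is to transport the known evaluation homomorphism for the quantum affine superalgebra $U(\epsilon)$ through the comparison isomorphism $\tau$, just as the triangular decomposition and the Cartan-current comparison were obtained. For $\epsilon=\epsilon_{M|N}$, Zhang's work (or equivalently the classical construction of Jimbo-type evaluation maps for $U_q(\widehat{\mathfrak{sl}}_{M|N})$) provides an algebra homomorphism $\mathrm{ev}_a^U:U(\epsilon)\to\mathring{U}(\epsilon)$ sending $E_j,F_j,K_j$ ($j\in\mathring I$) to themselves, $K_0$ to $K_{\delta_n}K_{\delta_1}^{-1}$, and $E_0,F_0$ to the appropriate longest-root vectors built from iterated $q$-brackets of the $F_j$'s (resp. $E_j$'s) times a Cartan factor and a spectral parameter. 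Extending $\Sigma$-equivariantly, $\mathrm{ev}_a^U$ lifts to $\mathrm{ev}_a^U[\sigma]:U(\epsilon)[\sigma]\to\mathring{U}(\epsilon)[\sigma]$ (it is $\Sigma$-linear because the $\sigma_i$ act diagonally on generators and the defining formulas are homogeneous). Then I would simply set
\[
\mathrm{ev}_a\coloneqq\mathring{\tau}^{-1}\circ\mathrm{ev}_a^U[\sigma]\circ\tau:\mathcal{U}(\epsilon)[\sigma]\longrightarrow\mathring{\mathcal{U}}(\epsilon)[\sigma],
\]
which is manifestly an algebra homomorphism as a composite of such.

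Next I would check that $\mathrm{ev}_a$ restricts to a map $\mathcal{U}(\epsilon)\to\mathring{\mathcal{U}}(\epsilon)$ (i.e. does not genuinely involve the auxiliary $\sigma_i$), and compute its values on the Chevalley generators to match the stated formulas. For $x_j$ with $x=e,f,k$ and $j\in\mathring I$ this is immediate since $\tau(x_j)=X_j\sigma(x_j)$, $\mathrm{ev}_a^U$ fixes $X_j$, and $\mathring{\tau}^{-1}(X_j\sigma(x_j))=x_j$ by definition of $\mathring\tau$; the $\sigma$-factors cancel. For $k_0$: $\tau(k_0)=K_0\sigma_n\sigma_1$, which maps to $K_{\delta_n}K_{\delta_1}^{-1}\sigma_n\sigma_1$, and $\mathring\tau^{-1}$ of that is $k_{\delta_n}k_{\delta_1}^{-1}=k_1^{-1}\cdots k_{n-1}^{-1}$, as claimed. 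The substantive computation is for $e_0$ and $f_0$: here I would use the explicit formulas for $\tau(e_0)=E_0\sigma_{\leq n}(-\sigma_n)^N$ and $\tau(f_0)$ recorded in the excerpt just before the proposition, apply $\mathrm{ev}_a^U$ (turning $E_0$ into the iterated $q$-bracket of $F_j$'s times $K_{\delta_1}K_{\delta_n}$ and a scalar $a\cdot(\text{sign/power of }q)$), and then pull back by $\mathring\tau^{-1}$. The key point is Remark \ref{rem:tau-pres-q-bracket}: $\tau$ (hence $\mathring\tau$) carries the super $q$-brackets on the $U$-side to the ordinary $q$-brackets on the $\mathcal{U}$-side, up to signs coming from commuting $\sigma$-monomials past root vectors, so the iterated bracket $[[\cdots[F_1,F_2]_{\cdots},\dots],F_{n-1}]_{\cdots}$ pulls back to $[[\cdots[f_1,f_2]_{q_2},\dots],f_{n-1}]_{q_{n-1}}$; similarly for the $e_j$-bracket. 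Tracking the Cartan prefactors, $K_{\delta_1}K_{\delta_n}\mapsto k_{\delta_1}k_{\delta_n}\sigma_1\sigma_n$ and the residual $\sigma$-monomials must combine with $\sigma_{\leq n}(-\sigma_n)^N$ to yield $1$ (since $k_0$'s formula involves no $\sigma$), which fixes the overall scalar — this is the origin of the factor $(-q)^{N-M}$.

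The main obstacle I anticipate is precisely this bookkeeping of signs and powers of $q$: one must verify that all the $\sigma_i$-factors introduced by $\tau$, by the super-commutator signs in $\mathrm{ev}_a^U$, and by $\mathring\tau^{-1}$ conspire to leave a well-defined map into $\mathring{\mathcal{U}}(\epsilon)$ (not merely into the $\sigma$-extension), and to pin down the scalar $a(-q)^{N-M}$ in $\mathrm{ev}_a(e_0)$ and the $a^{-1}$ in $\mathrm{ev}_a(f_0)$. This is a finite, deterministic computation — one follows each generator through the three maps using the case-by-case formulas for $\sigma(e_i),\sigma(f_i)$ specialized to $\epsilon_{M|N}$ (for which the associated sequence is $1=i_1<i_2=M+1$, so only two "blocks" appear and the formulas simplify drastically) — but it is the step requiring genuine care. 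Alternatively, once the map is known to be well defined on generators, one could bypass some of this by directly verifying the Drinfeld--Jimbo relations (\ref{eq:GQG-DJ-kk})--(\ref{eq:GQG-DJ-deg5Serre}) for the proposed images in $\mathring{\mathcal{U}}(\epsilon)$; but using $\tau$ is cleaner since those relations are already known to hold on the $U(\epsilon)$-side. A final remark: the hypothesis $M\neq N$ is used implicitly throughout (it guarantees $\tau$ and the structural isomorphisms exist), and $\epsilon=\epsilon_{M|N}$ is essential because only then do we have the explicit $\tau$-formula and the classical evaluation map at hand.
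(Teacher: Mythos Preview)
Your proposal is correct and follows exactly the paper's approach: define $\mathrm{ev}_a=\mathring{\tau}^{-1}\circ\mathrm{ev}_a^Z\circ\tau$ using Zhang's evaluation map $\mathrm{ev}_a^Z$ for $U(\epsilon)$, extend $\Sigma$-linearly, and then read off the formulas while checking that the image of $\mathcal{U}(\epsilon)$ lands in $\mathring{\mathcal{U}}(\epsilon)$ (no residual $\sigma_i$). The paper records this in two sentences and leaves the sign bookkeeping you describe as implicit, so your outline is in fact more detailed than the paper's own proof.
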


\begin{proof}
According to \cite[Proposition 5.6]{Z1}, there exists an algebra
homomorphism $\mathrm{ev}_{a}^{Z}:U(\epsilon)\longrightarrow\mathring{U}(\epsilon)$
defined for each $a\in\Bbbk^{\times}$ , and we set 
\[
\mathrm{ev}_{a}=\mathring{\tau}^{-1}\circ\mathrm{ev}_{a}^{Z}\circ\tau:\mathcal{U}(\epsilon)[\sigma]\longrightarrow\mathring{\mathcal{U}}(\epsilon)[\sigma]
\]
 where we let $\mathrm{ev}_{a}^{Z}(\sigma_{i})=\sigma_{i}$. The formulas
in the statement can be read from those for $\mathrm{ev}_{a}^{Z}$,
from which we also observe that the image of $\mathcal{U}(\epsilon)\subset\mathcal{U}(\epsilon)[\sigma]$
does not involve $\Sigma$.
\end{proof}

\section{A multiplicative formula for the universal $R$-matrix\label{sec:mult-formula-R}}

\subsection{Hopf pairing and universal $R$-matrix}

There exists a Hopf pairing on $\mathcal{U}(\epsilon)$ (\textit{cf}.
\cite[Proposition 6.2.1]{Y}), namely a bilinear form between $\mathcal{U}^{\geq0}(\epsilon)=\mathcal{U}^{+}(\epsilon)\mathcal{U}^{0}(\epsilon)$
and $\mathcal{U}^{\leq0}(\epsilon)=\mathcal{U}^{0}(\epsilon)\mathcal{U}^{-}(\epsilon)$
\[
(\,\cdot\,,\,\cdot\,):\mathcal{U}^{\geq0}(\epsilon)\otimes\mathcal{U}^{\leq0}(\epsilon)\longrightarrow\Bbbk
\]
satisfying
\begin{align}
 & (x,y_{1}y_{2})=(\Delta(x),y_{1}\otimes y_{2}),\quad(x_{1}x_{2},y)=(x_{2}\otimes x_{1},\Delta(y)),\label{eq:Hopf-pair-comult}\\
 & (e_{i},f_{j})=\frac{\delta_{ij}}{q^{-1}-q},\quad(k_{i},k_{j})=\mathbf{q}(\alpha_{i},\alpha_{j})^{-1},\quad(k_{i}^{\pm1},f_{j})=(e_{i},k_{j}^{\pm1})=0,\\
 & (xk_{i},y)=(x,y)=(x,yk_{i})\quad^{\forall}x\in\mathcal{U}^{+},\,{}^{\forall}y\in\mathcal{U}^{-}.
\end{align}
where we let $(x_{1}\otimes x_{2},y_{1}\otimes y_{2})=(x_{1},y_{1})(x_{2},y_{2})$.
It is related to the bilinear form on $\mathbf{f}(\epsilon)$ by the
following simple observation.
\begin{lem}
\label{lem:Hopf-pair-bil-form}For $x,y\in\mathbf{f}(\epsilon)$,
we have $(x,y)=(x^{+},y^{-})$.
\end{lem}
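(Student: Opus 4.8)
The plan is to recognise the bilinear pairing $B$ on $\mathbf{f}(\epsilon)$ defined by $B(x,y)=(x^{+},y^{-})$ as Lusztig's form, by verifying that $B$ satisfies the properties that characterise $(\,\cdot\,,\,\cdot\,)$ in \cite[Proposition 1.2.3]{Lb} and then invoking uniqueness. Because the Hopf pairing is $Q$-graded, $B$ and $(\,\cdot\,,\,\cdot\,)$ both vanish on $\mathbf{f}(\epsilon)_{\beta}\times\mathbf{f}(\epsilon)_{\gamma}$ for $\beta\neq\gamma$, so it is enough to compare them on homogeneous elements of a common degree, by induction on the height of that degree. The base cases are immediate: $B(1,1)=(1,1)=1$, and $B(\theta_{i},\theta_{j})=(e_{i},f_{j})=\delta_{ij}(q^{-1}-q)^{-1}$, which matches $(\theta_{i},\theta_{j})=\delta_{ij}(q_{i}^{-1}-q_{i})^{-1}$ since $q_{i}-q_{i}^{-1}=q-q^{-1}$ for every $i$: indeed $q_{i}=q$ when $\epsilon_{i}=0$, and $q_{i}=-q^{-1}$ when $\epsilon_{i}=1$, so $q_{i}^{-1}-q_{i}=q^{-1}-q$ in both cases.

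The key step will be a normal-ordering formula relating the coproduct $\Delta$ on $\mathcal{U}^{+}(\epsilon)$ to the twisted coproduct $r$ on $\mathbf{f}(\epsilon)$: writing $r(x)=\sum x_{(1)}\otimes x_{(2)}$ with homogeneous components and, for $\beta=\sum_{i\in I}c_{i}\alpha_{i}\in Q$, $k_{\beta}=\prod_{i\in I}k_{i}^{c_{i}}$, one has
\[
\Delta(x^{+})=\sum x_{(1)}^{+}\,k_{\deg x_{(2)}}\otimes x_{(2)}^{+}\qquad\text{for homogeneous }x\in\mathbf{f}(\epsilon).
\]
I would prove this by induction on the height of $\deg x$: for $x=\theta_{i}$ it is the definition $\Delta(e_{i})=e_{i}\otimes1+k_{i}\otimes e_{i}$, and for $x=\theta_{j}x'$ one expands $\Delta(e_{j})\Delta(x'^{+})$, moves each $k_{j}$ past $\mathcal{U}^{+}(\epsilon)$ using $k_{j}z=\mathbf{q}(\alpha_{j},\mathrm{cl}\deg(z))\,zk_{j}$, and checks that the $\mathbf{q}$-factors thus produced agree term by term with those arising from $r(\theta_{j}x')=(\theta_{j}\otimes1+1\otimes\theta_{j})\,r(x')$ computed in the twisted product $(a_{1}\otimes a_{2})(b_{1}\otimes b_{2})=\mathbf{q}(|a_{2}|,|b_{1}|)a_{1}b_{1}\otimes a_{2}b_{2}$. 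This bookkeeping of $\mathbf{q}$-twists is the only genuine computation and the step where I expect whatever friction there is; it is eased by the absence of any superstructure on $\mathcal{U}(\epsilon)$, so that no signs intervene and the superalgebra $U(\epsilon)$ need never be invoked.

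Granting the formula, the multiplicativity of $B$ drops out: for $x,y,y'\in\mathbf{f}(\epsilon)$, using that $x\mapsto x^{-}$ is an algebra homomorphism, then (\ref{eq:Hopf-pair-comult}), then the normal-ordering formula, and finally $(uk_{i},v)=(u,v)$ for $u\in\mathcal{U}^{+}(\epsilon)$ and $v\in\mathcal{U}^{-}(\epsilon)$,
\[
B(x,yy')=(x^{+},y^{-}y'^{-})=(\Delta(x^{+}),y^{-}\otimes y'^{-})=\sum(x_{(1)}^{+}k_{\deg x_{(2)}},y^{-})(x_{(2)}^{+},y'^{-})=\sum B(x_{(1)},y)\,B(x_{(2)},y'),
\]
all sums being finite and supported on matching $Q$-degrees. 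Hence $B$ satisfies $B(1,1)=1$, $B(\theta_{i},\theta_{j})=\delta_{ij}(q_{i}^{-1}-q_{i})^{-1}$ and $B(x,yy')=\sum B(x_{(1)},y)B(x_{(2)},y')$; these determine a bilinear form on $\mathbf{f}(\epsilon)$ uniquely --- the last identity gives the recursion $B(x,y\theta_{j})=(\theta_{j},\theta_{j})\,B(r_{j}(x),y)$ lowering the $Q$-degree of the second argument, while $B(x,1)$ vanishes unless $\deg x=0$ --- and the form $(\,\cdot\,,\,\cdot\,)$ of \cite[Proposition 1.2.3]{Lb} satisfies all three, so $B=(\,\cdot\,,\,\cdot\,)$, which is the assertion. (Symmetrically, $B(xx',y)=\sum B(x,y_{(1)})B(x',y_{(2)})$ follows from the other half of (\ref{eq:Hopf-pair-comult}) together with the analogous expansion of $\Delta(y^{-})$, so $B$ in fact satisfies the full two-sided characterisation.)
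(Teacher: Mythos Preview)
Your proof is correct and follows essentially the same route as the paper: both hinge on the normal-ordering identity $\Delta(x^{+})=\sum x_{(1)}^{+}k_{\mathrm{cl}(\deg x_{(2)})}\otimes x_{(2)}^{+}$ and reduce to the base case $(\theta_{i},\theta_{j})=(e_{i},f_{j})$ via induction on the degree of the second argument. The only cosmetic difference is that the paper runs the induction directly on the equality $(x,y)=(x^{+},y^{-})$, whereas you package the same recursion as a uniqueness argument for Lusztig's form.
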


\begin{proof}
Suppose we have $y_{1},y_{2}\in\mathbf{f}(\epsilon)$ such that $(x,y_{1})=(x_{1}^{+},y_{1}^{-})$
and $(x,y_{2})=(x^{+},y_{2}^{-})$ for any $x\in\mathbf{f}(\epsilon)$.
We shall show that $(x,y_{1}y_{2})=(x^{+},y_{1}^{-}y_{2}^{-})$ for
all $x\in\mathbf{f}(\epsilon)$. Indeed, if we write $r(x)=\sum x_{(1)}\otimes x_{(2)}$
for homogeneous $x_{(1)},x_{(2)}$, then we have
\[
\Delta(x^{+})=\sum x_{(1)}^{+}k_{\mathrm{cl}(\deg x_{(2)})}\otimes x_{(2)}^{+}
\]
where $k_{\beta}\coloneqq\prod k_{i}^{c_{i}}$ for $\beta=\sum c_{i}\alpha_{i}\in\mathring{Q}$.
With the assumption we argue that
\begin{align*}
(x,y_{1}y_{2})=(r(x),y_{1}\otimes y_{2}) & =\sum(x_{(1)},y_{1})(x_{(2)},y_{2})\\
 & =\sum(x_{(1)}^{+},y_{1}^{-})(x_{(2)}^{+},y_{2}^{-})\\
 & =\sum(x_{(1)}^{+}k_{\mathrm{cl}(\deg x_{(2)})},y_{1}^{-})(x_{(2)}^{+},y_{2}^{-})\\
 & =\sum(x_{(1)}^{+}k_{\mathrm{cl}(\deg x_{(2)})}\otimes x_{(2)}^{+},y_{1}^{-}\otimes y_{2}^{-})\\
 & =(\Delta(x^{+}),y_{1}^{-}\otimes y_{2}^{-})=(x^{+},y_{1}^{-}y_{2}^{-})
\end{align*}
as $(ak_{\beta},b)=(a,b)$ for $a\in\mathcal{U}^{+}$, $b\in\mathcal{U}^{-}$
and $\beta\in\mathring{Q}$. It remains to recall that $(\theta_{i},\theta_{j})=\frac{\delta_{ij}}{q_{i}^{-1}-q_{i}}=(e_{i},f_{j})$.
\end{proof}
In particular, the Hopf pairing is non-degenerate if $M\neq N$, which
is always assumed throughout this article. In such cases, the universal
$R$-matrix of $\mathcal{U}(\epsilon)$ can be constructed as follows
(see \cite[Section 3.2]{KL} for details, where a different comultiplication
is used). For each $\beta\in Q^{+}$, take a basis $\{v_{\beta}^{i}\}$
of $\mathcal{U}^{+}(\epsilon)_{\beta}$ and the dual basis $\{v_{\beta,i}\}$
of $\mathcal{U}^{-}(\epsilon)_{-\beta}$ with respect to the Hopf
pairing. Then the universal $R$-matrix is given by
\[
\mathcal{R}=\left(\sum_{\beta\in Q^{+}}\sum_{i}v_{\beta}^{i}\otimes v_{\beta,i}\right)\overline{\Pi}_{\mathbf{q}},
\]
where the summation is defined in a suitably completed tensor product
$\mathcal{U}^{+}(\epsilon)\widehat{\otimes}\mathcal{U}^{-}(\epsilon)$
and $\overline{\Pi}_{\mathbf{q}}$ is an operator given by
\[
\overline{\Pi}_{\mathbf{q}}(x\otimes y)=\mathbf{q}(\mu,\nu)^{-1}x\otimes y\quad\text{if }k_{\beta}x=\mathbf{q}(\beta,\mu)x,\,k_{\beta}y=\mathbf{q}(\beta,\nu)y\text{ for }\beta\in\mathring{Q}.
\]

The goal of this section is to prove the following Khoroshkin--Tolstoy-type
multiplicative formula for the universal $R$-matrix \cite{KT} for
$\mathcal{U}(\epsilon)$.
\begin{thm}
\label{thm:mult-formula-R} If $\epsilon$ is such that $p(i)p(i+1)=0$
for all $i=1,\dots,n-2$, then the universal $R$-matrix $\mathcal{R}$
for $\mathcal{U}(\epsilon)$ has the following decomposition:
\begin{align*}
\mathcal{R} & =\mathcal{R}^{+}\mathcal{R}^{0}\mathcal{R}^{-}\overline{\Pi}_{\mathbf{q}},\\
\mathcal{R}^{\pm} & =\prod_{\beta\in\Phi_{+}(\pm\infty)}\exp_{\beta}\left((q^{-1}-q)e_{\beta}\otimes f_{\beta}\right),\\
\mathcal{R}^{0} & =\exp\left(-\sum_{r>0}\sum_{i,j\in\mathring{I}}\frac{r(q-q^{-1})^{2}}{q_{i}^{r}-q_{i}^{-r}}\widetilde{C}_{ji}^{r}h_{i,r}\otimes h_{j,-r}\right)
\end{align*}
where $\widetilde{C}^{r}=(\widetilde{C}_{ij}^{r})_{i,j}$ is the inverse
matrix of 
\[
C^{r}=\left(\frac{\mathbf{q}(\alpha_{i},\alpha_{j})^{r}-\mathbf{q}(\alpha_{i},\alpha_{j})^{-r}}{q_{i}^{r}-q_{i}^{-r}}\right)_{i,j\in\mathring{I}}
\]
which is invertible whenever $M\neq N$.
\end{thm}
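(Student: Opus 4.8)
The plan is to follow the strategy of Damiani \cite{D1,D2} for the non-super case, adapting it to $\mathcal{U}(\epsilon)$. Since the root system of $\widehat{\mathfrak{sl}}_{M|N}$ coincides with that of $\widehat{\mathfrak{sl}}_{M+N}$, the combinatorial skeleton of the argument — the convex ordering on $\Phi_+$, the PBW basis indexed by it, and the bookkeeping of which root vectors contribute to each $\beta$-summand of $\mathcal{R}$ — transfers verbatim. The first step is to use the braid action $T_i$ (and the doubly infinite braid word coming from $\widehat{W}$) to construct the real affine root vectors $e_\beta$, $f_\beta$ for $\beta \in \Phi_+(\pm\infty)$ and the imaginary root vectors $h_{i,r}$, obtaining via Propositions~\ref{prop:Tw-on-ef} and \ref{prop:Lus-Lemma-40.1.2} a PBW basis of $\mathcal{U}^+(\epsilon)$ adapted to the convex order. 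I would then prove (following Section~\ref{subsec:aff-PBW}) that the product over $\beta \in \Phi_+(\infty)$ of the real-root $q$-exponential factors, times the imaginary contribution, times the product over $\beta \in \Phi_+(-\infty)$, reproduces $\sum_{\beta}\sum_i v_\beta^i \otimes v_{\beta,i}$.

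The decisive input is the computation of the Hopf pairing on basis vectors. Using Lemma~\ref{lem:Hopf-pair-bil-form} one reduces pairings in $\mathcal{U}(\epsilon)$ to the symmetric bilinear form on $\mathbf{f}(\epsilon)$, and Proposition~\ref{prop:braid-isometry} shows $g$ (hence each $T_i$) is an isometry, so the pairing of real root vectors $(e_\beta,f_\beta)$ equals $(e_i,f_i) = (q^{-1}-q)^{-1}$ up to the expected correction, exactly as in \cite{D1}. This yields the real-root factors $\exp_\beta((q^{-1}-q)e_\beta \otimes f_\beta)$. For the imaginary part one must compute $(h_{i,r},h_{j,s})$; by the grading this vanishes unless $r = -s > 0$, and the resulting matrix is precisely the symmetrized deformed Cartan matrix $C(q^r,\widetilde q^r)$. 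Dualizing — i.e.\ inverting this matrix, which requires its nondegeneracy — produces the $\widetilde{C}^r_{ji}$ coefficients in the stated exponential $\mathcal{R}^0$.

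The main obstacle, as the introduction already flags, is the computation of $(h_{i,r},h_{j,s})$ when $p(j) = \epsilon_j + \epsilon_{j+1} = 1$: here the usual inductive scheme that expresses $h_{j,r}$ through $q$-brackets of Chevalley and already-constructed root vectors behaves differently because of the odd simple root. The hypothesis $p(i)p(i+1) = 0$ for $1 \le i \le n-2$ is exactly what saves the day: it guarantees that if $p(j) = 1$ then $p(i) = 0$ for the neighbouring index $i$ entering the computation, so one can appeal to the symmetry $(h_{i,r},h_{j,s}) = (h_{j,s},h_{i,r})$ of the Hopf pairing together with the known even-node computation of $(h_{j,s},h_{i,r})$, and the relation between the Hopf pairing and the bilinear form on $\mathcal{U}^-(\epsilon)$ from Section~\ref{subsec:Lusztig-f-bil-form}, to reduce the hard pairing to an already-settled one. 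Finally, invertibility of $C(q^r,\widetilde q^r)$ for $M \neq N$ is a direct determinant computation: expanding along the super structure one finds $\det C(q^r,\widetilde q^r)$ is a nonzero multiple of $[M]_{q^r} N$-type factors that vanish only when $M = N$ (matching the $M \neq N$ hypothesis running through the paper), which legitimizes passing to $\widetilde{C}^r$. Assembling the three pieces and the factor $\overline{\Pi}_{\mathbf q}$ in the convex order gives the claimed formula.
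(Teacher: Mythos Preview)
Your outline matches the paper's approach closely: PBW basis via the braid action, Levendorskii--Soibelman, and computation of the Hopf pairing on root vectors following Damiani. Two points deserve sharpening, though.

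First, you invoke ``the symmetry $(h_{i,r},h_{j,s}) = (h_{j,s},h_{i,r})$ of the Hopf pairing'' as if it were a standing property. It is not: the Hopf pairing is between $\mathcal{U}^{\geq 0}$ and $\mathcal{U}^{\leq 0}$, and $(x,y)=(\Omega(y),\Omega(x))$ fails in general. The paper's Proposition~\ref{prop:symm-pair-hh} is precisely the new ingredient, and its proof requires more than the identification with the symmetric form on $\mathbf{f}(\epsilon)$ (Lemma~\ref{lem:Hopf-pair-bil-form}). One must realize $\widetilde{e}_{(r\delta,i)}$ explicitly as an iterated $q$-bracket (Lemma~\ref{lem:formula-x-_i,1} and the recursion (\ref{eq:ind-formula-xir})) and then compare the two anti-involutions $\omega$ and $\Omega$ on such brackets, obtaining $\Omega(\widetilde{e}_{(r\delta,i)}) = c_r\,\omega(\widetilde{e}_{(r\delta,i)})$ for a scalar $c_r$ \emph{independent of $i$}. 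Only then does the symmetry of the Lusztig form on $\mathbf{f}(\epsilon)$ yield the desired identity. Your sketch names the right lemma but skips this computation, which is where the work lies.

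Second, you omit the comultiplication formulas for real and imaginary root vectors (Section~\ref{subsec:comult-Dr}, especially Theorem~\ref{thm:comult-imag-rootv}). These are what make the pairing of distinct PBW monomials vanish (Lemma~\ref{lem:Dam-Prop8.1} and the orthogonality in Section~\ref{subsec:Pairing-PBW}), and without them the passage from ``we know each $(e_\beta,f_\beta)$'' to ``the product of $q$-exponentials equals $\sum v_\beta^i\otimes v_{\beta,i}$'' is not justified. This part does transfer from \cite{D1} with minimal change, but it is a substantial ingredient, not just bookkeeping.
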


The unexplained notations, which will not play any significant role
in this article, can be found in Section \ref{subsec:aff-PBW} and
\ref{subsec:Pairing-PBW}. The proof consists of an explicit construction
of affine root vectors using the braid action and then a PBW basis
of $\mathcal{U}^{+}(\epsilon)$ (Section \ref{subsec:aff-PBW}), and
a computation of Hopf pairing between PBW vectors (Section \ref{subsec:Pairing-imag-rvector}
and \ref{subsec:Pairing-PBW}) which makes use of a comultiplication
formula for affine root vectors (Section \ref{subsec:comult-Dr})
and Levendorskii-Soibelman formula (Theorem \ref{thm:LS-formula}).
We will mostly follow the argument in \cite{D1}, with essentially
the only exception of Theorem \ref{thm:pairing-imag-rootv} where
one needs a new idea of relating the Hopf pairing to the bilinear
form on the algebra $\mathbf{f}(\epsilon)$.

\subsection{Affine root vectors, PBW basis and Levendorskii--Soibelman formula\label{subsec:aff-PBW}}

The set of positive roots $\Phi^{+}$($\subset Q$) of the affine
Lie superalgebra $\widehat{\mathfrak{sl}}_{M|N}$ can be identified
with the one of $\widehat{\mathfrak{sl}}_{M+N}$ once we forget the
parity (determined by $\epsilon$). Namely, 
\begin{align*}
\Phi^{+} & =\Phi_{\mathrm{re}}^{+}\sqcup\Phi_{\mathrm{im}}^{+},\\
\Phi_{\mathrm{re}}^{+} & =\left\{ r\delta+\beta\,|\,r\in\mathbb{Z}_{\geq0},\,\beta\in\mathring{\Phi}^{+}\right\} \cup\left\{ s\delta-\gamma\,|\,s\in\mathbb{Z}_{>0},\,\gamma\in\mathring{\Phi}^{+}\right\} ,\\
\Phi_{\mathrm{im}}^{+} & =\left\{ s\delta\,|\,s\in\mathbb{Z}_{>0}\right\} ,
\end{align*}
where $\mathring{\Phi}^{+}\subset\mathring{Q}$ is the set of positive
roots $\{\delta_{i}-\delta_{j}\,|\,1\leq i<j\leq n\}$ of $\mathfrak{sl}_{M|N}$
(or $\mathfrak{sl}_{M+N}$) and $\mathring{Q}$ is embedded into $Q$
by $\alpha_{i}\mapsto\alpha_{i}$ ($i\in\mathring{I}$). As above,
we assume that the affine Weyl group $W$ of the affine Lie algebra
$\widehat{\mathfrak{sl}}_{n}$ acts on $Q$ as in the non-super case
(\ref{eq:affW-action}). Recall that $W$ is a semidirect product
$\mathfrak{S}_{n}\ltimes\mathring{Q}$ of the Weyl group $\mathfrak{S}_{n}$
of $\mathfrak{sl}_{M+N}$ and the root lattice $\mathring{Q}=\bigoplus_{i=1}^{n-1}\mathbb{Z}\varpi_{i}$
of $\mathfrak{sl}_{n}$ (see \cite[Chapter 6]{Kac} for more details).
For $\alpha\in\mathring{Q}$, we denote by $t_{\alpha}$ the element
of $W$ corresponding to $(1,\alpha)\in\mathfrak{S}_{n}\ltimes\mathring{Q}$. 

The extended affine Weyl group $\widetilde{W}$ is a semidirect product
of the affine Weyl group $W$ and the cyclic group $T=\left\langle \mu\right\rangle $
of order $n$, where $\mu$ is the automorphism of the Dynkin diagram
of type $A_{n-1}^{(1)}$ determined by $\alpha_{i}\mapsto\alpha_{i+1}$
for all $i\in I$. Equivalently, we have $\mu s_{i}\mu^{-1}=s_{i+1}$
in $\widetilde{W}$. We say an expression 
\[
w=\mu^{k}s_{i_{1}}s_{i_{2}}\cdots s_{i_{\ell}}\in\widetilde{W}
\]
is reduced if $s_{i_{1}}\cdots s_{i_{\ell}}\in W$ is so.

The braid action can be extended to the one associated with $\widetilde{W}$
by introducing an isomorphism representing $\mu$. Indeed, the action
of $W$ on $\epsilon$ can be extended to $\widetilde{W}$ by setting
$(\epsilon\mu)_{i+1}=\epsilon_{i}$ and then we define
\begin{align*}
Z:\mathcal{U}(\epsilon) & \longrightarrow\mathcal{U}(\epsilon\mu)\\
e_{i},f_{i},k_{i} & \longmapsto e_{i+1},f_{i+1},k_{i+1}.
\end{align*}
Immediately we have $ZT_{i}Z^{-1}=T_{i+1}$, and hence for $w=\mu^{k}w^{\prime}\in\widetilde{W}$
we have a well-defined isomorphism $T_{w}=Z^{k}T_{w^{\prime}}$. 

On the other hand, $\widetilde{W}$ is also realized as $\mathring{W}\rtimes\mathring{P}_{A_{n-1}}$,
where the subgroup $W$ occurs as $\mathring{W}\rtimes\mathring{Q}$.
Let $\varpi_{i}\in\mathring{P}_{A_{n-1}}$ be the $i$-th fundamental
weight ($i=1,\dots,n-1$) and consider the element $t_{\varpi_{i}}=(1,\varpi_{i})$
of $\widetilde{W}=\mathring{W}\rtimes\mathring{P}_{A_{n-1}}$. We
may write it uniquely as $t_{\varpi_{i}}=\tau_{i}w_{i}^{\prime}$
for $\tau_{i}\in T,\,w_{i}^{\prime}\in W$, so that
\begin{align*}
t_{2\rho} & =t_{\varpi_{1}}t_{\varpi_{2}}\cdots t_{\varpi_{n-1}}t_{\varpi_{1}}t_{\varpi_{2}}\cdots t_{\varpi_{n-1}}\\
 & =(\tau_{1}w_{1}^{\prime})(\tau_{2}w_{2}^{\prime})\cdots(\tau_{n-1}w_{n-1}^{\prime})(\tau_{1}w_{1}^{\prime})(\tau_{2}w_{2}^{\prime})\cdots(\tau_{n-1}w_{n-1}^{\prime})
\end{align*}
for $2\rho=2\sum\varpi_{i}\in\mathring{Q}$. We can move $\tau_{i}$'s
to the right to rewrite it as
\[
t_{2\rho}=w_{1}w_{2}\cdots w_{n-1}w_{n}w_{n+1}\cdots w_{2n-2}\left(\prod_{i=1}^{n-1}\tau_{i}\right)^{2}.
\]
Since $t_{2\rho}\in W$, the last factor $(\prod\tau_{i})^{2}$ is
actually trivial. Then it is well-known that $\ell(t_{2\rho})=\sum_{i=1}^{2n-2}\ell(w_{i})$,
so that if we take a reduced expression $\underline{w}_{i}$ of $w_{i}$'s,
then the expression
\[
t_{2\rho}=\underline{w}_{1}\underline{w}_{2}\cdots\underline{w}_{n-1}\underline{w}_{n}\underline{w}_{n+1}\cdots\underline{w}_{2n-2}
\]
 is also reduced (see \textit{e.g.} \cite{Bo}).
\begin{rem}
\label{rem:T_varpi_i-inv}We use the notation $T_{\lambda}=T_{t_{\lambda}}$
for $\lambda\in\mathring{P}_{A_{n-1}}$. Then for $i\in\mathring{I}$,
$T_{\varpi_{i}}$ (and so any $T_{\lambda}$) is an automorphism of
$\mathcal{U}(\epsilon)$, namely does not change $\epsilon$. 
\end{rem}

From now on, we fix a reduced expression of $w_{j}^{\prime}$ given
by
\begin{equation}
w_{j}^{\prime}=\mathbf{s}_{(n-j,n-1)}\mathbf{s}_{(n-j-1,n-2)}\cdots\mathbf{s}_{(1,j)},\quad\mathbf{s}_{(a,b)}=s_{a}s_{a+1}\cdots s_{b}.\label{eq:red-exp-translation}
\end{equation}
Accordingly we obtain a reduced expression of $t_{2\rho}$ which we
write
\[
t_{2\rho}=s_{i_{1}}s_{i_{2}}\cdots s_{i_{N}},
\]
and then we let $i_{k}\coloneqq i_{k(\mathrm{mod}N)}$ for $k\in\mathbb{Z}$.
The set of positive real roots $\Phi_{\mathrm{re}}^{+}$ are then
parametrized by
\[
\Phi_{\mathrm{re}}^{+}=\{\beta_{k}\}_{k\in\mathbb{Z}},\quad\beta_{k}=\begin{cases}
s_{i_{1}}s_{i_{2}}\cdots s_{i_{k-1}}(\alpha_{i_{k}}) & \text{if }k>0\\
s_{i_{0}}s_{i_{-1}}\cdots s_{i_{k+1}}(\alpha_{i_{k}}) & \text{if }k\leq0.
\end{cases}
\]
One can further check that
\begin{align*}
\Phi^{+}(\infty)\coloneqq & \left\{ \beta_{k}\right\} _{k\geq0}=\left\{ s\delta-\gamma\,|\,s\in\mathbb{Z}_{>0},\,\gamma\in\mathring{\Phi}^{+}\right\} ,\\
\Phi^{+}(-\infty)\coloneqq & \left\{ \beta_{k}\right\} _{k<0}=\left\{ r\delta+\beta\,|\,r\in\mathbb{Z}_{\geq0},\,\beta\in\mathring{\Phi}^{+}\right\} .
\end{align*}
In particular, the subsets $\Phi^{+}(\pm\infty)$ of $\Phi_{\mathrm{re}}^{+}$
are independent of the choice of the reduce expression of $t_{2\rho}$. 

We define \textit{real root vectors} $e_{\beta}\in\mathcal{U}(\epsilon)_{\beta}$
for $\beta\in\Phi_{\mathrm{re}}^{+}$ by 
\[
e_{\beta_{k}}=\begin{cases}
T_{i_{1}}T_{i_{2}}\cdots T_{i_{k-1}}(e_{i_{k}}) & \text{if }k>0\\
T_{i_{0}}^{-1}T_{i_{-1}}^{-1}\cdots T_{i_{k+1}}^{-1}(e_{i_{k}}) & \text{if }k\leq0,
\end{cases}
\]
where $e_{i_{k}}$ is taken from $\mathcal{U}(\epsilon s_{i_{1}}\cdots s_{i_{k-1}})$
if $k>0$ and from $\mathcal{U}(\epsilon s_{i_{0}}\cdots s_{i_{k+1}})$
if $k\le0$ to ensure $e_{\beta}\in\mathcal{U}(\epsilon)$. Thanks
to Corollary \ref{prop:Lus-Lemma-40.1.2}, we have $e_{\beta_{k}}\in\mathcal{U}^{+}(\epsilon)$.
The root vectors for negative real roots are then defined by $f_{\beta_{k}}=\Omega(e_{\beta_{k}})\in\mathcal{U}^{-}(\epsilon)_{-\beta_{k}}$. 

Next, recall that the imaginary roots $r\delta$ occur with multiplicity
$n-1$. For $i\in\mathring{I}$ and $r>0$, we define a vector of
weight $r\delta$
\[
\widetilde{e}_{(r\delta,i)}=e_{r\delta-\alpha_{i}}e_{i}-\mathbf{q}(\alpha_{i},\alpha_{i})^{-1}e_{i}e_{r\delta-\alpha_{i}}.
\]
which obviously belongs to $\mathcal{U}^{+}(\epsilon)$. As \textit{imaginary
root vectors} we choose a renormalization $e_{(r\delta,i)}\in\mathcal{U}^{+}(\epsilon)_{r\delta}$
determined from the following equation
\[
\exp\left((q-q^{-1})\sum_{r>0}e_{(r\delta,i)}u^{r}\right)=1+(q-q^{-1})\sum_{r>0}\widetilde{e}_{(r\delta,i)}u^{r}.
\]
Finally we let $\widetilde{f}_{(r\delta,i)}=\Omega(\widetilde{e}_{(r\delta,i)})$
and $f_{(r\delta,i)}=\Omega(e_{(r\delta,i)})$ for $r>0$.
\begin{rem}
\label{rem:compare-rootv-Drinfeld}Let $o(i)=(-1)^{i}$. From our
choice of the reduced expression of $t_{2\rho},$ the loop generators
are given in terms of affine root vectors as (\textit{cf.} \cite[Lemma 1.5]{BCP})
\begin{align*}
x_{i,k}^{+} & =\begin{cases}
o(i)^{k}e_{k\delta+\alpha_{i}} & \text{if }k\geq0\\
-o(i)^{k}f_{-k\delta-\alpha_{i}}k_{i}^{-1}c^{k} & \text{if }k<0,
\end{cases}\quad x_{i,k}^{-}=\begin{cases}
-o(i)^{k}c^{-k}k_{i}e_{k\delta-\alpha_{i}} & \text{if }k>0\\
o(i)^{k}f_{-k\delta+\alpha_{i}} & \text{if }k\leq0,
\end{cases}\\
h_{i,r} & =\begin{cases}
o(i)^{r}c^{-r/2}e_{(r\delta,i)} & \text{if }r>0\\
o(i)^{r}c^{r/2}f_{(-r\delta,i)} & \text{if }r<0,
\end{cases}\quad\begin{cases}
\psi_{i,r}^{+}=o(i)^{r}(q-q^{-1})c^{-r/2}k_{i}\widetilde{e}_{(r\delta,i)}\\
\psi_{i,-r}^{-}=-o(i)^{r}(q-q^{-1})c^{r/2}k_{i}^{-1}\widetilde{f}_{(r\delta,i)}
\end{cases}\text{for }r>0,\quad\psi_{i,0}^{\pm}=k_{i}^{\pm1}.
\end{align*}
\end{rem}

The choice of a reduced expression of $t_{2\rho}$ gives rise to a
total ordering $\prec$ on the set of positive roots $\Phi^{+}$
\[
\underbrace{\beta_{0}\prec\beta_{-1}\prec\beta_{-2}\prec\cdots}_{\text{real roots }\beta_{k},\,k\leq0}\prec\underbrace{\cdots\prec2\delta\prec\delta}_{\text{imaginary roots}}\prec\underbrace{\cdots\prec\beta_{3}\prec\beta_{2}\prec\beta_{1}}_{\text{real roots }\beta_{k},\,k>0}
\]
which is convex in the sense that
\[
\beta=\beta_{1}+\beta_{2}\text{ for }\beta,\beta_{1},\beta_{2}\in\Phi^{+}\Longrightarrow\beta_{1}\prec\beta\prec\beta_{2}\text{ or }\beta_{2}\prec\beta\prec\beta_{1}.
\]
Let $\widetilde{\Phi}_{\mathrm{im}}^{+}=\left\{ (r\delta,i)\,|\,r\in\mathbb{Z}_{>0},\,i\in\mathring{I}\right\} $
be the set of positive imaginary roots counted with multiplicity.
We refine the ordering to the one on $\widetilde{\Phi}^{+}=\Phi_{\mathrm{re}}^{+}\cup\widetilde{\Phi}_{\mathrm{im}}^{+}$
by declaring
\[
\beta_{0}\prec\beta_{-1}\prec\cdots\prec\cdots\prec(2\delta,n-1)\prec(\delta,1)\prec\cdots\prec(\delta,n-2)\prec(\delta,n-1)\prec\cdots\prec\beta_{2}\prec\beta_{1}
\]
following the convention of \cite{D1}.

Now we set
\[
\mathrm{Par}_{\epsilon}=\left\{ \overrightarrow{\gamma}=(\gamma_{1}\preceq\cdots\preceq\gamma_{r})\in\bigcup_{r\geq0}\left(\widetilde{\Phi}_{+}\right)^{r}\,\vert\,{}^{\forall}i,\,\gamma_{i}\neq\gamma_{i+1}\text{ if }p(\gamma_{i})=1\right\} 
\]
where we assume $p((r\delta,i))=p(r\delta)=0$. The extra condition
is a reflection of the fact that any odd element $x$ of a Lie superalgebra
$\mathfrak{g}$ satisfying $[x,x]=0$ (which is automatic in $\mathfrak{sl}_{M|N}$
or $\widehat{\mathfrak{sl}}_{M|N}$) squares to zero in the universal
enveloping algebra. This is still justified in our quantum case by
the following easy observation.
\begin{lem}
\label{lem:orrvector-sqzero}If $p(\beta_{k})=1$, then $e_{\beta_{k}}^{2}=0=f_{\beta_{k}}^{2}$.
\end{lem}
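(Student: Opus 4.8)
The plan is to deduce the vanishing from the single-generator identity $e_i^2=0$ (valid when $p(i)=1$) transported along the braid isomorphisms. First, since $\Omega$ is an anti-involution with $\Omega(e_{\beta_k})=f_{\beta_k}$, we have $f_{\beta_k}^2=\Omega(e_{\beta_k}^2)$, so it suffices to treat $e_{\beta_k}^2$. By construction $e_{\beta_k}$ is obtained by applying to a Chevalley generator the braid isomorphism attached to the relevant prefix of the fixed reduced expression of $t_{2\rho}$: there is $u\in W$ with $u(\alpha_{i_k})=\beta_k$ such that $e_{\beta_k}=T_u(e_{i_k})$, where $T_u\colon\mathcal{U}(\epsilon u)\to\mathcal{U}(\epsilon)$ is an algebra isomorphism and $e_{i_k}$ denotes the corresponding Chevalley generator of $\mathcal{U}(\epsilon u)$ (for $k>0$ take $u=s_{i_1}\cdots s_{i_{k-1}}$; for $k\le 0$ the analogous product built from the $T_i^{-1}$'s). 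Since $T_u$ is multiplicative, $e_{\beta_k}^2=T_u(e_{i_k}^2)$, so the claim reduces to showing $e_{i_k}^2=0$ in $\mathcal{U}(\epsilon u)$, i.e.\ that $p_{\epsilon u}(i_k)=1$.

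To handle this I would first record the elementary fact that for a real root $\gamma$ one has $p_\epsilon(\gamma)=1$ precisely when $\mathbf{q}_\epsilon(\gamma,\gamma)=-1$: writing $\mathrm{cl}(\gamma)=\pm(\delta_a-\delta_b)$ gives $\mathbf{q}_\epsilon(\gamma,\gamma)=q_aq_b$, which equals $q^{\pm2}$ when $\epsilon_a=\epsilon_b$ and equals $-1$ when $\epsilon_a\ne\epsilon_b$, whereas $p_\epsilon(\gamma)\equiv\epsilon_a+\epsilon_b\pmod 2$; the simple-root instance of this is just $\mathbf{q}_\epsilon(\alpha_i,\alpha_i)=q_iq_{i+1}=-1\Leftrightarrow p(i)=1$. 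Now iterating Lemma \ref{lem:W-pres-q} along a word for $u$ yields $\mathbf{q}_\epsilon(\beta_k,\beta_k)=\mathbf{q}_\epsilon(u\alpha_{i_k},u\alpha_{i_k})=\mathbf{q}_{\epsilon u}(\alpha_{i_k},\alpha_{i_k})$, so the hypothesis $p_\epsilon(\beta_k)=1$ translates into $\mathbf{q}_{\epsilon u}(\alpha_{i_k},\alpha_{i_k})=-1$, i.e.\ $p_{\epsilon u}(i_k)=1$. It then remains to observe that $e_i^2=0$ whenever $p(i)=1$: since $\mathbf{q}_\epsilon(\alpha_i,\alpha_i)=-1$ in that case, a one-line computation with $r$ gives $r(\theta_i^2)=\theta_i^2\otimes 1+1\otimes\theta_i^2$, hence $(\theta_i^2,\theta_i^2)=0$ and $\theta_i^2\in\mathcal{I}$, so $e_i^2=(\theta_i^2)^+=0$ in $\mathcal{U}^+(\epsilon)$ --- the relation already used in Section \ref{subsec:Lusztig-f-bil-form}. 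Combining, $e_{\beta_k}^2=T_u(e_{i_k}^2)=0$, and then $f_{\beta_k}^2=\Omega(e_{\beta_k}^2)=0$.

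The only delicate point I anticipate is the bookkeeping in the reduction step, namely correctly tracking how the sequence $\epsilon$ gets permuted as one moves along the reduced expression of $t_{2\rho}$, so that the Chevalley generator $e_{i_k}$ is read off in the right generalized quantum group $\mathcal{U}(\epsilon u)$. But this is exactly what the conservation law $\mathbf{q}_\epsilon(\alpha,\beta)=\mathbf{q}_{\epsilon s_i}(s_i\alpha,s_i\beta)$ of Lemma \ref{lem:W-pres-q} is designed to control, so once that is invoked no new ingredient beyond the material already developed is required.
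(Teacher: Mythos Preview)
Your proof is correct and follows essentially the same approach as the paper: both reduce $e_{\beta_k}^2=0$ to $e_{i_k}^2=0$ in the twisted algebra via the braid isomorphism, and both establish that the parity of $\alpha_{i_k}$ in $\mathcal{U}(\epsilon u)$ agrees with that of $\beta_k$ in $\mathcal{U}(\epsilon)$. The only difference is cosmetic: the paper records this parity preservation directly as the identity $p_\epsilon(\beta)=p_{\epsilon s_i}(s_i\beta)$, whereas you deduce it from the equivalent characterization $p_\epsilon(\gamma)=1\Leftrightarrow\mathbf{q}_\epsilon(\gamma,\gamma)=-1$ together with Lemma~\ref{lem:W-pres-q}; your extra justification of $e_i^2=0$ via the radical of the bilinear form is correct but not needed, since this relation is already part of the presentation.
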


\begin{proof}
Since $e_{\beta_{k}}^{2}=T_{i_{1}}\cdots T_{i_{k-1}}(e_{i_{k}}^{2})$
for $k>0$ (and similarly for $k\leq0$), it suffices to observe that
the parity is fixed under $T_{i}^{\pm1}$, that is
\[
p_{\epsilon}(\beta)=p_{\epsilon s_{i}}(s_{i}\beta)\quad^{\forall}\beta\in Q.
\]
\end{proof}
For $\overrightarrow{\gamma}\in\mathrm{Par}_{\epsilon}$, we define
a PBW vector (associated with the choice of $\epsilon$ and a reduced
expression of $t_{2\rho}$)
\[
e(\overrightarrow{\gamma})=e_{\gamma_{1}}e_{\gamma_{2}}\cdots e_{\gamma_{r}},\quad e(-\overrightarrow{\gamma})=e_{\gamma_{r}}e_{\gamma_{r-1}}\cdots e_{\gamma_{1}}
\]
and also $f(\overrightarrow{\gamma})=\Omega(e(-\overrightarrow{\gamma}))$.
We shall prove that they form a basis of $\mathcal{U}^{+}(\epsilon)$. 
\begin{thm}
\label{thm:PBW-basis}The sets $\left\{ e(\overrightarrow{\gamma})\,|\,\overrightarrow{\gamma}\in\mathrm{Par}_{\epsilon}\right\} $
and $\left\{ e(-\overrightarrow{\gamma})\,|\,\overrightarrow{\gamma}\in\mathrm{Par}_{\epsilon}\right\} $
are bases of $\mathcal{U}^{+}(\epsilon)$.
\end{thm}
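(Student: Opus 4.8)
The plan is to follow the standard strategy for PBW bases of quantum affine algebras, adapting the arguments of Beck \cite{B2} and Damiani \cite{D1} (and, at the level of the finite part, Lusztig \cite{Lb}), while keeping careful track of how the super-parity forces the extra condition in the definition of $\mathrm{Par}_{\epsilon}$. Since the two claimed bases are exchanged by the anti-automorphism $\Omega$ composed with a reversal of the ordering, it suffices to prove the statement for $\{e(\overrightarrow{\gamma})\}$; applying $\Omega$ then gives the assertion for $\{e(-\overrightarrow{\gamma})\}$ and hence, by definition of $f(\overrightarrow{\gamma})$, the $f$-side. The proof naturally splits into a spanning part and a linear-independence part.

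For spanning, I would first treat the two ``ends'' of the convex order, i.e.\ the subalgebras generated by the real root vectors $e_{\beta_k}$ with $k>0$ and with $k\le 0$ respectively. For a fixed reduced expression of $t_{2\rho}$ one has, as in \cite[Proposition 40.2.1]{Lb} via the braid action $T_i$ and Proposition \ref{prop:Lus-Lemma-40.1.2}, that ordered monomials in finitely many consecutive $e_{\beta_k}$ span the corresponding $T_w$-twisted copy of $\mathcal{U}^+$; the only new point is that whenever $p(\beta_k)=1$ we may assume the exponent of $e_{\beta_k}$ is $0$ or $1$, which is exactly Lemma \ref{lem:orrvector-sqzero}. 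For the imaginary part, one uses the $q$-commutation relations among the $\widetilde e_{(r\delta,i)}$ (equivalently, the relation (\ref{eq:GQG-Drinfeld-q-Heis}) for the $h_{i,r}$ read through Remark \ref{rem:compare-rootv-Drinfeld}), which shows that the subalgebra they generate is spanned by ordered monomials in the $e_{(r\delta,i)}$ with no repetition constraint (consistent with $p(r\delta,i)=0$). Finally one glues the three pieces: using the Levendorskii--Soibelman-type straightening relations (Theorem \ref{thm:LS-formula}, invoked as a stated result) one shows that an arbitrary product $e_{\gamma}e_{\gamma'}$ with $\gamma\succ\gamma'$ can be rewritten as a linear combination of $\prec$-ordered monomials whose ``weight'' $\gamma+\gamma'$ is preserved, so that an evident induction on the root lattice $Q^+$ reduces every element of $\mathcal{U}^+(\epsilon)$ to a combination of the $e(\overrightarrow\gamma)$ with $\overrightarrow\gamma\in\mathrm{Par}_\epsilon$.

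For linear independence, the cleanest route is a dimension count via the Hilbert series. The triangular decomposition (\ref{eq:GQG-tri-decomp}) together with the isomorphism $^+:\mathbf{f}(\epsilon)\xrightarrow{\sim}\mathcal{U}^+(\epsilon)$ pins down $\dim \mathcal{U}^+(\epsilon)_\beta$ for every $\beta\in Q^+$; alternatively one transports the known graded dimensions of $U^+(\epsilon)$ through the comparison isomorphism $\tau$ (which preserves the $Q$-grading up to the $\Sigma$-factors). On the other hand the number of $\overrightarrow\gamma\in\mathrm{Par}_\epsilon$ of a given weight $\beta$ equals the super-analogue of Kostant's partition function for $\widehat{\mathfrak{sl}}_{M|N}$ — real roots of odd parity contribute a factor $(1+t_\beta)$ rather than $(1-t_\beta)^{-1}$, imaginary roots contribute $(1-t_{r\delta})^{-(n-1)}$ — and this matches $\dim\mathcal{U}^+(\epsilon)_\beta$ because the root system and the parity assignment of $\widehat{\mathfrak{sl}}_{M|N}$ coincide with those of $\mathfrak{sl}_{M|N}$ extended affinely. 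Since a spanning set of the correct cardinality in each finite-dimensional graded piece is automatically a basis, we are done.

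The main obstacle is the gluing step in the spanning argument: one must verify that the straightening relations genuinely stay within the span of $\mathrm{Par}_\epsilon$-monomials, in particular that commuting a real root vector $e_{\beta_k}$ with $p(\beta_k)=1$ past an imaginary block, or past another odd real root vector equal to it, never produces a forbidden square $e_{\beta_k}^2$ with nonzero coefficient — this is where Lemma \ref{lem:orrvector-sqzero} is used repeatedly and where the hypothesis $M\ne N$ (ensuring the relevant scalars $q_i-q_i^{-1}$ and the deformed Cartan matrix are invertible) enters. The rest is, as in \cite{D1}, a bookkeeping exercise once the braid action, the convex order, and the Levendorskii--Soibelman formula are in place.
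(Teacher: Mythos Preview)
Your argument has a genuine circularity. You invoke Theorem~\ref{thm:LS-formula} (the Levendorskii--Soibelman formula) as the key tool in the spanning step, but in the logical order of this paper Theorem~\ref{thm:LS-formula} is deduced \emph{from} Theorem~\ref{thm:PBW-basis}: the paper explicitly states that the LS formula ``can then be proved using the PBW basis as in \cite[Proposition~7]{B1}.'' The standard proofs of LS in the affine setting (Beck, Damiani) all presuppose that ordered monomials form a basis, since one needs to expand $e_\beta e_\alpha - \mathbf{q}(\beta,\alpha)e_\alpha e_\beta$ in that basis to even state the formula. So your gluing/straightening step is not available at this point, and the spanning argument collapses.

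The paper's route avoids this by proving \emph{linear independence} first, via the classical limit. One passes from $\mathcal{U}^+(\epsilon)$ to $U^+(\epsilon)$ through the comparison map $\tau$ (Remark~\ref{rem:tau-pres-q-bracket} shows PBW vectors correspond up to $\Sigma$-factors), then uses Proposition~\ref{prop:rootv-q-brac} to see that the root vectors $E_\beta$ are iterated $q$-brackets which specialize at $q\to 1$ to honest root vectors of the affine Lie superalgebra $\widehat{\mathfrak{sl}}_{M|N}$. The classical PBW theorem for $U^+(\widehat{\mathfrak{sl}}_{M|N})$ then gives linear independence of the specialized monomials, hence of the original ones. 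Spanning then follows for free from the character count you already sketched: the graded dimensions of $\mathcal{U}^+(\epsilon)$, $U^+(\epsilon)$, and $U^+(\widehat{\mathfrak{sl}}_{M|N})$ all agree, and match the generating function for $\mathrm{Par}_\epsilon$. A minor additional point: your reduction from $\{e(-\overrightarrow{\gamma})\}$ to $\{e(\overrightarrow{\gamma})\}$ via $\Omega$ is not quite right, since $\Omega$ sends $\mathcal{U}^+$ to $\mathcal{U}^-$; both sets live in $\mathcal{U}^+$ and differ only by reversing the product order.
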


To prove this, we may follow the argument of \cite[Section 1]{L}
using the classical limit $q\rightarrow1$ of the quantum affine superalgebra
$U(\epsilon)$. For this, we first consider affine PBW vectors for
$U(\epsilon)$ as above. A braid action $T_{i}:U(\epsilon)\rightarrow U(\epsilon s_{i})$
is given in \cite[Proposition 8.2.1]{Y}
\begin{align*}
T_{i}(K_{j}) & =\begin{cases}
K_{i}^{-1}\\
K_{i-1}K_{i}\\
K_{i+1}K_{i}\\
K_{j}
\end{cases},\quad T_{i}(E_{j})=\begin{cases}
-F_{i}K_{i}\\
-(-1)^{(\epsilon_{i-1}+\epsilon_{i+1})(\epsilon_{i}+\epsilon_{i+1})}\left\llbracket E_{i},E_{i-1}\right\rrbracket \\
-\left\llbracket E_{i},E_{i+1}\right\rrbracket \\
E_{j}
\end{cases},\\
T_{i}(F_{j}) & =\begin{cases}
-K_{i}^{-1}E_{i} & \text{if }j=i\\
-(-1)^{(\epsilon_{i-1}+\epsilon_{i+1})(\epsilon_{i}+\epsilon_{i+1})}\left\llbracket F_{i-1},F_{i}\right\rrbracket  & \text{if }j=i-1\\
-\left\llbracket F_{i+1},F_{i}\right\rrbracket  & \text{if }j=i+1\\
F_{j} & \text{otherwise}
\end{cases}
\end{align*}
where the $q$-bracket $\left\llbracket x,y\right\rrbracket $ is
defined for homogeneous $x,y\in U^{+}(\epsilon)$ by 
\begin{equation}
\left\llbracket x,y\right\rrbracket =xy-(-1)^{p(x)p(y)}q^{(\deg x,\deg y)}yx.\label{eq:q-bracket-def-qasa}
\end{equation}
Then the real root vectors $E_{\beta}\in U^{+}(\epsilon)$ for $\beta\in\Phi_{\mathrm{re}}^{+}$
can be defined with the braid action in the same manner. We take imaginary
root vectors $\widetilde{E}_{(r\delta,i)}$ (before renormalization)
\[
\widetilde{E}_{(r\delta,i)}=\left\llbracket E_{r\delta-\alpha_{i}},E_{i}\right\rrbracket =E_{r\delta-\alpha_{i}}E_{i}-(-1)^{p(i)}q^{-(\alpha_{i},\alpha_{i})}E_{i}E_{r\delta-\alpha_{i}}.
\]
We also remark that the definition of the $q$-bracket differs from
\cite[Section 8]{Y}, so our $\widetilde{E}_{(r\delta,i)}$ equals
to $(-1)^{\epsilon_{i}+\epsilon_{i+1}}C^{r/2}\overline{\psi}_{i,r}$
therein.
\begin{prop}
\label{prop:rootv-q-brac}The root vectors $E_{\beta}$ and $\widetilde{E}_{(r\delta,i)}$
can be expressed as iterated $q$-brackets of $E_{0},E_{1},\dots,E_{n-1}$
up to a non-zero scalar multiple from $\pm q^{\mathbb{Z}}$.
\end{prop}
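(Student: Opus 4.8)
The plan is to express each real root vector $E_\beta$ and each imaginary root vector $\widetilde E_{(r\delta,i)}$ recursively, tracking at every step how the braid operators $T_i$ act on $q$-brackets of the Chevalley generators. The key observation is that a braid operator $T_i$ sends an iterated $q$-bracket to another iterated $q$-bracket (up to a scalar in $\pm q^{\mathbb Z}$): this follows from the explicit formulas for $T_i$ on $E_j$ (which are themselves $q$-brackets or, for $j=i$, $-F_iK_i$), together with the fact that $T_i$ is an algebra homomorphism and the standard "$q$-Jacobi" manipulations that rewrite $\llbracket x,\llbracket y,z\rrbracket\rrbracket$ in terms of nested $q$-brackets. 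One must be slightly careful because $T_i(E_i)$ is not a positive root vector, but this situation never arises when building $E_{\beta_k}$ for $k>0$: by Proposition \ref{prop:Lus-Lemma-40.1.2} (more precisely its $U(\epsilon)$-analogue) the relevant $T_w(E_{i_k})$ stays inside $U^+(\epsilon')$, so only the "$j=i\pm1$" and "$j$ otherwise" cases of the braid formula are ever used in the recursion.

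First I would set up the induction on $k$ for the real root vectors. For $k>0$ we have $E_{\beta_k}=T_{i_1}\cdots T_{i_{k-1}}(E_{i_k})$; writing $E_{\beta_k}=T_{i_1}(E_{\beta'})$ where $\beta'$ is the corresponding root in the shifted picture, and knowing by induction that $E_{\beta'}$ is an iterated $q$-bracket of the $E_j$'s, one applies $T_{i_1}$, distributes it over the bracket, substitutes the braid formulas for each leaf $E_j$, and then uses the $q$-Jacobi identity to re-collect the result into a single nested $q$-bracket of the $E_j$'s. That the scalars incurred at each stage lie in $\pm q^{\mathbb Z}$ is immediate from the shape of the structure constants $\mathbf q(\alpha_i,\alpha_j)=\pm q^{\pm(\alpha_i,\alpha_j)}$ and $(\alpha_i,\alpha_j)\in\mathbb Z$. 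The case $k\le0$ is handled identically using $T_i^{-1}$, whose formulas are of the same shape. For the imaginary root vectors, $\widetilde E_{(r\delta,i)}=\llbracket E_{r\delta-\alpha_i},E_i\rrbracket$ is a single $q$-bracket of two vectors already known to be iterated $q$-brackets, so it is again an iterated $q$-bracket; here $r\delta-\alpha_i=\beta_k$ for an appropriate $k>0$, so this reduces to the real case already treated. Note the proposition is only asserted for $\widetilde E_{(r\delta,i)}$, not for the renormalized $E_{(r\delta,i)}$, which is the correct statement since the exponential renormalization introduces rational coefficients.

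The main obstacle is the bookkeeping in the $q$-Jacobi step: after applying $T_i$ and substituting the braid formulas, one obtains an expression that is a $q$-bracket of two things, each of which is itself a (possibly nested) $q$-bracket, and one must verify that the super-$q$-Jacobi identity
\[
\llbracket x,\llbracket y,z\rrbracket\rrbracket=\llbracket\llbracket x,y\rrbracket,z\rrbracket+(-1)^{p(x)p(y)}q^{(\deg x,\deg y)}\llbracket y,\llbracket x,z\rrbracket\rrbracket
\]
(valid because $\llbracket\,\cdot\,,\,\cdot\,\rrbracket$ is the braided commutator for the grading-twisted braiding on $U^+(\epsilon)$) can be iterated to bring everything into a normal nested form, with all scalars staying in $\pm q^{\mathbb Z}$. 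Because the Dynkin diagram is of type $A$, each $E_j$ appearing in a given $E_\beta$ does so at a controlled position, so the re-bracketing terminates; the one place where extra care is warranted is the appearance of the twisted coefficient $(-1)^{(\epsilon_{i-1}+\epsilon_{i+1})(\epsilon_i+\epsilon_{i+1})}$ in $T_i(E_{i-1})$, but since $p(i)p(i+1)=0$ is the standing hypothesis in this part of the paper, this sign is manageable and in any case lies in $\pm1\subset\pm q^{\mathbb Z}$. I would present the real-root case in detail for one representative step (say $T_iT_{i+1}(E_{i+2})$) and then assert that the general inductive step is identical, mirroring the treatment in \cite{D1} and \cite{Y}.
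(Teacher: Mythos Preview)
There is a genuine gap in your inductive step. You claim that because $T_w(E_{i_k})\in U^+(\epsilon')$ (Proposition~\ref{prop:Lus-Lemma-40.1.2}), the case $T_{i_1}(E_{i_1})=-F_{i_1}K_{i_1}$ never occurs when you distribute $T_{i_1}$ over the leaves of the $q$-bracket expression for $E_{\beta'}=T_{i_2}\cdots T_{i_{k-1}}(E_{i_k})$. This inference is unjustified: Proposition~\ref{prop:Lus-Lemma-40.1.2} controls the \emph{overall} element, not which Chevalley generators appear as leaves in any particular bracket expression. A concrete counterexample already occurs in rank~$2$: for the reduced word $s_1s_2$ acting on $\alpha_1$ one has $T_2(E_1)=\pm\llbracket E_2,E_1\rrbracket$, and then $T_1$ must be applied to a bracket containing $E_1$ as a leaf, producing $-F_1K_1$. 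Of course $T_1T_2(E_1)=E_2$, but your leaf-by-leaf substitution does not see this; it is precisely the braid relation that collapses the mess, and your scheme does not invoke it. Relatedly, the $q$-Jacobi identity you write down produces a \emph{sum} of two iterated brackets, so even in good cases it cannot ``re-collect the result into a single nested $q$-bracket'' as the proposition requires.

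The paper's proof avoids this by inducting on $\ell(w)$ rather than along a fixed reduced expression, and at each step choosing $j\neq i$ with $w(\alpha_j)<0$ and passing to the minimal coset representative $w'$ in $w\langle s_i,s_j\rangle$. The three cases $w=w's_j$ (with $|i-j|>1$), $w=w's_is_j$, and $w=w's_j$ (with $|i-j|=1$) are handled using, respectively, $T_j(E_i)=E_i$, the braid identity $T_iT_j(E_i)=E_j$, and the key observation that $T_{w'}\llbracket x,y\rrbracket=\llbracket T_{w'}x,T_{w'}y\rrbracket$ (since the parameters in the $q$-bracket are Weyl-invariant). No $q$-Jacobi is needed, and the bad case $T_i(E_i)$ is never encountered. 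Finally, note that the hypothesis $p(i)p(i+1)=0$ is \emph{not} standing at this point of the paper; it is only imposed later for Theorem~\ref{thm:mult-formula-R}, so your appeal to it is misplaced.
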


\begin{proof}
Since imaginary root vectors $\widetilde{E}_{(r\delta,i)}$ are already
defined as the $q$-bracket of real root vectors, it is enough to
consider the latters. Indeed, we shall prove the claim for $T_{w}(E_{i})$,
for any $w\in W$ and $i\in I$ such that $w(\alpha_{i})>0$, by induction
on the length of $w$ as in the proof of \cite[Proposition 1.8]{L}.

The base case $\ell(w)=1$ is the definition of $T_{i}$. Assume the
claim for every $w^{\prime}\in W$ with $\ell(w^{\prime})<\ell(w)$
and let $i\in I$ be such that $w(\alpha_{i})>0$. Take $j\neq i$
with $w(\alpha_{j})<0$ and $w^{\prime}$ the minimal length element
in the coset $w\left\langle s_{i},s_{j}\right\rangle $ of the subgroup
generated by $s_{i},s_{j}\in W$. Then we divide the proof into the
following cases:

\textit{Case 1. }$\left|i-j\right|>1$ and $w=w^{\prime}s_{j}$. We
have $T_{w}(E_{i})=T_{w^{\prime}}T_{j}(E_{i})=T_{w^{\prime}}(E_{i})$
and the induction works immediately.

\textit{Case 2. }$\left|i-j\right|=1$ and $w=w^{\prime}s_{i}s_{j}$.
By a direct computation one can verify $T_{i}T_{j}(E_{i})=E_{j}$
(\textit{cf.} Proposition \ref{prop:Tw-on-ef}) so that $T_{w}(E_{i})=T_{w^{\prime}}(E_{i})$.

\textit{Case 3. }$\left|i-j\right|=1$ and $w=w^{\prime}s_{j}$. For
notational convenience we may assume $j=i-1$, and we have $T_{w}(E_{i})=T_{w^{\prime}}(\left\llbracket E_{i-1},E_{i}\right\rrbracket )$
up to a scalar multiple. Assume $T_{w}(E_{i})\in U(\epsilon)$, so
that the $q$-bracket $\left\llbracket E_{i-1},E_{i}\right\rrbracket $
in the righthand side belongs to $U(\epsilon w^{\prime})$ and then
\begin{align*}
\left\llbracket E_{i-1},E_{i}\right\rrbracket  & =E_{i-1}E_{i}-(-1)^{(\epsilon_{i}^{\prime}+\epsilon_{i+1}^{\prime})(\epsilon_{i-1}^{\prime}+\epsilon_{i}^{\prime})}q^{-(-1)^{\epsilon_{i}^{\prime}}}E_{i}E_{i-1}\\
 & =E_{i-1}E_{i}-(-1)^{(\epsilon_{w^{\prime}(i)}+\epsilon_{w^{\prime}(i+1)})(\epsilon_{w^{\prime}(i-1)}+\epsilon_{w^{\prime}(i)})}q^{-(-1)^{\epsilon_{w^{\prime}(i)}}}E_{i}E_{i-1}
\end{align*}
since $(\epsilon w^{\prime})_{k}=\epsilon_{w^{\prime}(k)}$. Now that
$\epsilon_{w^{\prime}(i)}+\epsilon_{w^{\prime}(i+1)}=p(w^{\prime}(\alpha_{i}))$,
$\epsilon_{w^{\prime}(i-1)}+\epsilon_{w^{\prime}(i)}=p(w^{\prime}(\alpha_{i-1}))$
and $(-1)^{\epsilon_{w^{\prime}(i)}}=-(w^{\prime}(\alpha_{i}),w^{\prime}(\alpha_{i-1}))$,
we have
\begin{align*}
T_{w^{\prime}}(\left\llbracket E_{i-1},E_{i}\right\rrbracket ) & =T_{w^{\prime}}(E_{i-1})T_{w^{\prime}}(E_{i})-(-1)^{(\epsilon_{w^{\prime}(i)}+\epsilon_{w^{\prime}(i+1)})(\epsilon_{w^{\prime}(i-1)}+\epsilon_{w^{\prime}(i)})}q^{-(-1)^{\epsilon_{w^{\prime}(i)}}}T_{w^{\prime}}(E_{i})T_{w^{\prime}}(E_{i-1})\\
 & =T_{w^{\prime}}(E_{i-1})T_{w^{\prime}}(E_{i})-(-1)^{p(w^{\prime}(\alpha_{i}))p(w^{\prime}(\alpha_{i-1}))}q^{(w^{\prime}(\alpha_{i}),w^{\prime}(\alpha_{i-1}))}T_{w^{\prime}}(E_{i})T_{w^{\prime}}(E_{i-1})\\
 & =\left\llbracket T_{w^{\prime}}(E_{i-1}),T_{w^{\prime}}(E_{i})\right\rrbracket 
\end{align*}
\[
\]
and the induction hypothesis applies.
\end{proof}
\begin{rem}
The same holds for the root vectors of $\mathcal{U}(\epsilon)$ if
we define the $q$-bracket as 
\begin{equation}
\left\llbracket x,y\right\rrbracket =xy-\mathbf{q}(\left|x\right|,\left|y\right|)yx\label{eq:q-bracket-def}
\end{equation}
for homogeneous $x,y\in\mathcal{U}^{+}(\epsilon)$ (\textit{cf}. Remark
\ref{rem:tau-pres-q-bracket}). Later we will obtain a more precise
description for several root vectors, see Section \ref{subsec:Pairing-imag-rvector}
below. We also note that by Lemma \ref{lem:W-pres-q}, the $q$-bracket
is preserved under $T_{i}$: $T_{i}\left\llbracket x,y\right\rrbracket =\left\llbracket T_{i}x,T_{i}y\right\rrbracket $.
\[
\]
\end{rem}

From the proposition we see that under the classical limit $q\rightarrow1$,
the root vectors of $U^{+}(\epsilon)$ become the usual root vectors
of the affine Lie superalgebra $\widehat{\mathfrak{sl}}_{M|N}=\mathfrak{sl}_{M|N}\otimes\mathbb{C}[t^{\pm1}]\oplus\mathbb{C}C$
\begin{align*}
E_{r\delta\pm\alpha} & \longmapsto\overline{E}_{\pm\alpha}\otimes t^{r}\quad\text{for }r\delta\pm\alpha\in\Phi_{\mathrm{re}}^{+},\\
\widetilde{E}_{(r\delta,i)} & \longmapsto h_{i}\otimes t^{r}
\end{align*}
up to signs, where $\overline{E}_{\pm\alpha}$ is a root vector of
$\mathfrak{sl}_{M|N}$ (with respect to the simple system associated
with $\epsilon$, see \cite[Section 1.3]{CW}) and $h_{i}=(-1)^{\epsilon_{i}}E_{ii}-(-1)^{\epsilon_{i+1}}E_{i+1,i+1}\in\mathrm{End}(\mathbb{C}^{M|N})$
with respect to the matrix presentation of $\mathrm{End}(\mathbb{C}^{M|N})$
associated to $\epsilon$. Therefore, the set
\[
\left\{ E(\overrightarrow{\gamma})=E_{\gamma_{1}}E_{\gamma_{2}}\cdots E_{\gamma_{r}}\,|\,\overrightarrow{\gamma}\in\mathrm{Par}_{\epsilon}\right\} \subset U^{+}(\epsilon)
\]
 induces a PBW basis of the positive part $U^{+}(\widehat{\mathfrak{sl}}_{M|N})$
of the universal enveloping algebra of $\widehat{\mathfrak{sl}}_{M|N}$
by taking $q\rightarrow1$, which implies its linear independence
(and also that it is a PBW basis for $U^{+}(\epsilon)$).

Thanks to Remark \ref{rem:tau-pres-q-bracket}, a PBW vector $e(\overrightarrow{\gamma})$
is mapped to $E(\overrightarrow{\gamma})$ up to $\Sigma$-factors
and hence linearly independent as well. Since the character of $\mathcal{U}^{+}(\epsilon)$
with respect to the $Q$-grading coincides with that of $U^{+}(\epsilon)$
and hence of $U^{+}(\widehat{\mathfrak{sl}}_{M|N})$ which matches
the one expected by counting the PBW vectors, this proves Theorem
\ref{thm:PBW-basis}. 

The proof for $\{e(-\overrightarrow{\gamma}))\}$ is exactly the same,
and then by the triangular decomposition (\ref{eq:GQG-tri-decomp}),
we obtain a PBW basis $\{e(-\overrightarrow{\gamma})k_{\alpha}f(\overrightarrow{\gamma}^{\prime})\}_{\overrightarrow{\gamma},\overrightarrow{\gamma}^{\prime}\in\mathrm{Par}_{\epsilon},\,\alpha\in\mathring{Q}}$
for the whole algebra $\mathcal{U}(\epsilon)$, which we will assume
henceforth. The following Levendorskii--Soibelman formula for $\mathcal{U}^{+}(\epsilon)$
can then be proved using the PBW basis as in \cite[Proposition 7]{B1}. 
\begin{thm}
\label{thm:LS-formula}For $\alpha,\beta\in\widetilde{\Phi}^{+}$
such that $\beta\succ\alpha$, we have
\[
e_{\beta}e_{\alpha}-\mathbf{q}(\beta,\alpha)e_{\alpha}e_{\beta}=\sum_{\alpha\prec\gamma_{1}\preceq\cdots\preceq\gamma_{r}\prec\beta}c_{\overrightarrow{\gamma}}e(\overrightarrow{\gamma}).\quad(c_{\overrightarrow{\gamma}}\in\Bbbk)
\]
\end{thm}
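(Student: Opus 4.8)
The plan is to follow the proof of the Levendorskii--Soibelman relations in \cite[Proposition 7]{B1}, now that a PBW basis of $\mathcal{U}^{+}(\epsilon)$ is available (Theorem \ref{thm:PBW-basis}), and to check that the super features introduce no essential new difficulty. Write $x=\llbracket e_{\beta},e_{\alpha}\rrbracket=e_{\beta}e_{\alpha}-\mathbf{q}(\beta,\alpha)e_{\alpha}e_{\beta}\in\mathcal{U}^{+}(\epsilon)_{\alpha+\beta}$. By Theorem \ref{thm:PBW-basis} it has a unique expansion $x=\sum_{\overrightarrow{\gamma}}c_{\overrightarrow{\gamma}}\,e(\overrightarrow{\gamma})$ over $\overrightarrow{\gamma}\in\mathrm{Par}_{\epsilon}$ of weight $\alpha+\beta$, where Lemma \ref{lem:orrvector-sqzero} is exactly what permits straightening to stay within $\mathrm{Par}_{\epsilon}$. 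Since the term $\overrightarrow{\gamma}=(\alpha\preceq\beta)$ is cancelled in $x$, and since an elementary consequence of the convexity of $\prec$ is that no other partition of $\alpha+\beta$ has an entry equal to $\alpha$ or to $\beta$, it suffices to show $c_{\overrightarrow{\gamma}}=0$ whenever some entry of $\overrightarrow{\gamma}$ is $\prec\alpha$ or $\succ\beta$.

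The first observation is that each $T_{i}$ and the diagram rotation $Z$ is an algebra isomorphism preserving the $q$-bracket and carrying root vectors $e_{\mu}$ to scalar multiples of root vectors, hence PBW monomials to scalar multiples of reordered PBW monomials; thus, as in \cite{B1}, the statement for $(\alpha,\beta)$ is equivalent to the one for $(w\alpha,w\beta)$ over $\mathcal{U}(\epsilon w^{-1})$, $w\in\widetilde{W}$, with the convex order transported along. Using this together with the reduction argument of \cite[Proposition 7]{B1}, the verification is brought down to a finite list of base configurations: a pair of simple roots; a simple root $\alpha_{i}$ together with a consecutive root vector of the form $T_{i}^{\pm1}(e_{j})$ with $j=i\pm1$; and the configurations in which at least one of $e_{\alpha},e_{\beta}$ is an imaginary root vector.

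For the first two, the required vanishing is a rank-$\le 2$ identity: $\llbracket e_{j},e_{i}\rrbracket=0$ for $|i-j|>1$ is the quadratic Serre relation (\ref{eq:GQG-DJ-quadSerre}), and $\llbracket T_{i}^{\pm1}(e_{j}),e_{i}\rrbracket=0$ for $j=i\pm1$ follows from the explicit formulas for $T_{i}^{\pm1}$ together with the even Serre relation (\ref{eq:GQG-DJ-evenSerre}) when $p(i)=0$, and the odd Serre relation (\ref{eq:GQG-DJ-oddSerre}) --- or (\ref{eq:GQG-DJ-deg5Serre}) when $n=3$ --- when $p(i)=1$. For the configurations involving imaginary root vectors one passes to the loop generators via Remark \ref{rem:compare-rootv-Drinfeld}: then, up to nonzero scalars and powers of the central $c$, $\llbracket e_{\beta},e_{\alpha}\rrbracket$ becomes a commutator among the $h_{i,r}$'s, between an $h_{i,r}$ and an $x_{j,k}^{\pm}$, or between two $x_{j,k}^{\pm}$'s, and the Drinfeld-type relations (\ref{eq:GQG-Drinfeld-q-Heis})--(\ref{eq:GQG-Drinfeld-oSerre}), together with the defining relations of the $e_{(r\delta,i)}$ in terms of real root vectors, display the outcome as a combination of root vectors lying strictly between $\alpha$ and $\beta$. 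For instance (\ref{eq:GQG-Drinfeld-q-Heis}) gives $[h_{i,r},h_{j,s}]=0$ for $r,s>0$, so positive imaginary root vectors commute, while by (\ref{eq:GQG-Drinfeld-hx}) the bracket $[h_{i,r},x_{j,k}^{+}]$ is proportional to the single root vector $x_{j,r+k}^{+}$, which indeed belongs to the open interval; the case of a non-simple $\alpha\in\mathring{\Phi}^{+}$ reduces to the simple one by the Leibniz rule for the $q$-bracket. As $\Phi^{+}$ is the positive root system of $\widehat{\mathfrak{sl}}_{M+N}$, the combinatorial organisation of all these cases is identical to that of \cite{B1}.

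The main obstacle is bookkeeping rather than conceptual. One must track carefully how $\epsilon$, and hence the $\mathbf{q}$-factors in the brackets, transforms under the braid action across each reduction, and one must redo the rank-$\le 2$ verifications at odd simple roots using the quartic and quintic Serre relations (\ref{eq:GQG-DJ-oddSerre}) and (\ref{eq:GQG-DJ-deg5Serre}) in place of the cubic one; this is more laborious than in \cite{B1} but involves no new idea, precisely because the braid operators and the $q$-brackets are designed to match those of the quantum affine superalgebra $U(\epsilon)$ under $\tau$ (Remark \ref{rem:tau-pres-q-bracket}).
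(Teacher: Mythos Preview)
Your proposal is correct and follows the same approach as the paper, which simply defers to \cite[Proposition~7]{B1} once the PBW basis (Theorem~\ref{thm:PBW-basis}) is available; your sketch supplies more detail than the paper's one-line reference and correctly flags the places where the super features enter (the shift of $\epsilon$ under the braid groupoid action, the odd Serre relations (\ref{eq:GQG-DJ-oddSerre})--(\ref{eq:GQG-DJ-deg5Serre}), and Lemma~\ref{lem:orrvector-sqzero}). One small caveat: the step ``no other partition of $\alpha+\beta$ has an entry equal to $\alpha$ or $\beta$'' relies on the \emph{strong} convexity of $\prec$ (a positive root is never a nontrivial sum of positive roots all on one side of it), which is true for the order at hand but is not quite the pairwise convexity stated in the text, so it deserves a word of justification rather than being labelled elementary.
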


\subsection{A comultiplication formula for affine root vectors\label{subsec:comult-Dr}}

To use the PBW basis for the computation of the universal $R$-matrix,
we have to compute the Hopf pairing between PBW vectors $e(-\overrightarrow{\gamma})$
and $f(\overrightarrow{\gamma})$. To prepare it, in this section
we shall obtain a comultiplication formula for affine root vectors
in \cite[Section 4-7]{D1}, which is an interesting problem in itself.

For $r\in\mathbb{Z}$, we let
\[
\Phi^{+}(r)=\begin{cases}
\{\beta_{s}\,|\,1\leq s<r\} & \text{if }r>0\\
\{\beta_{s}\,|\,r<s\leq0\} & \text{if }r\leq0
\end{cases}
\]
so that $\bigcup_{r>0}\Phi^{+}(r)=\Phi^{+}(\infty)$ and $\bigcup_{r\leq0}\Phi^{+}(r)=\Phi^{+}(-\infty)$.
Accordingly, we define the subspaces $\mathcal{U}_{r}^{+}(\epsilon)$
spanned by the PBW vectors comprised of root vectors from $\Phi^{+}(r)$:
\[
\mathcal{U}_{r}^{+}(\epsilon)=\mathrm{span}\left\{ e(\overrightarrow{\gamma})\,|\,\overrightarrow{\gamma}\in\mathcal{P},\,^{\forall}\gamma_{i}\in\Phi^{+}(r)\right\} ,\quad\mathcal{U}_{r}^{\geq0}(\epsilon)=\mathcal{U}_{r}^{+}(\epsilon)\cdot\mathcal{U}^{0}(\epsilon).
\]
By the LS formula, $\mathcal{U}_{r}^{+}(\epsilon)$ (and so $\mathcal{U}_{r}^{\geq0}(\epsilon)$)
is a subalgebra of $\mathcal{U}$.

\subsubsection{Real root vectors}

As real root vectors are obtained using the braid symmetry $T_{i}^{\pm1}$,
their comultiplication $\Delta(e_{\beta_{k}})$ can be computed by
invoking a relation between $\Delta$ and $T_{i}^{\pm1}$. For each
$i\in I$, consider the universal $R$-matrix for the rank 1 subalgebra
generated by $e_{i},\,f_{i},\,k_{i}^{\pm1}$:
\[
\mathcal{U}(\epsilon)\widehat{\otimes}\mathcal{U}(\epsilon)\ni\mathcal{R}_{i,\epsilon}=\begin{cases}
\sum_{k\geq0}(q_{i}^{-1}-q_{i})^{k}\frac{q_{i}^{-k(k-1)/2}}{[k]_{q_{i}}!}e_{i}^{k}\otimes f_{i}^{k} & \text{if }\epsilon_{i}=\epsilon_{i+1},\\
1+(q^{-1}-q)e_{i}\otimes f_{i} & \text{if }\epsilon_{i}\neq\epsilon_{i+1},
\end{cases}
\]
which is invertible. The following result, known in the even case
of $\epsilon_{i}=\epsilon_{i+1}$ \cite[Proposition 37.3.2]{Lb},
can be proved in the remaining odd case by a simple calculation.
\begin{lem}
For $i\in I$ and $x\in\mathcal{U}(\epsilon)$, we have
\[
\Delta(T_{i}(x))=\mathcal{R}_{i,\epsilon s_{i}}^{-1}\cdot(T_{i}\otimes T_{i})\Delta(x)\cdot\mathcal{R}_{i,\epsilon s_{i}}\in\mathcal{U}(\epsilon s_{i})\otimes\mathcal{U}(\epsilon s_{i})
\]
\end{lem}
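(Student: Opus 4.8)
The plan is to verify the identity on the algebra generators of $\mathcal{U}(\epsilon)$. Both sides are algebra homomorphisms $\mathcal{U}(\epsilon)\to\mathcal{U}(\epsilon s_i)\widehat{\otimes}\mathcal{U}(\epsilon s_i)$: the left-hand side is the composite $\Delta\circ T_i$ of the algebra isomorphism $T_i$ with the comultiplication of $\mathcal{U}(\epsilon s_i)$, and the right-hand side is $y\mapsto\mathcal{R}_{i,\epsilon s_i}^{-1}(T_i\otimes T_i)(\Delta(y))\mathcal{R}_{i,\epsilon s_i}$, again a composite of algebra maps because conjugation by the invertible element $\mathcal{R}_{i,\epsilon s_i}$ is an algebra automorphism. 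Hence it suffices to check the formula on $e_j,f_j,k_j^{\pm1}$ for $j\in I$. When $\epsilon_i=\epsilon_{i+1}$ we have $\epsilon s_i=\epsilon$, $T_i$ is an automorphism, and the rank $1$ subalgebra is a copy of $U_{q_i}(\mathfrak{sl}_2)$, so the claim is exactly \cite[Proposition 37.3.2]{Lb}. From now on I therefore assume $p(i)=1$, so that $e_i^2=f_i^2=0$ (Lemma \ref{lem:orrvector-sqzero}) and $\mathcal{R}_{i,\epsilon s_i}=1+(q^{-1}-q)e_i\otimes f_i$, with inverse $1-(q^{-1}-q)e_i\otimes f_i$ since the square of $(q^{-1}-q)e_i\otimes f_i$ vanishes.

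Next I would dispose of the cases that do not really involve the rank $1$ data. For $g=k_j^{\pm1}$ and any $j$, the element $\Delta(T_i(k_j^{\pm1}))$ equals $k_\mu\otimes k_\mu$ for a suitable $\mu$, and conjugation of $e_i\otimes f_i$ by such an element scales it by $\mathbf{q}(\mu,\alpha_i)\mathbf{q}(\mu,-\alpha_i)=1$; thus $k_\mu\otimes k_\mu$ commutes with $\mathcal{R}_{i,\epsilon s_i}$ and the two sides agree. For $|i-j|>1$ the isomorphism $T_i$ fixes $e_j,f_j,k_j$, and since $\mathbf{q}(\alpha_i,\alpha_j)=1$ the relations (\ref{eq:GQG-DJ-ef}) and (\ref{eq:GQG-DJ-quadSerre}) show that $e_j,f_j,k_j$ all commute with $e_i$ and $f_i$; hence conjugation by $\mathcal{R}_{i,\epsilon s_i}$ is trivial on $\Delta(e_j)$ and $\Delta(f_j)$, and both sides again equal $\Delta(g)$.

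The substance is in the cases $g\in\{e_i,f_i,e_{i\pm1},f_{i\pm1}\}$, which I would settle by direct calculation. For $g=e_i$ one has $T_i(e_i)=-f_ik_i$, so $\Delta(T_i(e_i))=-(f_ik_i\otimes1+k_i\otimes f_ik_i)$, while $(T_i\otimes T_i)\Delta(e_i)=-f_ik_i\otimes1-k_i^{-1}\otimes f_ik_i$; the required identity $\mathcal{R}_{i,\epsilon s_i}^{-1}\bigl(-f_ik_i\otimes1-k_i^{-1}\otimes f_ik_i\bigr)\mathcal{R}_{i,\epsilon s_i}=-f_ik_i\otimes1-k_i\otimes f_ik_i$ then follows by expanding and using $e_i^2=f_i^2=0$, the $k_i$-weight relations, and $e_if_i-f_ie_i=(k_i-k_i^{-1})/(q-q^{-1})$ (note $q_i-q_i^{-1}=q-q^{-1}$ for either parity of $\epsilon_i$, which is precisely why $(q^{-1}-q)$ is the correct scalar in $\mathcal{R}_{i,\epsilon s_i}$). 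For $g=e_{i-1}$ (and symmetrically $e_{i+1}$) I would write $T_i(e_{i-1})=e_ie_{i-1}-\mathbf{q}_{\epsilon s_i}(\alpha_i,\alpha_{i-1})\,e_{i-1}e_i$, apply the multiplicative $\Delta$, and compare with the conjugate of $(T_i\otimes T_i)\bigl(e_{i-1}\otimes1+k_{i-1}\otimes e_{i-1}\bigr)$; after expanding, every term carrying a factor $e_i^2$ or $f_i^2$ vanishes, and the surviving terms match after using the $\mathbf{q}$-commutation relations among $e_i,e_{i-1},k_i,k_{i-1}$ and Lemma \ref{lem:W-pres-q} to pass between $\mathbf{q}_{\epsilon s_i}$ and $\mathbf{q}_{\epsilon}$. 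The cases $g=f_i,f_{i\pm1}$ are entirely parallel (alternatively they follow from the $e$-cases via the anti-automorphism $\Omega$, which commutes with $T_i$ and intertwines $\Delta$ with $\Delta^{\mathrm{op}}$).

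I expect the only real difficulty to be bookkeeping rather than conceptual: one must keep careful track of whether each generator and each $\mathbf{q}$-bracket is read in $\mathcal{U}(\epsilon)$ or in $\mathcal{U}(\epsilon s_i)$, so that the coefficients $\mathbf{q}_{\epsilon}$ versus $\mathbf{q}_{\epsilon s_i}$ come out right in the $j=i\pm1$ cases; Lemma \ref{lem:W-pres-q} makes these translations mechanical, and because $e_i^2=f_i^2=0$ forces the conjugations to collapse to short expressions, no genuinely long computation is needed.
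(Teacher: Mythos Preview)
Your overall approach is exactly the paper's: the paper's entire proof is the sentence ``known in the even case \cite[Proposition 37.3.2]{Lb}, can be proved in the remaining odd case by a simple calculation,'' and you reproduce this by reducing to generators, deferring the even case to Lusztig, and sketching the odd-case computation on $k_j,e_j,f_j$.

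One concrete caution on the sketch for $g=e_i$ when $p(i)=1$. You say the identity follows from $e_i^2=f_i^2=0$, the weight relations, and $e_if_i-f_ie_i=(k_i-k_i^{-1})/(q-q^{-1})$. But in the odd case $\mathbf{q}(\alpha_i,\alpha_i)=-1$, so $k_ie_i=-e_ik_i$, and when you expand
\[
\mathcal{R}_{i,\epsilon s_i}^{-1}\bigl(-f_ik_i\otimes1-k_i^{-1}\otimes f_ik_i\bigr)\mathcal{R}_{i,\epsilon s_i}
\]
the cross term is $(q^{-1}-q)\bigl(e_if_ik_i-f_ik_ie_i\bigr)\otimes f_i=(q^{-1}-q)(e_if_i+f_ie_i)k_i\otimes f_i$, an \emph{anti}commutator, not the commutator you invoke. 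The relations you list do not determine $e_if_i+f_ie_i$, and the target $\Delta(T_i(e_i))=-f_ik_i\otimes1-k_i\otimes f_ik_i$ has $f_ik_i$ (not $f_i$) in the second tensor slot, so on the face of it the two sides do not agree in $\mathcal{U}(\epsilon s_i)\otimes\mathcal{U}(\epsilon s_i)$. You should therefore carry out this case in full rather than asserting it: either an additional identity is needed, or the element $\mathcal{R}_{i,\epsilon s_i}$ (described as the rank-$1$ universal $R$-matrix but written without its Cartan factor) requires that factor for the odd computation to close. The $e_{i\pm1}$ and $f$-cases will inherit whatever fix is needed here.
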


Since the formula is formally the same for either even or odd $i\in I$,
the remaining argument in \cite[Section 5]{D1} works here as well.
Namely, if we let for $k>0$
\[
R(k)=\begin{cases}
\mathcal{R}_{i_{1},\epsilon} & \text{if }k=1\\
\left((T_{i_{1}}\otimes T_{i_{1}})\cdots(T_{i_{k-1}}\otimes T_{i_{k-1}})\mathcal{R}_{i_{k},\epsilon s_{i_{1}}\cdots s_{i_{k-1}}}\right)R(k-1) & \text{if }k>1
\end{cases}
\]
 then we have 
\[
\Delta(e_{\beta_{k}})=\Delta(T_{i_{1}}\cdots T_{i_{k-1}}(e_{i_{k}}))=R(k)^{-1}\left((T_{i_{1}}\otimes T_{i_{1}})\cdots(T_{i_{k-1}}\otimes T_{i_{k-1}})\Delta(e_{i_{k}})\right)R(k).
\]
By definition and the ordering of positive real roots, if we write
$R(k)=\sum_{u}l_{u}\otimes l_{u}^{\prime}$ and $R(k)^{-1}=\sum_{u}m_{u}\otimes m_{u}^{\prime}$,
then
\[
l_{u}k_{\mathrm{wt}(l_{u})}\otimes l_{u}^{\prime}k_{\mathrm{wt}(l_{u})}^{-1},\,m_{u}k_{\mathrm{wt}(m_{u})}\otimes m_{u}^{\prime}k_{\mathrm{wt}(m_{u})}^{-1}\in\mathcal{U}_{k+1}^{+}(\epsilon)\otimes\mathcal{U}_{k+1}^{-}(\epsilon).
\]
One can repeat this for $k\leq0$, from which we deduce an approximation
of the comultiplication of real root vectors (the one for $f_{\beta_{k}}$
follows by applying $\Omega$).
\begin{prop}
For $k\in\mathbb{Z}$, we have
\begin{align*}
\Delta(e_{\beta_{k}})-(e_{\beta_{k}}\otimes1+k_{\beta_{k}}\otimes e_{\beta_{k}}) & \in\begin{cases}
\mathcal{U}_{k}^{\geq0}(\epsilon)\otimes\mathcal{U}^{+}(\epsilon) & \text{if }k>0\\
\mathcal{U}^{\geq0}(\epsilon)\otimes\mathcal{U}_{k}^{+}(\epsilon) & \text{if }k\leq0,
\end{cases}\\
\Delta(f_{\beta_{k}})-(f_{\beta_{k}}\otimes k_{\beta_{k}}^{-1}+1\otimes f_{\beta_{k}}) & \in\begin{cases}
\mathcal{U}^{-}(\epsilon)\otimes\mathcal{U}_{k}^{\leq0}(\epsilon) & \text{if }k>0\\
\mathcal{U}_{k}^{-}(\epsilon)\otimes\mathcal{U}^{\leq0}(\epsilon) & \text{if }k\leq0.
\end{cases}
\end{align*}
\end{prop}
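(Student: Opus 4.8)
The plan is to adapt the argument of \cite[Section 5]{D1}. I will describe the case $k>0$; the case $k\leq0$ is completely parallel, using the tail of the chosen reduced word of $t_{2\rho}$ and $T_i^{-1}$ in place of $T_i$, and the assertions for the $f_{\beta_k}$ are obtained from those for the $e_{\beta_k}$ by applying $\Omega\otimes\Omega$ to both sides and invoking $\Delta\circ\Omega=(\Omega\otimes\Omega)\circ\Delta^{\mathrm{op}}$, $\Omega(e_{\beta_k})=f_{\beta_k}$, $\Omega(k_{\beta_k})=k_{\beta_k}^{-1}$ and $\Omega(\mathcal{U}_k^{\geq0}(\epsilon))=\mathcal{U}_k^{\leq0}(\epsilon)$, which convert the $e$-statement for index $k$ into the $f$-statement for the same $k$ after exchanging the two tensor factors.

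So fix $k>0$. I start from the identity $\Delta(e_{\beta_k})=R(k)^{-1}\bigl(e_{\beta_k}\otimes1+k_{\beta_k}\otimes e_{\beta_k}\bigr)R(k)$ derived above, together with the fact, also recorded above, that after the indicated $k$-twist $R(k)$ and $R(k)^{-1}$ lie in $\mathcal{U}_{k+1}^{+}(\epsilon)\,\widehat{\otimes}\,\mathcal{U}_{k+1}^{-}(\epsilon)$; using the factorization $R(k)=\mathcal{R}_{\beta_k}\,R(k-1)$, the factor $R(k-1)$ has its legs supported on $\Phi^{+}(k)$, while $\mathcal{R}_{\beta_k}=(T_{i_1}\otimes T_{i_1})\cdots(T_{i_{k-1}}\otimes T_{i_{k-1}})\mathcal{R}_{i_k,\epsilon s_{i_1}\cdots s_{i_{k-1}}}$ is the universal $R$-matrix of the rank $1$ subalgebra $\langle e_{\beta_k},f_{\beta_k},k_{\beta_k}^{\pm1}\rangle$, so its legs are powers of $e_{\beta_k}$ and of $f_{\beta_k}$ only.

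The computation then proceeds in two stages. First one evaluates $\mathcal{R}_{\beta_k}^{-1}\bigl(e_{\beta_k}\otimes1+k_{\beta_k}\otimes e_{\beta_k}\bigr)\mathcal{R}_{\beta_k}$ inside the rank $1$ subalgebra: if $p(\beta_k)=0$ this is handled via the intertwining property of the quasi-$R$-matrix exactly as in \cite{D1}, and if $p(\beta_k)=1$ one uses $\mathcal{R}_{\beta_k}=1+(q^{-1}-q)e_{\beta_k}\otimes f_{\beta_k}$ together with $e_{\beta_k}^{2}=f_{\beta_k}^{2}=0$ (Lemma \ref{lem:orrvector-sqzero}) to carry out the analogous calculation by hand; in either case the outcome is $e_{\beta_k}\otimes1+k_{\beta_k}\otimes e_{\beta_k}$ up to a term whose first tensor leg contains no root vector from $\Phi^{+}(k)$. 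In the second stage one conjugates by $R(k-1)$: since $R(k-1)$ is supported on $\Phi^{+}(k)$, and since $\mathcal{U}_k^{\geq0}(\epsilon)$ is a subalgebra and $e_{\beta_k}\cdot\mathcal{U}_k^{\geq0}(\epsilon)\subseteq\mathcal{U}_k^{\geq0}(\epsilon)e_{\beta_k}+\mathcal{U}_k^{\geq0}(\epsilon)$ --- both consequences of the Levendorskii--Soibelman formula (Theorem \ref{thm:LS-formula}), using that $\beta_k\prec\beta_j$ for $j<k$ with all straightening terms lying in $\Phi^{+}(k)$ --- an induction on $k$ shows that $\Delta(e_{\beta_k})-(e_{\beta_k}\otimes1+k_{\beta_k}\otimes e_{\beta_k})$ has first leg in $\mathcal{U}_k^{\geq0}(\epsilon)$. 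Finally, since $e_{\beta_k}\in\mathcal{U}^{+}(\epsilon)$ and $\mathcal{U}^{\geq0}(\epsilon)\otimes\mathcal{U}^{+}(\epsilon)$ is a subalgebra containing $\Delta(e_i)$ for all $i$, we have $\Delta(e_{\beta_k})\in\mathcal{U}^{\geq0}(\epsilon)\otimes\mathcal{U}^{+}(\epsilon)$, so intersecting the two containments gives that the error lies in $\mathcal{U}_k^{\geq0}(\epsilon)\otimes\mathcal{U}^{+}(\epsilon)$, as required.

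The one point demanding attention --- and the only genuine departure from \cite{D1} --- is the odd case of the first stage: because $\mathcal{R}_{\beta_k}$ no longer intertwines $\Delta(e_{\beta_k})$ with a bar-twisted coproduct when $p(\beta_k)=1$, the correction term it produces must be carried explicitly through the subsequent conjugation by $R(k-1)$ and shown to respect the support bounds, and this is exactly where Lemma \ref{lem:orrvector-sqzero} and the support properties of $R(k)$ established above are used. I expect this bookkeeping, rather than any conceptual difficulty, to be the main obstacle.
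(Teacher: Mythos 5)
Your skeleton coincides with the paper's: iterate the intertwining lemma $\Delta(T_{i}(x))=\mathcal{R}_{i,\epsilon s_{i}}^{-1}\,(T_{i}\otimes T_{i})\Delta(x)\,\mathcal{R}_{i,\epsilon s_{i}}$ along the reduced word, control the legs of the resulting product of twisted rank-one $R$-matrices, finish with the Levendorskii--Soibelman formula (Theorem \ref{thm:LS-formula}) and the containment $\Delta(\mathcal{U}^{+}(\epsilon))\subset\mathcal{U}^{\geq0}(\epsilon)\otimes\mathcal{U}^{+}(\epsilon)$, get the $f$-statement by applying $\Omega\otimes\Omega$ to the flipped identity, and treat $k\leq0$ symmetrically; this is exactly the route of the paper, which defers the details to \cite[Section 5]{D1}.

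The genuine gap is your ``stage 1''. Factoring the conjugator as $\mathcal{R}_{\beta_{k}}R(k-1)$ and conjugating the leading term by $\mathcal{R}_{\beta_{k}}$ produces a nonzero correction: in the odd case, using $e_{\beta_{k}}^{2}=0$ and $\mathbf{q}(\beta_{k},\beta_{k})=-1$, one finds a term proportional to $k_{\beta_{k}}e_{\beta_{k}}\otimes(e_{\beta_{k}}f_{\beta_{k}}+f_{\beta_{k}}e_{\beta_{k}})$, and analogous corrections (first legs $e_{\beta_{k}}^{m}$ times Cartan) occur in the even case, since the rank-one quasi-$R$-matrix does not commute with $\Delta(e_{\beta_{k}})$. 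The first legs of these terms involve $e_{\beta_{k}}\notin\Phi^{+}(k)$, so they lie outside $\mathcal{U}_{k}^{\geq0}(\epsilon)\otimes\mathcal{U}^{+}(\epsilon)$, and neither your stage 2 nor the final intersection with $\mathcal{U}^{\geq0}(\epsilon)\otimes\mathcal{U}^{+}(\epsilon)$ shows that they cancel; you explicitly leave this ``bookkeeping'' open, so the stated containment is not reached. In fact the obstacle is an artifact of the setup: $e_{\beta_{k}}=T_{i_{1}}\cdots T_{i_{k-1}}(e_{i_{k}})$ involves only $k-1$ braid operators, so iterating the lemma yields conjugation by the rank-one factors attached to $i_{1},\dots,i_{k-1}$ only, i.e.\ by $R(k-1)$ (note that for $k=1$ a conjugation by $R(1)=\mathcal{R}_{i_{1},\epsilon}$ would force $\mathcal{R}_{i_{1},\epsilon}$ to commute with $\Delta(e_{i_{1}})$, which fails; the display preceding the proposition should be read accordingly). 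Hence no conjugation of the leading term at $\beta_{k}$ ever occurs, your correction terms never arise, and --- contrary to your closing claim --- there is no departure from \cite{D1} in the odd case: the lemma proved just before the proposition establishes the intertwining relation verbatim for odd $i$, with $\mathcal{R}_{i,\epsilon}=1+(q^{-1}-q)e_{i}\otimes f_{i}$, and this uniformity is precisely what lets Damiani's argument run unchanged, the only other super-specific input being $e_{\beta_{k}}^{2}=0$ (Lemma \ref{lem:orrvector-sqzero}).
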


Together with LS formula, this is enough to compute the Hopf pairing
of PBW vectors consisting of real root vectors. Note that in the formula
below, if $p(\beta)=1$, then $m_{\beta}$ is at most 1 so $(m_{\beta})_{\beta}=1$.
Moreover, we have $(e_{\beta_{k}},e_{\beta_{k}})=\frac{1}{q_{i_{k}}^{-1}-q_{i_{k}}}$
by Proposition \ref{prop:braid-isometry}. 
\begin{cor}[{cf. \cite[Theorem 4.4]{D1}}]
Let $\overrightarrow{\gamma},\overrightarrow{\gamma}^{\prime}\in\mathcal{P}$
be such that $\gamma_{i},\gamma_{j}^{\prime}\in\Phi^{+}(\infty)$
for all $i,j$. Then we have
\[
\left(e(-\overrightarrow{\gamma}),e(-\overrightarrow{\gamma}^{\prime})\right)=\delta_{\overrightarrow{\gamma},\overrightarrow{\gamma}^{\prime}}\prod_{\beta\in\Phi^{+}(\infty)}\frac{(m_{\beta})_{\beta}}{(q_{\beta}^{-1}-q_{\beta})^{m_{\beta}}}
\]
where $m_{\beta}$ is the multiplicity of $\beta$ in $\overrightarrow{\gamma}$,
$q_{\beta}=q_{i_{k}}$ when $\beta=\beta_{k}$ and $(n)_{\beta}=1+\mathbf{q}(\beta,\beta)+\cdots+\mathbf{q}(\beta,\beta)^{n-1}$.
\end{cor}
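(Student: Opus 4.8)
The plan is to follow the induction of \cite[Theorem 4.4]{D1} from the non-super affine case, feeding in the two facts specific to our setting that are recorded just before the statement. First, $e_\beta^2=f_\beta^2=0$ whenever $p(\beta)=1$ (Lemma \ref{lem:orrvector-sqzero}), so that in that case no root occurs with multiplicity $\geq 2$ in any $\overrightarrow{\gamma}\in\mathrm{Par}_\epsilon$ (and the corresponding $(m_\beta)_\beta$ equals $1$). Second, $(e_{\beta_k},e_{\beta_k})=\tfrac1{q^{-1}-q}$ for every real root $\beta_k$: writing $e_{\beta_k}=T_{i_1}\cdots T_{i_{k-1}}(e_{i_k})$ (and the $T^{-1}$-analogue when $k\leq0$), each $T_i$ being an isometry of the form on the relevant $\mathbf{f}(\epsilon)[i]$ by Proposition \ref{prop:braid-isometry}, one iterates to get $(e_{\beta_k},e_{\beta_k})=(\theta_{i_k},\theta_{i_k})=\tfrac1{q_{i_k}^{-1}-q_{i_k}}$, and $q_i^{-1}-q_i=q^{-1}-q$ for every $i\in I$ irrespective of $\epsilon_i$. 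Throughout, one works with the symmetric $Q$-graded form on $\mathbf{f}(\epsilon)$, which agrees with the Hopf pairing by Lemma \ref{lem:Hopf-pair-bil-form}; in particular the left-hand side already vanishes unless $\overrightarrow{\gamma}$ and $\overrightarrow{\gamma}'$ have the same total weight, so it remains only to prove vanishing unless $\overrightarrow{\gamma}=\overrightarrow{\gamma}'$ and to pin down the diagonal value.

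I would argue by induction on the number of distinct roots occurring in $\overrightarrow{\gamma}$. The base case $\overrightarrow{\gamma}=(\beta,\dots,\beta)$ of a single root type $\beta$, of multiplicity $m$, is handled by convexity of $\prec$: any $\overrightarrow{\gamma}'$ of weight $m\beta$ with roots in $\widetilde{\Phi}^{+}$ is forced to be $(\beta,\dots,\beta)$ again (the only presentation of $m\beta$ as a sum of positive roots), so the pairing equals $(e_\beta^{m},e_\beta^{m})$, which is computed below. For the inductive step, let $\beta$ be the $\preceq$-maximal root of $\overrightarrow{\gamma}$, of multiplicity $m$, factor $e(-\overrightarrow{\gamma})=e_\beta^{m}\,e(-\overrightarrow{\gamma}_0)$ with all roots of $\overrightarrow{\gamma}_0$ strictly $\prec\beta$, and use $(xx',y)=(x\otimes x',r(y))$ to rewrite the pairing as $\bigl(e_\beta^{m}\otimes e(-\overrightarrow{\gamma}_0),\,r(e(-\overrightarrow{\gamma}'))\bigr)$; only the part of $r(e(-\overrightarrow{\gamma}'))$ whose left tensor factor has weight $m\beta$ matters. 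Now one analyses $r(e(-\overrightarrow{\gamma}'))=\prod_j r(\theta_{\gamma'_j})$ by inserting the comultiplication approximation of the preceding proposition (read on $\mathbf{f}(\epsilon)$ via $r$ through the relation between $\Delta$ and $r$ in the proof of Lemma \ref{lem:Hopf-pair-bil-form}) for each real root vector, and normalizing every resulting summand to a tensor product of PBW monomials using the Levendorskii--Soibelman formula (Theorem \ref{thm:LS-formula}), which also shows the spans of PBW monomials in roots $\succ\gamma$, resp.\ $\prec\gamma$, are subalgebras by convexity of $\prec$. Combining this with the symmetry of the form (to control the opposite tensor factor), one concludes that the only contribution comes from $\overrightarrow{\gamma}'$ having the same $\preceq$-maximal root $\beta$ with the same multiplicity $m$, in which case the relevant part factors as $e_\beta^{m}\otimes e(-\overrightarrow{\gamma}'_0)$ up to strictly lower PBW terms that pair to $0$; hence the pairing reduces to $(e_\beta^{m},e_\beta^{m})\,\bigl(e(-\overrightarrow{\gamma}_0),e(-\overrightarrow{\gamma}'_0)\bigr)$, and the inductive hypothesis (one fewer distinct root) forces $\overrightarrow{\gamma}_0=\overrightarrow{\gamma}'_0$, hence $\overrightarrow{\gamma}=\overrightarrow{\gamma}'$. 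This second step is the main obstacle: one has to carry out the bookkeeping, through the comultiplication approximation and repeated LS normalization, of exactly which PBW monomials can appear in the two tensor factors and verify — via convexity of $\prec$ — that the top root block is forced to agree so that the pairing decouples, precisely as in \cite[Theorem 4.4]{D1}; the only genuinely new point is that the odd root vectors, with $e_\beta^2=0$, cause no trouble.

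It then remains to evaluate $(e_\beta^{m},e_\beta^{m})$ for a single root $\beta$. Pulling $e_\beta$ back to $\theta_\beta\in\mathbf{f}(\epsilon)$ and expanding $r(\theta_\beta^{m})$ by the $q$-binomial theorem in the twisted product on $\mathbf{f}(\epsilon)\otimes\mathbf{f}(\epsilon)$ (only the terms $\theta_\beta^{m}\otimes1$ and $\theta_\beta^{m-1}\otimes\theta_\beta$ being relevant when one pairs against $\theta_\beta^{m-1}\otimes\theta_\beta$, the expansion terminating at degree $\leq1$ in each slot when $p(\beta)=1$ since $\theta_\beta^2=0$), together with $(\theta_\beta,\theta_\beta)=\tfrac1{q^{-1}-q}$ from the first paragraph, a direct induction on $m$ yields the factor $\tfrac{(m_\beta)_\beta}{(q^{-1}-q)^{m_\beta}}$ recorded in the statement. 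Multiplying these factors over the distinct roots $\beta$ occurring in $\overrightarrow{\gamma}$ gives the asserted formula.
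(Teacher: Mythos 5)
Your proposal is correct and follows essentially the same route as the paper: the paper likewise deduces the corollary by running the argument of \cite[Theorem 4.4]{D1} verbatim, with the comultiplication approximation for real root vectors and the Levendorskii--Soibelman formula as the only inputs, plus exactly the two super-specific observations you isolate (odd real roots occur with multiplicity at most one by Lemma \ref{lem:orrvector-sqzero}, so $(m_\beta)_\beta=1$ there, and $(e_{\beta_k},e_{\beta_k})=\tfrac{1}{q^{-1}-q}$ via Proposition \ref{prop:braid-isometry} together with $q_i^{-1}-q_i=q^{-1}-q$ for all $i$). Your filled-in sketch of Damiani's induction and the identification of the Hopf pairing with the form on $\mathbf{f}(\epsilon)$ via Lemma \ref{lem:Hopf-pair-bil-form} is exactly the intended reading.
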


\subsubsection{Imaginary root vectors}

As above, we take
\[
\mathcal{U}_{\mathrm{im}}^{+}(\epsilon)=\mathrm{span}\left\{ e(\overrightarrow{\gamma})\,|\,\overrightarrow{\gamma}\in\mathcal{P},\,^{\forall}\gamma_{i}\in\widetilde{\Phi}_{\mathrm{im}}^{+}\right\} ,\quad\mathcal{U}_{\mathrm{im}}^{\geq0}(\epsilon)=\mathcal{U}_{\mathrm{im}}^{+}(\epsilon)\cdot\mathcal{U}^{0}(\epsilon).
\]
We also let $\Phi^{+}(\pm\widetilde{\infty})=\Phi^{+}(\pm\infty)\cup\Phi_{\mathrm{im}}^{+}$
and define $\mathcal{U}_{\pm\widetilde{\infty}}^{+}(\epsilon),\mathcal{U}_{\pm\widetilde{\infty}}^{\geq0}(\epsilon)\subset\mathcal{U}(\epsilon)$
accordingly. Again, they are all subalgebras of $\mathcal{U}(\epsilon)$.
\begin{lem}[{cf. \cite[Lemma 5.1]{D1}}]
 If $k>0$, then $T_{2\rho}(e_{\beta_{k}})=e_{\beta_{k+N}}$ and
there exists $m>0$ such that $T_{2\rho}^{-m^{\prime}}(e_{\beta_{k}})\in\mathcal{U}^{\leq0}(\epsilon)$
for all $m^{\prime}\geq m$.

Similarly when $k\leq0$, $T_{2\rho}^{-1}(e_{\beta_{k}})=e_{\beta_{k-N}}$
and there exists $m>0$ such that $T_{2\rho}^{m^{\prime}}(e_{\beta_{k}})\in\mathcal{U}^{\leq0}(\epsilon)$
for all $m^{\prime}\geq m$.
\end{lem}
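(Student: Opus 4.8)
The plan is to transcribe the argument of \cite[Section 5]{D1}: the affine root system of $\widehat{\mathfrak{sl}}_{M|N}$ and the action of $W$ on it are literally those of $\widehat{\mathfrak{sl}}_{n}$, so the only extra care needed is to keep track of how the $(01)$-sequence varies along the braid groupoid, which will turn out to be harmless. For the first assertion, recall that $T_{2\rho}=T_{t_{2\rho}}=T_{i_{1}}T_{i_{2}}\cdots T_{i_{N}}$ for the fixed reduced expression $t_{2\rho}=s_{i_{1}}\cdots s_{i_{N}}$, and that it is an automorphism of $\mathcal{U}(\epsilon)$ by Remark \ref{rem:T_varpi_i-inv}. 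Since the doubly-infinite index sequence $(i_{k})_{k\in\mathbb{Z}}$ is $N$-periodic and $s_{i_{1}}\cdots s_{i_{mN}}$ is a reduced expression of $t_{2\rho}^{m}$ for every $m>0$, applying $T_{2\rho}$ simply prepends $T_{i_{1}}\cdots T_{i_{N}}$ to the string of braid operators defining $e_{\beta_{k}}$, which is exactly the string defining $e_{\beta_{k+N}}$; thus $T_{2\rho}(e_{\beta_{k}})=e_{\beta_{k+N}}$ for $k>0$, and the same computation with the operators $T_{i}^{-1}$ gives $T_{2\rho}^{-1}(e_{\beta_{k}})=e_{\beta_{k-N}}$ for $k\leq0$. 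One only needs to know that the intermediate $(01)$-sequences match, e.g.\ $\epsilon s_{i_{1}}\cdots s_{i_{k+N-1}}=\epsilon s_{i_{1}}\cdots s_{i_{k-1}}$: their ``difference'' $s_{i_{k}}s_{i_{k+1}}\cdots s_{i_{k+N-1}}$ is a cyclic rotation of the reduced word of $t_{2\rho}$, hence conjugate to $t_{2\rho}$ in $W=\mathfrak{S}_{n}\ltimes\mathring{Q}$, hence a translation, and translations fix every $(01)$-sequence.

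For the second assertion I treat $k>0$; the case $k\leq0$ is parallel with $T_{2\rho}^{m^{\prime}}$ and $T_{i}$ in place of $T_{2\rho}^{-m^{\prime}}$ and $T_{i}^{-1}$. Using the first assertion to absorb powers of $T_{2\rho}$, it suffices to take $1\leq k\leq N$ and prove $T_{2\rho}^{-m^{\prime}}(e_{\beta_{k}})\in\mathcal{U}^{\leq0}(\epsilon)$ for all $m^{\prime}\geq1$ (so that $m=\lceil k/N\rceil$ works in general). Writing $T_{2\rho}^{-m^{\prime}}=T_{i_{m^{\prime}N}}^{-1}\cdots T_{i_{1}}^{-1}$ and $e_{\beta_{k}}=T_{i_{1}}\cdots T_{i_{k-1}}(e_{i_{k}})$ and cancelling the pairs $T_{i_{j}}^{-1}T_{i_{j}}$ for $1\leq j\leq k-1$, one gets
\[
T_{2\rho}^{-m^{\prime}}(e_{\beta_{k}})=T_{i_{m^{\prime}N}}^{-1}\cdots T_{i_{k+1}}^{-1}\bigl(T_{i_{k}}^{-1}(e_{i_{k}})\bigr)=T_{w}^{-1}\bigl(-k_{i_{k}}^{-1}f_{i_{k}}\bigr)=-T_{w}^{-1}(k_{i_{k}}^{-1})\,\Omega\bigl(T_{w}^{-1}(e_{i_{k}})\bigr),
\]
where $w=s_{i_{k+1}}\cdots s_{i_{m^{\prime}N}}$ is reduced (a contiguous subword of the reduced expression of $t_{2\rho}^{m^{\prime}}$) and the last equality uses that $\Omega$ commutes with every $T_{i}$ and that $T_{i_{k}}^{-1}(e_{i_{k}})=-k_{i_{k}}^{-1}f_{i_{k}}$. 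Since $s_{i_{k}}w=s_{i_{k}}s_{i_{k+1}}\cdots s_{i_{m^{\prime}N}}$ is again a contiguous subword of a reduced word, it is reduced, so $w^{-1}(\alpha_{i_{k}})>0$; by the analogue for the family $\{T_{i}^{-1}\}$ of Proposition \ref{prop:Lus-Lemma-40.1.2} (proved verbatim), $T_{w}^{-1}(e_{i_{k}})\in\mathcal{U}^{+}(\epsilon)$, whence $\Omega(T_{w}^{-1}(e_{i_{k}}))\in\mathcal{U}^{-}(\epsilon)$ and $T_{2\rho}^{-m^{\prime}}(e_{\beta_{k}})\in\mathcal{U}^{0}(\epsilon)\mathcal{U}^{-}(\epsilon)=\mathcal{U}^{\leq0}(\epsilon)$. (For $k\leq0$ the same telescoping yields $T_{2\rho}^{m^{\prime}}(e_{\beta_{k}})=T_{w^{\prime}}\bigl(-f_{i_{k}}k_{i_{k}}\bigr)$ with $w^{\prime}=s_{i_{1}}\cdots s_{i_{m^{\prime}N+k-1}}$ a prefix of the reduced expression of $t_{2\rho}^{m^{\prime}}$, so one may invoke Proposition \ref{prop:Lus-Lemma-40.1.2} directly.)

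The one point that requires genuine attention, as opposed to being a verbatim copy of \cite{D1}, is the bookkeeping of $(01)$-sequences: every $T_{i_{j}}^{\pm1}$ is an isomorphism between \emph{different} algebras $\mathcal{U}(\cdot)$, so one must check that all the cancellations above, and the membership of the final expression, take place over the correct $\epsilon$. As indicated above this is resolved once and for all by Lemma \ref{lem:W-pres-q} together with the fact that the cyclic rotations and conjugates of $t_{2\rho}$ appearing here are translations; with that settled, everything is formal.
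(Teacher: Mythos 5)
Your proof is correct and is essentially the argument the paper has in mind when it declares this lemma ``obvious from the definition of real root vectors'' (i.e.\ Damiani's Lemma 5.1): the shift formulas are the periodicity of the braid-operator strings, and the containment in $\mathcal{U}^{\leq0}(\epsilon)$ follows by telescoping down to a single $T_{i}^{\pm1}$ applied to $e_{i}$ plus the braid-positivity statement, with the $(01)$-sequence bookkeeping handled exactly as you do via Lemma \ref{lem:W-pres-q} and the fact that translations fix $\epsilon$. The only ingredient not literally stated in the paper is the $T_{w}^{-1}$-analogue of Proposition \ref{prop:Lus-Lemma-40.1.2} (with the condition $\ell(s_{i}w)=\ell(w)+1$), which you correctly flag as provable verbatim and which the paper itself already uses implicitly when asserting $e_{\beta_{k}}\in\mathcal{U}^{+}(\epsilon)$ for $k\leq0$.
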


This is obvious from the definition of real root vectors, and the
following statement can be shown by this lemma with the formulas for
real root vectors above.
\begin{prop}[{cf. \cite[Proposition 5.2]{D1}}]
 If $x\in\mathcal{U}_{\mathrm{im}}^{+}(\epsilon)$, then $\Delta(x)\in\mathcal{U}_{\widetilde{\infty}}^{\geq0}(\epsilon)\otimes\mathcal{U}_{-\widetilde{\infty}}^{+}(\epsilon)$.
\end{prop}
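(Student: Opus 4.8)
The plan is to adapt the argument of \cite[\S5]{D1}. First I reduce to the algebra generators $\widetilde{e}_{(r\delta,i)}$ of $\mathcal{U}_{\mathrm{im}}^{+}(\epsilon)$. By the defining exponential relation the $e_{(r\delta,i)}$ and the $\widetilde{e}_{(r\delta,i)}$ generate the same subalgebra --- the relation is invertible by $\log$, since by Remark \ref{rem:compare-rootv-Drinfeld} and \eqref{eq:GQG-Drinfeld-q-Heis} the $e_{(r\delta,i)}$ with $r>0$ and $i$ fixed commute --- and by Theorem \ref{thm:PBW-basis} this subalgebra equals $\mathcal{U}_{\mathrm{im}}^{+}(\epsilon)$. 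Moreover $\mathcal{U}_{\widetilde{\infty}}^{\geq0}(\epsilon)\widehat{\otimes}\mathcal{U}_{-\widetilde{\infty}}^{+}(\epsilon)$ is a subalgebra of $\mathcal{U}(\epsilon)\widehat{\otimes}\mathcal{U}(\epsilon)$ for the twisted multiplication, because its two tensor factors are subalgebras of $\mathcal{U}(\epsilon)$ and the twisting only rescales monomials by scalars in $\Bbbk^{\times}$. Since $\Delta$ is an algebra homomorphism, the elements $x$ with $\Delta(x)$ in this subalgebra form a subalgebra, so it suffices to prove $\Delta(\widetilde{e}_{(r\delta,i)})\in\mathcal{U}_{\widetilde{\infty}}^{\geq0}(\epsilon)\widehat{\otimes}\mathcal{U}_{-\widetilde{\infty}}^{+}(\epsilon)$ for all $r>0$ and $i\in\mathring{I}$.

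Put $\beta\coloneqq r\delta-\alpha_{i}$, so that $\beta\in\Phi^{+}(\infty)$, $\alpha_{i}\in\Phi^{+}(-\infty)$, and $\mathrm{cl}(\beta)=-\alpha_{i}$ gives $\mathbf{q}(\beta,\alpha_{i})=\mathbf{q}(\alpha_{i},\alpha_{i})^{-1}$; hence $\widetilde{e}_{(r\delta,i)}=\left\llbracket e_{\beta},e_{i}\right\rrbracket$ and $\Delta(\widetilde{e}_{(r\delta,i)})=\left\llbracket\Delta(e_{\beta}),\Delta(e_{i})\right\rrbracket$, the $q$-bracket on the right being taken in $\mathcal{U}(\epsilon)\widehat{\otimes}\mathcal{U}(\epsilon)$ for the total $Q$-grading. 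Substituting the comultiplication formulas proved above for real root vectors --- $\Delta(e_{i})=e_{i}\otimes1+k_{i}\otimes e_{i}$ with $e_{i}\in\mathcal{U}_{-\widetilde{\infty}}^{+}(\epsilon)$, and $\Delta(e_{\beta})=e_{\beta}\otimes1+k_{\beta}\otimes e_{\beta}+\Xi$ with $\Xi\in\mathcal{U}_{\widetilde{\infty}}^{\geq0}(\epsilon)\widehat{\otimes}\mathcal{U}^{+}(\epsilon)$, the first tensor leg of $\Xi$ lying in $\mathcal{U}_{\widetilde{\infty}}^{\geq0}(\epsilon)$ --- and expanding the $q$-bracket, after normalizing every Cartan factor to the appropriate side via $k_{\mu}e_{\nu}=\mathbf{q}(\mathrm{cl}\,\mu,\nu)e_{\nu}k_{\mu}$ the leading parts produce the expected $\widetilde{e}_{(r\delta,i)}\otimes1+k_{r\delta}\otimes\widetilde{e}_{(r\delta,i)}$, and everything else is a sum of \emph{cross terms} of the form $\left\llbracket a,e_{i}\right\rrbracket\otimes b$ in which $e_{i}$ has been pushed into the first tensor factor, with $a$ supported on roots of $\Phi^{+}(\infty)\cup\Phi_{\mathrm{im}}^{+}$.

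The main obstacle is to show that these cross terms recombine inside $\mathcal{U}_{\widetilde{\infty}}^{\geq0}(\epsilon)\widehat{\otimes}\mathcal{U}_{-\widetilde{\infty}}^{+}(\epsilon)$. Here the Levendorskii--Soibelman formula (Theorem \ref{thm:LS-formula}) and the lemma on $T_{2\rho}$ above do the work: since $\alpha_{i}$ precedes every imaginary root and every root of $\Phi^{+}(\infty)$ in the convex order, each bracket $\left\llbracket e_{\gamma},e_{i}\right\rrbracket$ with $\gamma\in\Phi^{+}(\infty)\cup\Phi_{\mathrm{im}}^{+}$ re-expands into PBW monomials supported on roots strictly between $\alpha_{i}$ and $\gamma$; feeding these back in and running an induction on $r$ along the lines of \cite[\S5]{D1}, the off-target pieces cancel. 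The $\mathbb{Z}/2\mathbb{Z}$-structure contributes nothing new: odd real root vectors square to zero (Lemma \ref{lem:orrvector-sqzero}) and the $q$-bracket is equivariant under the braid operators (Lemma \ref{lem:W-pres-q}), so all parities are already built into the scalars $\mathbf{q}(\cdot,\cdot)$ appearing in every identity used. Steps one and two are formal; it is this final cancellation that is laborious.
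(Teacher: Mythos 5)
Your reduction to the generators $\widetilde{e}_{(r\delta,i)}$ and the identification of the leading terms $\widetilde{e}_{(r\delta,i)}\otimes 1+k_{r\delta}\otimes\widetilde{e}_{(r\delta,i)}$ are unobjectionable (note only that $\Delta$ takes values in the ordinary tensor product algebra; no twist or completion is involved here). The gap is in your final step, which is the entire content of the proposition. The approximation of $\Delta(e_{r\delta-\alpha_{i}})$ constrains only the first tensor leg of the error term (it lies in $\mathcal{U}_{k}^{\geq0}(\epsilon)\otimes\mathcal{U}^{+}(\epsilon)$, the second leg being a priori arbitrary in $\mathcal{U}^{+}(\epsilon)$), so after bracketing with $\Delta(e_{i})$ you must show two things: that the first leg acquires no $\Phi^{+}(-\infty)$ real root vectors even though $e_{i}$ has been multiplied in, and, more seriously, that the second leg contains no $\Phi^{+}(\infty)$ real root vectors at all. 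The mechanism you invoke, namely re-expanding $\llbracket e_{\gamma},e_{i}\rrbracket$ by the LS formula into PBW monomials on roots strictly between $\alpha_{i}$ and $\gamma$, accomplishes neither: the roots strictly between $\alpha_{i}$ and $\gamma$ in the convex order include many $\Phi^{+}(-\infty)$ roots (e.g. $\delta+\alpha_{j}$) as well as $\Phi^{+}(\infty)$ roots, and in any case the expansion says nothing about the second tensor factor. The sentence ``feeding these back in and running an induction on $r$ \dots, the off-target pieces cancel'' is precisely the statement to be proved, not an argument.

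The proof the paper relies on (Damiani's) uses the $T_{2\rho}$-lemma in an essential rather than decorative way: imaginary root vectors are fixed by $T_{2\rho}$, the comultiplication intertwines the braid operators up to conjugation by the rank-one $R$-matrices ($\Delta\circ T_{i}=\mathcal{R}_{i}^{-1}\cdot(T_{i}\otimes T_{i})\Delta(\cdot)\cdot\mathcal{R}_{i}$), and by the lemma the operators $T_{2\rho}^{\mp m}$ drive real root vectors of $\Phi^{+}(\pm\infty)$ into $\mathcal{U}^{\leq0}(\epsilon)$ for $m$ large. Expanding the second (resp. first) leg of $\Delta(x)$ in the PBW basis, any component containing a $\Phi^{+}(\infty)$ (resp. $\Phi^{+}(-\infty)$) real root vector would, under iterated application of this identity, leave $\mathcal{U}^{\geq0}(\epsilon)\otimes\mathcal{U}^{+}(\epsilon)$, a contradiction; this is how both legs get confined to $\mathcal{U}_{\widetilde{\infty}}^{\geq0}(\epsilon)$ and $\mathcal{U}_{-\widetilde{\infty}}^{+}(\epsilon)$. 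To complete your write-up you should reproduce that argument (or, as the paper does, cite \cite[Proposition 5.2]{D1} after checking that its ingredients, the conjugation formula including the odd case, the convex ordering, and the $T_{2\rho}$-shift lemma, carry over to $\mathcal{U}(\epsilon)$), rather than attempting a direct cancellation in the expanded bracket.
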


\begin{lem}[{cf. \cite[Lemma 6.1]{D1}}]
\begin{enumerate}
\item If $k>0$, then there exists $i\in\mathring{I}$ such that $e_{\beta_{k}}e_{i}\neq\mathbf{q}(\alpha_{i},\beta_{k})e_{i}e_{\beta_{k}}$.
\item If $k\leq0$, then there exists $i\in\mathring{I}$ such that $e_{\beta_{k}}e_{\delta-\alpha_{i}}\neq\mathbf{q}(\alpha_{i},\beta_{k})e_{\delta-\alpha_{i}}e_{\beta_{k}}$.
\item Given $m>0$ and $a_{i}\in\Bbbk$ for $i\in\mathring{I}$, if either
\[
\left[\sum a_{i}e_{(m\delta,i)},e_{j}\right]=0\:^{\forall}j\in\mathring{I}\quad\text{or}\quad\left[\sum a_{i}e_{(m\delta,i)},e_{\delta-\alpha_{j}}\right]=0\:^{\forall}j\in\mathring{I}
\]
holds, then $a_{i}=0$ for all $i\in\mathring{I}$.
\item Given $m>0$ and $x\in\mathcal{U}_{\mathrm{im}}^{+}(\epsilon)_{m\delta}$,
if either
\[
\left[x,e_{j}\right]=0\,^{\forall}j\in\mathring{I}\quad\text{or}\quad\left[x,e_{\delta-\alpha_{j}}\right]=0\,^{\forall}j\in\mathring{I}
\]
holds, then $x=0$.
\end{enumerate}
\end{lem}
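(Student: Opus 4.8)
The plan is to follow \cite[Lemma~6.1]{D1} essentially verbatim: because the affine root datum of $\widehat{\mathfrak{sl}}_{M|N}$ is that of $\widehat{\mathfrak{sl}}_{M+N}$, the only inputs that are genuinely new in the super setting are an elementary fact about $\mathring{\Phi}^{+}$ and the non-degeneracy, for $M\neq N$, of the (symmetrized) Cartan matrix of $\mathfrak{sl}_{M|N}$. For (1) and (2) I would pass to the classical limit $q\to1$ exactly as in the proof of Theorem~\ref{thm:PBW-basis}: by Remark~\ref{rem:tau-pres-q-bracket} the element $e_{\beta_{k}}e_{i}-\mathbf{q}(\alpha_{i},\beta_{k})^{\pm1}e_{i}e_{\beta_{k}}$ corresponds under $\tau$, up to a sign and $\Sigma$-factors, to a super $q$-bracket of root vectors in $U(\epsilon)$, which lies in the integral form (Proposition~\ref{prop:rootv-q-brac}) and specializes at $q=1$ to $[\overline{E}_{\beta_{k}},\overline{E}_{\alpha_{i}}]_{\pm}$ in $\widehat{\mathfrak{sl}}_{M|N}$ (and likewise with $\overline{E}_{\delta-\alpha_{i}}$ in place of $\overline{E}_{\alpha_{i}}$). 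Thus it suffices to exhibit, for each $k$, an index $i\in\mathring{I}$ for which this classical (anti)commutator is nonzero --- and since the candidates below are nonzero for \emph{both} choices of sign, the precise normalisation of the scalar will not matter.

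For (1), write $\beta_{k}=s\delta-\gamma$ with $s>0$ and $\gamma=\delta_{a}-\delta_{b}\in\mathring{\Phi}^{+}$, $a<b$, so that in matrix units $\overline{E}_{\beta_{k}}=E_{ba}\otimes t^{s}$ (up to sign). Taking $i=b-1\in\mathring{I}$ and $\overline{E}_{\alpha_{i}}=E_{b-1,b}$, one finds $E_{ba}E_{b-1,b}\mp E_{b-1,b}E_{ba}$ equals $\mp E_{b-1,a}\neq0$ when $a<b-1$ and $E_{bb}\mp E_{b-1,b-1}$ (a nonzero multiple of $h_{b-1}$) when $a=b-1$; tensoring with $t^{s}\neq0$ gives a nonzero element for either sign. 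Part (2) is parallel: for $\beta_{k}=r\delta+\beta$ with $\beta=\delta_{a}-\delta_{b}\in\mathring{\Phi}^{+}$, $a<b$, take $i=a\in\mathring{I}$, so $\overline{E}_{\beta_{k}}=E_{ab}\otimes t^{r}$ and $\overline{E}_{\delta-\alpha_{i}}=E_{a+1,a}\otimes t$, and $E_{ab}E_{a+1,a}\mp E_{a+1,a}E_{ab}$ is either $\mp E_{a+1,b}$ or a nonzero multiple of $h_{a}$, tensored with $t^{r+1}\neq0$.

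For (3) I would argue directly inside $\mathcal{U}(\epsilon)$. By Remark~\ref{rem:compare-rootv-Drinfeld}, $e_{(m\delta,i)}=o(i)^{-m}c^{m/2}h_{i,m}$, and relation~(\ref{eq:GQG-Drinfeld-hx}) together with $x_{j,0}^{+}=e_{j}$ and $x_{j,m}^{+}=o(j)^{m}e_{m\delta+\alpha_{j}}$ gives
\[
[e_{(m\delta,i)},e_{j}]=\frac{[m]_{q_{i}}}{m}\,o(i)^{-m}o(j)^{m}c^{m}\,C(q^{m},\widetilde{q}^{m})_{ij}\,e_{m\delta+\alpha_{j}}.
\]
Since $e_{m\delta+\alpha_{j}}$ is a PBW basis vector and $[m]_{q_{i}}\neq0$, the vanishing of $[\sum_{i}a_{i}e_{(m\delta,i)},e_{j}]$ for all $j\in\mathring{I}$ says precisely that the vector $\bigl(a_{i}[m]_{q_{i}}o(i)^{-m}\bigr)_{i\in\mathring{I}}$ lies in the left kernel of $C(q^{m},\widetilde{q}^{m})$, which is trivial for $M\neq N$; hence every $a_{i}=0$. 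The variant with $e_{\delta-\alpha_{j}}$ is identical, using the $-$ case of~(\ref{eq:GQG-Drinfeld-hx}) and $x_{j,m+1}^{-}$. For (4), recall that $\mathcal{U}_{\mathrm{im}}^{+}(\epsilon)$ is the polynomial algebra $\Bbbk[e_{(r\delta,i)}:r\geq1,\,i\in\mathring{I}]$ --- the $e_{(r\delta,i)}$ commute by~(\ref{eq:GQG-Drinfeld-q-Heis}) and the corresponding ordered monomials form a PBW basis by Theorem~\ref{thm:PBW-basis} --- which under $q\to1$ specializes to $\mathbb{C}[u_{i,r}]$, $u_{i,r}:=h_{i}\otimes t^{r}$, while $[e_{(r\delta,i)},e_{j}]$ specializes to $[u_{i,r},\overline{E}_{\alpha_{j}}]=(\alpha_{i},\alpha_{j})_{\epsilon}\,\overline{E}_{\alpha_{j}}\otimes t^{r}$. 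Given a nonzero $x\in\mathcal{U}_{\mathrm{im}}^{+}(\epsilon)_{m\delta}$ with $[x,e_{j}]=0$ for all $j$, passing via $\tau$ to $U(\epsilon)$, clearing denominators and specializing at $q=1$ as in the proof of Theorem~\ref{thm:PBW-basis} produces a nonzero $\overline{x}\in\mathbb{C}[u_{i,r}]$, homogeneous of weight $m\delta$ ($m\geq1$), with $[\overline{x},\overline{E}_{\alpha_{j}}]=0$ for all $j$. Letting $d\geq1$ be the top polynomial degree in the $u_{i,r}$ occurring in $\overline{x}$ and $\overline{x}_{d}$ its leading component, the part of $[\overline{x},\overline{E}_{\alpha_{j}}]$ of the shape (one positive real root vector) $\times$ (a degree-$(d-1)$ polynomial in the $u_{i,r}$) comes only from $\overline{x}_{d}$ and equals $\sum_{s\geq1}(\overline{E}_{\alpha_{j}}\otimes t^{s})\bigl(\sum_{i}(\alpha_{i},\alpha_{j})_{\epsilon}\,\partial_{u_{i,s}}\overline{x}_{d}\bigr)$; its vanishing for all $j$ forces, by invertibility of $\bigl((\alpha_{i},\alpha_{j})_{\epsilon}\bigr)_{i,j\in\mathring{I}}$ for $M\neq N$, all $\partial_{u_{i,s}}\overline{x}_{d}=0$, so $\overline{x}_{d}$ is a constant --- contradicting $d\geq1$. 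Hence $x=0$; the $e_{\delta-\alpha_{j}}$ case is the same.

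The routine-but-delicate points --- that the real and imaginary root vectors, with the relevant $\mathbf{q}$-brackets, lie in the integral form and specialize correctly at $q=1$, together with the tracking of $\Sigma$-factors under $\tau$ --- are precisely those already handled in the proof of Theorem~\ref{thm:PBW-basis}, so they pose no new difficulty here. The one genuinely new ingredient, and what I expect to be the real content beyond transcribing \cite{D1}, is the non-degeneracy of $\bigl((\alpha_{i},\alpha_{j})_{\epsilon}\bigr)_{i,j\in\mathring{I}}$ (equivalently of $C(q^{r},\widetilde{q}^{r})$ at $q=1$) for $M\neq N$ --- the familiar fact that the invariant form on $\mathfrak{sl}_{M|N}$ is non-degenerate exactly when $M\neq N$, which one can check by a direct determinant computation for the tridiagonal matrix with diagonal entries $(\alpha_{i},\alpha_{i})_{\epsilon}=(-1)^{\epsilon_{i}}+(-1)^{\epsilon_{i+1}}$.
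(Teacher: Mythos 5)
Your proof is correct, but parts (2) and (4) take routes genuinely different from the paper's, while (1) and (3) essentially coincide with it. Part (3) is exactly the paper's argument (relation (\ref{eq:GQG-Drinfeld-hx}) together with invertibility of $C(q^{m},\widetilde{q}^{m})$ for $M\neq N$), and for (1) the paper also passes through $\tau$ and the classical limit, but avoids your two-case matrix-unit check by observing that $q$-commutation of $e_{\beta_{k}}$ with all $e_{i}$ forces $q$-commutation with the finite root vector $e_{\alpha}$ for $\beta_{k}=r\delta-\alpha$, after which one only needs $[\overline{E}_{-\alpha},\overline{E}_{\alpha}]\otimes t^{r}\neq0$. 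For (2) the paper stays on the quantum side: applying $T_{\rho}^{-1}$ turns $e_{\delta-\alpha_{i}}$ into $-k_{i}^{-1}f_{i}$ and $e_{\beta_{k}}$ into another positive real root vector, reducing (2) to a nonvanishing bracket with $f_{i}$ treated as in (1); your direct computation with $\overline{E}_{\delta-\alpha_{a}}=E_{a+1,a}\otimes t$ is an acceptable substitute, since the classical limits of these root vectors are already identified in the paper. For (4) the paper invokes Damiani's induction via the Levendorskii--Soibelman formula, entirely quantum, whereas you go classical and run a leading-term/derivation argument in $\mathbb{C}[u_{i,r}]$ using nondegeneracy of $\bigl((\alpha_{i},\alpha_{j})_{\epsilon}\bigr)$ for $M\neq N$ --- a more elementary and self-contained alternative, at the cost of leaning on the integral-form specialization set up for Theorem \ref{thm:PBW-basis} and on transporting identities through $\tau$. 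On that last point, note that $\tau(x)=X\sigma^{\prime}$ with a $\Sigma$-factor $\sigma^{\prime}$ determined by the degree $m\delta$, so $[x,e_{j}]=0$ translates into $XE_{j}=\pm E_{j}X$ with a sign depending on $\epsilon$, $m$ and $j$; you handled both signs in (1)--(2) but silently assumed the commutator case in (4). The omission is harmless: if the sign is $-1$, the top $u$-degree part of $XE_{j}+E_{j}X$ at $q=1$ is $2\,\overline{E}_{\alpha_{j}}\overline{x}_{d}$, nonzero by the classical PBW theorem, so your conclusion follows even more directly in that case.
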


\begin{proof}
We follow the proof of \cite[Lemma 6.1]{D1}.
\begin{enumerate}
\item If the claim does not hold for $\beta_{k}=r\delta-\alpha$ $(\alpha\in\mathring{\Phi}^{+})$,
then we have $e_{\beta_{k}}e_{\alpha}=\mathbf{q}(\alpha,\beta_{k})e_{\alpha}e_{\beta_{k}}$.
This is equivalent to $\left\llbracket E_{\beta_{k}},E_{\alpha}\right\rrbracket =0$
by applying $\tau^{-1}$, which is absurd as it implies after taking
the classical limit
\[
0\neq\left[\overline{E}_{-\alpha},\overline{E}_{\alpha}\right]\otimes t^{r}=[\overline{E}_{-\alpha}\otimes t^{r},\overline{E}_{\alpha}\otimes1]=0.
\]
\item As $k\leq0$, $\beta_{k}=m\delta+\beta$ for a $\beta\in\mathring{\Phi}^{+}$
and $T_{\rho}^{-1}(e_{\beta_{k}})=e_{m^{\prime}\delta+\beta}$ for
an $m^{\prime}>m$. On the other hand, $T_{\rho}^{-1}(e_{\delta-\alpha_{i}})=T_{\varpi_{i}}^{-1}(e_{\delta-\alpha_{i}})$
by Remark \ref{rem:T_varpi_i-inv} and then as $e_{\delta-\alpha_{i}}=T_{\varpi_{i}}T_{i}^{-1}(e_{i}),$
we have $T_{\rho}^{-1}(e_{\delta-\alpha_{i}})=T_{i}^{-1}(e_{i})=-k_{i}^{-1}f_{i}$.
Thus, the statement is not true whenever $[e_{m^{\prime}\delta+\beta},f_{i}]=-k_{i}T_{\rho}^{-1}\left\llbracket e_{\delta-\alpha_{i}},e_{\beta_{k}}\right\rrbracket =0$
for all $i\in\mathring{I}$, which cannot happen as in (1).
\item Using the commutation relation \ref{eq:GQG-Drinfeld-hx} (together
with Remark \ref{rem:compare-rootv-Drinfeld}), the assertion follows
from the invertibility of the matrix $(\frac{\mathbf{q}(\alpha_{i},\alpha_{j})^{m}-\mathbf{q}(\alpha_{i},\alpha_{j})^{-m}}{q-q^{-1}})_{i,j\in\mathring{I}}$
which is true whenever $M\neq N$.
\item This is completely parallel with the non-super case, using the Levendorskii--Soibelman
formula.
\end{enumerate}
\end{proof}
\begin{prop}[{cf. \cite[Corollary 6.6]{D1}}]
 If $x\in\mathcal{U}_{\widetilde{\infty}}^{+}(\epsilon)_{\beta}$
is such that $xe_{i}=\mathbf{q}(\beta,\alpha_{i})e_{i}x$ for all
$i\in\mathring{I}$, then $x=0$.

Similarly for $y\in\mathcal{U}_{-\widetilde{\infty}}^{+}(\epsilon)_{\beta}$
satisfying $ye_{\delta-\alpha_{i}}=\mathbf{q}(\beta,\alpha_{i})e_{\delta-\alpha_{i}}y$
for all $i\in\mathring{I}$, we have $y=0$.
\end{prop}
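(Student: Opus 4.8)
The plan is to adapt the proof of \cite[Corollary~6.6]{D1}, the essential inputs being the PBW basis (Theorem~\ref{thm:PBW-basis}), the Levendorskii--Soibelman formula (Theorem~\ref{thm:LS-formula}) and the preceding lemma. I would prove the first assertion by induction on the height $\mathrm{ht}(\gamma)$, where $\beta=m\delta-\gamma$ with $m\in\mathbb{Z}_{\geq0}$ and $\gamma$ a nonnegative integral combination of the $\alpha_{i}$ ($i\in\mathring{I}$); such a decomposition exists because every entry of a PBW monomial in $\mathcal{U}_{\widetilde{\infty}}^{+}(\epsilon)$ lies in $\Phi^{+}(\infty)\cup\Phi_{\mathrm{im}}^{+}$, hence is of the form $s\delta$ or $s\delta-\mathring{\gamma}$ with $\mathring{\gamma}\in\mathring{\Phi}^{+}$. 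Note first that the hypothesis $xe_{i}=\mathbf{q}(\beta,\alpha_{i})e_{i}x$ says precisely $\llbracket x,e_{i}\rrbracket=0$ for the $q$-bracket (\ref{eq:q-bracket-def}).

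In the base case $\gamma=0$ one has $x\in\mathcal{U}_{\mathrm{im}}^{+}(\epsilon)_{m\delta}$, and since $\mathbf{q}(m\delta,\alpha_{i})=1$ this becomes $[x,e_{i}]=0$ for all $i\in\mathring{I}$, so $x=0$ by part~(4) of the preceding lemma. For the inductive step, suppose $x\neq0$ and $\gamma\neq0$; then some real root occurs in the PBW expansion of $x$, and we take $\beta_{p}$ (with $p>0$) to be the $\prec$-largest such. Since $\beta_{p}$ is then $\succ$ every root occurring in $x$, I would write $x=\sum_{j\geq0}x_{j}e_{\beta_{p}}^{j}$, where each $x_{j}$ is a combination of PBW monomials whose entries all lie in $\Phi^{+}(\widetilde{\infty})$ and are $\prec\beta_{p}$; in particular $x_{j}\in\mathcal{U}_{\widetilde{\infty}}^{+}(\epsilon)$, and $x_{j}$ belongs to the subalgebra $\mathcal{U}'$ spanned by \emph{all} PBW monomials with entries $\prec\beta_{p}$ (a subalgebra by Theorem~\ref{thm:LS-formula}, which moreover contains every $e_{i}$ since $\alpha_{i}\in\Phi^{+}(-\infty)$ is $\prec\beta_{p}$). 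Put $J=\max\{j:x_{j}\neq0\}\geq1$; this is finite, with $J\leq1$ when $p(\beta_{p})=1$ by Lemma~\ref{lem:orrvector-sqzero}.

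Now $\llbracket\,\cdot\,,e_{i}\rrbracket$ obeys a twisted Leibniz rule, so $\llbracket x,e_{i}\rrbracket=\sum_{j}\bigl(x_{j}\,\llbracket e_{\beta_{p}}^{j},e_{i}\rrbracket+\mathbf{q}(j\beta_{p},\alpha_{i})\,\llbracket x_{j},e_{i}\rrbracket\,e_{\beta_{p}}^{j}\bigr)$. Here $\llbracket x_{j},e_{i}\rrbracket\in\mathcal{U}'$, so it carries no factor $e_{\beta_{p}}$; and by the Levendorskii--Soibelman formula applied to the pair $\beta_{p}\succ\alpha_{i}$, $\llbracket e_{\beta_{p}},e_{i}\rrbracket$ expands in PBW monomials with entries strictly between $\alpha_{i}$ and $\beta_{p}$, again with no $e_{\beta_{p}}$, whence $\llbracket e_{\beta_{p}}^{j},e_{i}\rrbracket$ has $\beta_{p}$-degree $\leq j-1$. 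Consequently $x_{j}\,\llbracket e_{\beta_{p}}^{j},e_{i}\rrbracket$ has $\beta_{p}$-degree $\leq j-1\leq J-1$, while $\llbracket x_{j},e_{i}\rrbracket\,e_{\beta_{p}}^{j}$ has $\beta_{p}$-degree exactly $j$; so the $\beta_{p}$-degree-$J$ part of the identity $\llbracket x,e_{i}\rrbracket=0$ reads $\mathbf{q}(J\beta_{p},\alpha_{i})\,\llbracket x_{J},e_{i}\rrbracket\,e_{\beta_{p}}^{J}=0$. Right multiplication by $e_{\beta_{p}}^{J}$ being injective on the span of PBW monomials with entries $\prec\beta_{p}$, this forces $\llbracket x_{J},e_{i}\rrbracket=0$ for every $i\in\mathring{I}$; and $x_{J}\in\mathcal{U}_{\widetilde{\infty}}^{+}(\epsilon)$ has finite part of strictly smaller height than $\gamma$ (because $\beta_{p}$ has nonzero finite part and $J\geq1$), so the inductive hypothesis yields $x_{J}=0$, contradicting the choice of $J$; hence $x=0$. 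The second assertion is obtained by the mirror argument: writing $\beta=m\delta+\gamma$ one decomposes $y=\sum_{j\geq0}e_{\beta_{q}}^{j}y_{j}$ along the $\prec$-smallest real root $\beta_{q}$ (with $q\leq0$) occurring in $y$, commutes against $e_{\delta-\alpha_{i}}$ (which is $\succ$ every root occurring in $y$), and invokes parts~(2) and~(4) of the preceding lemma; the support statement needed is that $\mathbf{q}(\delta-\alpha_{i},\beta_{q})$ equals the Levendorskii--Soibelman coefficient for the pair $\delta-\alpha_{i}\succ\beta_{q}$, so that $\llbracket e_{\delta-\alpha_{i}},e_{\beta_{q}}\rrbracket$ is supported on roots $\succ\beta_{q}$.

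The main obstacle, and really the only step beyond routine bookkeeping, is precisely this control of the $\prec$-support of the iterated $q$-brackets via convexity of $\prec$ and the Levendorskii--Soibelman formula, which is what lets one pass to the top $\beta_{p}$-degree (resp. $\beta_{q}$-degree) and isolate the term $\llbracket x_{J},e_{i}\rrbracket\,e_{\beta_{p}}^{J}$ cleanly so that the induction can run. The remaining verifications (that the relevant spans are subalgebras, that the decomposition $x=\sum_{j}x_{j}e_{\beta_{p}}^{j}$ makes sense using Lemma~\ref{lem:orrvector-sqzero}, and that the finite-part height genuinely decreases at each inductive step) are all straightforward.
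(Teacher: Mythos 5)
Your overall strategy (induction on the height of the finite part of $\beta$, peeling off the $\prec$-largest real root $\beta_{p}$ occurring in the PBW expansion, and comparing $\beta_{p}$-degrees via the Levendorskii--Soibelman formula) is essentially Damiani's, and most of the bookkeeping is fine: the span of PBW monomials with entries $\prec\beta_{p}$ is indeed a subalgebra, $\llbracket e_{\beta_{p}},e_{i}\rrbracket$ has $\beta_{p}$-degree zero, and right multiplication by $e_{\beta_{p}}^{J}$ is injective on that span. But the inductive step has a genuine hole: when the top coefficient $x_{J}$ is a scalar, i.e. $\beta=J\beta_{p}$ and the leading PBW component of $x$ is a pure power $c\,e_{\beta_{p}}^{J}$, the degree-$J$ identity you extract, $\llbracket x_{J},e_{i}\rrbracket e_{\beta_{p}}^{J}=0$, holds trivially (a scalar $q$-commutes with everything, since $\mathbf{q}(0,\alpha_{i})=1$), and the induction hypothesis cannot be applied to $x_{J}$: the assertion is false in weight $0$, and your base case (part (4) of the preceding lemma) requires a strictly positive $\delta$-degree. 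So exactly in this case no contradiction is produced and the induction does not close. Tellingly, this is precisely the situation that parts (1) and (2) of the preceding lemma are designed for (a single real root vector never $q$-commutes with all $e_{i}$, resp.\ all $e_{\delta-\alpha_{i}}$), and your argument for the first statement never invokes them.

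Patching this is not a one-liner inside your framework: already for $x=c\,e_{\beta_{p}}+x_{0}$ the top-degree component of $\llbracket x,e_{i}\rrbracket$ vanishes identically, and the degree-$0$ component only yields the relation $c\,\llbracket e_{\beta_{p}},e_{i}\rrbracket+\llbracket x_{0},e_{i}\rrbracket=0$, which is not yet a contradiction. What is missing is finer information about the convex order than the pairwise convexity recorded in the paper --- e.g.\ the strong-convexity fact that a positive root cannot be written as a sum of two or more positive roots all strictly smaller (resp.\ all strictly larger) with respect to $\prec$, which forces the would-be interfering lower terms such as $x_{0}$ to vanish, after which parts (1)/(2) of the lemma finish the argument. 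These finer properties are exactly Damiani's Lemmas 6.2--6.4, which the paper explicitly points to when asserting that the proof of \cite[Corollary 6.6]{D1} carries over word for word (the paper itself gives no self-contained proof, so the comparison is with Damiani's argument). Your proposal omits both ingredients, so the case of a scalar leading coefficient --- equivalently, of a component of $x$ proportional to a power of a single real root vector --- is left unproved; the mirror statement for $y\in\mathcal{U}_{-\widetilde{\infty}}^{+}(\epsilon)$ has the same gap.
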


\begin{proof}
The proof in \cite{D1} relies on the lemma and several properties
of the ordering $\prec$ \cite[Lemma 6.2 - 6.4]{D1} and hence can
be reproduced in our case word for word.
\end{proof}
By the comultiplication formula for imaginary root vectors, we mean
the following refined statement which can be again proved as in \cite[Section 7]{D1}.
\begin{thm}[{cf. \cite[Proposition 7.1]{D1}}]
\label{thm:comult-imag-rootv} For $\alpha=(r\delta,i)\in\widetilde{\Phi}_{\mathrm{im}}^{+}$,
we have
\[
\Delta(e_{\alpha})-(e_{\alpha}\otimes1+c^{r}\otimes e_{\alpha})\in\mathcal{U}_{\infty}^{\geq0}(\epsilon)\otimes\mathcal{U}_{-\infty}^{+}(\epsilon).
\]
\end{thm}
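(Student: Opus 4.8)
The plan is to reproduce the argument of \cite[\S\,7]{D1}, which refines the rough containment $\Delta(e_{(r\delta,i)})\in\mathcal{U}_{\widetilde{\infty}}^{\geq0}(\epsilon)\otimes\mathcal{U}_{-\widetilde{\infty}}^{+}(\epsilon)$ already established above (it applies since $e_{(r\delta,i)}\in\mathcal{U}_{\mathrm{im}}^{+}(\epsilon)$). I would proceed by induction on $r$, using the defining exponential relation
\[
\exp\Bigl((q_{i}-q_{i}^{-1})\sum_{s>0}e_{(s\delta,i)}u^{s}\Bigr)=1+(q_{i}-q_{i}^{-1})\sum_{s>0}\widetilde{e}_{(s\delta,i)}u^{s}
\]
to pass between the renormalized vectors $e_{(s\delta,i)}$ and the vectors $\widetilde{e}_{(s\delta,i)}=e_{s\delta-\alpha_{i}}e_{i}-\mathbf{q}(\alpha_{i},\alpha_{i})^{-1}e_{i}e_{s\delta-\alpha_{i}}$: it writes $\widetilde{e}_{(r\delta,i)}$ as $e_{(r\delta,i)}$ plus a polynomial in the $e_{(s\delta,i)}$ with $s<r$, so, granting the asserted congruence (with $c^{s}$ on the right) for all $s<r$, it is enough to prove it for $\widetilde{e}_{(r\delta,i)}$ in place of $e_{(r\delta,i)}$; this reduction is the same formal bookkeeping with the exponential as in \cite[\S\,7]{D1}.

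For $\widetilde{e}_{(r\delta,i)}$ I would compute $\Delta(\widetilde{e}_{(r\delta,i)})=\Delta(e_{r\delta-\alpha_{i}})\Delta(e_{i})-\mathbf{q}(\alpha_{i},\alpha_{i})^{-1}\Delta(e_{i})\Delta(e_{r\delta-\alpha_{i}})$ directly, from $\Delta(e_{i})=e_{i}\otimes1+k_{i}\otimes e_{i}$ and, since $r\delta-\alpha_{i}\in\Phi^{+}(\infty)$, say $r\delta-\alpha_{i}=\beta_{k}$ with $k>0$, the real-root coproduct approximation proved above,
\[
\Delta(e_{r\delta-\alpha_{i}})-\bigl(e_{r\delta-\alpha_{i}}\otimes1+k_{r\delta-\alpha_{i}}\otimes e_{r\delta-\alpha_{i}}\bigr)\in\mathcal{U}_{k}^{\geq0}(\epsilon)\otimes\mathcal{U}^{+}(\epsilon)\subseteq\mathcal{U}_{\infty}^{\geq0}(\epsilon)\otimes\mathcal{U}^{+}(\epsilon).
\]
Expanding the $q$-commutator, the leading cross-terms simplify, via $\mathbf{q}(\delta,\,\cdot\,)=1$, $\mathbf{q}(-\alpha_{i},\alpha_{i})=\mathbf{q}(\alpha_{i},\alpha_{i})^{-1}$ and $k_{r\delta-\alpha_{i}}k_{i}=k_{i}k_{r\delta-\alpha_{i}}=c^{r}$, to $\widetilde{e}_{(r\delta,i)}\otimes1+c^{r}\otimes\widetilde{e}_{(r\delta,i)}$, while every remaining term is built out of the real-root remainder.

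The hard part is to show that those remainder terms all lie in $\mathcal{U}_{\infty}^{\geq0}(\epsilon)\otimes\mathcal{U}_{-\infty}^{+}(\epsilon)$. This is not visible term by term: \emph{a priori} one only gets membership in a larger space, and putting the terms into PBW normal form with the Levendorskii--Soibelman formula (Theorem \ref{thm:LS-formula}) may bring imaginary root vectors into either tensor factor. One recovers the sharp containment exactly as in \cite[\S\,6--7]{D1}: one intersects with the rough containment $\Delta(\widetilde{e}_{(r\delta,i)})\in\mathcal{U}_{\widetilde{\infty}}^{\geq0}(\epsilon)\otimes\mathcal{U}_{-\widetilde{\infty}}^{+}(\epsilon)$, and then uses the ordering lemmas (cf. \cite[Lemmas 6.2--6.4]{D1}) together with the nondegeneracy statement recorded above (cf. \cite[Corollary 6.6]{D1}) to kill the components involving an imaginary root vector; concretely, one $q$-commutes $\Delta(\widetilde{e}_{(r\delta,i)})$ past $\Delta(e_{j})$ and past $\Delta(e_{\delta-\alpha_{j}})$ for $j\in\mathring{I}$, recognizes the resulting brackets as real root vectors lying in $\mathcal{U}_{-\infty}^{+}(\epsilon)$ by the Drinfeld relation (\ref{eq:GQG-Drinfeld-hx}) and Remark \ref{rem:compare-rootv-Drinfeld}, and appeals to coassociativity to force the offending coefficients to vanish. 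Every step of \cite[\S\,7]{D1} transfers verbatim; the only genuinely super-specific points, both routine, are that the odd real root vectors square to zero (Lemma \ref{lem:orrvector-sqzero}), which is what makes $\mathrm{Par}_{\epsilon}$ the correct PBW index set so that Theorem \ref{thm:PBW-basis} and Theorem \ref{thm:LS-formula} apply as stated, and that all structure constants are read off from $\mathbf{q}_{\epsilon}$, whose $W$-invariance (Lemma \ref{lem:W-pres-q}) lets the braid-symmetry computations of \cite{D1} be carried over unchanged.
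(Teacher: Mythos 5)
Your proposal is correct and follows essentially the same route as the paper: the paper's proof of Theorem \ref{thm:comult-imag-rootv} simply observes that the argument of \cite[Proposition 7.1]{D1}, resting on the preceding comultiplication approximation for real root vectors, the Levendorskii--Soibelman formula, the vanishing criteria (the analogue of \cite[Corollary 6.6]{D1}) and the properties of the convex ordering \cite[Lemmas 6.2--6.4]{D1}, carries over word for word, which is precisely the transplantation you spell out (including the two super-specific checks, $e_{\beta}^{2}=0$ for odd $\beta$ and the $W$-invariance of $\mathbf{q}_{\epsilon}$).
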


This theorem also implies the following special case of orthogonality.
\begin{lem}[{cf. \cite[Proposition 8.1]{D1}}]
\label{lem:Dam-Prop8.1} For $\alpha\in\widetilde{\Phi}^{+}$ and
$\overrightarrow{\gamma}=(\gamma_{1},\dots,\gamma_{r})\in\mathrm{Par}_{\epsilon}$,
if 
\[
\text{either }(e_{\alpha},f(-\overrightarrow{\gamma}))\neq0\text{ or }(e(-\overrightarrow{\gamma}),f_{\alpha})\neq0,
\]
then we have $r=1$.
\end{lem}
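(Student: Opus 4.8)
\emph{Proof sketch.} I would adapt \cite[Proposition~8.1]{D1}, which is essentially structural; the super-structure only forces a little extra bookkeeping. First, by Lemma~\ref{lem:Hopf-pair-bil-form} both quantities can be computed inside the symmetric bilinear form on $\mathbf{f}(\epsilon)$, and the anti-automorphism $\sigma$ of $\mathbf{f}(\epsilon)$, which satisfies $(\sigma x,\sigma y)=(x,y)$, interchanges them up to replacing $\overrightarrow{\gamma}$ by its reverse and each root vector by its $\sigma$-image --- operations that do not affect the length $r$. So it is enough to treat $(e_\alpha,f(-\overrightarrow{\gamma}))\neq 0$. Since the Hopf pairing is $Q$-graded, a nonzero value already forces $e(\overrightarrow{\gamma})$ to have the correct weight, hence $r\geq1$; the content is to exclude $r\geq2$. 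I would argue by induction on $r$: assuming $r\geq2$, split $f(-\overrightarrow{\gamma})=f(-\overrightarrow{\gamma}')\,f_{\gamma}$ as a product of a shorter PBW monomial ($\overrightarrow{\gamma}'\in\mathrm{Par}_{\epsilon}$, of length $r-1$) and a single root vector $f_{\gamma}$, and use $(e_\alpha,f(-\overrightarrow{\gamma}))=(\Delta(e_\alpha),f(-\overrightarrow{\gamma}')\otimes f_{\gamma})$.

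Next I would feed in the appropriate approximate comultiplication of $e_\alpha$: the formula for real root vectors when $\alpha=\beta_k$ (treating $k>0$ and $k\leq0$ separately), and Theorem~\ref{thm:comult-imag-rootv} when $\alpha=(m\delta,i)$ is imaginary. In each case $\Delta(e_\alpha)=e_\alpha\otimes 1+g_\alpha\otimes e_\alpha+R_\alpha$, where $g_\alpha$ is a group-like ($k_{\beta_k}$, resp.\ $c^m$) and $R_\alpha$ lies in a tensor product $\mathcal{U}^{\geq0}(\epsilon)\otimes\mathcal{U}^{+}(\epsilon)$ of subalgebras built from root vectors confined to a controlled interval of the convex order $\prec$. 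Pairing against $f(-\overrightarrow{\gamma}')\otimes f_{\gamma}$: the term $e_\alpha\otimes 1$ contributes $(e_\alpha,f(-\overrightarrow{\gamma}'))\,(1,f_{\gamma})=0$ because $f_{\gamma}$ has nonzero $Q$-degree, and $g_\alpha\otimes e_\alpha$ contributes a factor $(g_\alpha,f(-\overrightarrow{\gamma}'))=0$ because $g_\alpha$ has $Q$-degree $0$ while $f(-\overrightarrow{\gamma}')$ does not (this is where $r\geq2$ enters). Thus the pairing is carried entirely by $R_\alpha$, and expanding $R_\alpha$ in the PBW basis, using the Levendorskii--Soibelman formula (Theorem~\ref{thm:LS-formula}) and the order-theoretic properties of $\Phi^{+}(\pm\infty)$ that mirror \cite[Lemmas~6.2--6.4]{D1} together with the induction hypothesis, one checks that no such contribution survives when $r\geq2$. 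This yields the contradiction, hence $r=1$.

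The parity intervenes in only two cosmetic ways. First, the combinatorics of PBW monomials is literally that of $\widehat{\mathfrak{sl}}_{M+N}$, since the square-zero condition on odd root vectors (Lemma~\ref{lem:orrvector-sqzero}) is already built into $\mathrm{Par}_{\epsilon}$. Second, the non-degeneracy statements about $q$-commutators of root vectors underpinning the comultiplication formulas are, in the odd directions, obtained by transporting along $\tau$ and passing to the classical limit $q\to1$ of $U(\epsilon)$, exactly as in the analog of \cite[Lemma~6.1]{D1} established above. The step I expect to be the real work --- and the one that should be written out rather than merely cited --- is the last part of the previous paragraph: controlling the correction term $R_\alpha$ against the order $\prec$ and the multiplicity constraints of $\mathrm{Par}_{\epsilon}$ so that it cannot pair nontrivially with a length-$\geq2$ monomial. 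Once that bookkeeping is in place, the conclusion $r=1$ is immediate.
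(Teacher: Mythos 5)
Your route is the paper's own: Lemma \ref{lem:Dam-Prop8.1} is there obtained, exactly as in \cite{D1}, from the coproduct approximations (Theorem \ref{thm:comult-imag-rootv} and the analogous statement for real root vectors), and your splitting $(e_{\alpha},f(-\overrightarrow{\gamma}))=(\Delta(e_{\alpha}),f(-\overrightarrow{\gamma}{}^{\prime})\otimes f_{\gamma})$ with the degree argument killing the terms $e_{\alpha}\otimes1$ and $g_{\alpha}\otimes e_{\alpha}$ is the correct start. However, the step you defer is not bookkeeping; it is the proof. The quoted approximations only place the correction $R_{\alpha}$ in, say, $\mathcal{U}_{\infty}^{\geq0}(\epsilon)\otimes\mathcal{U}_{-\infty}^{+}(\epsilon)$, and a pure degree/convexity count does not force $(R_{\alpha},f(-\overrightarrow{\gamma}{}^{\prime})\otimes f_{\gamma})=0$: already for $r=2$ with $\gamma_{1}=s\delta+\beta\in\Phi^{+}(-\infty)$, $\gamma_{2}=t\delta-\beta\in\Phi^{+}(\infty)$ and $\alpha=((s+t)\delta,i)$, nothing in the stated containment prevents the two tensor slots of $R_{\alpha}$ from containing single root vectors of exactly the degrees of $f_{\gamma_{2}}$ and $f_{\gamma_{1}}$, so each factor of the pairing can individually be nonzero. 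Excluding such contributions is where the induction along $\prec$, the Levendorskii--Soibelman formula (Theorem \ref{thm:LS-formula}) and the already established orthogonality among real-root PBW vectors are actually used in \cite{D1}; if one is not content to cite \cite{D1} (as the paper in effect does after Theorem \ref{thm:comult-imag-rootv}), this analysis must be written out, and your sketch does not supply it.

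A second point: your reduction of the case $(e(-\overrightarrow{\gamma}),f_{\alpha})\neq0$ to the first case via $\sigma$ and Lemma \ref{lem:Hopf-pair-bil-form} is not justified as stated. To compute this pairing inside the form on $\mathbf{f}(\epsilon)$ you must identify $f_{\alpha}=\Omega(e_{\alpha})$ and $f(-\overrightarrow{\gamma})$ with images of elements of $\mathbf{f}(\epsilon)$ under $x\mapsto x^{-}$, i.e. compare $\Omega$ with the algebra involution $\omega$; this is precisely the delicate point the paper handles by explicit $q$-bracket computations in Proposition \ref{prop:symm-pair-hh}, and for the normalized imaginary vectors $e_{(r\delta,i)}$ the two maps differ by no single scalar. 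Moreover $\sigma$ applied to a PBW root vector is in general not proportional to a root vector, so the transported pairing is no longer of the PBW shape your induction needs. Damiani, and hence the paper, avoids this by treating the two non-vanishing statements through parallel mirrored arguments (approximating the coproduct in the appropriate tensor slot in each case) rather than by a symmetry; you should do the same, or else justify the $\Omega$ versus $\omega$ comparison for all the root vectors involved.
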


\subsection{\label{subsec:Pairing-imag-rvector}Pairing of imaginary root vectors}

For each $j\in\mathring{I}$, consider the projection
\[
\pi_{j}:\mathcal{U}(\epsilon)\otimes\mathcal{U}(\epsilon)\longrightarrow\mathcal{U}(\epsilon)\otimes\Bbbk e_{j}
\]
on the second factor along the complement of $\Bbbk e_{j}$ with respect
to the PBW basis of $\mathcal{U}(\epsilon)$. Thanks to the convexity
of the ordering $\prec$ and the comultiplication formula for $e_{(r\delta,i)}\in\mathcal{U}_{\mathrm{im}}^{+}(\epsilon)$,
we have
\[
\pi_{j}(\Delta(e_{(r\delta,i)}))=c_{r}^{ij}e_{r\delta-\alpha_{j}}k_{j}\otimes e_{j}
\]
for some scalar $c_{m}^{ij}$, which is related to the pairing of
imaginary root vectors as follows.
\begin{lem}[{cf. \cite[Lemma 9.4]{D1}}]
\label{lem:pair-imag-cijr}For $r,s>0$ and $i,j\in\mathring{I}$,
we have 
\[
(e_{(r\delta,i)},f_{(s\delta,j)})=-\delta_{r,s}c_{r}^{ij}\frac{\mathbf{q}(\alpha_{j},\alpha_{j})}{(q_{j}^{-1}-q_{j})(q_{j+1}^{-1}-q_{j+1})}.
\]
\end{lem}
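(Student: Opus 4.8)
The plan is to follow \cite[Lemma 9.4]{D1}, extracting $(e_{(r\delta,i)},f_{(s\delta,j)})$ from the scalar $c_{r}^{ij}$ by means of the Hopf pairing axiom (\ref{eq:Hopf-pair-comult}) together with the comultiplication formula, in the packaged form $\pi_{j}(\Delta(e_{(r\delta,i)}))=c_{r}^{ij}e_{r\delta-\alpha_{j}}k_{j}\otimes e_{j}$ recorded just above. First, since $e_{(r\delta,i)}$ is homogeneous of $Q$-degree $r\delta$ and $f_{(s\delta,j)}$ of $Q$-degree $-s\delta$, the pairing vanishes unless $r=s$; this produces the factor $\delta_{r,s}$, and from now on I take $r=s$.

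Second, I would pass from $f_{(r\delta,j)}$ to $\widetilde{f}_{(r\delta,j)}=\Omega(\widetilde{e}_{(r\delta,j)})$. Solving the exponential relation defining the $e_{(a\delta,j)}$ for the $\widetilde{e}_{(a\delta,j)}$, and using that for a fixed $j$ the $e_{(a\delta,j)}$ with $a>0$ mutually commute, shows that $\widetilde{e}_{(r\delta,j)}-e_{(r\delta,j)}$ is a linear combination of PBW monomials in the $e_{(a\delta,j)}$ with $a<r$, each of length $\geq2$; applying $\Omega$ gives the same for $\widetilde{f}_{(r\delta,j)}-f_{(r\delta,j)}$. By Lemma \ref{lem:Dam-Prop8.1} the functional $(e_{(r\delta,i)},\,\cdot\,)$ kills every PBW vector of $\mathcal{U}^{-}(\epsilon)$ of length $\geq2$, hence $(e_{(r\delta,i)},f_{(r\delta,j)})=(e_{(r\delta,i)},\widetilde{f}_{(r\delta,j)})$. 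This is the step that renders the exponential relation harmless: without it one would be left with infinitely many unresolved pairings.

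Third, I would compute $(e_{(r\delta,i)},\widetilde{f}_{(r\delta,j)})$. Applying $\Omega$ to $\widetilde{e}_{(r\delta,j)}=e_{r\delta-\alpha_{j}}e_{j}-\mathbf{q}(\alpha_{j},\alpha_{j})^{-1}e_{j}e_{r\delta-\alpha_{j}}$, and noting that $\Omega$ inverts $\mathbf{q}(\alpha_{j},\alpha_{j})$ because it sends each $q_{\ell}$ to $q_{\ell}^{-1}$, yields
\[
\widetilde{f}_{(r\delta,j)}=f_{j}f_{r\delta-\alpha_{j}}-\mathbf{q}(\alpha_{j},\alpha_{j})\,f_{r\delta-\alpha_{j}}f_{j}.
\]
By (\ref{eq:Hopf-pair-comult}), $(e_{(r\delta,i)},f_{j}f_{r\delta-\alpha_{j}})=(\Delta(e_{(r\delta,i)}),f_{j}\otimes f_{r\delta-\alpha_{j}})$, and this vanishes since by Theorem \ref{thm:comult-imag-rootv} the bidegree-$(\alpha_{j},r\delta-\alpha_{j})$ component of $\Delta(e_{(r\delta,i)})$ is zero: neither $e_{(r\delta,i)}\otimes1$ nor $c^{r}\otimes e_{(r\delta,i)}$ has that bidegree, and the left tensor factor $\mathcal{U}_{\infty}^{\geq0}(\epsilon)$ of the remainder has no nonzero homogeneous component of degree $\alpha_{j}$ (every root of $\Phi^{+}(\infty)$ has positive $\delta$-coefficient). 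On the other hand $(e_{(r\delta,i)},f_{r\delta-\alpha_{j}}f_{j})=(\Delta(e_{(r\delta,i)}),f_{r\delta-\alpha_{j}}\otimes f_{j})$, and since $(\,\cdot\,,f_{j})$ annihilates $\mathcal{U}^{+}(\epsilon)_{\mu}$ for $\mu\neq\alpha_{j}$ while $\mathcal{U}^{+}(\epsilon)_{\alpha_{j}}=\Bbbk e_{j}$, only the summand $\pi_{j}(\Delta(e_{(r\delta,i)}))=c_{r}^{ij}e_{r\delta-\alpha_{j}}k_{j}\otimes e_{j}$ survives, so
\[
(e_{(r\delta,i)},f_{r\delta-\alpha_{j}}f_{j})=c_{r}^{ij}\,(e_{r\delta-\alpha_{j}}k_{j},f_{r\delta-\alpha_{j}})\,(e_{j},f_{j})=c_{r}^{ij}\,(e_{r\delta-\alpha_{j}},f_{r\delta-\alpha_{j}})\,(e_{j},f_{j}),
\]
using $(xk_{j},y)=(x,y)$.

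Finally I would evaluate the two surviving factors: $(e_{j},f_{j})=\frac{1}{q^{-1}-q}$ by the definition of the Hopf pairing, and $(e_{r\delta-\alpha_{j}},f_{r\delta-\alpha_{j}})=(e_{i},f_{i})=\frac{1}{q^{-1}-q}$ for the appropriate simple root, by Lemma \ref{lem:Hopf-pair-bil-form} together with the isometry property of the braid symmetries (Proposition \ref{prop:braid-isometry}). Since $q_{\ell}^{-1}-q_{\ell}=q^{-1}-q$ for every $\ell$ (as $q_{\ell}\in\{q,-q^{-1}\}$), assembling the above gives
\[
(e_{(r\delta,i)},f_{(r\delta,j)})=(e_{(r\delta,i)},\widetilde{f}_{(r\delta,j)})=-c_{r}^{ij}\,\frac{\mathbf{q}(\alpha_{j},\alpha_{j})}{(q_{j}^{-1}-q_{j})(q_{j+1}^{-1}-q_{j+1})},
\]
which is the assertion. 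I expect the main friction to be purely bookkeeping --- keeping straight that it is the ``wrong-order'' monomial $f_{r\delta-\alpha_{j}}f_{j}$, not $f_{j}f_{r\delta-\alpha_{j}}$, that survives against $f_{r\delta-\alpha_{j}}\otimes f_{j}$, tracking $\mathbf{q}(\alpha_{j},\alpha_{j})$ through $\Omega$, and watching the $q_{\ell}^{-1}-q_{\ell}$ collapse --- rather than any new idea beyond \cite{D1}.
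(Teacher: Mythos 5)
Your proposal is correct and follows essentially the same route as the paper's proof: the $\delta_{r,s}$ from the $Q$-grading, the reduction of $f_{(r\delta,j)}$ to $\widetilde{f}_{(r\delta,j)}$ via Lemma \ref{lem:Dam-Prop8.1}, and the evaluation of the surviving term $-\mathbf{q}(\alpha_{j},\alpha_{j})f_{r\delta-\alpha_{j}}f_{j}$ through $\pi_{j}(\Delta(e_{(r\delta,i)}))=c_{r}^{ij}e_{r\delta-\alpha_{j}}k_{j}\otimes e_{j}$ together with Lemma \ref{lem:Hopf-pair-bil-form} and Proposition \ref{prop:braid-isometry}. The only (harmless) deviation is that you kill the PBW-ordered term $f_{j}f_{r\delta-\alpha_{j}}$ by a direct degree argument on Theorem \ref{thm:comult-imag-rootv}, where the paper simply cites Lemma \ref{lem:Dam-Prop8.1} again.
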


\begin{proof}
First, from the $Q$-grading we have $(e_{(r\delta,i)},f_{(s\delta,j)})=0$
whenever $r\neq s$. For $r=s$, one can proceed as in the proof of
\cite[Lemma 9.4]{D1}. Indeed, as $f_{(r\delta,j)}=\widetilde{f}_{(r\delta,j)}$
modulo the subalgebra generated by $f_{(s\delta,j)}$ for $s<r$,
by Lemma \ref{lem:Dam-Prop8.1} we have
\[
(e_{(r\delta,i)},f_{(r\delta,j)})=(e_{(r\delta,i)},\widetilde{f}_{(r\delta,j)})=(e_{(r\delta,i)},f_{j}f_{r\delta-\alpha_{j}}-\mathbf{q}(\alpha_{j},\alpha_{j})f_{r\delta-\alpha_{j}}f_{j}).
\]
Since $f_{j}f_{r\delta-\alpha_{j}}=f(\alpha_{j},r\delta-\alpha_{j})$,
again by Lemma \ref{lem:Dam-Prop8.1},
\begin{align*}
(e_{(r\delta,i)},f_{j}f_{r\delta-\alpha_{j}}-\mathbf{q}(\alpha_{j},\alpha_{j})f_{r\delta-\alpha_{j}}f_{j}) & =(e_{(r\delta,i)},-\mathbf{q}(\alpha_{j},\alpha_{j})f_{r\delta-\alpha_{j}}f_{j})\\
 & =-\mathbf{q}(\alpha_{j},\alpha_{j})(\pi_{j}(\Delta(e_{(r\delta,i)})),f_{r\delta-\alpha_{j}}\otimes f_{j})\\
 & =-\mathbf{q}(\alpha_{j},\alpha_{j})c_{r}^{ij}(e_{r\delta-\alpha_{j}},f_{r\delta-\alpha_{j}})(e_{j},f_{j})\\
 & =-c_{r}^{ij}\frac{\mathbf{q}(\alpha_{j},\alpha_{j})}{(q_{j}^{-1}-q_{j})(q_{j+1}^{-1}-q_{j+1})}
\end{align*}
as $(e_{r\delta-\alpha_{j}},f_{r\delta-\alpha_{j}})=\frac{1}{q_{j+1}^{-1}-q_{j+1}}$
by Lemma \ref{lem:Hopf-pair-bil-form} and \ref{prop:braid-isometry}.
\end{proof}
To compute $c_{r}^{ij}$ we have the following lemmas from \cite[Section 9]{D1}
coming from the commutation relation (\ref{eq:GQG-Drinfeld-hx}) and
hence the same argument applies.
\begin{lem}[{cf. \cite[Lemma 9.5]{D1}}]
 For $r>0$ and $i,j\in\mathring{I}$ such that $p(j)=0$, we have
\[
c_{r}^{ij}=(o(i)o(j))^{r}\frac{\mathbf{q}(\alpha_{i},\alpha_{j})^{r}-\mathbf{q}(\alpha_{i},\alpha_{j})^{-r}}{\mathbf{q}(\alpha_{j},\alpha_{j})^{r}-\mathbf{q}(\alpha_{j},\alpha_{j})^{-r}}c_{r}^{jj}.
\]
Moreover, $c_{r}^{ij}=0$ unless $\left|i-j\right|\leq1$.
\end{lem}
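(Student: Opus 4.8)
The plan is to transcribe the proof of \cite[Lemma 9.5]{D1}, whose only ingredient sensitive to the super structure is the Drinfeld-type commutation relation (\ref{eq:GQG-Drinfeld-hx}) between a Cartan loop generator and a Chevalley generator. Rewriting (\ref{eq:GQG-Drinfeld-hx}) in terms of affine root vectors through Remark \ref{rem:compare-rootv-Drinfeld} and specializing at $x_{j,0}^{+}=e_{j}$, one obtains, for all $i,j\in\mathring{I}$ and $r>0$,
\[
[e_{(r\delta,i)},e_{j}]=\lambda_{ij}^{r}\,c^{r}\,e_{r\delta+\alpha_{j}},\qquad \lambda_{ij}^{r}:=(o(i)o(j))^{r}\,\frac{\mathbf{q}(\alpha_{i},\alpha_{j})^{r}-\mathbf{q}(\alpha_{i},\alpha_{j})^{-r}}{r(q_{i}-q_{i}^{-1})},
\]
which holds in $\mathcal{U}(\epsilon)$ (or its extension by $c^{\pm1/2}$) by Theorem \ref{thm:GQG-Drinfeld}. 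When $p(j)=0$ the scalar $\mathbf{q}(\alpha_{j},\alpha_{j})=q_{j}q_{j+1}=q_{j}^{2}$ is not a root of unity, so $\lambda_{jj}^{r}\neq0$; setting $\mu_{ij}^{r}:=\lambda_{ij}^{r}/\lambda_{jj}^{r}$ one recovers precisely the coefficient appearing in the statement, and the element $z:=e_{(r\delta,i)}-\mu_{ij}^{r}e_{(r\delta,j)}\in\mathcal{U}_{\mathrm{im}}^{+}(\epsilon)_{r\delta}$ satisfies $[z,e_{j}]=0$ by construction.

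To pass from this to the claim for $c_{r}^{ij}$ I would follow Damiani's argument verbatim. By Lemma \ref{lem:pair-imag-cijr} one has $c_{r}^{ij}=-(e_{(r\delta,i)},f_{(r\delta,j)})\,(q_{j}^{-1}-q_{j})(q_{j+1}^{-1}-q_{j+1})/\mathbf{q}(\alpha_{j},\alpha_{j})$, the last factor being independent of $i$, so the assertion $c_{r}^{ij}=\mu_{ij}^{r}c_{r}^{jj}$ is equivalent to $(z,f_{(r\delta,j)})=0$. This is deduced from $[z,e_{j}]=0$ exactly as in \cite{D1}: as in the proof of Lemma \ref{lem:pair-imag-cijr} the weight grading gives $(z,f_{(r\delta,j)})=(z,\widetilde{f}_{(r\delta,j)})$ and reduces it to a pairing against $f_{r\delta-\alpha_{j}}\otimes f_{j}$; then, using (\ref{eq:Hopf-pair-comult}), the comultiplication formula for imaginary root vectors (Theorem \ref{thm:comult-imag-rootv}) and for real root vectors, together with the convexity of the order $\prec$ to discard all but the relevant components, one identifies $(z,f_{(r\delta,j)})$ with a nonzero, $i$-independent multiple (namely the universal real-root-vector pairing constant of Proposition \ref{prop:braid-isometry} and Lemma \ref{lem:Hopf-pair-bil-form}) of the pairing of $[z,e_{j}]$ against a suitable weight-$(-r\delta-\alpha_{j})$ element. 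Since $[z,e_{j}]=0$, that pairing vanishes, hence $(z,f_{(r\delta,j)})=0$ and $c_{r}^{ij}=\mu_{ij}^{r}c_{r}^{jj}$.

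The "moreover" is then immediate: if $|i-j|>1$ the weights $\alpha_{i}=\delta_{i}-\delta_{i+1}$ and $\alpha_{j}=\delta_{j}-\delta_{j+1}$ have disjoint support, so $\mathbf{q}(\alpha_{i},\alpha_{j})=1$, the numerator $\mathbf{q}(\alpha_{i},\alpha_{j})^{r}-\mathbf{q}(\alpha_{i},\alpha_{j})^{-r}$ of $\mu_{ij}^{r}$ is zero, and therefore $c_{r}^{ij}=\mu_{ij}^{r}c_{r}^{jj}=0$. The main obstacle is bookkeeping rather than conceptual: quoting the displayed relation with the correct normalization, tracking the signs $o(i)$ and the parameters $q_{i}$ against $q$, and checking that every step of Damiani's derivation relies only on the parametrization of $\Phi^{+}$, the convex order $\prec$, the subalgebras $\mathcal{U}_{r}^{\pm}(\epsilon)$ and the comultiplication formulas—all established in Sections \ref{subsec:aff-PBW}--\ref{subsec:comult-Dr} with statements identical to the non-super case—so that the only super-sensitive quantity is the scalar in (\ref{eq:GQG-Drinfeld-hx}).
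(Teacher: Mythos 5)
Your overall route is the same as the paper's, which proves this lemma simply by transplanting Damiani's argument for \cite[Lemma 9.5]{D1}, the only super-sensitive input being the commutation relation (\ref{eq:GQG-Drinfeld-hx}): your normalization of the structure constant $\lambda_{ij}^{r}$ through Remark \ref{rem:compare-rootv-Drinfeld}, the use of $p(j)=0$ to guarantee $\mathbf{q}(\alpha_{j},\alpha_{j})^{r}\neq\pm1$ and hence $\lambda_{jj}^{r}\neq0$, the reduction of the identity $c_{r}^{ij}=\mu_{ij}^{r}c_{r}^{jj}$ to $(z,f_{(r\delta,j)})=0$ via Lemma \ref{lem:pair-imag-cijr} (whose proportionality factor is indeed independent of $i$), and the vanishing for $\left|i-j\right|>1$ from $\mathbf{q}(\alpha_{i},\alpha_{j})=1$ are all correct.

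The one step that is not yet a proof is the bridge from $[z,e_{j}]=0$ to $(z,f_{(r\delta,j)})=0$. As you phrase it, one should pair $[z,e_{j}]$ against ``a suitable weight-$(-r\delta-\alpha_{j})$ element''; but for any $w\in\mathcal{U}^{-}(\epsilon)_{-(r\delta+\alpha_{j})}$ the functional $x\mapsto([x,e_{j}],w)$ is of the form $x\mapsto(x,u_{w})$ for some fixed $u_{w}\in\mathcal{U}^{-}(\epsilon)_{-r\delta}$ (strip the Cartan parts using $(xk_{\mu},y)=(x,y)$), so asserting the existence of a $w$ for which this functional computes $(\,\cdot\,,\widetilde{f}_{(r\delta,j)})$ on the span of the $e_{(r\delta,k)}$ is essentially a restatement of the claim that this span's pairing against $\widetilde{f}_{(r\delta,j)}$ kills the kernel of $\mathrm{ad}(e_{j})$ --- i.e.\ of the lemma itself. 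Note also that the commutator with $e_{j}$ is not the natural detector of the $(\,\cdot\,)k_{j}\otimes e_{j}$-component of $\Delta$; the commutator with $f_{j}$ is. So to make this step self-contained you should either reproduce Damiani's actual argument for it (controlling the unwanted coproduct components with Theorem \ref{thm:comult-imag-rootv}, Lemma \ref{lem:Dam-Prop8.1} and the convexity of $\prec$), or, more directly, use the $x^{-}$ half of (\ref{eq:GQG-Drinfeld-hx}): by Remark \ref{rem:compare-rootv-Drinfeld}, $[e_{(r\delta,i)},f_{j}]$ equals $\lambda_{ij}^{r}$ times an $i$-independent Cartan monomial times $e_{r\delta-\alpha_{j}}$, and comparing with the commutator formula recalled in Section \ref{subsec:Lusztig-f-bil-form} (cf.\ \cite[Proposition 3.1.6]{Lb}), which expresses this commutator through $r_{j}$ and $_{j}r$ and hence through the $(\,\cdot\,)k_{j}\otimes e_{j}$-component of $\Delta(e_{(r\delta,i)})$, identifies $c_{r}^{ij}$ as an $i$-independent multiple of $\lambda_{ij}^{r}$, from which the stated ratio and the vanishing for $\left|i-j\right|>1$ follow at once. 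With that step filled in, your write-up matches the paper's intended proof.
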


\begin{lem}[{cf. \cite[Lemma 9.7]{D1}}]
 For $i\in\mathring{I}$, $c_{1}^{ii}=(1-\mathbf{q}(\alpha_{i},\alpha_{i})^{-2})$.
\end{lem}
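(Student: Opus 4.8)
The plan is to compute the coefficient of the PBW basis element $e_{\delta-\alpha_i}k_i\otimes e_i$ in $\Delta(e_{(\delta,i)})$; by the definition of $c_1^{ii}$ (i.e.\ $\pi_i(\Delta(e_{(\delta,i)}))=c_1^{ii}\,e_{\delta-\alpha_i}k_i\otimes e_i$) this coefficient equals $c_1^{ii}$, and the computation follows \cite[Lemma 9.7]{D1}. First I would observe that comparing coefficients of $u^1$ in the defining exponential identity for the imaginary root vectors gives $e_{(\delta,i)}=\widetilde{e}_{(\delta,i)}=e_{\delta-\alpha_i}e_i-\mathbf{q}(\alpha_i,\alpha_i)^{-1}e_ie_{\delta-\alpha_i}$, so that, $\Delta$ being an algebra homomorphism with $\Delta(e_i)=e_i\otimes1+k_i\otimes e_i$,
\[
\Delta(e_{(\delta,i)})=\Delta(e_{\delta-\alpha_i})(e_i\otimes1+k_i\otimes e_i)-\mathbf{q}(\alpha_i,\alpha_i)^{-1}(e_i\otimes1+k_i\otimes e_i)\Delta(e_{\delta-\alpha_i}).
\]

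Next I would pin down which bidegree components of $\Delta(e_{\delta-\alpha_i})$ can possibly contribute, using the structure of $\Delta$ on $\mathcal{U}^+(\epsilon)$ recalled in the proof of Lemma \ref{lem:Hopf-pair-bil-form}: writing $e_{\delta-\alpha_i}=w^+$ with $w\in\mathbf{f}(\epsilon)_{\delta-\alpha_i}$ and $r(w)=\sum w_{(1)}\otimes w_{(2)}$, one has $\Delta(e_{\delta-\alpha_i})=\sum w_{(1)}^+k_{\mathrm{cl}(\deg w_{(2)})}\otimes w_{(2)}^+$. Consequently the only component of $\Delta(e_{\delta-\alpha_i})$ whose second tensor factor has $Q$-degree $0$ is $e_{\delta-\alpha_i}\otimes1$, and there is no component whose second tensor factor has $Q$-degree $\alpha_i$, because its first factor would then lie in $\mathcal{U}^+(\epsilon)_{\delta-2\alpha_i}=0$ (the coefficient of $\alpha_i$ in $\delta-2\alpha_i$ equals $-1$, so $\delta-2\alpha_i\notin Q^+$).

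It then remains to read off the coefficient of $e_{\delta-\alpha_i}k_i\otimes e_i$, whose second tensor factor has $Q$-degree $\alpha_i$, so the first factor has $Q$-degree $\delta-\alpha_i$. In the first summand of the display, the previous paragraph forces the $\Delta(e_{\delta-\alpha_i})$-factor to be $e_{\delta-\alpha_i}\otimes1$ and the $\Delta(e_i)$-factor to be $k_i\otimes e_i$, contributing $(e_{\delta-\alpha_i}\otimes1)(k_i\otimes e_i)=e_{\delta-\alpha_i}k_i\otimes e_i$, i.e.\ coefficient $1$. In the second summand it forces the $\Delta(e_i)$-factor to be $k_i\otimes e_i$ and the $\Delta(e_{\delta-\alpha_i})$-factor to be $e_{\delta-\alpha_i}\otimes1$, contributing $(k_i\otimes e_i)(e_{\delta-\alpha_i}\otimes1)=k_ie_{\delta-\alpha_i}\otimes e_i=\mathbf{q}(\alpha_i,\alpha_i)^{-1}e_{\delta-\alpha_i}k_i\otimes e_i$, where we used $k_ixk_i^{-1}=\mathbf{q}(\alpha_i,\mathrm{cl}(\deg x))x$ with $\mathrm{cl}(\delta-\alpha_i)=-\alpha_i$. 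Hence the coefficient of $e_{\delta-\alpha_i}k_i\otimes e_i$ in $\Delta(e_{(\delta,i)})$ equals $1-\mathbf{q}(\alpha_i,\alpha_i)^{-1}\cdot\mathbf{q}(\alpha_i,\alpha_i)^{-1}=1-\mathbf{q}(\alpha_i,\alpha_i)^{-2}$, which gives $c_1^{ii}=1-\mathbf{q}(\alpha_i,\alpha_i)^{-2}$.

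The computation is elementary; the only step where genuine care is needed — and thus the main (mild) obstacle — is the middle one, namely the exact determination of which bidegree components of $\Delta(e_{\delta-\alpha_i})$ survive, which is settled by the explicit coproduct formula on $\mathcal{U}^+(\epsilon)$ together with the vanishing $\mathcal{U}^+(\epsilon)_{\delta-2\alpha_i}=0$.
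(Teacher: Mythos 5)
Your computation is correct and is essentially the argument the paper relies on (via the cited Lemma 9.7 of Damiani): write $e_{(\delta,i)}=\widetilde{e}_{(\delta,i)}=\left\llbracket e_{\delta-\alpha_i},e_i\right\rrbracket$, expand $\Delta$ of this $q$-bracket, and use the $Q$-grading together with $\mathcal{U}^{+}(\epsilon)_{\delta-2\alpha_i}=0$ to see that only the terms $(e_{\delta-\alpha_i}\otimes 1)(k_i\otimes e_i)$ and $(k_i\otimes e_i)(e_{\delta-\alpha_i}\otimes 1)$ contribute to $\pi_i$, yielding $1-\mathbf{q}(\alpha_i,\alpha_i)^{-2}$. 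The degree bookkeeping, the identification of the degree-zero component of $\Delta(e_{\delta-\alpha_i})$ as exactly $e_{\delta-\alpha_i}\otimes 1$, and the commutation $k_ie_{\delta-\alpha_i}=\mathbf{q}(\alpha_i,\alpha_i)^{-1}e_{\delta-\alpha_i}k_i$ are all handled correctly.
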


\begin{lem}[{\cite[Lemma 9.6]{D1}}]
 For $r>0$ and $i,j\in\mathring{I}$, we have
\[
r(\mathbf{q}(\alpha_{i},\alpha_{j})-\mathbf{q}(\alpha_{i},\alpha_{j})^{-1})c_{r}^{ij}=(o(i)o(j))^{r-1}(\mathbf{q}(\alpha_{i},\alpha_{j})^{r}-\mathbf{q}(\alpha_{i},\alpha_{j})^{-r})c_{1}^{ij}.
\]
\end{lem}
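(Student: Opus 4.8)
The plan is to follow \cite[Lemma 9.6]{D1}: everything in sight — the imaginary root vectors $e_{(r\delta,i)}$, the simple root vectors $e_j,f_j$, and the relations (\ref{eq:GQG-Drinfeld-hx}) they satisfy — is attached to even roots ($p(r\delta)=0$), so Damiani's computation transcribes symbol for symbol with $q^{(\alpha_i,\alpha_j)}$ replaced by $\mathbf{q}(\alpha_i,\alpha_j)$; concretely, the idea is to evaluate the single commutator $[e_{(r\delta,i)},f_j]$ in two ways.

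For the first evaluation I would use the comultiplication. Write $e_{(r\delta,i)}=x^{+}$ with $x\in\mathbf{f}(\epsilon)$. By Theorem \ref{thm:comult-imag-rootv}, $\Delta(e_{(r\delta,i)})-\bigl(e_{(r\delta,i)}\otimes1+c^{r}\otimes e_{(r\delta,i)}\bigr)$ lies in $\mathcal{U}_{\infty}^{\geq0}(\epsilon)\otimes\mathcal{U}_{-\infty}^{+}(\epsilon)$, and since $\mathcal{U}_{\infty}^{\geq0}(\epsilon)$ has no nonzero element of $Q$-degree $\alpha_j$ — a nonempty product of roots from $\Phi^{+}(\infty)$ has strictly positive $\delta$-coefficient — the left skew-derivation satisfies ${}_{j}r(x)=0$; at the same time the very definition $\pi_j\bigl(\Delta(e_{(r\delta,i)})\bigr)=c_r^{ij}\,e_{r\delta-\alpha_j}k_j\otimes e_j$, together with the fact that $e_j$ is the unique PBW vector of degree $\alpha_j$, gives $r_j(x)^{+}=c_r^{ij}\,e_{r\delta-\alpha_j}$. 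Substituting into the commutator identity $x^{+}f_j-f_jx^{+}=\dfrac{r_j(x)^{+}k_j-k_j^{-1}\,{}_{j}r(x)^{+}}{q-q^{-1}}$ (cf.\ \cite[Proposition 3.1.6]{Lb} recorded above) yields $[e_{(r\delta,i)},f_j]=\dfrac{c_r^{ij}}{q-q^{-1}}\,e_{r\delta-\alpha_j}k_j$. For the second evaluation I would translate via Remark \ref{rem:compare-rootv-Drinfeld}: there $h_{i,r}$, $f_j$ and $e_{r\delta-\alpha_j}k_j$ are identified with $e_{(r\delta,i)}$, $x_{j,0}^{-}$ and $x_{j,r}^{-}$ up to the signs $o(i)^{r}$, $-o(j)^{r}$ and powers of the central $c$, so specializing (\ref{eq:GQG-Drinfeld-hx}) at $k=0$ computes $[e_{(r\delta,i)},f_j]$ as $(o(i)o(j))^{r}\dfrac{\mathbf{q}(\alpha_i,\alpha_j)^{r}-\mathbf{q}(\alpha_i,\alpha_j)^{-r}}{r(q_i-q_i^{-1})}$ times $e_{r\delta-\alpha_j}k_j$, up to the factor $\mathbf{q}(\alpha_j,\alpha_j)^{-1}$ from commuting $k_j$ past $e_{r\delta-\alpha_j}$ and a power of $c$ that enters consistently on both sides (linearly in $r$).

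Comparing the two evaluations of $[e_{(r\delta,i)},f_j]$ then gives a closed formula for $c_r^{ij}$; dividing by the $r=1$ instance cancels the $r$-independent normalization and leaves exactly
\[
r\bigl(\mathbf{q}(\alpha_i,\alpha_j)-\mathbf{q}(\alpha_i,\alpha_j)^{-1}\bigr)c_r^{ij}=(o(i)o(j))^{r-1}\bigl(\mathbf{q}(\alpha_i,\alpha_j)^{r}-\mathbf{q}(\alpha_i,\alpha_j)^{-r}\bigr)c_1^{ij}.
\]
I do not anticipate any real obstacle: once (\ref{eq:GQG-Drinfeld-hx}) is available for the affine root vectors — which it is, either by the construction of $\mathcal{B}$ in Appendix \ref{sec:GQG-Drinfeld-constr} or simply by Remark \ref{rem:logical-independence} — the argument is short bookkeeping, and the one thing a super reader must check, the absence of parity signs, is immediate because $r\delta$ is even. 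The genuinely super-sensitive step of this subsection is not here but the passage from the $c_r^{ij}$ to the pairing $(h_{i,r},h_{j,-r})$ for $p(j)=1$ (Theorem \ref{thm:pairing-imag-rootv}), which requires the detour through the bilinear form on $\mathbf{f}(\epsilon)$.
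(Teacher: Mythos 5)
Your proposal is correct, and it fleshes out exactly the route the paper gestures at: the paper gives no argument of its own here beyond citing \cite{D1} and observing that the input is the commutation relation (\ref{eq:GQG-Drinfeld-hx}), while you instantiate this by evaluating $[e_{(r\delta,i)},f_j]$ twice. Your first evaluation is sound: Theorem \ref{thm:comult-imag-rootv} forces ${}_{j}r(x)=0$ (every nontrivial first tensor factor has $\delta$-degree at least $1$), the definition of $c_r^{ij}$ gives $r_j(x)^{+}=c_r^{ij}e_{r\delta-\alpha_j}$, and the skew-derivation formula then yields $[e_{(r\delta,i)},f_j]=c_r^{ij}\,e_{r\delta-\alpha_j}k_j/(q-q^{-1})$. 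Comparing with (\ref{eq:GQG-Drinfeld-hx}) via Remark \ref{rem:compare-rootv-Drinfeld} gives $c_r^{ij}=(o(i)o(j))^{r}\bigl(\mathbf{q}(\alpha_i,\alpha_j)^{r}-\mathbf{q}(\alpha_i,\alpha_j)^{-r}\bigr)/\bigl(r\,\mathbf{q}(\alpha_j,\alpha_j)\bigr)$ (note $q_i-q_i^{-1}=q-q^{-1}$ for either parity), which is in fact stronger than the lemma: it is the closed formula the paper only reaches for $p(j)=0$ by chaining through the neighbouring lemmas (cf. \cite[Lemma 9.5, 9.7]{D1}), and the stated $r$-versus-$1$ identity falls out by division. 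The one place where your write-up is too casual is the parenthetical about ``a power of $c$ that enters consistently on both sides'': the first evaluation contains no power of $c$ at all, so the central factors in the second — $c^{r/2}$ from $e_{(r\delta,i)}=o(i)^{r}c^{r/2}h_{i,r}$, the $c^{\pm|r|/2}$ in (\ref{eq:GQG-Drinfeld-hx}), and $c^{-r}$ from $x_{j,r}^{-}=-o(j)^{r}c^{-r}k_j e_{r\delta-\alpha_j}$ — must cancel exactly, not merely be linear in $r$; they do cancel with the convention in which the exponent of $c$ is opposite to the superscript of $x^{\pm}$, whereas taking the sign literally as printed in Theorem \ref{thm:GQG-Drinfeld} leaves a stray $c^{-r}$, so this normalization should be checked rather than waved at. With that bookkeeping pinned down, your argument is complete and, within this paper's framework (where (\ref{eq:GQG-Drinfeld-hx}) is supplied independently by Appendix \ref{sec:GQG-Drinfeld-constr}), it is a legitimate and arguably more economical substitute for the transcription of Damiani's chain.
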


To sum up, we have 
\[
c_{r}^{ij}=(o(i)o(j))^{r}\frac{\mathbf{q}(\alpha_{i},\alpha_{j})^{r}-\mathbf{q}(\alpha_{i},\alpha_{j})^{-r}}{r\mathbf{q}(\alpha_{j},\alpha_{j})}
\]
for any $r>0$ and $i,j\in\mathring{I}$ with $p(j)=0$, which allows
us to compute $(e_{(r\delta,i)},f_{(r\delta,j)})$ in such cases.
For the remaining case of $p(j)=1$, we may use the following symmetry
which is enough to complete the computation for those $\epsilon$
in the statement of the main theorem.
\begin{prop}
\label{prop:symm-pair-hh}For $r>0$ and $i,j\in\mathring{I}$, we
have $(e_{(r\delta,i)},f_{(r\delta,j)})=(e_{(r\delta,j)},f_{(r\delta,i)})$.
\end{prop}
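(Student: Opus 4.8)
\emph{Proof plan.} The plan is to transport the computation to Lusztig's algebra $\mathbf{f}(\epsilon)$, on which the relevant bilinear form is symmetric by construction. Fix $r>0$ and write $e_{(r\delta,i)}=\theta\mapsto e$ image of a unique element, i.e. $e_{(r\delta,i)}={}^{+}(\phi_i)$ for a unique $\phi_i\in\mathbf{f}(\epsilon)_{r\delta}$ under the isomorphism ${}^{+}\colon\mathbf{f}(\epsilon)\to\mathcal{U}^{+}(\epsilon)$. By Lemma \ref{lem:Hopf-pair-bil-form} the Hopf pairing between $\mathcal{U}^{+}(\epsilon)$ and $\mathcal{U}^{-}(\epsilon)$ is computed by the symmetric form on $\mathbf{f}(\epsilon)$, so once we identify the element of $\mathbf{f}(\epsilon)$ corresponding to $f_{(r\delta,j)}$ under ${}^{-}$, symmetry in $i\leftrightarrow j$ will drop out of the symmetry of that form.

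Since $f_{(r\delta,j)}=\Omega(e_{(r\delta,j)})$, the first step is to express $\Omega$ through ${}^{\pm}$. As $\Omega$ is the $q\mapsto q^{-1}$ anti-involution with $\Omega(e_i)=f_i$, one checks on generators that $\Omega({}^{+}(x))={}^{-}(\sigma(\bar{x}))$ for all $x\in\mathbf{f}(\epsilon)$, where $\bar{\cdot}$ is the $\mathbb{Q}$-algebra automorphism of $\mathbf{f}(\epsilon)$ inverting $q$ and fixing the $\theta_i$, and $\sigma$ is the anti-automorphism fixing the $\theta_i$. Hence $f_{(r\delta,j)}={}^{-}(\sigma(\bar{\phi}_j))$ and $(e_{(r\delta,i)},f_{(r\delta,j)})=(\phi_i,\sigma(\bar{\phi}_j))$ in $\mathbf{f}(\epsilon)$. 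The crux is then to show $\sigma(\bar{\phi}_i)=\zeta_r\,\phi_i$ with $\zeta_r\in\Bbbk^{\times}$ depending only on $r$: the affine root vectors are produced by a single recipe --- iterated $q$-brackets built from the braid operators $T_k$, which commute with $\Omega$ and, via $\sigma$, respect the form (Proposition \ref{prop:braid-isometry}), followed by the renormalization relating $\widetilde{e}_{(r\delta,i)}$ to $e_{(r\delta,i)}$ --- so the $e$- and $f$-versions are the same word, and the only discrepancy between $\Omega$ and the $q$-fixing, order-preserving algebra isomorphism $\mathcal{U}^{+}(\epsilon)\to\mathcal{U}^{-}(\epsilon)$, $e_i\mapsto f_i$, is a $\sigma\circ\bar{\cdot}$ twist which, on the degree $r\delta$ part (a multiple of $\delta$, hence invisible to $\mathbf{q}$), collapses to one scalar. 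Granting this, $(e_{(r\delta,i)},f_{(r\delta,j)})=\zeta_r(\phi_i,\phi_j)=\zeta_r(\phi_j,\phi_i)=(e_{(r\delta,j)},f_{(r\delta,i)})$ by symmetry of the form, which is the claim.

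The main obstacle is precisely this identity $\sigma(\bar{\phi}_i)\in\Bbbk^{\times}\phi_i$ with $i$-independent constant: one must keep track of how $\sigma$, the bar-involution, and the renormalizing exponential twist the root vectors coming out of the braid action, and verify that the index-dependent factors in the real-root steps cancel against the $\mathbf{q}(\alpha_i,\alpha_i)$-factors appearing in the imaginary directions. If this bookkeeping is unpleasant, a shorter route is to first reduce to $r=1$: by Lemma \ref{lem:pair-imag-cijr} the assertion is equivalent to $\mathbf{q}(\alpha_j,\alpha_j)c_r^{ij}=\mathbf{q}(\alpha_i,\alpha_i)c_r^{ji}$, and the recursion (the $\mathcal{U}(\epsilon)$-analogue of \cite[Lemma 9.6]{D1}) $r(\mathbf{q}(\alpha_i,\alpha_j)-\mathbf{q}(\alpha_i,\alpha_j)^{-1})c_r^{ij}=(o(i)o(j))^{r-1}(\mathbf{q}(\alpha_i,\alpha_j)^{r}-\mathbf{q}(\alpha_i,\alpha_j)^{-r})c_1^{ij}$ is symmetric under $i\leftrightarrow j$ apart from having $c_1^{ij}$ in place of $c_r^{ij}$ (and the cases where it degenerates, namely $|i-j|>1$ or $i=j$, are trivially symmetric), so it suffices to treat $(e_{(\delta,i)},f_{(\delta,j)})=(e_{(\delta,j)},f_{(\delta,i)})$, where $e_{(\delta,i)}=\widetilde{e}_{(\delta,i)}$ needs no renormalization and the $\mathbf{f}(\epsilon)$-computation above becomes short.
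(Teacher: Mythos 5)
Your overall strategy is the same as the paper's: identify the Hopf pairing with the symmetric bilinear form on $\mathbf{f}(\epsilon)$ via Lemma \ref{lem:Hopf-pair-bil-form}, and show that $f_{(r\delta,j)}=\Omega(e_{(r\delta,j)})$ differs from the image under $^{-}$ of the element corresponding to $e_{(r\delta,j)}$ only by a scalar independent of $j$. However, you never prove that scalar identity, and the one argument you offer for it is not valid: the fact that the degree $r\delta$ is ``invisible to $\mathbf{q}$'' does not make the twist $\sigma\circ\bar{\phantom{x}}$ collapse to a scalar --- it is nowhere near scalar on $\mathbf{f}(\epsilon)_{r\delta}$ --- and even granting that each $\phi_{i}$ is sent to a multiple of itself, nothing in that degree argument forces the multiple to be the same for all $i$, which is the whole point (an $i$-dependent scalar would destroy the symmetry conclusion). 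This is exactly where the paper does its work: using the explicit iterated $q$-bracket word for $e_{\delta-\alpha_{j}}$ from Lemma \ref{lem:formula-x-_i,1}, each bracket contributes a factor $-\mathbf{q}(\left|x\right|,\left|y\right|)^{-1}$ when comparing $\Omega$ with $\omega$, the real-root steps give the $j$-dependent product $(-1)^{n-2}\prod_{k\in\mathbb{I}\setminus\{j,j+1\}}q_{k}$, and the last bracket contributes $-\mathbf{q}(\alpha_{j},\alpha_{j})=-q_{j}q_{j+1}$, so the total is the $j$-independent constant $(-1)^{n-1}\prod_{k\in\mathbb{I}}q_{k}$; induction in $r$ through (\ref{eq:ind-formula-xir}) then yields $\Omega(\widetilde{e}_{(r\delta,i)})=-(-1)^{r(n-2)}\left(\prod_{k}q_{k}\right)^{r}\omega(\widetilde{e}_{(r\delta,i)})$. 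You correctly anticipate this cancellation, but anticipating it is not proving it, and this computation is essentially the entire content of the paper's proof. A further small point: phrasing the twist as $\sigma\circ\bar{\phantom{x}}$ on $\mathbf{f}(\epsilon)$ presupposes that the bar involution descends to $\mathbf{f}(\epsilon)$ (bar-stability of the radical), which the paper never establishes and deliberately avoids by working with $\omega$ on $\mathcal{U}(\epsilon)$ and invoking Lemma \ref{lem:Hopf-pair-bil-form} only at the last step.

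Your fallback reduction to $r=1$ is correct in the generic case and is a genuinely nice observation: since $(q_{j}^{-1}-q_{j})(q_{j+1}^{-1}-q_{j+1})=(q^{-1}-q)^{2}$ for every $j$, Lemma \ref{lem:pair-imag-cijr} does turn the claim into $\mathbf{q}(\alpha_{j},\alpha_{j})c_{r}^{ij}=\mathbf{q}(\alpha_{i},\alpha_{i})c_{r}^{ji}$, and the recursion of \cite[Lemma 9.6]{D1} is symmetric in $i,j$. But it does not close the gap. First, the degenerate case $\left|i-j\right|>1$ is not ``trivially symmetric'': there $\mathbf{q}(\alpha_{i},\alpha_{j})=1$ makes the recursion vacuous, so you need $c_{r}^{ij}=0=c_{r}^{ji}$, and the analogue of \cite[Lemma 9.5]{D1} in Section \ref{subsec:Pairing-imag-rvector} supplies this only under the hypothesis $p(j)=0$; when $p(j)=1$ a separate (easy but missing) argument via the commutation relation (\ref{eq:GQG-Drinfeld-hx}) is required. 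Second, at $r=1$ you still have to carry out exactly the same scalar computation as above --- taking $e_{(\delta,i)}=\widetilde{e}_{(\delta,i)}$ removes the exponential renormalization but not the bookkeeping through Lemma \ref{lem:formula-x-_i,1} --- while the paper's induction via (\ref{eq:ind-formula-xir}) handles all $r$ at no extra cost. In short: the strategy is the paper's, but the decisive verification is missing.
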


This is not obvious as $(x,y)=(\Omega(y),\Omega(x))$ is far from
true in general. The idea is to invoke the symmetricity of the bilinear
form on $\mathbf{f}(\epsilon)$ that is related to the Hopf pairing
under
\[
(x,y)=(x^{+},y^{-})\quad\text{for }x,y\in\mathbf{f}(\epsilon)
\]
(Lemma \ref{lem:Hopf-pair-bil-form}). Let $\omega$ be the $\Bbbk$-algebra
involution on $\mathcal{U}(\epsilon)$ defined by $\omega(e_{i})=f_{i}$
and $\omega(k_{i})=k_{i}^{-1}$, so that for $y\in\mathbf{f}(\epsilon)$
we have $y^{-}=\omega(y^{+})$. In general $\omega$ is different
from $\Omega$, but in this specific case we can obtain a nice relation
from a description as an iterated $q$-commutator. Namely, for $x,y\in\mathcal{U}^{+}(\epsilon)$,
we have
\begin{align*}
\omega(\left\llbracket x,y\right\rrbracket ) & =\left\llbracket \omega(x),\omega(y)\right\rrbracket ,\\
\Omega(\left\llbracket x,y\right\rrbracket ) & =\Omega(y)\Omega(x)-\mathbf{q}(\left|x\right|,\left|y\right|)^{-1}\Omega(x)\Omega(y)=-\mathbf{q}(\left|x\right|,\left|y\right|)^{-1}\left\llbracket \Omega(x),\Omega(y)\right\rrbracket 
\end{align*}
and so if there is a nice relation between $\omega(x)$, $\omega(y)$
and $\Omega(x)$, $\Omega(y)$ respectively, then we may obtain a
similar one between $\omega(\left\llbracket x,y\right\rrbracket )$
and $\Omega(\left\llbracket x,y\right\rrbracket )$. For this reason
we will work with $\widetilde{e}_{(r\delta,i)}$ that is defined as
a $q$-commutator, rather than $e_{(r\delta,i)}$. Indeed, as in the
proof of Lemma \ref{lem:pair-imag-cijr}, it is equivalent to verify
$(\widetilde{e}_{(r\delta,i)},\widetilde{f}_{(r\delta,j)})=(\widetilde{f}_{(r\delta,j)},\widetilde{e}_{(r\delta,i)})$. 

By definition,

\[
\widetilde{e}_{(r\delta,i)}=e_{r\delta-\alpha_{i}}e_{i}-\mathbf{q}(-\alpha_{i},\alpha_{i})e_{i}e_{r\delta-\alpha_{i}}=\left\llbracket e_{r\delta-\alpha_{i}},e_{i}\right\rrbracket .
\]
To represent $e_{r\delta-\alpha_{i}}$ as a $q$-commutator, we use
the relation (\ref{eq:GQG-Drinfeld-hx}), which reads in terms of
affine root vectors as (see Remark \ref{rem:compare-rootv-Drinfeld})
\[
[\widetilde{e}_{(\delta,j)},e_{r\delta-\alpha_{i}}]=o(j)o(i)\frac{\mathbf{q}(\alpha_{j},\alpha_{i})-\mathbf{q}(\alpha_{j},\alpha_{i})^{-1}}{q-q^{-1}}e_{(r+1)\delta-\alpha_{i}}
\]
for $r>0$. Taking $j=i\pm1$, we have
\begin{equation}
e_{(r+1)\delta-\alpha_{i}}=e_{r\delta-\alpha_{i}}\widetilde{e}_{(\delta,j)}-\widetilde{e}_{(\delta,j)}e_{r\delta-\alpha_{i}}=\left\llbracket e_{r\delta-\alpha_{i}},\widetilde{e}_{(\delta,j)}\right\rrbracket \label{eq:ind-formula-xir}
\end{equation}
as $\left|\widetilde{e}_{(\delta,i)}\right|=0$. Since $\widetilde{e}_{(\delta,j)}$
is again a $q$-commutator of $e_{\delta-\alpha_{j}}$ and $e_{j}$,
it remains to realize $e_{\delta-\alpha_{j}}$ for each $j\in\mathring{I}$. 
\begin{lem}
\label{lem:formula-x-_i,1}For $j\in\mathring{I}$, we have 
\[
e_{\delta-\alpha_{j}}=\left\llbracket \left\llbracket \cdots\left\llbracket \left\llbracket \left\llbracket \left\llbracket \cdots\left\llbracket e_{0},e_{n-1}\right\rrbracket ,\cdots\right\rrbracket ,e_{j+1}\right\rrbracket ,e_{1}\right\rrbracket ,e_{2}\right\rrbracket ,\cdots\right\rrbracket ,e_{j-1}\right\rrbracket .
\]
\end{lem}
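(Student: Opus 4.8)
The plan is to compute $e_{\delta-\alpha_{j}}$ directly from its definition via the braid action, following the non-super computation \cite[Lemma~1.5]{BCP} (see also \cite{Y,D1}). Since $\delta-\alpha_{j}\in\Phi^{+}(\infty)$, the vector $e_{\delta-\alpha_{j}}$ is the real root vector $T_{i_{1}}\cdots T_{i_{k-1}}(e_{i_{k}})$ attached to it in the convex order fixed by the reduced expression \eqref{eq:red-exp-translation} of $t_{2\rho}$; because the $T_{i}$ satisfy the braid relations, this equals $T_{w}(e_{m})$ for any pair $(w,m)$ with $w(\alpha_{m})=\delta-\alpha_{j}$ realizing $\delta-\alpha_{j}$ in that order. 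The whole computation rests on three elementary facts: $T_{i}(e_{\ell})=e_{\ell}$ for $\ell\notin\{i,i\pm1\}$; $T_{i}(e_{i\pm1})=\llbracket e_{i},e_{i\pm1}\rrbracket$ inside the appropriate $\mathcal{U}(\epsilon s_{i})$; and the compatibility of $T_{i}$ with the $q$-bracket $\llbracket\,\cdot\,,\,\cdot\,\rrbracket$ (the remark following Proposition \ref{prop:rootv-q-brac}). These are formally identical to the non-super case, so the argument of \cite{BCP} applies with only notational changes.

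Concretely, the combinatorial content is that $\delta-\alpha_{j}=\alpha_{0}+(\alpha_{1}+\cdots+\alpha_{j-1})+(\alpha_{j+1}+\cdots+\alpha_{n-1})$ is the highest root of the finite type $A_{n-1}$ Dynkin diagram obtained by deleting node $j$ from the affine cycle, read as the path $j+1,\dots,n-1,0,1,\dots,j-1$. One therefore takes for $w$ the element that grows $\alpha_{0}$ into $\delta-\alpha_{j}$ by successively adjoining $\alpha_{n-1},\alpha_{n-2},\dots,\alpha_{j+1}$ and then $\alpha_{1},\alpha_{2},\dots,\alpha_{j-1}$. Unwinding $T_{w}$ applied to the appropriate Chevalley generator, at each stage the unique $T_{i}$ with $i$ adjacent to the current support converts one factor into a $q$-bracket with $e_{i}$ while the other $T_{i}$'s act trivially; repeatedly applying the Leibniz rule $\llbracket x,yz\rrbracket=\llbracket x,y\rrbracket z+\mathbf{q}(|x|,|y|)\,y\llbracket x,z\rrbracket$ together with the quadratic Serre relations $\llbracket e_{i},e_{\ell}\rrbracket=0$ for non-adjacent $i,\ell$ (where in addition $\mathbf{q}(\alpha_{i},\alpha_{\ell})=1$) lets one rebracket each intermediate expression and land on the fully left-nested bracket of the statement. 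As an alternative (and a cross-check) one may instead prove the corresponding identity for the affine root vector $E_{\delta-\alpha_{j}}$ of $U(\epsilon)$ and transport it along $\tau$, using $\tau(e_{i})=E_{i}\sigma(e_{i})$, the compatibility of $\tau$ with $q$-brackets (Remark \ref{rem:tau-pres-q-bracket}), and the fact that $\tau$ sends $e_{\delta-\alpha_{j}}$ to $E_{\delta-\alpha_{j}}$ up to a nonzero scalar and a $\Sigma$-factor; matching the coefficient of a single PBW monomial then forces the scalar to be $1$.

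The main obstacle is the bookkeeping, specifically the fact that node $j$ ``cuts'' the affine cycle: once the bracket $\llbracket\cdots\llbracket e_{0},e_{n-1}\rrbracket,\cdots,e_{j+1}\rrbracket$ has been formed, the operators $T_{1},\dots,T_{j-1}$ act on the $e_{0}$ sitting at its innermost-left position rather than on an outer factor, and one must check that the result is again the fully left-nested bracket and not one of the other, genuinely inequivalent, associativity patterns (distinct convex orders give distinct real root vectors, so the nesting matters). I would organize this as an induction on $j$, reducing at each step to a $q$-Jacobi identity relating the two relevant nestings and using the Serre relations to collapse the spurious terms, and I would dispose of the boundary cases $j=1$, $j=n-1$ and the small ranks $n=3,4$ by hand. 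On the $U(\epsilon)$-route, the analogous difficulty is tracking the signs coming from the super $q$-bracket, the $o(i)$-normalizations in the definition of the affine root vectors, and the $\Sigma$-factors introduced by $\tau$ --- routine but easy to botch.
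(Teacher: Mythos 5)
Your toolkit is the right one (unwinding braid operators on a Chevalley generator using $T_{i}(e_{i\pm1})=\left\llbracket e_{i},e_{i\pm1}\right\rrbracket$, triviality on non-adjacent generators, and compatibility of $T_{i}$ with the $q$-bracket), but the step that launches your computation is not justified and, as stated, is false: the braid relations only make $T_{w}$ well defined; they do \emph{not} imply that $e_{\delta-\alpha_{j}}$ equals $T_{w}(e_{m})$ for any pair $(w,m)$ with $w(\alpha_{m})=\delta-\alpha_{j}$. The root vector is attached to the specific reduced expression \eqref{eq:red-exp-translation} of $t_{2\rho}$, i.e. to the particular prefix--letter pair that expression determines, and different admissible pairs give genuinely different vectors. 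Already for $n=3$, $j=1$ one has $s_{2}(\alpha_{0})=\delta-\alpha_{1}=(\tau s_{2})(\alpha_{1})$, but $T_{2}(e_{0})=\left\llbracket e_{2},e_{0}\right\rrbracket=-\mathbf{q}(\alpha_{0},\alpha_{2})\bigl(e_{0}e_{2}-\mathbf{q}(\alpha_{0},\alpha_{2})^{-1}e_{2}e_{0}\bigr)$ is not proportional to $ZT_{2}(e_{1})=\left\llbracket e_{0},e_{2}\right\rrbracket=e_{0}e_{2}-\mathbf{q}(\alpha_{0},\alpha_{2})e_{2}e_{0}$ (since $\mathbf{q}(\alpha_{0},\alpha_{2})\neq\mathbf{q}(\alpha_{0},\alpha_{2})^{-1}$), and only the latter is $e_{\delta-\alpha_{1}}$ for the order fixed in Section \ref{subsec:aff-PBW}. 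Your chosen $w$ (growing $\alpha_{0}$ by adjoining $\alpha_{n-1},\dots,\alpha_{j+1}$ and then $\alpha_{1},\dots,\alpha_{j-1}$) is exactly of the first kind, and no rebracketing via the Leibniz rule, Serre relations or a $q$-Jacobi identity can repair it, because the two candidates are distinct elements of the same weight space. The missing ingredient is the identification $e_{\delta-\alpha_{j}}=T_{\varpi_{j}}T_{j}^{-1}(e_{j})$ together with the fixed reduced expression $t_{\varpi_{j}}=\tau^{j}\mathbf{s}_{(n-j,n-1)}\cdots\mathbf{s}_{(1,j)}$, which is where the paper's proof starts; with that anchor the computation is also much simpler than what you plan: $T_{\mathbf{s}_{(1,j)}}T_{j}^{-1}(e_{j})=T_{1}\cdots T_{j-1}(e_{j})$ is already a left-nested bracket, each subsequent $T_{\mathbf{s}_{(1+r,j+r)}}$ acts on the individual generators by Proposition \ref{prop:Tw-on-ef} (so no rebracketing, no induction on $j$, no special treatment of boundary cases), and the final twist by $Z$ shifts indices to place $e_{0}$ in the innermost slot.

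The fallback route through $U(\epsilon)$ and $\tau$ does not close this gap either: the explicit bracket formula for $E_{\delta-\alpha_{j}}$, including its sign, is exactly the content of Remark \ref{rem:q-bracket-loop-gen-qasa}, which the paper obtains \emph{from} this lemma by transporting along $\tau$; to use it independently you would have to redo the same braid computation on the $U(\epsilon)$ side, with the same anchoring issue plus the $o(i)$- and $\Sigma$-bookkeeping, and without that precise formula the claim that matching a single PBW coefficient forces the comparison scalar to be $1$ has nothing to match against.
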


\begin{proof}
This is essentially proved in \cite[Lemma 4.6]{JKP} which we now
recall. The real root vector $e_{\delta-\alpha_{j}}$ is obtained
as $T_{\varpi_{j}}T_{j}^{-1}(e_{j})$, and recall that we are using
the following reduced expression for $t_{\varpi_{j}}$ (\ref{eq:red-exp-translation}):
\[
t_{\varpi_{j}}=\mu^{j}\mathbf{s}_{(n-j,n-1)}\mathbf{s}_{(n-j-1,n-2)}\cdots\mathbf{s}_{(1,j)},\quad\mathbf{s}_{(a,b)}=s_{a}s_{a+1}\cdots s_{b}.
\]
First, we have
\[
T_{\mathbf{s}_{(1,j)}}T_{j}^{-1}(e_{j})=T_{1}T_{2}\cdots T_{j-1}(e_{j})=\left\llbracket \left\llbracket \cdots\left\llbracket e_{1},e_{2}\right\rrbracket ,\dots\right\rrbracket ,e_{j}\right\rrbracket .
\]
We apply $T_{\mathbf{s}(1+r,j+r)}$ to this one by one. Since we have
\[
T_{\mathbf{s}_{(1+r,j+r)}}(e_{k})=\begin{cases}
e_{k} & \text{if }k<r\\
\left\llbracket e_{r+1},e_{r}\right\rrbracket  & \text{if }k=r\\
e_{k+1} & \text{if }k>r
\end{cases}
\]
from Proposition \ref{prop:Tw-on-ef}, it is easy to see that
\[
T_{\mathbf{s}_{(n-j,n-1)}}\cdots T_{\mathbf{s}_{(2,j+1)}}T_{\mathbf{s}_{(1,j)}}T_{j}^{-1}(e_{j})=\left\llbracket \left\llbracket \cdots\left\llbracket \left\llbracket \left\llbracket \left\llbracket \cdots\left\llbracket e_{n-j},e_{n-j-1}\right\rrbracket ,\cdots\right\rrbracket ,e_{1}\right\rrbracket ,e_{n-j+1}\right\rrbracket ,e_{n-j+2}\right\rrbracket ,\cdots\right\rrbracket ,e_{n-1}\right\rrbracket .
\]
Since $\mu$ acts on $\mathcal{U}(\epsilon)$ by $Z:e_{i}\mapsto e_{i+1}$
($i\in I$), the statement follows.
\end{proof}
\begin{rem}
\label{rem:q-bracket-loop-gen-qasa}In the same manner we obtain a
$q$-bracket realization of affine root vectors $E_{r\delta-\alpha_{i}}$
of $U(\epsilon)$. For example, when $\epsilon=\epsilon_{M|N}$, we
have
\begin{align*}
E_{\delta-\alpha_{i}} & =(-1)^{n+\delta(i<M)}\left\llbracket \left\llbracket \cdots\left\llbracket \left\llbracket \left\llbracket \left\llbracket \cdots\left\llbracket E_{0},E_{n-1}\right\rrbracket ,\cdots\right\rrbracket ,E_{j+1}\right\rrbracket ,E_{1}\right\rrbracket ,E_{2}\right\rrbracket ,\cdots\right\rrbracket ,E_{j-1}\right\rrbracket ,\\
E_{(r+1)\delta-\alpha_{i}} & =(-1)^{\epsilon_{i-1}}\left\llbracket E_{r\delta-\alpha_{i}},\left\llbracket E_{\delta-\alpha_{i-1}},E_{i-1}\right\rrbracket \right\rrbracket =(-1)^{\epsilon_{i}}\left\llbracket E_{r\delta-\alpha_{i}},\left\llbracket E_{\delta-\alpha_{i+1}},E_{i+1}\right\rrbracket \right\rrbracket ,\\
\phi_{i,r}^{+} & =(-1)^{ir+\epsilon_{i}+\epsilon_{i+1}}(q-q^{-1})C^{-r/2}K_{i}\left\llbracket E_{r\delta-\alpha_{i}},E_{i}\right\rrbracket 
\end{align*}
where for a statement $\mathrm{P}$ we set $\delta(\mathrm{P})=1$
if $\mathrm{P}$ is true and $\delta(\mathrm{P})=0$ otherwise. 
\end{rem}

\begin{proof}[Proof of Proposition \ref{prop:symm-pair-hh}]
From the lemma, as $\Omega\left\llbracket x,y\right\rrbracket =-\mathbf{q}(x,y)^{-1}\left\llbracket \Omega x,\Omega y\right\rrbracket $,
we have
\begin{align*}
\Omega(e_{\delta-\alpha_{j}}) & =(-1)^{n-2}\left(q_{j-1}^{-1}q_{j-2}^{-1}\cdots q_{2}^{-1}q_{n}^{-1}\cdots q_{j+2}^{-1}q_{1}^{-1}\right)^{-1}\left\llbracket \left\llbracket \cdots\left\llbracket \left\llbracket \left\llbracket \left\llbracket \cdots\left\llbracket f_{0},f_{n-1}\right\rrbracket ,\cdots\right\rrbracket ,f_{j+1}\right\rrbracket ,f_{1}\right\rrbracket ,f_{2}\right\rrbracket ,\cdots\right\rrbracket ,f_{j-1}\right\rrbracket \\
 & =(-1)^{n-2}\left(\prod_{k\in\mathbb{I}\setminus\{j,j+1\}}q_{k}\right)\omega(e_{\delta-\alpha_{j}})
\end{align*}
and so 
\begin{align*}
\Omega(\widetilde{e}_{(\delta,j)}) & =-\mathbf{q}(-\alpha_{j},\alpha_{j})^{-1}\left\llbracket \Omega(e_{\delta-\alpha_{j}}),\Omega(e_{j})\right\rrbracket \\
 & =-q_{j}q_{j+1}\cdot(-1)^{n-2}\left(\prod_{k\in\mathbb{I}\setminus\{j,j+1\}}q_{k}\right)\left\llbracket \omega(e_{\delta-\alpha_{j}}),\omega(e_{j})\right\rrbracket \\
 & =(-1)^{n-1}\left(\prod_{k\in\mathbb{I}}q_{k}\right)\omega(\widetilde{e}_{(\delta,j)}).
\end{align*}
for any $j\in\mathring{I}$. Therefore, we obtain inductively
\begin{align*}
\Omega(e_{r\delta-\alpha_{i}}) & =(-1)^{r(n-2)}q_{i}^{-1}q_{i+1}^{-1}\left(\prod_{k\in\mathbb{I}}q_{k}\right)^{r}\omega(e_{r\delta-\alpha_{i}}),\\
\Omega(\widetilde{e}_{(r\delta,i)}) & =-(-1)^{r(n-2)}\left(\prod_{k\in\mathbb{I}}q_{k}\right)^{r}\omega(\widetilde{e}_{(r\delta,i)}).
\end{align*}

Now take $\widetilde{\theta}_{(r\delta,i)}\in\mathbf{f}(\epsilon)$
such that $\widetilde{\theta}_{(r\delta,i)}^{+}=\widetilde{e}_{(r\delta,i)}$.
From the last computation, we proceed as
\begin{align*}
(\widetilde{e}_{(r\delta,i)},\widetilde{f}_{(r\delta,j)}) & =(\widetilde{e}_{(r\delta,i)},\Omega(\widetilde{e}_{(r\delta,j)}))\\
 & =-(-1)^{r(n-2)}\left(\prod_{k\in\mathbb{I}}q_{k}\right)^{r}(\widetilde{e}_{(r\delta,i)},\omega(\widetilde{e}_{(r\delta,j)}))\\
 & =-(-1)^{r(n-2)}\left(\prod_{k\in\mathbb{I}}q_{k}\right)^{r}(\widetilde{\theta}_{(r\delta,i)}^{+},\widetilde{\theta}_{(r\delta,j)}^{-})\\
 & =-(-1)^{r(n-2)}\left(\prod_{k\in\mathbb{I}}q_{k}\right)^{r}(\widetilde{\theta}_{(r\delta,i)},\widetilde{\theta}_{(r\delta,j)}).
\end{align*}
Since the last one is symmetric in $i$ and $j,$ it is also equal
to $(\widetilde{e}_{(r\delta,j)},\widetilde{f}_{(r\delta,i)})$. 
\end{proof}
Summarizing all the computations in this section, we obtain the following
formula for the pairing of imaginary root vectors.
\begin{thm}
\label{thm:pairing-imag-rootv}If $\epsilon$ is such that $p(i)p(i+1)=0$
for all $1\leq i\leq n-2$, then for $r,s>0$ and $i,j\in\mathring{I}$
we have
\[
(e_{(r\delta,i)},f_{(s\delta,j)})=\begin{cases}
-\delta_{r,s}(o(i)o(j))^{r}\frac{\mathbf{q}(\alpha_{i},\alpha_{j})^{r}-\mathbf{q}(\alpha_{i},\alpha_{j})^{-r}}{r(q_{j}^{-1}-q_{j})(q_{j+1}^{-1}-q_{j+1})} & \text{if }p(j)=0\\
-\delta_{r,s}(o(i)o(j))^{r}\frac{\mathbf{q}(\alpha_{i},\alpha_{j})^{r}-\mathbf{q}(\alpha_{i},\alpha_{j})^{-r}}{r(q_{i}^{-1}-q_{i})(q_{i+1}^{-1}-q_{i+1})} & \text{if }p(j)=1.
\end{cases}
\]
\end{thm}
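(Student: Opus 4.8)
The plan is to deduce the formula from three ingredients already in place: Lemma \ref{lem:pair-imag-cijr}, the explicit values of the structure constants $c_r^{ij}$ computed above, and the symmetry $(e_{(r\delta,i)},f_{(r\delta,j)})=(e_{(r\delta,j)},f_{(r\delta,i)})$ of Proposition \ref{prop:symm-pair-hh}. Comparing $Q$-degrees shows the pairing vanishes unless $r=s$, so I assume $r=s$ from now on. When $p(j)=0$ there is nothing subtle: I substitute the closed form
\[
c_r^{ij}=(o(i)o(j))^r\,\frac{\mathbf{q}(\alpha_i,\alpha_j)^r-\mathbf{q}(\alpha_i,\alpha_j)^{-r}}{r\,\mathbf{q}(\alpha_j,\alpha_j)}\cdot\frac{q_j-q_j^{-1}}{q_i-q_i^{-1}}
\]
obtained above into Lemma \ref{lem:pair-imag-cijr}; the two copies of $\mathbf{q}(\alpha_j,\alpha_j)$ cancel, the factor $(q_j-q_j^{-1})/(q_j^{-1}-q_j)$ is $-1$, and writing $1/(q_i-q_i^{-1})=-1/(q_i^{-1}-q_i)$ yields precisely the first line of the statement.

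For $p(j)=1$ I would exploit Proposition \ref{prop:symm-pair-hh} to reduce to the even case. Since $(e_{(r\delta,i)},f_{(r\delta,j)})=(e_{(r\delta,j)},f_{(r\delta,i)})$, if $p(i)=0$ then the right-hand side is covered by the case just proved (its second index $i$ being even), and by the symmetry of $\mathbf{q}(\cdot,\cdot)$ and of $o(\cdot)o(\cdot)$ the answer is exactly the second line of the statement — note that applying the first-line formula with $i$ and $j$ interchanged produces the denominator $r(q_{i+1}^{-1}-q_{i+1})(q_j^{-1}-q_j)$ appearing there. If instead $p(i)=1$, then the hypothesis $p(k)p(k+1)=0$ for $1\le k\le n-2$ forbids two adjacent odd nodes, so either $i=j$ or $|i-j|\ge 2$; in both situations $\mathbf{q}(\alpha_i,\alpha_j)^r=\mathbf{q}(\alpha_i,\alpha_j)^{-r}$, so the claimed value is $0$, and by Lemma \ref{lem:pair-imag-cijr} it remains only to show $c_r^{ij}=0$. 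For $|i-j|\ge 2$ this is the vanishing $c_r^{ij}=0$ already recorded, whose proof from the comultiplication formula for $e_{(r\delta,i)}$ in Theorem \ref{thm:comult-imag-rootv} and the convexity of $\prec$ does not use the parity of $j$.

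The remaining case $i=j$ with $p(i)=1$ is, I expect, the only genuinely delicate point; there the asserted value is $0$, and by Lemma \ref{lem:pair-imag-cijr} it suffices to prove $c_r^{ii}=0$. For $r=1$ one has $c_1^{ii}=1-\mathbf{q}(\alpha_i,\alpha_i)^{-2}$, which vanishes because $\mathbf{q}(\alpha_i,\alpha_i)=q_iq_{i+1}=-1$ when $p(i)=1$; the computation is the comultiplication argument behind that identity, reading off the coefficient of $e_{r\delta-\alpha_i}k_i\otimes e_i$ in $\Delta(\widetilde{e}_{(r\delta,i)})=\Delta(e_{r\delta-\alpha_i})\Delta(e_i)-\mathbf{q}(\alpha_i,\alpha_i)^{-1}\Delta(e_i)\Delta(e_{r\delta-\alpha_i})$ using $\Delta(e_i)=e_i\otimes1+k_i\otimes e_i$ and $k_ie_{r\delta-\alpha_i}=\mathbf{q}(\alpha_i,\alpha_i)^{-1}e_{r\delta-\alpha_i}k_i$.

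For $r\ge 2$ the lower-order part of $\Delta(e_{r\delta-\alpha_i})$ can no longer be discarded purely on $Q$-degree grounds — products of the odd real root vectors $e_{u\delta-\alpha_i}$ must be controlled — so here I would instead invoke the Drinfeld relation (\ref{eq:GQG-Drinfeld-hx}): via Remark \ref{rem:compare-rootv-Drinfeld} it gives $[h_{i,r},e_i]\propto(\mathbf{q}(\alpha_i,\alpha_i)^r-\mathbf{q}(\alpha_i,\alpha_i)^{-r})x_{i,r}^+=0$ when $p(i)=1$, and combining this with the comultiplication formula for $e_{(r\delta,i)}$ forces $c_r^{ii}=0$; alternatively one can push the identity $(\widetilde{e}_{(r\delta,i)},\widetilde{f}_{(r\delta,i)})=\pm\bigl(\prod_k q_k\bigr)^r(\widetilde{\theta}_{(r\delta,i)},\widetilde{\theta}_{(r\delta,i)})$ from the proof of Proposition \ref{prop:symm-pair-hh} and evaluate the right-hand $\mathbf{f}(\epsilon)$-pairing using $\theta_i^2=0$. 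Assembling the pieces — the even case, the two symmetry reductions, and the $|i-j|\ge2$ and $i=j$ vanishings — gives the stated formula; the bulk of the real work is the last vanishing, precisely because the recursions relating $c_r^{ij}$ to $c_1^{ij}$ degenerate to $0=0$ there.
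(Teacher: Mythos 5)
Your proposal follows the same route as the paper: vanishing for $r\neq s$ by the $Q$-grading, Lemma \ref{lem:pair-imag-cijr} combined with the closed form of $c_{r}^{ij}$ for $p(j)=0$ (your bookkeeping of the sign cancellations there is correct), and Proposition \ref{prop:symm-pair-hh} to swap the indices when $p(j)=1$, $p(i)=0$, which indeed reproduces the second line of the statement. Where you go beyond the paper's written argument is in isolating the cases $p(i)=p(j)=1$, which under the hypothesis mean $i=j$ or $|i-j|\geq2$; the paper disposes of all of $p(j)=1$ with the single remark that the symmetry ``is enough,'' whereas you correctly note that for $|i-j|\geq2$ one needs the parity-free vanishing of $c_{r}^{ij}$, that for $i=j$, $r=1$ one has $c_{1}^{ii}=1-\mathbf{q}(\alpha_{i},\alpha_{i})^{-2}=0$, and that for $i=j$, $r\geq2$ the recursion relating $c_{r}^{ii}$ to $c_{1}^{ii}$ degenerates to $0=0$. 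Identifying that last sub-case as the genuinely delicate point is a real sharpening of the argument as written.

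That sub-case is, however, the one place where what you wrote is an assertion rather than a proof. Citing Theorem \ref{thm:comult-imag-rootv} does not ``force'' $c_{r}^{ii}=0$: that theorem only locates the mixed terms of $\Delta(e_{(r\delta,i)})$ and says nothing about the coefficient of $e_{r\delta-\alpha_{i}}k_{i}\otimes e_{i}$; and your alternative of evaluating $(\widetilde{\theta}_{(r\delta,i)},\widetilde{\theta}_{(r\delta,i)})$ ``using $\theta_{i}^{2}=0$'' gives no visible control on the self-pairing of an iterated $q$-bracket of that size. Your first idea is nonetheless the right one and can be completed with tools already in the paper: relation (\ref{eq:GQG-Drinfeld-hx}) gives $[h_{i,r},f_{i}]=0$ when $\mathbf{q}(\alpha_{i},\alpha_{i})=-1$, so by the commutator formula $x^{+}f_{i}-f_{i}x^{+}=\bigl(r_{i}(x)^{+}k_{i}-k_{i}^{-1}\,{}_{i}r(x)^{+}\bigr)/(q-q^{-1})$ and the linear independence of $k_{i}$ and $k_{i}^{-1}$ over $\mathcal{U}^{+}(\epsilon)$, the element $\vartheta\in\mathbf{f}(\epsilon)$ with $\vartheta^{+}=e_{(r\delta,i)}$ satisfies $r_{i}(\vartheta)={}_{i}r(\vartheta)=0$; then Lemma \ref{lem:Hopf-pair-bil-form} and the adjunction $(\theta_{i}x,y)=(\theta_{i},\theta_{i})(x,{}_{i}r(y))$, $(x\theta_{i},y)=(\theta_{i},\theta_{i})(x,r_{i}(y))$ kill both $(e_{(r\delta,i)},f_{i}f_{r\delta-\alpha_{i}})$ and $(e_{(r\delta,i)},f_{r\delta-\alpha_{i}}f_{i})$, hence $(e_{(r\delta,i)},\widetilde{f}_{(r\delta,i)})=0$, and the reduction $(e_{(r\delta,i)},f_{(r\delta,i)})=(e_{(r\delta,i)},\widetilde{f}_{(r\delta,i)})$ from the proof of Lemma \ref{lem:pair-imag-cijr} (via Lemma \ref{lem:Dam-Prop8.1}) finishes the vanishing. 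With that step spelled out, your proof is complete and, apart from being more explicit than the paper about the both-odd cases, coincides with it.
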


\begin{rem}
\label{rem:hh-q-Heis} Comparing this with the relation (\ref{eq:GQG-Drinfeld-hh}),
we have
\[
[h_{i,r},h_{j,s}]=\delta_{r,-s}(h_{i,r},h_{j,s})(c^{-2r}-1),
\]
which can be interpreted as a $q$-deformed Heisenberg relation.
\end{rem}

\subsection{\label{subsec:Pairing-PBW}Pairing of PBW vectors}

Now it remains to compute the pairing between general PBW vectors
to obtain the multiplicative formula for the universal $R$-matrix.
Following the argument in the beginning of \cite[Section 10]{D1}
(with Remark \ref{rem:hh-q-Heis}) one can find another basis $\{\overline{e}_{(r\delta,i)}\}_{i\in\mathring{I}}$
for the subspace spanned by $\{e_{(r\delta,i)}\}_{i\in\mathring{I}}$
such that $(\overline{e}_{(r\delta,i)},\Omega\overline{e}_{(r\delta,j)})=0$
whenever $i\neq j$. With this orthogonality, once again thanks to
the formally same comultiplication formula and the root system with
convex order, the analysis of \cite[Section 10]{D1} using LS formula
applies here verbatim and allows us to deduce the following results.
\begin{lem}[{cf. \cite[Lemma 10.3]{D1}}]
 For $\alpha\in\widetilde{\Phi}^{+}$ and $\overrightarrow{\gamma}\in\mathrm{Par}_{\epsilon}$,
if either $(\overline{e}_{\alpha},\overline{f}(-\overrightarrow{\gamma}))$
or $(\overline{e}(-\overrightarrow{\gamma}),\overline{f}_{\alpha})$
is non-zero, then $\overrightarrow{\gamma}=(\alpha)$.
\end{lem}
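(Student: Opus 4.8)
The plan is to adapt the proof of \cite[Lemma 10.3]{D1}, reducing the claim to the version for the original PBW basis, Lemma \ref{lem:Dam-Prop8.1}, together with the defining orthogonality of the $\overline{e}_{(r\delta,i)}$. Two grading remarks come first. The Hopf pairing vanishes on $\mathcal{U}^{+}(\epsilon)_{\beta}\otimes\mathcal{U}^{-}(\epsilon)_{-\gamma}$ unless $\beta=\gamma$, so each of the two pairings in question is zero unless the $Q$-degrees match; and since a positive real root $r\delta\pm\beta$ ($\beta\in\mathring{\Phi}^{+}$) has nonzero image in $\mathring{Q}$ while an imaginary root $s\delta$ has zero image, a real root and an imaginary root never have the same $Q$-degree. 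Next I would observe that the orthogonalization is performed separately inside each finite-dimensional space $\mathrm{span}\{e_{(s\delta,i)}\}_{i\in\mathring{I}}$ (one for every $s>0$), and that imaginary root vectors of a common $\delta$-degree commute with one another by the $q$-Heisenberg relation (\ref{eq:GQG-Drinfeld-q-Heis}) (since $s\neq-s$). Consequently $\overline{f}(-\overrightarrow{\gamma})$ expands as a $\Bbbk$-linear combination of ordinary PBW monomials $f(-\overrightarrow{\gamma}^{\prime})$ with $\overrightarrow{\gamma}^{\prime}\in\mathrm{Par}_{\epsilon}$ sharing the real part of $\overrightarrow{\gamma}$ and, for each $s$, the total multiplicity of $s\delta$; in particular every $\overrightarrow{\gamma}^{\prime}$ occurring has the same length as $\overrightarrow{\gamma}$. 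Likewise $\overline{e}_{\alpha}$ equals $e_{\alpha}$ if $\alpha$ is real and is a linear combination of the $e_{(s\delta,j)}$ if $\alpha=(s\delta,i)$.

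Granting this, here is how the first assertion runs. Suppose the length of $\overrightarrow{\gamma}$ is not $1$. Expanding both arguments as above, every contribution to $(\overline{e}_{\alpha},\overline{f}(-\overrightarrow{\gamma}))$ is a scalar times some $(e_{\beta},f(-\overrightarrow{\gamma}^{\prime}))$ with $\beta\in\widetilde{\Phi}^{+}$ and $\overrightarrow{\gamma}^{\prime}$ of length $\neq 1$, hence zero by Lemma \ref{lem:Dam-Prop8.1}; so the whole pairing vanishes. If instead $\overrightarrow{\gamma}=(\gamma_{1})$, then $\deg\gamma_{1}=\deg\alpha$. If $\gamma_{1}$ is a real root, then by the grading remark $\alpha$ is real too and the degree equality forces $\gamma_{1}=\alpha$. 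If $\gamma_{1}=(s\delta,j)$ is imaginary, then $\alpha=(s\delta,i)$ for some $i$, and $(\overline{e}_{\alpha},\overline{f}(-\overrightarrow{\gamma}))=(\overline{e}_{(s\delta,i)},\Omega\overline{e}_{(s\delta,j)})$, which is zero for $i\neq j$ by the defining property of the orthogonal basis, so $\gamma_{1}=\alpha$. Thus nonvanishing of the pairing forces $\overrightarrow{\gamma}=(\alpha)$. The second assertion, involving $(\overline{e}(-\overrightarrow{\gamma}),\overline{f}_{\alpha})$, is obtained verbatim with the roles of $e$ and $f$ exchanged, using the other half of Lemma \ref{lem:Dam-Prop8.1}.

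The genuinely new ingredient compared with the non-super setting is everything leading up to the orthogonal basis itself, namely the explicit pairing of imaginary root vectors (Theorem \ref{thm:pairing-imag-rootv}) and its symmetry $(e_{(r\delta,i)},f_{(r\delta,j)})=(e_{(r\delta,j)},f_{(r\delta,i)})$ (Proposition \ref{prop:symm-pair-hh}); once those are in hand the present argument is a faithful transcription of \cite[Section 10]{D1}. I expect the only point needing a little care to be the claim that the expansion of $\overline{f}(-\overrightarrow{\gamma})$ into ordinary PBW monomials is length-preserving, i.e. that the orthogonalization respects the refined $\widetilde{\Phi}_{\mathrm{im}}^{+}$-grading and that imaginary vectors at a fixed level commute, so that sorting the expanded factors into PBW order neither lengthens the monomial nor introduces new terms.
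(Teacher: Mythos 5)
Your proof is correct and takes essentially the same route as the paper, which gives no independent argument but defers to \cite[Section 10]{D1} (Lemma 10.3): namely, expand the level-wise orthogonalized imaginary vectors back into the original PBW monomials, kill every term with a factor string of length $\neq1$ by Lemma \ref{lem:Dam-Prop8.1}, and settle the length-one case by the $Q$-grading of the Hopf pairing (real versus imaginary, and distinct real roots) together with the defining orthogonality $(\overline{e}_{(r\delta,i)},\Omega\overline{e}_{(r\delta,j)})=0$ for $i\neq j$. The point you flag—length preservation of the expansion—is indeed secured exactly as you say, since the orthogonalization stays inside each span $\{e_{(r\delta,i)}\}_{i\in\mathring{I}}$ and positive imaginary root vectors commute by (\ref{eq:GQG-Drinfeld-q-Heis}).
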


\begin{prop}[{cf. \cite[Proposition 10.5, Lemma 10.6]{D1}}]
 For $\overrightarrow{\gamma},\overrightarrow{\gamma}^{\prime}\in\mathcal{P}$,
we have 
\[
(\overline{e}(-\overrightarrow{\gamma}),\overline{f}(-\overrightarrow{\gamma}^{\prime}))=\delta_{\overrightarrow{\gamma},\overrightarrow{\gamma}^{\prime}}\prod_{\beta\in\widetilde{\Phi}^{+}}(\overline{e}_{\beta}^{m_{\beta}},\overline{f}_{\beta}^{m_{\beta}})=\delta_{\overrightarrow{\gamma},\overrightarrow{\gamma}^{\prime}}\prod_{\beta\in\Phi_{\mathrm{re}}^{+}}\frac{(m_{\beta})_{\beta}}{(q_{\beta}^{-1}-q_{\beta})^{m_{\beta}}}\prod_{\beta\in\widetilde{\Phi}_{\mathrm{im}}^{+}}m_{\beta}!(\overline{e}_{\beta},\overline{f}_{\beta}).
\]
\end{prop}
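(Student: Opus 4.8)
The plan is to follow the argument of \cite[Section 10]{D1} essentially verbatim. The structural inputs it uses are all now in place: the comultiplication formulas for affine root vectors (the real-root approximation established above and Theorem \ref{thm:comult-imag-rootv}), the Levendorskii--Soibelman formula (Theorem \ref{thm:LS-formula}), the convexity of the ordering $\prec$, the adjointness relations $(x,y_{1}y_{2})=(\Delta(x),y_{1}\otimes y_{2})$ and $(x_{1}x_{2},y)=(x_{2}\otimes x_{1},\Delta(y))$ of the Hopf pairing, the single-root values $(e_{\beta},e_{\beta})=\tfrac{1}{q^{-1}-q}$ for $\beta\in\Phi_{\mathrm{re}}^{+}$ (via Lemma \ref{lem:Hopf-pair-bil-form} and Proposition \ref{prop:braid-isometry}), and the imaginary-root pairing of Theorem \ref{thm:pairing-imag-rootv}. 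So no genuinely new idea is required.

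First I would establish orthogonality together with the factorization by a single induction on the total length $r=\sum_{\beta}m_{\beta}(\overrightarrow{\gamma})$. The cases $r\le 1$ are the lemma preceding this proposition (cf. \cite[Lemma 10.3]{D1}) and Lemma \ref{lem:Dam-Prop8.1}. For the inductive step let $\alpha$ be the $\prec$-smallest root with positive multiplicity in $\overrightarrow{\gamma}$ or $\overrightarrow{\gamma}'$; the two cases are symmetric, so assume $\alpha$ occurs in $\overrightarrow{\gamma}'$ with multiplicity $\ell>0$ and put $m=m_{\alpha}(\overrightarrow{\gamma})$. By the PBW ordering we may write $\overline{e}(-\overrightarrow{\gamma})=x\,\overline{e}_{\alpha}^{m}$ and $\overline{f}(-\overrightarrow{\gamma}')=y\,\overline{f}_{\alpha}^{\ell}$ with $x,y$ involving only root vectors $\succ\alpha$. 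Applying $(x_{1}x_{2},y)=(x_{2}\otimes x_{1},\Delta(y))$ and the multiplicativity of $\Delta$, the pairing becomes a sum over terms of $\Delta(\overline{f}(-\overrightarrow{\gamma}'))$ whose first tensor factor has weight $-m\alpha$; convexity of $\prec$ together with the comultiplication formulas forces the branches that can contribute, shows $m=\ell$, and sends the ``error'' parts of $\Delta$ into subalgebras that pair to zero against a power of $\overline{e}_{\alpha}$. What survives is $(\overline{e}_{\alpha}^{m},\overline{f}_{\alpha}^{m})$ times the pairing of the $\succ\alpha$-parts, to which the inductive hypothesis applies. This yields simultaneously $(\overline{e}(-\overrightarrow{\gamma}),\overline{f}(-\overrightarrow{\gamma}'))=0$ for $\overrightarrow{\gamma}\neq\overrightarrow{\gamma}'$ and $(\overline{e}(-\overrightarrow{\gamma}),\overline{f}(-\overrightarrow{\gamma}))=\prod_{\beta\in\widetilde{\Phi}^{+}}(\overline{e}_{\beta}^{m_{\beta}},\overline{f}_{\beta}^{m_{\beta}})$; the orthogonal choice of $\{\overline{e}_{(r\delta,i)}\}$ (available by Remark \ref{rem:hh-q-Heis}) is precisely what decouples distinct imaginary roots of a common weight $r\delta$.

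It then remains to evaluate the single-root factors. For a real root $\beta$ with $p(\beta)=0$, the rank-one comultiplication $\Delta(e_{\beta})\equiv e_{\beta}\otimes1+k_{\beta}\otimes e_{\beta}$ modulo the relevant higher subalgebra, combined with $(e_{\beta},e_{\beta})=\tfrac{1}{q^{-1}-q}$, gives by induction on $m$ that $(e_{\beta}^{m},f_{\beta}^{m})=\tfrac{(m)_{\beta}}{(q^{-1}-q)^{m}}$ with $(m)_{\beta}=1+\mathbf{q}(\beta,\beta)+\cdots+\mathbf{q}(\beta,\beta)^{m-1}$; when $p(\beta)=1$ one has $e_{\beta}^{2}=0$ by Lemma \ref{lem:orrvector-sqzero}, so $m_{\beta}\le1$ and $(m_{\beta})_{\beta}=1$, in agreement with the formula. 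For an imaginary root $\beta=(r\delta,i)$, since $\mathbf{q}(r\delta,\,\cdot\,)\equiv1$ the comultiplication of Theorem \ref{thm:comult-imag-rootv} gives $\Delta(\overline{e}_{\beta}^{m})\equiv\sum_{k}\binom{m}{k}\overline{e}_{\beta}^{m-k}c^{rk}\otimes\overline{e}_{\beta}^{k}$ modulo terms orthogonal to powers of $\overline{f}_{\beta}$, whence $(\overline{e}_{\beta}^{m},\overline{f}_{\beta}^{m})=m(\overline{e}_{\beta},\overline{f}_{\beta})(\overline{e}_{\beta}^{m-1},\overline{f}_{\beta}^{m-1})$, which unwinds to the imaginary contribution in the statement; the value $(\overline{e}_{\beta},\overline{f}_{\beta})$ is read off from Theorem \ref{thm:pairing-imag-rootv} after the change of basis. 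Combining the two displayed equalities finishes the proof.

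The main obstacle is not conceptual but one of bookkeeping: controlling precisely, via convexity and the comultiplication formulas, which terms of the iterated coproduct of one PBW monomial pair nontrivially with a given PBW monomial on the other side, exactly as in \cite[Section 10]{D1}. The single point genuinely special to the super setting, namely the evaluation of $(\overline{e}_{(r\delta,i)},\overline{f}_{(r\delta,j)})$, has already been taken care of in Theorem \ref{thm:pairing-imag-rootv}.
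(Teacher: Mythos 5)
Your proposal is correct and takes essentially the same route as the paper, which simply follows \cite[Section 10]{D1} verbatim once the comultiplication formulas, the Levendorskii--Soibelman formula, the convexity of $\prec$, the orthogonalized imaginary root vectors $\overline{e}_{(r\delta,i)}$ and the single-root pairings are in place --- exactly the ingredients you assemble. One small remark: your recursion for an imaginary root $\beta$ yields $(\overline{e}_{\beta}^{m},\overline{f}_{\beta}^{m})=m!\,(\overline{e}_{\beta},\overline{f}_{\beta})^{m}$, which is indeed the value needed for the exponential form of $\mathcal{R}^{0}$ in the next step (the displayed statement omits the exponent $m_{\beta}$ on $(\overline{e}_{\beta},\overline{f}_{\beta})$).
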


To sum up, we obtain a multiplicative formula
\begin{align*}
\mathcal{R}= & \left(\sum_{\overrightarrow{\gamma}\in\mathcal{P}}\frac{\overline{e}(-\overrightarrow{\gamma})\otimes\overline{f}(-\overrightarrow{\gamma})}{(\overline{e}(-\overrightarrow{\gamma}),\overline{f}(-\overrightarrow{\gamma}))}\right)\overline{\Pi}_{\mathbf{q}}=\mathcal{R}^{+}\mathcal{R}^{0}\mathcal{R}^{-}\overline{\Pi}_{\mathbf{q}},\quad\text{ where}\\
\mathcal{R}^{\pm} & =\prod_{\beta\in\Phi^{+}(\pm\infty)}\mathrm{exp}_{\beta}((q_{\beta}^{-1}-q_{\beta})e_{\beta}\otimes f_{\beta}),\quad\quad\exp_{\beta}(X)=\sum_{k\geq0}\frac{X^{k}}{(k)_{\beta}!},\\
\mathcal{R}^{0} & =\prod_{\beta\in\widetilde{\Phi}_{\mathrm{im}}^{+}}\mathrm{exp}\left(\frac{\overline{e}_{\beta}\otimes\overline{f}_{\beta}}{(\overline{e}_{\beta},\overline{f}_{\beta})}\right)=\prod_{r>0}\mathrm{exp}\left(\sum_{i\in\mathring{I}}\frac{\overline{e}_{(r\delta,i)}\otimes\overline{f}_{(r\delta,i)}}{(\overline{e}_{(r\delta,i)},\overline{f}_{(r\delta,i)})}\right)
\end{align*}
where again for $\beta\in\Phi_{\mathrm{re}}^{+}$ such that $p(\beta)=1$,
$e_{\beta}^{2}=0$ and we understand $\mathrm{exp}_{\beta}((q_{\beta}^{-1}-q_{\beta})e_{\beta}\otimes f_{\beta})=1+(q_{\beta}^{-1}-q_{\beta})e_{\beta}\otimes f_{\beta}$.
Now if we let
\[
B^{r}=\left((h_{i,r},h_{j,-r})\right)_{i,j\in\mathring{I}},
\]
and $\widetilde{B}^{r}$ be its inverse, then the last term can be
rewritten as
\[
\sum_{i\in\mathring{I}}\frac{\overline{e}_{(r\delta,i)}\otimes\overline{f}_{(r\delta,i)}}{(\overline{e}_{(r\delta,i)},\overline{f}_{(r\delta,i)})}=\sum_{i,j\in\mathring{I}}\widetilde{B}_{ji}^{r}h_{i,r}\otimes h_{j,-r}.
\]
The matrix $C^{r}$ in the statement of Theorem \ref{thm:mult-formula-R}
is a normalization of $B^{r}$:
\[
B^{r}=\mathrm{diag}\left(-\frac{q_{i}^{r}-q_{i}^{-r}}{r(q^{-1}-q)^{2}}\right)\cdot C^{r}.
\]
Hence $\widetilde{B}_{ji}^{r}=-\frac{r(q-q^{-1})^{2}}{q_{i}^{r}-q_{i}^{-r}}\widetilde{C}_{ji}^{r}$
and we obtain the desired formula

\[
\mathcal{R}^{0}=\exp\left(-\sum_{i,j,r}\frac{r(q-q^{-1})^{2}}{q_{i}^{r}-q_{i}^{-r}}\widetilde{C}_{ji}^{r}h_{i,r}\otimes h_{j,-r}\right).
\]

\subsection{Deformed Cartan matrices for $\mathfrak{sl}_{M|N}$\label{subsec:def-Cartan}}

In this section, we assume that $q$ and $\widetilde{q}$ are two
algebraically independent variables. 
\begin{defn}
Given a $(01)$-sequence $\epsilon=(\epsilon_{i})_{1\leq i\leq n}$
(possibly $M=N$), let
\[
q_{i}=\begin{cases}
q & \text{if }\epsilon_{i}=0\\
\widetilde{q} & \text{if }\epsilon_{i}=1.
\end{cases}
\]
We define a \textit{symmetrized $(q,\widetilde{q})$-deformed Cartan
matrix $B(q,\widetilde{q})$} associated with $\mathfrak{sl}_{M|N}$
and $\epsilon$ by 
\[
B(q,\widetilde{q})=(B_{ij}(q,\widetilde{q}))_{i,j\in\mathring{I}},\quad B_{ii}=\frac{q_{i}^{-1}q_{i+1}^{-1}-q_{i}q_{i+1}}{(q_{i}^{-1}-q_{i})(q_{i+1}^{-1}-q_{i+1})},\quad B_{i-1,i}=B_{i,i-1}=\frac{1}{q_{i}-q_{i}^{-1}}
\]
and the other entries are zero. A \textit{$(q,\widetilde{q})$-deformed
Cartan matrix} $C(q,\widetilde{q})$ is its normalization
\[
C(q,\widetilde{q})=D(q,\widetilde{q})^{-1}B(q,\widetilde{q}),\quad D_{ij}(q,\widetilde{q})=\frac{\delta_{ij}}{q_{i}^{-1}-q_{i}}.
\]
\end{defn}

For instance, the $(q,\widetilde{q})$-deformed Cartan matrices for
$\epsilon=(010),(101),(0011)$ are
\[
\begin{pmatrix}\frac{q\widetilde{q}-q^{-1}\widetilde{q}^{-1}}{\widetilde{q}-\widetilde{q}^{-1}} & -\frac{q-q^{-1}}{\widetilde{q}-\widetilde{q}^{-1}}\\
-1 & \frac{q\widetilde{q}-q^{-1}\widetilde{q}^{-1}}{q-q^{-1}}
\end{pmatrix},\quad\begin{pmatrix}\frac{q\widetilde{q}-q^{-1}\widetilde{q}^{-1}}{q-q^{-1}} & -\frac{\widetilde{q}-\widetilde{q}^{-1}}{q-q^{-1}}\\
-1 & \frac{q\widetilde{q}-q^{-1}\widetilde{q}^{-1}}{\widetilde{q}-\widetilde{q}^{-1}}
\end{pmatrix},\quad\begin{pmatrix}q+q^{-1} & -1\\
-1 & \frac{q\widetilde{q}-q^{-1}\widetilde{q}^{-1}}{\widetilde{q}-\widetilde{q}^{-1}} & -\frac{q-q^{-1}}{\widetilde{q}-\widetilde{q}^{-1}}\\
 & -1 & \widetilde{q}+\widetilde{q}^{-1}
\end{pmatrix}
\]
respectively (we will always suppress $\epsilon$ from the notation
$C(q,\widetilde{q})$). Observe that the matrix $\widetilde{C}^{r}$
in the multiplicative formula for the universal $R$-matrix is indeed
the inverse of the specialization $C(q^{r},(-q^{-1})^{r})$. Moreover,
$C(q^{r},(-q^{-1})^{r})$ is \textit{not} equal to the substitution
$q=q^{r}$ of $C(q,-q^{-1})$, hence our introduction of the second
parameter $\widetilde{q}$ is genuine.

The symmetrized deformed Cartan matrix $B(q,\widetilde{q})$ is extrapolated
from our computation of $(h_{i,1},h_{j,-1})$, where the cases $p(i)=p(j)=1$
excluded in Theorem \ref{thm:pairing-imag-rootv} are handled by considering
$(\widetilde{\theta}_{(\delta,i)},\widetilde{\theta}_{(\delta,j)})$
in $\mathbf{f}(\epsilon)$ instead. By inspection, it coincides with
the deformed Cartan matrix \cite{FJMV} defined by the contraction
of root currents (up to the overall factor $t_{1}t_{2}t_{3}$) defined
on the tensor product 
\[
\mathcal{F}_{c_{1}}(u_{1})\otimes\mathcal{F}_{c_{2}}(u_{2})\otimes\cdots\otimes\mathcal{F}_{c_{n}}(u_{n})
\]
of $n$ Fock modules of the quantum toroidal algebra $\mathcal{E}$
associated with $\mathfrak{gl}_{1}$ (see \cite[Section 2,3]{FJMV}
for definitions), where $c_{i}=3-\epsilon_{i}$ and the quantum parameters
$\mathrm{q}_{1},\mathrm{q}_{2},\mathrm{q}_{3}$ therein correspond
to $q^{-2}\widetilde{q}^{-2},\widetilde{q}^{2},q^{2}$ respectively
in our setting. Their normalization \cite[Section 3.3]{FJMV} is slightly
but not essentially different from ours which is chosen to have the
formulas for $q$-characters below as similar as possible to the original
construction \cite{FR2}.

In the non-super case $N=0$, this picture recovers the connection
between the representation theory of the quantum affine algebra $U_{q}(\widehat{\mathfrak{sl}}_{M})$
and the $q$-deformed $W$-algebra $\mathcal{W}_{q}(\mathfrak{sl}_{M})$
\cite[Section 8]{FR2}. That is, the $(q,t)$-deformed Cartan matrix
of type $A$ \cite{FR1} is exactly the one obtained by subtitution
$\mathrm{q}_{3}=q^{2}t^{-2}$. The relation then follows from their
description by the screening operators defined from the root currents
above at the `classical limit' $t=1$. Note that in this non-super
case $B(q,\widetilde{q})$ and $C(q,\widetilde{q})$ do not involve
$\widetilde{q}$, we may make an auxiliary choice $\mathrm{q_{2}=q^{-2}}$
to reinterpret the limit as a specialization $\mathrm{q}_{1}=1$. 

Now we understand our specialization $\widetilde{q}=-q^{-1}$ in the
general super cases in the same vein, as it corresponds to $\mathrm{q}_{1}=1$.
We will see that the matrix $C(q,-q^{-1})$ indeed governs the theory
of $q$-characters (hence representation theory) for $\mathcal{U}(\epsilon)$
in Section \ref{sec:FR-FM-qchar}, while such a connection for quantum
affine superalgebras $U(\epsilon)$ under $\mathrm{q}_{1}=1$ is claimed
in their subsequent works, see \cite[Section 5]{FJM1}.

Indeed, there is another specialization $\widetilde{q}=q^{-1}$ which
is expected to appear in the study of the quantum affine superalgebra
$U(\epsilon)$ itself from the comparison result with $\mathcal{U}(\epsilon)$.
In fact, it is $C(q,q^{-1})$ that recovers the Cartan matrix $C_{ij}=(-1)^{\epsilon_{i}}(\alpha_{i},\alpha_{j})$
of $\mathfrak{sl}_{M|N}$ associated with $\epsilon$ by taking $q=1$.
Nevertheless, it turns out that $U(\epsilon)$ possesses the same
$q$-character theory as that of $\mathcal{U}(\epsilon)$ (Remark
\ref{rem:compare-l-wt-GQG-qasa}) which may be understood from the
fact that both $\widetilde{q}=\pm q^{-1}$ give rise to the same specialization
$\mathrm{q}_{1}=1$ in the toroidal realization above. Our approach
through $\mathcal{U}(\epsilon)$ (hence the specialization $\widetilde{q}=-q^{-1}$)
thus reveals the $2$-parameter, rather than super, nature of the
representation theory of the quantum affine superalgebra at the Grothendieck
ring level, which is hidden in $U(\epsilon)$ as $C(q^{r},q^{-r})$
can be written as $F(q^{r})$ for some single variable $F(t)\in\mathbb{C}[t^{\pm1}]$.

For later use, we record formulas for the deformed Cartan matrices
for $\epsilon=\epsilon_{M|N}$ which can be verified by a straightforward
calculation.
\begin{prop}
\label{prop:inv-qCartan}Assume $\epsilon=\epsilon_{M|N}$ with $M,N>0$. 
\begin{enumerate}
\item The determinant of $C(q,\widetilde{q})$ is given by
\[
d_{M,N}=\frac{q^{M}\widetilde{q}^{N}-q^{-M}\widetilde{q}^{-N}}{\widetilde{q}-\widetilde{q}^{-1}}.
\]
In particular, $C(q,\pm q^{-1})$ is invertible if and only if $M\neq N$.
\item Suppose $M\neq N$. The $(i,j)$-th entry of the matrix $d_{M,N}C(q,\widetilde{q})^{-1}$
is given by
\begin{align*}
 & \frac{(q^{k}-q^{-k})(q^{M-K}\widetilde{q}^{N}-q^{K-M}\widetilde{q}^{-N})}{(q-q^{-1})(\widetilde{q}-\widetilde{q}^{-1})} & i,j\leq M\\
 & \frac{(q^{j}-q^{-j})(\widetilde{q}^{M+N-i}-\widetilde{q}^{i-M-N})}{(q-q^{-1})(\widetilde{q}-\widetilde{q}^{-1})} & j\leq M<i\\
 & \frac{(q^{i}-q^{-i})(\widetilde{q}^{M+N-j}-\widetilde{q}^{j-M-N})}{(\widetilde{q}-\widetilde{q}^{-1})^{2}} & i\leq M<j\\
 & \frac{(q^{M}\widetilde{q}^{k-M}-q^{-M}\widetilde{q}^{M-k})(\widetilde{q}^{M+N-K}-\widetilde{q}^{K-M-N})}{(\widetilde{q}-\widetilde{q}^{-1})^{2}} & M<i,j
\end{align*}
where $k=\min(i,j)$ and $K=\max(i,j)$.
\end{enumerate}
\end{prop}

As a final remark, recall that the symmetric matrix $B(q,-q^{-1})$
can be realized in terms of the bilinear from on $\mathbf{f}(\epsilon)$.
One may hope to obtain $B(q,\widetilde{q})$ directly from $\mathbf{f}(\epsilon)$
which seems to yield a $2$-parameter version, unlike the full quantum
group $\mathcal{U}(\epsilon)$. Indeed, the construction of $\mathbf{f}(\epsilon)$
in Section \ref{subsec:Lusztig-f-bil-form} can be performed over
the base field $\mathbb{Q}(q,\widetilde{q})$. It turns out that the
2-parameter version of the relations (\ref{eq:GQG-Drinfeld-quadSerre}
-- \ref{eq:GQG-Drinfeld-oSerre}) except $\theta_{i}^{2}=0$ ($p(i)=1$),
defined formally by the same formula, still lie in the radical of
the bilinear form on $^{\prime}\mathbf{f}(\epsilon)$. While it is
not clear at this stage if the braid action $T_{i}$ can be defined
(in the sense of Lemma \ref{lem:T_i-on-Lusztig-f}) for this $\mathbb{Q}(q,\widetilde{q})$-algebra,
we may introduce $\widetilde{\theta}_{(\delta,i)}^{+}$ in terms of
iterated $q$-brackets as in Section \ref{subsec:Pairing-imag-rvector}. 
\begin{conjecture}
For any given $\epsilon$ (possibly $M=N$), we have $B_{ij}(q,\widetilde{q})=(\widetilde{\theta}_{(\delta,i)},\widetilde{\theta}_{(\delta,j)})$
up to a scalar multiple that does not depend on $i,j$.
\end{conjecture}

\section{The $q$-character map for $\mathcal{U}(\epsilon)$\label{sec:RT-GQG}}

From now on, we extend the base field $\Bbbk=\mathbb{Q}(q)$ to its
algebraic closure $\mathbf{k}$ inside $\bigcup_{t\in\mathbb{Z}_{>0}}\mathbb{C}(q^{1/t})$.
We also assume $\epsilon=\epsilon_{M|N}$ in the remaining of this
article.

\subsection{Finite-dimensional $\mathcal{U}(\epsilon)$-modules and their $\ell$-weights\label{subsec:classification-irrep}}
\begin{prop}
Every finite-dimensional simple $\mathcal{U}(\epsilon)$-modules can
be obtained from a module on which $c^{1/2}$ acts trivially by twisting
by an automorphism of $\mathcal{U}(\epsilon)$.
\end{prop}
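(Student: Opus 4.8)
The plan is: on any finite-dimensional module the central element $c=\prod_{i\in I}k_{i}$ acts as the identity, so on a simple one $c^{1/2}$ can act only by $+1$ or $-1$, and the sign $-1$ is absorbed by the elementary involution that negates $c^{1/2}$. (Here, as throughout after Theorem~\ref{thm:GQG-Drinfeld}, $\mathcal{U}(\epsilon)$ is understood to carry the adjoined central $c^{\pm1/2}$ with $(c^{1/2})^{2}=\prod_{i\in I}k_{i}$.)

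First I would check that $c$ acts trivially on any finite-dimensional $\mathcal{U}(\epsilon)$-module $V$. Since $q$ is not a root of unity, restricting to the rank-one subalgebra $\langle e_{i},f_{i},k_{i}^{\pm1}\rangle$ for each $i\in I$ shows that the pairwise commuting operators $k_{i}$ act semisimply on $V$ with eigenvalues of the form $\mathbf{q}(\mathrm{cl}(\alpha_{i}),\mu)$; thus $V$ is a weight module, a direct sum of common eigenspaces labelled by $\mu\in\mathring{P}$. On such an eigenspace $c$ acts by $\mathbf{q}\big(\mathrm{cl}(\textstyle\sum_{i\in I}\alpha_{i}),\mu\big)=\mathbf{q}(\mathrm{cl}(\delta),\mu)=\mathbf{q}(0,\mu)=1$, because $\sum_{i\in I}\alpha_{i}=\delta$ and $\mathrm{cl}(\delta)=0$. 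Hence $c=\mathrm{id}_{V}$, and if $V$ is simple then Schur's lemma over $\mathbb{C}$ forces $c^{1/2}$ to act by a scalar $\omega$ with $\omega^{2}=1$, i.e.\ $\omega=\pm1$.

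Next I would introduce the $\mathbb{C}$-algebra involution $\phi$ of $\mathcal{U}(\epsilon)$ fixing each Drinfeld--Jimbo generator $e_{i},f_{i},k_{i}^{\pm1}$ and sending $c^{1/2}\mapsto-c^{1/2}$. This is well defined: the only relations involving $c^{1/2}$ say that it is central and that its square is $\prod_{i\in I}k_{i}$, and both are preserved; being an involution, $\phi$ is an automorphism. Now let $V$ be finite-dimensional and simple, with $c^{1/2}$ acting by $\omega$. If $\omega=1$, then $V$ itself already has $c^{1/2}$ acting trivially. If $\omega=-1$, set $W=V^{\phi}$, the pullback of $V$ along $\phi$; then $W$ is again finite-dimensional and simple, on $W$ the element $c^{1/2}$ acts with eigenvalue $-\omega=1$, and since $\phi^{2}=\mathrm{id}$ we have $V\cong W^{\phi}$. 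In either case $V$ is the twist by an automorphism of a module on which $c^{1/2}$ acts trivially.

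I do not expect a genuine obstacle; the only step needing care is the claim that finite-dimensional $\mathcal{U}(\epsilon)$-modules are weight modules. This is the standard generic-$q$ argument via the rank-one subalgebras (which at generic $q$ have finite-dimensional modules on which $k_{i}$ acts semisimply; for $p(i)=1$ one also uses $e_{i}^{2}=f_{i}^{2}=0$), and it can alternatively be deduced once finite-dimensional simple modules are identified, later in this subsection, as highest $\ell$-weight modules, hence as $\mathcal{U}^{-}(\epsilon)$ applied to a weight vector.
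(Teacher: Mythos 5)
Your route is genuinely different from the paper's: the paper invokes \cite[Proposition 3.2]{MY}, whose proof uses the Heisenberg-type relation (\ref{eq:GQG-Drinfeld-q-Heis}) --- the operators $h_{i,r},h_{i,-r}$ preserve each finite-dimensional weight space, the trace of their commutator vanishes, and since the coefficient is nonzero for an even node this forces $c^{\pm 2r}=1$ with no integrality assumption on the weights; a suitable twisting automorphism then removes the resulting root of unity. Your second half (Schur's lemma, and the involution fixing $e_i,f_i,k_i^{\pm1}$ and sending $c^{1/2}\mapsto -c^{1/2}$, which is indeed a well-defined automorphism of the extension) is fine. The genuine gap is in your first step: the claim that $c=\prod_{i\in I}k_i$ acts trivially on \emph{every} finite-dimensional module is not established by the rank-one argument and is false in this generality. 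Restriction to an even node only gives that $k_i$ acts semisimply with eigenvalues in $\pm q^{\mathbb{Z}}$ (the classical ``type $\pm$'' phenomenon), not that the joint eigenvalues have the form $\mathbf{q}(\alpha_i,\mu)$ for a single $\mu\in\mathring{P}$; and at an odd node the rank-one subalgebra is not $U_{q}(\widehat{\mathfrak{sl}}_2)$'s finite part, so the classical semisimplicity statement is not available off the shelf. Concretely, for any signs $\sigma_i\in\{\pm1\}$ there is a one-dimensional simple module with $e_i=f_i=0$ and $k_i=\sigma_i$ (all relations hold because $\sigma_i=\sigma_i^{-1}$); if $\prod_{i\in I}\sigma_i=-1$ then $c$ acts by $-1$ and $c^{1/2}$ by $\pm i$, and negating $c^{1/2}$ cannot bring this to $1$. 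So your two tools do not prove the proposition as stated: one also needs sign-twisting automorphisms such as $e_i\mapsto\sigma_ie_i$, $k_i\mapsto\sigma_ik_i$ (with a compatible rescaling of $c^{1/2}$), which is exactly the type-reduction role played by the twist in the cited argument. Your fallback --- deducing the weight property from the highest $\ell$-weight classification ``later in this subsection'' --- is circular in the paper's logical order, since that classification is developed only after this proposition, under the very assumptions ($c$ acting trivially, $\mathring{P}$-grading) being justified here.

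If one restricts to $\mathring{P}$-graded modules (which is what the paper assumes immediately afterwards, and is the setting of \cite{MY} as well), then your computation $\mathbf{q}\bigl(\mathrm{cl}(\sum_{i\in I}\alpha_i),\mu\bigr)=\mathbf{q}(0,\mu)=1$ is a clean shortcut replacing the Heisenberg/trace argument, and the Galois involution finishes the proof. To make your argument complete you should either state that weight hypothesis explicitly, or first control the spectrum of the $k_i$ (including the signs at even nodes and semisimplicity at odd nodes) and enlarge your stock of twisting automorphisms accordingly.
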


\begin{proof}
Thanks to the relation (\ref{eq:GQG-Drinfeld-hh}), the proof in \cite[Proposition 3.2]{MY}
for modules of quantum affine algebras with finite-dimensional weight
spaces works verbatim.
\end{proof}
Henceforth we will only concern such $\mathcal{U}(\epsilon)$-modules
as in the non-super case, which also allows us to replace $\mathcal{U}(\epsilon)$
by its quotient by $c-1$ (denoted by the same symbol). Accordingly
we will assume they have the $\mathring{P}$-weight space decomposition:

\[
V=\bigoplus_{\lambda\in\mathring{P}}V_{\lambda},\quad V_{\lambda}=\{v\in V\,|\,k_{i}v=\mathbf{q}(\alpha_{i},\lambda)v\:^{\forall}i\in I\}.
\]
Note that while $\mathcal{U}(\epsilon)$ is an analogue of $U_{q}(\widehat{\mathfrak{sl}}_{M|N})$,
we are concerning $\mathfrak{gl}$-weighted modules.

Let $D^{\pm}=\mathbb{C}\cdot v_{\pm}$ be the $\mathcal{U}(\epsilon)$-module
where $e_{i},f_{i}$ act trivially and the weight of $v_{\pm}$ is
$\pm(\delta_{1}+\cdots+\delta_{M}-\delta_{M+1}-\cdots-\delta_{M+N})$.
In particular, $k_{i}\cdot v_{\pm}=(-1)^{\delta_{i,M}}v_{\pm}$. Then
the tensor product by $D^{+}$ has the inverse $D^{-}\otimes-$. The
1-dimensional module $D^{+}$ is an analogue of the determinant representation.
However, unlike the non-super case, it is not enough to consider only
the polynomial modules (whose weights are contained in $\bigoplus\mathbb{Z}_{\geq0}\delta_{i}$).
For instance, the dual of a polynomial module is not necessarily a
polynomial module even after taking tensor products with $D^{\pm}$.
This is one of the reasons why it is more natural to consider $\mathfrak{gl}$-weighted
modules (\textit{cf.} the last paragraph of Section \ref{subsec:univ-coeff-R}).

The elements $\psi_{i,\pm r}^{\pm}$ ($i\in\mathring{I},\,r\in\mathbb{Z}$)
acting on $\mathcal{U}(\epsilon)$-modules commute with each others,
which allows us to consider a refined decomposition into the simultaneous
generalized eigenspaces 
\begin{align*}
V & =\bigoplus_{\Psi=(\Psi_{i}^{\pm}(z))_{i\in\mathring{I}}}V_{\Psi},\quad\Psi_{i}^{\pm}(z)=\sum_{r\geq0}\Psi_{i,\pm r}^{\pm}z^{\pm r}\in\Bbbk\left\llbracket z^{\pm1}\right\rrbracket ,\\
V_{\Psi} & =\left\{ v\in V\,|\,^{\exists}p>0\text{ such that }^{\forall}i\in\mathring{I},\,^{\forall}r\in\mathbb{Z}_{\geq0},\,(\psi_{i,\pm r}^{\pm}-\Psi_{i,\pm r}^{\pm})^{p}v=0\right\} .
\end{align*}
We say $\Psi=(\Psi_{i}^{\pm}(z))$ is an $\ell$-weight of $V$ if
$V_{\Psi}\neq0$. Note that by definition, for $v\in V_{\Psi}$ we
have $k_{i}^{\pm}v=\Psi_{i,0}^{\pm}v$ and so $\Psi_{i,0}^{\pm}$
is of the form $q_{i}^{\lambda_{i}}\widetilde{q}_{i+1}^{\lambda_{i+1}}$
for some $\lambda_{i}\in\mathbb{Z}$, in which case $V_{\Psi}\subset V_{\lambda}$
for $\lambda=\sum\lambda_{i}\delta_{i}\in\mathring{P}$.

More generally, an \textit{$\ell$-weight} refers to any $\mathring{I}$-tuple
of sequences of scalars
\[
\Psi=(\Psi_{i}^{\pm}(z))_{i\in\mathring{I},r\geq0},\quad\Psi_{i}^{\pm}(z)=\sum_{r\geq0}\Psi_{i,\pm r}^{\pm}z^{\pm r}
\]
such that $\Psi_{i,0}^{\pm}\in\pm q^{\mathbb{Z}}$ and $\Psi_{i,0}^{+}\Psi_{i,0}^{-}=1$
for all $i$. We define the multiplication of $\ell$-weights $\Psi$,
$\Psi^{\prime}$ as $(\Psi\Psi^{\prime})_{i}^{\pm}(z)=\Psi_{i}^{\pm}(z)\Psi_{i}^{\prime\pm}(z)$. 
\begin{rem}
\label{rem:compare-l-wt-GQG-qasa}For modules over the quantum affine
superalgebra $U(\epsilon)$ we can define the notion of $\ell$-weights
in the same manner with respect to $\phi_{i,r}^{\pm}\in U(\epsilon)$.
These notions are compatible under the isomorphism $\tau:\mathcal{U}(\epsilon)[\sigma]\rightarrow U(\epsilon)[\sigma]$.
Indeed, any $\mathring{P}$-graded $\mathcal{U}(\epsilon)$-module
$V$ naturally carries the extended action of $\mathcal{U}(\epsilon)[\sigma]$
by 
\[
\sigma_{i}\restriction_{V_{\lambda}}=(-1)^{\epsilon_{i}(\delta_{i},\lambda)}
\]
and so we may consider a $U(\epsilon)$-module $V^{\tau}$ obtained
from $V$ through $\tau$, and vice versa. Then the $\ell$-weight
space decompositions for $V$ and $V^{\tau}$ coincide, in the sense
that the $\ell$-weight space $V_{\Psi}$ of the $\mathcal{U}(\epsilon)$-module
$V$ is also the generalized eigenspace of $\sum\phi_{i,\pm r}^{\pm}z^{\pm r}$
with respect to the eigenvalue
\[
(-1)^{(\epsilon_{i}\delta_{i}+\epsilon_{i+1}\delta_{i+1},\lambda)}\left(\Psi_{i,0}^{\pm}+\sum_{r>0}\Psi_{i,\pm r}^{\pm}z^{\pm r}(-1)^{rn+r(M+i+1)\epsilon_{i}}\right)
\]
by Proposition \ref{prop:compare-Cartan-current}. Therefore, to find
all $\ell$-weights (with multiplicity) of a $\mathcal{U}(\epsilon)$-module
is equivalent to do it for $V^{\tau}$, which justifies our approach
to the representation theory of quantum affine superalgebras through
$\mathcal{U}(\epsilon)$.
\end{rem}

\begin{defn}
A $\mathcal{U}(\epsilon)$-module $V$ is called a highest $\ell$-weight
module of the highest $\ell$-weight $\Psi=(\Psi_{i,\pm r}^{\pm})$
if it is generated by a \textit{highest $\ell$-weight vector} $v\in V_{\Psi}$,
that is
\begin{enumerate}
\item $x_{i,k}^{+}v=0$ for all $i\in\mathring{I}$ and $k\in\mathbb{Z}$,
\item $\psi_{i,\pm r}^{\pm}v=\Psi_{i,\pm r}^{\pm}v$ for all $i\in\mathring{I},\,r\in\mathbb{Z}_{\geq0}$.
\end{enumerate}
\end{defn}

Suppose we are given an $\ell$-weight $\Psi$. Thanks to the triangular
decomposition (\ref{eq:GQGDr-tri-decomp}) with respect to loop generators,
we may construct the simple highest $\ell$-weight module $L(\Psi)$
of highest weight $\Psi$ as the unique simple quotient of the universal
highest $\ell$-weight module. Since the $U(\epsilon)$-module $L(\Psi)^{\tau}$
is also a simple highest $\ell$-weight module, the condition for
$L(\Psi)$ to be finite-dimensional can be read from the highest $\ell$-weight
classification of finite-dimensional simple $U(\epsilon)$-modules
given in \cite{Z1}.
\begin{thm}[{\cite[Proposition 4.15]{Z1}}]
 Up to the parity shift, the finite-dimensional simple $U(\epsilon)$-modules
are exactly the simple highest $\ell$-weight modules generated by
$v$ such that $X_{i,k}^{+}v=0$ for all $i\in\mathring{I}$ and $k\in\mathbb{Z}$
and the eigenvalue $\phi_{i}^{\pm}(z)v=\Phi_{i}^{\pm}(z)v$ is an
expansion of a rational function $f_{i}(z)\in\mathbf{k}(z)$ at $z^{\pm1}=0$
which is of the form 
\[
f_{i}(z)=\begin{cases}
\text{product of }\ensuremath{q_{i}\frac{1-q_{i}^{-1}az}{1-q_{i}az}}\text{ with }a\in\mathbf{k}^{\times} & \text{if }i\neq M,\\
\text{product of }\ensuremath{c\frac{1-c^{-1}az}{1-caz}}\text{ with }c,a\in\mathbf{k}^{\times} & \text{if }i=M.
\end{cases}
\]
\end{thm}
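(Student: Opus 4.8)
The plan is to follow the Chari--Pressley strategy for ordinary quantum affine algebras \cite{CP}, reducing the problem to rank $1$ along each Drinfeld node and then reassembling; the only genuinely new ingredient is the isotropic node $i=M$. For necessity, let $V$ be a finite-dimensional simple $U(\epsilon)$-module. As usual one first reduces, after a parity shift and a twist making $C^{1/2}$ act trivially, to the case that $V$ is a highest $\ell$-weight module: picking a weight of $V$ maximal in the dominance order and using the triangular decomposition of $U^{\mathrm{Dr}}(\epsilon)$ together with the commutativity of the Cartan currents $\phi^{\pm}_{i,r}$, any nonzero vector of that weight space is a highest $\ell$-weight vector $v$ with a well-defined highest $\ell$-weight $\Phi=(\Phi^{\pm}_i(z))_{i\in\mathring{I}}$. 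For each $i\in\mathring{I}$ restrict to the rank-$1$ Drinfeld subalgebra $U_i$ generated by $X^{\pm}_{i,k}$, $K^{\pm1}_i$, $H_{i,r}$ and $C^{\pm1/2}$. When $p(i)=0$ (that is, $i\ne M$), $U_i\cong U_{q_i}(\widehat{\mathfrak{sl}}_2)$, the submodule $U_i\cdot v\subseteq V$ is finite-dimensional, and the $\widehat{\mathfrak{sl}}_2$-classification forces $\Phi^{\pm}_i(z)$ to be the expansion at $z^{\pm1}=0$ of $q_i^{\deg P}\,P(q_i^{-1}z)/P(q_i z)$ for a polynomial $P$ with $P(0)=1$, which is precisely a product of factors $q_i(1-q_i^{-1}az)/(1-q_i az)$. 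When $i=M$, $p(M)=1$ and $U_M$ is a quantum affine $\mathfrak{gl}_{1|1}$-type algebra; its finite-dimensional simple modules are governed by a single rational function $c(1-c^{-1}az)/(1-caz)$ in which the normalization $c$ is a free parameter, and this is the extra continuous datum in the statement. Hence each $f_i$ has the claimed shape.

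For sufficiency, given $\Phi$ of the stated form I would factor each $f_i(z)$ into its elementary factors, which exhibits $\Phi$ as a product of \emph{fundamental} $\ell$-weights. Each such fundamental $\ell$-weight I would realize as the highest $\ell$-weight of an evaluation module --- the pullback of an appropriate finite-dimensional simple $\mathring{U}(\epsilon)$-module along the evaluation homomorphism of \cite{Z1}, with the $i=M$ factor additionally twisted by a one-dimensional module of type $D^{\pm}$ to match $c$. The tensor product $W$ of these evaluation modules is finite-dimensional, and by the standard coproduct behaviour of the Cartan currents $\phi^{\pm}_{i,r}$ it is a highest $\ell$-weight module whose highest $\ell$-weight is the product $\Phi$; its simple head is therefore $L(\Phi)$, so $\dim L(\Phi)<\infty$. (Alternatively one can bound from above the dimension of the local Weyl module attached to $\Phi$ by a PBW estimate in the Drinfeld presentation and conclude that its simple quotient is finite-dimensional.)

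The hard part is the isotropic node $i=M$: its rank-$1$ subalgebra is a quantum affine $\mathfrak{gl}_{1|1}$ rather than $U_q(\widehat{\mathfrak{sl}}_2)$, with a genuinely different representation theory --- non-semisimple tensor powers and the free normalization $c$ --- so one needs this rank-$1$ classification available as an input, and one must check that the constraints coming from the $U_M$-restriction are compatible with those from the neighbouring nodes $U_{M\pm1}$ (using the odd Serre relations (\ref{eq:GQG-DJ-oddSerre})--(\ref{eq:GQG-DJ-deg5Serre}) and their Drinfeld counterparts) while imposing no extra relation linking the various $f_i$. In practice the cleanest route is to carry out the whole argument first for the non-super algebra $\mathcal{U}(\epsilon)$ --- where for $i\ne M$ the rank-$1$ pieces are honest $U_{q_i}(\widehat{\mathfrak{sl}}_2)$ and for $i=M$ one invokes the representation theory of $\mathcal{U}(01)$ from Appendix \ref{sec:GQG-RT-rank1} --- and then transport the classification to $U(\epsilon)$ through the isomorphism $\tau$ of \cite[Theorem 2.7]{KL}, with the highest $\ell$-weights matching up by Proposition \ref{prop:compare-Cartan-current}.
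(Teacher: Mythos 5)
First, a point of reference: the paper itself contains no proof of this statement --- it is imported verbatim from \cite[Proposition 4.15]{Z1} --- so your sketch has to be judged against Zhang's argument, whose overall architecture (necessity by restriction to the rank-1 Drinfeld subalgebras, sufficiency by an explicit construction of finite-dimensional modules realizing a prescribed highest $\ell$-weight) your outline does reproduce.

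That said, two steps in your proposal are genuine gaps rather than routine verifications. First, all of the genuinely super content is outsourced: the rank-1 classification at the isotropic node --- that a finite-dimensional highest $\ell$-weight module over the quantum affine $\mathfrak{sl}_{1|1}$-type subalgebra $U_M$ forces $\Phi_M^{\pm}(z)$ to be the two expansions of a rational function which is a product of factors $c\frac{1-c^{-1}az}{1-caz}$ (equivalently, rational, regular and nonvanishing at $0$ and $\infty$, with the values at $0$ and $\infty$ inverse to each other), with the normalizations $c$ genuinely free --- is exactly the non-standard part of the theorem, and you take it as an input; likewise your assertion that the odd Serre relations impose ``no extra relation linking the various $f_i$'' is not something one can declare, it is precisely what the sufficiency construction must establish. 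Second, your mechanism for hitting an arbitrary $c\in\mathbb{C}^{\times}$ fails as written: on any one-dimensional module $E_M,F_M$ act by zero, so the relation $[E_M,F_M]=(K_M-K_M^{-1})/(q_M-q_M^{-1})$ forces $K_M$ to act by $\pm1$, and twisting by $D^{\pm}$ (or any character) therefore moves $c$ only through a countable set. The continuum of $c$ has to come from the evaluation modules themselves: finite-dimensional simple $\mathring{U}(\epsilon)$-modules exist for highest weights whose entry at the odd node is an arbitrary complex number (the finite-dimensionality criterion only constrains the even nodes), and it is the pullback of these under the evaluation map of \cite[Proposition 5.6]{Z1}, together with an explicit computation of the resulting highest $\ell$-weight via the $q$-bracket formulas for the Drinfeld generators (as in Proposition \ref{prop:fund-rep-l-wt}), that matches a prescribed factor. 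Note that such weights leave the integral lattice $\mathring{P}$, so this cannot be run inside the $\mathring{P}$-graded framework of Sections \ref{sec:RT-GQG}--\ref{sec:FR-FM-qchar}; in particular your alternative plan --- prove the statement for $\mathcal{U}(\epsilon)$ using Appendix \ref{sec:GQG-RT-rank1} and transport through $\tau$ --- would have to be redeveloped for non-integral $\ell$-weights and would also invert the logical flow of this paper, which deduces the $\mathcal{U}(\epsilon)$-classification from this very theorem. Two smaller repairs: the highest $\ell$-weight vector must be chosen as a common eigenvector of the commuting $\phi^{\pm}_{i,r}$ inside the maximal weight space (not an arbitrary vector there), and the tensor product $W$ of evaluation modules need not itself be a highest $\ell$-weight module --- one should pass to the cyclic submodule generated by the tensor product of the top vectors before taking the simple quotient $L(\Phi)$.
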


\begin{rem}
\label{rem:intble-fails}Compared with the classification for quantum
affine algebras \cite{CP}, the condition on $\Phi_{i}^{\pm}(z)$
when $p(i)=1$ is relaxed. Recall that the local nilpotency of $e_{i}$
and $f_{i}$ is essential in establishing various results for finite-dimensional
modules of quantum affine algebras. However in the super case, it
is automatically satisfied whenever $p(i)=1$ and so not as strong
as in the non-super case. Indeed, this classification result can be
understood in connection with the one for the simples in the affinization
of a parabolic BGG category $\mathcal{O}$ for the quantum affine
algebra $U_{q}(\widehat{\mathfrak{sl}}_{n})$ where the integrability
condition is relaxed \cite[Theorem 3.7]{MY}.
\end{rem}

\begin{defn}
\label{def:fund-l-wt}For $i\in\{1,\dots,M\}$ and $a\in\mathbf{k}^{\times}$,
let $Y_{i,a}$ denote the $\ell$-weight given by
\[
(Y_{i,a})_{k}^{\pm}(z)=\delta_{i,k}q\frac{1-q^{-1}az}{1-qaz}+(1-\delta_{i,k})
\]
and similarly for $i\in\{M,\dots,M+N-1\}$, $\widetilde{Y}_{i,a}$
the one given by
\[
(\widetilde{Y}_{i,a})_{k}^{\pm}(z)=\delta_{i,k}\widetilde{q}\frac{1-\widetilde{q}^{-1}az}{1-\widetilde{q}az}+(1-\delta_{i,k}).
\]
Note that $D\coloneqq Y_{M,a}\widetilde{Y}_{M,-a}$ is the $\ell$-weight
of the 1-dimensional module $D^{+}$ above and independent of $a$.

We also introduce \textit{simple $\ell$-roots} for $i\in\mathring{I}$
\begin{equation}
A_{i,a}=\begin{cases}
Y_{i,aq}Y_{i,aq^{-1}}Y_{i-1,a}^{-1}Y_{i+1,a}^{-1} & \text{if }i<M\\
\widetilde{Y}_{i,a\widetilde{q}}\widetilde{Y}_{i,a\widetilde{q}^{-1}}\widetilde{Y}_{i-1,a}^{-1}\widetilde{Y}_{i+1,a}^{-1} & \text{if }i>M\\
Y_{M,a}\widetilde{Y}_{M,-a}Y_{M-1,a}^{-1}\widetilde{Y}_{M+1,a}^{-1}=DY_{M-1,a}^{-1}\widetilde{Y}_{M+1,a}^{-1} & \text{if }i=M.
\end{cases}\label{eq:def-A-qCartan}
\end{equation}
In particular, $A_{M,a}$ represents the $\ell$-weight
\[
\left(1,\dots,1,q^{-1}\frac{1-qaz}{1-q^{-1}az},-1,\widetilde{q}^{-1}\frac{1-\widetilde{q}az}{1-\widetilde{q}^{-1}az},1,\dots,1\right)
\]
where $-1$ lies at the $M$-th component. The weight of $Y_{i,a}$
(resp. $\widetilde{Y}_{j,b}$) is defined to be $\delta_{1}+\cdots+\delta_{i}$
(resp. $-\delta_{j+1}-\cdots-\delta_{n}$) so that the weight of $A_{i,a}$
is $\alpha_{i}=\delta_{i}-\delta_{i+1}$ for all $i\in\mathring{I}$.
The lower index $a\in\mathbf{k}^{\times}$ in $Y$ and $A$ is called
the \textit{spectral parameter}. 
\end{defn}

As its name suggests, $A_{i,a}$ plays the role of simple roots in
the sense that any $\ell$-weight of a given (finite-dimensional)
highest $\ell$-weight module can be written as a monomial in $A_{i,a}^{-1}$'s
times the highest $\ell$-weight (Corollary \ref{cor:q-char-cone-prop}).
This important property in the non-super case was first shown in \cite[Theorem 4.1]{FM}
using the restriction map below (Section \ref{subsec:restriction-rk12}),
but later streamlined and naturally extended to the affinization of
the BGG category by establishing the following general result \cite{MY}. 
\begin{thm}[{cf. \cite[Proposition 3.9]{MY}}]
\label{thm:integral-l-wt} Suppose $\Psi$ and $\Psi^{\prime}$ are
$\ell$-weights of a simple $\mathcal{U}(\epsilon)$-module $V$ such
that 
\[
V_{\Psi^{\prime}}\cap\bigoplus_{r\in\mathbb{Z}}x_{i,r}^{\pm}V_{\Psi}\neq0
\]
for some $i\in I$. Then we have $\Psi^{\prime}=\Psi A_{i,a}^{\pm1}$
for some $a\in\mathbf{k}^{\times}$.
\end{thm}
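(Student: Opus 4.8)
The plan is to follow the strategy of \cite[Proposition 3.9]{MY}, which works with the commutation relations between the Drinfeld loop generators rather than with any particular realization of the module, so that it transfers to $\mathcal{U}(\epsilon)$ with only cosmetic changes. First I would reduce to the rank-one situation: fix $i\in I$ and study the action of the subalgebra generated by $x_{i,r}^{\pm}$, $h_{i,r}$, $k_{i}^{\pm1}$. The key structural input is the commutation relation \eqref{eq:GQG-Drinfeld-hx} together with the $x^{+}x^{-}$-relation \eqref{eq:GQG-Drinfeld-x+x-}, which express how $x_{i,r}^{\pm}$ intertwine the generalized eigenspaces of the $\psi_{i,\pm s}^{\pm}$. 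Encoding the eigenvalues as generating series $\Psi_{i}^{\pm}(z)$, the relation \eqref{eq:GQG-Drinfeld-hx} shows that if $v\in V_{\Psi}$ and $x_{i,r}^{+}v$ has a nonzero component in $V_{\Psi'}$, then $\Psi'$ and $\Psi$ differ only in the $i$-th slot, and the ratio $\Psi_{i}'{}^{\pm}(z)/\Psi_{i}^{\pm}(z)$ is constrained to be a rational function of a very restricted shape.

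The second step is to pin down that rational function. Here I would invoke the highest $\ell$-weight classification (the theorem quoted just before, from \cite[Proposition 4.15]{Z1}, transported to $\mathcal{U}(\epsilon)$ via $\tau$ and Proposition \ref{prop:compare-Cartan-current}): on a module in $\mathcal{F}(\epsilon)$ every $\ell$-weight is integral, i.e. a monomial in the fundamental $\ell$-weights $Y_{j,a}^{\pm1}, \widetilde{Y}_{j,a}^{\pm1}$. Combined with the rank-one analysis — which says the $i$-th component can only change by multiplication by an expression of the form (fundamental factor in direction $i$)${}^{\pm1}$ times possibly fundamental factors in the two neighbouring directions $i\pm1$, in exactly the combination dictated by how $x_{i,r}^{\pm}$ sits inside the bigger algebra — one sees that $\Psi'/\Psi$ must be precisely $A_{i,a}^{\pm1}$ for a single $a\in\mathbb{C}^{\times}$. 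The content here is that the $\mathfrak{sl}_2$- (or, at $i=M$, $\mathfrak{sl}_{1|1}$-type) subalgebra forces the $q$-shifts: the numerator/denominator of the $i$-th slot move by $q_i^{\mp 1}$ (resp. $\widetilde q^{\,\mp1}$, resp. a $\widetilde q$-shift in the neighbouring odd slot) exactly as recorded in \eqref{eq:def-A-qCartan}.

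The $i=M$ case is the one subtlety, since the $M$-th rank-one subalgebra is not $U_{q}(\widehat{\mathfrak{sl}}_2)$ but the algebra $\mathcal{U}(01)\cong\mathcal{U}(\epsilon_{\{M\}})$ treated in Appendix \ref{sec:GQG-RT-rank1}; there the integrability of $e_M,f_M$ is automatic and the admissible $\ell$-weight shifts involve both the $Y_{M}$ and the $\widetilde Y_{M}$ families, matching the formula $A_{M,a}=DY_{M-1,a}^{-1}\widetilde Y_{M+1,a}^{-1}$. I expect this to be the main obstacle: one must check directly, using the relations and the rank-one representation theory of $\mathcal{U}(01)$, that the $\ell$-root $A_{M,a}$ has exactly the stated shape (in particular the relation $Y_{M,a}\widetilde Y_{M,-a}=D$ being independent of $a$ is what makes the answer well defined modulo the ideal defining $\mathscr{Y}(\epsilon)$). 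Once the rank-one statements are in hand for all $i$, including $i=M$, the general claim follows verbatim as in \cite{MY}: write $x_{i,r}^{\pm}v=\sum_{\Psi'} (x_{i,r}^{\pm}v)_{\Psi'}$, pick $\Psi'$ with nonzero $V_{\Psi'}$-component, and apply the rank-one analysis to conclude $\Psi'=\Psi A_{i,a}^{\pm1}$.
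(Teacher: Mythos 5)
There is a genuine gap at the heart of your argument. The statement you need to prove is precisely that the ratio $\Psi'/\Psi$ is a single $A_{i,a}^{\pm1}$, i.e.\ that the components in the three directions $i-1,i,i+1$ (and only those) change, each by the exact factor recorded in (\ref{eq:def-A-qCartan}), and all with one common spectral parameter $a$. Your reduction to the rank-one subalgebra in direction $i$ cannot see this: it controls only the $i$-th component (and your opening claim that $\Psi'$ and $\Psi$ ``differ only in the $i$-th slot'' is false, as you implicitly concede later, since $[h_{j,r},x_{i,k}^{\pm}]\neq0$ for $j=i\pm1$). The place where you say the neighbouring factors appear ``in exactly the combination dictated by how $x_{i,r}^{\pm}$ sits inside the bigger algebra'' is exactly the content to be established, and neither integrality of the $\ell$-weights nor the classification of simple modules supplies it: integrality only says $\Psi'/\Psi$ is some monomial in $Y$'s and $\widetilde Y$'s, not that it is one simple $\ell$-root. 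Moreover, if you intend to pin down the neighbouring shifts via the restriction machinery of Section \ref{subsec:restriction-rk12}, that is circular, since those arguments invoke Theorem \ref{thm:integral-l-wt} itself to exclude unwanted $Y_{M-1,a}\widetilde Y_{M+1,a}$ factors.

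The paper's proof avoids all of this by running the relation (\ref{eq:GQG-Drinfeld-hx}), rewritten as the current identity $(\mathbf{q}(\alpha_{i},\alpha_{j})-z_{1}z_{2})\,x_{i}^{+}(z_{1})\psi_{j}^{+}(z_{2})=(1-\mathbf{q}(\alpha_{i},\alpha_{j})z_{1}z_{2})\,\psi_{j}^{+}(z_{2})x_{i}^{+}(z_{1})$, for \emph{every} $j\in\mathring{I}$ simultaneously against bases of $V_{\Psi}$ and $V_{\Psi'}$ chosen so that the $\psi$-action is triangular. Writing the $V_{\Psi'}$-component of $x_{i}^{+}(z)v_{k}$ as $\sum_{l}\lambda_{k,l}(z)w_{l}$ and taking the extremal pair $(K,L)$, one gets for each $j$ a recurrence on the coefficients of the single series $\lambda_{K,L}$, which admits a nonzero solution only if $\Psi_{j}^{\prime+}(z)=\mathbf{q}(\alpha_{i},\alpha_{j})\frac{1-\mathbf{q}(\alpha_{i},\alpha_{j})^{-1}az}{1-\mathbf{q}(\alpha_{i},\alpha_{j})az}\Psi_{j}^{+}(z)$ for all $j$ with one common $a$ (the same $\lambda_{K,L}$ serves every $j$, which is what ties the spectral parameters together); this is $\Psi A_{i,a}^{\pm1}$ by definition. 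Note that this argument needs no case distinction at $i=M$, no $\mathcal{U}(01)$ representation theory, and no classification or integrality hypotheses beyond the existence of the generalized $\ell$-weight decomposition, so your anticipated ``main obstacle'' at $i=M$ is not where the difficulty lies.
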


\begin{proof}
While the proof is completely parallel, we reproduce it here for reader's
convenience. Let $(v_{k})$ be an ordered basis for $V_{\Psi}$ such
that
\[
(\psi_{i}^{\pm}(z)-\Psi_{i}^{\pm}(z))v_{k}=\sum_{k^{\prime}<k}\xi_{i}^{\pm,k,k^{\prime}}(z)v_{k^{\prime}}
\]
for some $\xi_{i}^{\pm,k,k^{\prime}}(z)\in z^{\pm1}\mathbf{k}\left\llbracket z^{\pm}\right\rrbracket $,
and $(w_{l})$ be the one for $V_{\Psi^{\prime}}$ such that
\[
(\psi_{i}^{\pm}(z)-\Psi_{i}^{\prime\pm}(z))w_{l}=\sum_{l^{\prime}>l}\zeta_{i}^{\pm,l,l^{\prime}}(z)w_{l^{\prime}}
\]
for some $\zeta_{i}^{\pm,l,l^{\prime}}(z)\in z^{\pm}\mathbf{k}\left\llbracket z^{\pm}\right\rrbracket $.
That is, the action of $\psi_{i,\pm r}^{\pm}$ is upper-triangular
(resp. lower-triangular) with respect to $(v_{k})$ (resp. $(w_{l})$).

Let $x_{i}^{+}(z)=\sum x_{i,r}^{+}z^{-r}$. The defining relation
(\ref{eq:GQG-Drinfeld-hx}) can be rewritten as
\[
(\mathbf{q}(\alpha_{i},\alpha_{j})-z_{1}z_{2})x_{i}^{+}(z_{1})\psi_{j}^{+}(z_{2})=(1-\mathbf{q}(\alpha_{i},\alpha_{j})z_{1}z_{2})\psi_{j}^{+}(z_{2})x_{i}^{+}(z_{1})
\]
which implies 
\begin{align*}
(\mathbf{q}(\alpha_{i},\alpha_{j})-z_{1}z_{2}) & x_{i}^{+}(z_{1})\left(\psi_{j}^{+}(z_{2})-\Psi_{j}^{+}(u)\right)v_{k}\\
 & =\left((1-\mathbf{q}(\alpha_{i},\alpha_{j})z_{1}z_{2})\psi_{j}^{+}(z_{2})-(\mathbf{q}(\alpha_{i},\alpha_{j})-z_{1}z_{2})\Psi_{j}^{+}(z_{2})\right)x_{i}^{+}(z_{1})v_{k}.
\end{align*}
Hence, if we write the $V_{\Psi^{\prime}}$-component of $x_{i}^{+}(z)v_{k}$
(with respect to the $\ell$-weight space decomposition) as $\sum_{l}\lambda_{k,l}(z)w_{l}$
for $\lambda_{k,l}(z)\in\mathbf{k}\left\llbracket z^{\pm1}\right\rrbracket $,
$\lambda_{k,l}(z)$ is subject to the following equation
\begin{align*}
 & (\mathbf{q}(\alpha_{i},\alpha_{j})-z_{1}z_{2})\sum_{k^{\prime}<k}\lambda_{k^{\prime},l}(z_{1})\xi_{j}^{+,k,k^{\prime}}(z_{2})\\
= & \left((1-\mathbf{q}(\alpha_{i},\alpha_{j})z_{1}z_{2})\Psi_{j}^{\prime+}(z_{2})-(\mathbf{q}(\alpha_{i},\alpha_{j})-z_{1}z_{2})\Psi_{j}^{+}(z_{2})\right)\lambda_{k,l}(z_{1})+(1-\mathbf{q}(\alpha_{i},\alpha_{j})z_{1}z_{2})\sum_{l^{\prime}=1}^{l-1}\zeta_{j}^{+,l^{\prime},l}(z_{1})\lambda_{k,l^{\prime}}(z_{2})
\end{align*}
obtained as the coefficients of $w_{l}$ of the $V_{\Psi^{\prime}}$-component
of the identity above.

Now assume $V_{\Psi^{\prime}}\cap\bigoplus_{r\in\mathbb{Z}}x_{i,r}^{\pm}V_{\Psi}\neq0$,
so that there is the smallest $K$ such that the $V_{\Psi^{\prime}}$-component
of $x_{i}^{+}(z)v_{K}$ is non-zero, and then the smallest $L$ with
$\lambda_{K,L}(z)\neq0$. For such $K$ and $L$ the equation reduces
to
\begin{align*}
0 & =\left((1-\mathbf{q}(\alpha_{i},\alpha_{j})z_{1}z_{2})\Psi_{j}^{\prime+}(z_{2})-(\mathbf{q}(\alpha_{i},\alpha_{j})-z_{1}z_{2})\Psi_{j}^{+}(z_{2})\right)\lambda_{K,L}(z_{1})\\
 & =\lambda_{K,L}(z_{1})\sum_{t=0}^{\infty}z_{2}^{t}\left((\Psi_{j,t}^{\prime+}-\mathbf{q}(\alpha_{i},\alpha_{j})\Psi_{j,t}^{+})-(\mathbf{q}(\alpha_{i},\alpha_{j})\Psi_{j,t-1}^{\prime+}-\Psi_{j,t-1}^{+})z_{1}\right)
\end{align*}
which imposes an infinite set of linear recurrences on the coefficients
of $\lambda_{K,L}(z_{1})\in\mathbf{k}[z_{1}^{\pm1}]$, for each $j\in J$.
Hence, it has a non-trivial solution if and only if there exists $a\in\mathbf{k}$
such that 
\[
\frac{\Psi_{j,t}^{\prime+}-\mathbf{q}(\alpha_{i},\alpha_{j})\Psi_{j,t}^{+}}{\mathbf{q}(\alpha_{i},\alpha_{j})\Psi_{j,t-1}^{\prime+}-\Psi_{j,t-1}^{+}}=a
\]
for all $t\geq0$ and $j\in\mathring{I}$, equivalently
\[
\Psi_{j}^{\prime+}(z)=\mathbf{q}(\alpha_{i},\alpha_{j})\frac{1-\mathbf{q}(\alpha_{i},\alpha_{j})^{-1}az}{1-\mathbf{q}(\alpha_{i},\alpha_{j})az}\Psi_{j}^{+}(z)=A_{i,a}(z)_{j}\Psi_{j}^{+}(z)
\]
for all $j$.
\end{proof}
\begin{cor}
\label{cor:q-char-cone-prop} For any $\ell$-weight $\Psi^{\prime}$
of a simple $\mathcal{U}(\epsilon)$-module $L(\Psi)$, $\Psi^{-1}\Psi^{\prime}$
is a monomial in $A_{i,a}^{-1}$ for $i\in\mathring{I}$ and $a\in\mathbf{k}^{\times}$.
\end{cor}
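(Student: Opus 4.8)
The plan is to run the Frenkel--Mukhin / Mukhin--Young ``cone property'' argument, taking Theorem~\ref{thm:integral-l-wt} as the only representation-theoretic input. Let $v$ be a highest $\ell$-weight vector of $L(\Psi)$ and $\lambda\in\mathring{P}$ the weight of $\Psi$. Using the loop triangular decomposition (\ref{eq:GQGDr-tri-decomp}) together with $x_{i,k}^{+}v=0$ and $\psi_{i,\pm r}^{\pm}v\in\mathbb{C}v$, we have $L(\Psi)=\mathcal{U}^{\mathrm{Dr},-}(\epsilon)v$; since $\mathcal{U}^{\mathrm{Dr},-}(\epsilon)$ is spanned by monomials in the $x_{i,r}^{-}$, every $\ell$-weight $\Psi'$ of $L(\Psi)$ has weight $\lambda'$ with $\lambda-\lambda'\in\mathbb{Z}_{\geq0}\mathring{\Phi}^{+}$, and $L(\Psi)_{\lambda}=\mathbb{C}v$. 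The claim I would prove, by induction on $\mathrm{ht}(\lambda-\lambda')$, is that $\Psi^{-1}\Psi'$ is a product of the $A_{i,a}^{-1}$ ($i\in\mathring{I}$, $a\in\mathbb{C}^{\times}$). The base case $\lambda'=\lambda$ forces $\Psi'=\Psi$, so $\Psi^{-1}\Psi'=1$.

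For the inductive step, fix an $\ell$-weight $\Psi'$ with $\lambda'\ne\lambda$. Since every element of $\mathcal{U}^{\mathrm{Dr},-}(\epsilon)$ of nonzero weight lies in $\sum_{i\in\mathring{I},\,r\in\mathbb{Z}}x_{i,r}^{-}\,\mathcal{U}^{\mathrm{Dr},-}(\epsilon)$, one gets $L(\Psi)_{\lambda'}=\sum_{i,r}x_{i,r}^{-}L(\Psi)_{\lambda'+\alpha_{i}}$; decomposing each $L(\Psi)_{\lambda'+\alpha_{i}}$ into $\ell$-weight spaces and using $L(\Psi)_{\Psi'}\ne0$, there exist $i$, an $\ell$-weight $\Psi''$ of weight $\lambda'+\alpha_{i}$, and $r\in\mathbb{Z}$ such that the $\Psi'$-component of $x_{i,r}^{-}L(\Psi)_{\Psi''}$ is nonzero. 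The key observation is that $U:=\bigoplus_{r\in\mathbb{Z}}x_{i,r}^{-}L(\Psi)_{\Psi''}$ is a sum of $\ell$-weight spaces: the relation $k_{j}x_{i,r}^{-}k_{j}^{-1}=\mathbf{q}(\alpha_{j},-\alpha_{i})x_{i,r}^{-}$ and the relation (\ref{eq:GQG-Drinfeld-hx}) show that $U$ is stable under all $k_{j}$ and $h_{j,s}$, hence under the $\psi_{j,\pm r}^{\pm}$, so $U=\bigoplus_{\Xi}\big(U\cap L(\Psi)_{\Xi}\big)$ and the $\Psi'$-component of any element of $U$ again lies in $U$. Therefore $L(\Psi)_{\Psi'}\cap\bigoplus_{r}x_{i,r}^{-}L(\Psi)_{\Psi''}\ne0$, and Theorem~\ref{thm:integral-l-wt} (applied with source $\ell$-weight $\Psi''$, target $\Psi'$, and sign $-$) gives $\Psi'=\Psi''A_{i,a}^{-1}$ for some $a\in\mathbb{C}^{\times}$. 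Since $\mathrm{ht}(\lambda-(\lambda'+\alpha_{i}))=\mathrm{ht}(\lambda-\lambda')-1$, the induction hypothesis applies to $\Psi''$, and $\Psi^{-1}\Psi'=(\Psi^{-1}\Psi'')A_{i,a}^{-1}$ is again a product of inverse simple $\ell$-roots.

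I do not anticipate a genuine obstacle, as this is the super counterpart of \cite[Theorem~4.1]{FM} and \cite[Proposition~3.9]{MY}, relying only on the loop-presentation relations together with Theorem~\ref{thm:integral-l-wt}. The two points that warrant care are (i) the $\psi$-stability of $\bigoplus_{r}x_{i,r}^{-}L(\Psi)_{\Psi''}$ sketched above (which uses that $c$ acts as $1$, so the $h_{j,s}$ pairwise commute and commute with the $k_{j}$), and (ii) making sure the running hypotheses of Theorem~\ref{thm:integral-l-wt} truly apply to $L(\Psi)$, namely that it has finite-dimensional $\ell$-weight spaces and that its $\ell$-weights are of the rational shape furnished by the highest $\ell$-weight classification, so that the monomials $A_{i,a}$ are meaningful for it. If one is content with the weaker statement that $\Psi^{-1}\Psi'$ is merely a Laurent monomial in the $A_{i,a}$, the induction can be bypassed: the sum of the $\ell$-weight spaces $L(\Psi)_{\Psi'}$ over all $\Psi'$ with $\Psi^{-1}\Psi'$ a Laurent monomial in the $A_{i,a}$ is a nonzero $\mathcal{U}(\epsilon)$-submodule of $L(\Psi)$ (again by Theorem~\ref{thm:integral-l-wt} for stability under the $x_{i,k}^{\pm}$), hence equals $L(\Psi)$ by simplicity.
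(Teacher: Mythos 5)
Your proof is correct and is essentially the same route as the paper's: the corollary is presented there as an immediate consequence of Theorem \ref{thm:integral-l-wt}, and your induction on $\mathrm{ht}(\lambda-\lambda')$ via the loop triangular decomposition, together with the observation that $\bigoplus_{r}x_{i,r}^{-}L(\Psi)_{\Psi''}$ is stable under the $\psi_{j,\pm r}^{\pm}$ so that the hypothesis of the theorem is met, is precisely the standard Frenkel--Mukhin/Mukhin--Young deduction the paper leaves implicit. Note that you in fact obtain the sharper cone form ($\Psi^{-1}\Psi'$ a product of the $A_{i,a}^{-1}$), which is what the paper intends.
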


\subsection{The category $\mathcal{F}(\epsilon)$ and fundamental representations\label{subsec:fund-repn}}

To develop the theory of $q$-characters, we introduce the category
of $\mathcal{U}(\epsilon)$-modules with \textit{integral} $\ell$-weights.
\begin{defn}
The category $\mathcal{F}(\epsilon)$ consists of the $\mathcal{U}(\epsilon)$-modules
whose $\ell$-weights are monomials in $Y_{i,a}^{\pm1}$ and $\widetilde{Y}_{j,a}^{\pm1}$
for $i\leq M\leq j$, $a\in\mathbf{k}^{\times}$.
\end{defn}

Introduce a Laurent polynomial ring 
\begin{align*}
\mathscr{Y}(\epsilon) & =\mathbb{Z}[Y_{i,a}^{\pm1},\widetilde{Y}_{j,a}^{\pm1}]_{i\leq M\leq j,a\in\mathbf{k}^{\times}}/\left(Y_{M,a}\widetilde{Y}_{M,-a}-Y_{M,b}\widetilde{Y}_{M,-b},\,^{\forall}a,b\in\mathbf{k}^{\times}\right)\\
 & \cong\mathbb{Z}[Y_{i,a}^{\pm1},\widetilde{Y}_{j,a}^{\pm1},D^{\pm1}]_{i\leq M<j,a\in\mathbf{k}^{\times}}
\end{align*}
where $D$ represents $Y_{M,a}\widetilde{Y}_{-a}$. Let $K(\mathcal{F}(\epsilon))$
be the Grothendieck group of the abelian category $\mathcal{F}(\epsilon)$.
Then we have a well-defined map
\begin{align*}
\chi_{q}:K(\mathcal{F}(\epsilon)) & \longrightarrow\mathscr{Y}(\epsilon)\\{}
[V] & \longmapsto\sum\dim(V_{\Psi})\cdot\Psi
\end{align*}
which we call the \textit{$q$-character} map. For example, $\chi_{q}(D^{\pm})=D^{\pm1}$.
Since the $\ell$-weights of $L(\Psi)$ has lower weights than $\Psi$
and $Y_{i,a}$, $\widetilde{Y}_{j,a}$ ($i\leq M<j$) and $D$ are
algebraically independent, $\chi_{q}$ is injective. Finally, thanks
to the comultiplication formula (Theorem \ref{thm:comult-imag-rootv}),
it can be shown as in \cite[Proposition 1]{FR2} that $\mathcal{F}(\epsilon)$
is a monoidal category (and so $K(\mathcal{F}(\epsilon))$ has a multiplication)
and $\chi_{q}$ is a ring homomorphism.
\begin{prop}
\label{prop:q-char-mult}The $q$-character map is an injective ring
homomorphism. In particular, $K(\mathcal{F}(\epsilon))$ is commutative.
\end{prop}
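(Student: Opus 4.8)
The plan is to adapt the classical argument of Frenkel--Reshetikhin \cite[Proposition 1]{FR2} to $\mathcal{U}(\epsilon)$, in three steps: (i) check that $\mathcal{F}(\epsilon)$ is an abelian subcategory and that $\chi_q$ descends to a well-defined additive map on $K(\mathcal{F}(\epsilon))$; (ii) prove multiplicativity of $\chi_q$ (which also yields closure of $\mathcal{F}(\epsilon)$ under $\otimes$, hence the ring structure on $K(\mathcal{F}(\epsilon))$); (iii) prove injectivity. Commutativity of $K(\mathcal{F}(\epsilon))$ is then immediate: an injective ring homomorphism into the commutative ring $\mathscr{Y}(\epsilon)$ forces the source to be commutative. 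Recall that throughout this section $c$ acts as $1$, so that in particular the $h_{i,r}$ (and hence the $\psi_{i,\pm r}^{\pm}$) mutually commute by Remark~\ref{rem:hh-q-Heis}.

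For step (i): the $\ell$-weights of any subquotient of $V\in\mathcal{F}(\epsilon)$ form a subset of those of $V$, and the $\ell$-weights of an extension are contained in the union of those of the sub and the quotient; hence $\mathcal{F}(\epsilon)$ is closed under subquotients, extensions and direct sums, so it is abelian and every object has finite length with simple constituents among the $L(\Psi)$. The assignment $[V]\mapsto\sum\dim(V_\Psi)\,\Psi$ is additive on short exact sequences because the dimension of each generalized $\ell$-weight space is, so $\chi_q$ is well defined on $K(\mathcal{F}(\epsilon))=\bigoplus_\Psi\mathbb{Z}[L(\Psi)]$.

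Step (ii) is the heart of the matter. Given $V,W\in\mathcal{F}(\epsilon)$, decompose $V=\bigoplus_\Psi V_\Psi$, $W=\bigoplus_\Phi W_\Phi$ and pick bases of each $V_\Psi$ and each $W_\Phi$ for which every $\psi_{i,\pm r}^{\pm}$ acts by an upper-triangular matrix with diagonal given by the scalars of $\Psi$ (resp.\ $\Phi$); this is possible since the $\psi_{i,\pm r}^{\pm}$ commute. On $V\otimes W$ I would use the comultiplication formula of Theorem~\ref{thm:comult-imag-rootv}: since $\psi_{i,\pm r}^{\pm}$ is, up to an invertible scalar, the product of $k_i^{\pm1}$ with the $q$-commutator $\widetilde{e}_{(r\delta,i)}$ (resp.\ $\widetilde{f}_{(r\delta,i)}$) built from the imaginary root vectors $e_{(s\delta,i)}$, $s\le r$ (Remark~\ref{rem:compare-rootv-Drinfeld}), the leading term $e_{(r\delta,i)}\otimes 1+c^{r}\otimes e_{(r\delta,i)}$ with $c=1$ makes $\Delta(\psi_i^{\pm}(z))$ agree with $\psi_i^{\pm}(z)\otimes\psi_i^{\pm}(z)$ modulo a correction in (a completion of) $\mathcal{U}_{\infty}^{\geq0}(\epsilon)\otimes\mathcal{U}_{-\infty}^{+}(\epsilon)$. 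That correction moves $V_\Psi\otimes W_\Phi$ into $\bigoplus V_{\Psi'}\otimes W_{\Phi'}$ with the first tensor factor of strictly smaller $\mathring{P}$-weight; ordering the blocks $V_\Psi\otimes W_\Phi$ by decreasing weight of the first factor and refining to a total order, all $\Delta(\psi_{i,\pm r}^{\pm})$ act triangularly with the diagonal block on $V_\Psi\otimes W_\Phi$ given by the scalars of the product $\ell$-weight $\Psi\Phi$. Consequently $\dim(V\otimes W)_\Theta=\sum_{\Psi\Phi=\Theta}\dim V_\Psi\cdot\dim W_\Phi$, i.e.\ $\mathcal{F}(\epsilon)$ is closed under $\otimes$ and $\chi_q([V\otimes W])=\chi_q([V])\chi_q([W])$. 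The main obstacle is precisely this triangularity assertion: one must extract it cleanly from Theorem~\ref{thm:comult-imag-rootv} while working in the completed tensor product $\mathcal{U}^{+}(\epsilon)\widehat{\otimes}\,\mathcal{U}^{-}(\epsilon)$, and rule out that the correction terms produce $\ell$-weights other than the products $\Psi\Phi$.

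For step (iii), I would use the cone property: by Corollary~\ref{cor:q-char-cone-prop} every $\ell$-weight $\Psi'$ of $L(\Psi)$ has the form $\Psi\prod_i A_{i,a_i}^{-n_i}$ with $n_i\in\mathbb{Z}_{\ge0}$, so its weight is that of $\Psi$ minus $\sum_i n_i\alpha_i$, with equality forcing $\Psi'=\Psi$ since the $A_{i,a}$ have weights the linearly independent simple roots $\alpha_i$. Hence $\chi_q([L(\Psi)])=\Psi+\sum_{\Psi'<\Psi}m_{\Psi'}\Psi'$, a finite sum as $L(\Psi)$ is finite-dimensional, and the partial order on $\ell$-weights by weight has finite lower sets on the support. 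Because $Y_{i,a}$, $\widetilde{Y}_{j,a}$ ($i\le M<j$) and $D$ are algebraically independent, distinct $\ell$-weight monomials are distinct nonzero elements of $\mathscr{Y}(\epsilon)$, so the transition matrix from the $\mathbb{Z}$-basis $\{[L(\Psi)]\}$ to the family $\{\Psi\}$ is unipotent and triangular, whence $\chi_q$ is injective.
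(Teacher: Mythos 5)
Your proposal is correct and follows essentially the same route as the paper: injectivity from the cone/triangularity property of $\ell$-weights of $L(\Psi)$ together with the algebraic independence of $Y_{i,a}$, $\widetilde{Y}_{j,a}$ $(i\leq M<j)$ and $D$, and multiplicativity (hence monoidality of $\mathcal{F}(\epsilon)$ and commutativity of $K(\mathcal{F}(\epsilon))$) from the comultiplication formula of Theorem \ref{thm:comult-imag-rootv} via the Frenkel--Reshetikhin triangularity argument. You merely spell out the details that the paper defers to \cite[Proposition 1]{FR2}, and your handling of the correction terms in $\mathcal{U}_{\infty}^{\geq0}(\epsilon)\otimes\mathcal{U}_{-\infty}^{+}(\epsilon)$ is exactly the intended argument.
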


\begin{rem}
To compare with the usual notion of $q$-character for quantum affine
superalgebra $U(\epsilon)$ (see for example \cite{Z5}), let $\mathrm{Y}_{i,a}$
denote the $\ell$-weight 
\[
(\mathrm{Y}_{i,a})_{k}=\delta_{i,k}q^{(-1)^{\epsilon_{i}}}\frac{1-q^{-(-1)^{\epsilon_{i}}}az}{1-q^{(-1)^{\epsilon_{i}}}az}+(1-\delta_{i,k})
\]
for $i=1,\dots,n-1$ and consider the $\ell$-weight space $V_{\mathrm{Y}_{i,a}}$of
a $U(\epsilon)$-module $V$. According to Remark \ref{rem:compare-l-wt-GQG-qasa},
it is also an $\ell$-weight space of the $\mathcal{U}(\epsilon)$-module
$V^{\tau^{-1}}$ of the $\ell$-weight
\[
\begin{cases}
Y_{i,(-1)^{n}} & \text{if }i\leq M\\
\widetilde{Y}_{i,(-1)^{n+M+i+1}} & \text{if }i>M.
\end{cases}
\]
Therefore, if we introduce the category $F(\epsilon)$ of $U(\epsilon)$-modules
whose $\ell$-weights are monomials in $\mathrm{Y}_{i,a}^{\pm1}$
($1\leq i\leq n-1$, $a\in\mathbf{k}^{\times}$) and define the $q$-character
$\chi_{q}(V)$ of each $V\in F(\epsilon)$ as above, then one may
easily recover $\chi_{q}(V^{\tau^{-1}})$ from $\chi_{q}(V)$ and
vice versa. In this sense, we understand that the theories of $q$-character
for the quantum affine superalgebra $U(\epsilon)$ and the generalized
quantum group $\mathcal{U}(\epsilon)$ are equivalent.
\end{rem}

From the classification of the finite-dimensional simple $U(\epsilon)$-modules,
the remark above and Corollary \ref{cor:q-char-cone-prop}, we know
that the simple $\mathcal{U}(\epsilon)$-modules in $\mathcal{F}(\epsilon)$
are exactly those $L(\Psi)$ such that $\Psi$ is a monomial in $Y_{i,a}$
and $\widetilde{Y}_{j,a}$ (\textit{i.e.} does not contain any negative
powers). Therefore, it is natural to consider the simple modules of
the form $L(Y_{i,a})$ and $L(\widetilde{Y}_{j,a})$ which belong
to $\mathcal{F}(\epsilon)$ again by Corollary \ref{cor:q-char-cone-prop}
and generate $\mathcal{F}(\epsilon)$ as a monoidal abelian category.
\begin{defn}
For $i\in\{1,\dots,M\}$ (resp. $i\in\{M,\dots,M+N-1\}$), the $i$-th
\textit{positive} (resp. \textit{negative}) \textit{fundamental representation}
with spectral parameter $a\in\mathbf{k}^{\times}$ refers to the simple
highest $\ell$-weight $\mathcal{U}(\epsilon)$-module $L(Y_{i,a})$
(resp. $L(\widetilde{Y}_{i,a})$).
\end{defn}

\begin{rem}
\label{rem:compare-fund-GQG-qasa}In \cite{Z3} a positive fundamental
representation $V_{s,a}^{+}$ ($1\leq s\leq M$) of the quantum affine
superalgebra $U(\epsilon)$ is defined by a fusion procedure from
the natural representations $V_{1,a}^{+}$ and a negative $V_{t,a}^{-}$
($1\leq t\leq N$) from the dual $V_{1,a}^{-}$ of $V_{1,a}^{+}$.
It turns out that $V_{s,a}^{+}$ (resp. $V_{t,a}^{-}$) corresponds
to $L(Y_{s,aq^{-2s-1}})$ (resp. $L(\widetilde{Y}_{M+N-t,-aq})$)
by inspecting its highest $\ell$-weight given at the end of \cite[Section 1]{Z3}.
The corresponding fusion procedure for $\mathcal{U}(\epsilon)$ is
given in Proposition \ref{prop:SES-fund-rep} below. 
\end{rem}

\begin{prop}
\label{prop:fund-gen-all-simple}Any simple $\mathcal{U}(\epsilon)$-module
in $\mathcal{F}(\epsilon)$ can be obtained as a subquotient of tensor
products of fundamental representations.
\end{prop}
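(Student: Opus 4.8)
The plan is to mimic the classical argument in the non-super case (see \cite[Corollary 3.9]{CP} or \cite[Section 3]{FR2}) and reduce the statement to a highest-$\ell$-weight computation via the $q$-character map. Let $V=L(\Psi)$ be a simple module in $\mathcal{F}(\epsilon)$. By the discussion following Proposition \ref{prop:q-char-mult}, the highest $\ell$-weight $\Psi$ is a \emph{monomial} (no negative powers) in the fundamental $\ell$-weights $Y_{i,a}$ ($1\leq i\leq M$) and $\widetilde{Y}_{j,a}$ ($M\leq j<M+N$), possibly with a factor $D^{\pm1}$; absorbing powers of $D$ into the one-dimensional twists $D^{\pm}$ (which are themselves fundamental-type, being $L(D)=L(Y_{M,a}\widetilde{Y}_{M,-a})$), we may write $\Psi=\prod_{k}\Psi^{(k)}$ where each $\Psi^{(k)}$ is a single fundamental $\ell$-weight $Y_{i_k,a_k}$ or $\widetilde{Y}_{j_k,a_k}$. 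First I would form the tensor product $W=L(\Psi^{(1)})\otimes L(\Psi^{(2)})\otimes\cdots\otimes L(\Psi^{(m)})$ of the corresponding fundamental representations; by Corollary \ref{cor:q-char-cone-prop} each factor lies in $\mathcal{F}(\epsilon)$, and since $\mathcal{F}(\epsilon)$ is monoidal (Proposition \ref{prop:q-char-mult}), so does $W$.

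The key step is then to show that $L(\Psi)$ appears as a subquotient of $W$. Using the comultiplication formula for the loop generators (Theorem \ref{thm:comult-imag-rootv}) together with the triangular decomposition with respect to loop generators (\ref{eq:GQGDr-tri-decomp}), one sees in the standard way that the tensor product $v=v_1\otimes\cdots\otimes v_m$ of the highest $\ell$-weight vectors is itself a highest $\ell$-weight vector of $W$ with highest $\ell$-weight $\prod_k\Psi^{(k)}=\Psi$: indeed $x_{i,r}^{+}$ acts as $x_{i,r}^{+}\otimes 1\otimes\cdots + (\text{Cartan})\otimes x_{i,r}^{+}\otimes\cdots$ modulo terms that raise the $Q$-degree in a later factor, hence kills $v$, while the $\psi_{i,\pm r}^{\pm}$ act diagonally on $v$ with eigenvalue the product of the eigenvalues on the $v_k$. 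Therefore the $\mathcal{U}(\epsilon)$-submodule $W'=\mathcal{U}(\epsilon)\cdot v\subseteq W$ is a highest $\ell$-weight module with highest $\ell$-weight $\Psi$, and its unique simple quotient is $L(\Psi)$ by the uniqueness in the definition following Theorem (the $U(\epsilon)$-classification). Thus $L(\Psi)=W'/W''$ for the maximal proper submodule $W''\subsetneq W'$, exhibiting $L(\Psi)$ as a subquotient of $W$.

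Finally I would assemble these pieces: every simple object of $\mathcal{F}(\epsilon)$ is of the form $L(\Psi)$ with $\Psi$ a product of fundamental $\ell$-weights (and $D^{\pm1}$), the corresponding tensor product $W$ of fundamental representations lies in $\mathcal{F}(\epsilon)$, and $L(\Psi)$ is a subquotient of $W$ by the previous paragraph; this is exactly the assertion. The main obstacle I expect is the verification that $v=v_1\otimes\cdots\otimes v_m$ really is annihilated by all $x_{i,r}^{+}$: this needs the precise ``triangularity'' of $\Delta(x_{i,r}^{+})$ with respect to the ordering of tensor factors, which does not follow from the Drinfeld--Jimbo coproduct directly but requires Theorem \ref{thm:comult-imag-rootv} and the analogous (easier) approximate coproduct formulas for the real root vectors $e_{\beta_k}$ of Section \ref{subsec:comult-Dr}, translated into loop generators via Remark \ref{rem:compare-rootv-Drinfeld}; one must check that the ``error terms'' there always land in a strictly larger $Q$-degree on a factor to the right, so that they vanish on the highest $\ell$-weight vectors. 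Everything else is formal once the monoidality and the highest-$\ell$-weight classification are in hand.
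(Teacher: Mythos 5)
Your proposal is correct and follows essentially the same route as the paper: the paper's proof consists precisely of the claim that a tensor product of highest $\ell$-weight vectors is again a highest $\ell$-weight vector of the product $\ell$-weight, established via the comultiplication formulas for the loop generators (Theorem \ref{thm:comult-imag-rootv} and the approximate coproducts of real root vectors), after which the factorization of $\Psi$ into fundamental $\ell$-weights and the passage to the simple quotient of the cyclic submodule generated by $v_1\otimes\cdots\otimes v_m$ are exactly as you describe. The only superfluous point is the hedge about a possible factor $D^{-1}$: by the classification recalled just before the proposition, highest $\ell$-weights of simples in $\mathcal{F}(\epsilon)$ contain no negative powers, so this case does not occur.
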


\begin{proof}
As in the non-super case, it is a consequence of the following claim
which can be proved using the comultiplication formula for loop generators:
for $\mathcal{U}(\epsilon)$-modules $V_{i}$ with a highest $\ell$-weight
vector $v_{i}\in(V_{i})_{\Psi_{i}}$ ($i=1,2$), $v_{1}\otimes v_{2}\in V_{1}\otimes V_{2}$
is also a highest $\ell$-weight vector of $\ell$-weight $\Psi_{1}\Psi_{2}$. 
\end{proof}
To describe fundamental representations explicitly, let us recall
some basic facts on polynomial modules of the finite type subalgebra
of $\mathcal{U}(\epsilon)$ \cite[Section 3.1]{KL}, \textit{cf.}
\cite{BKK}. We denote by $\mathring{\mathcal{U}}(\epsilon)$ the
subalgebra of $\mathcal{U}(\epsilon)$ generated by $e_{i}$, $f_{i}$
and $k_{i}^{\pm1}$ for $i\in\mathring{I}$, which is an analogue
of $U_{q}(\mathfrak{sl}_{M|N})$. Let $V(\lambda)$ be the simple
highest weight $\mathring{\mathcal{U}}(\epsilon)$-module of highest
weight $\lambda=\sum\lambda_{i}\delta_{i}\in\mathring{P}$, which
is finite-dimensional if and only if $\lambda_{i}-\lambda_{i+1}\geq0$
for all $i\neq M$.

A partition $\lambda=(\lambda_{i})$ is called an $(M|N)$-hook partition
if $\lambda_{M+1}\leq N$. They parametrize simple polynomial $\mathring{\mathcal{U}}(\epsilon)$-modules,
by assigning to each $\lambda$ the simple module $V(\varpi_{\lambda})$
whose highest weight $\varpi_{\lambda}=\sum m_{i}\delta_{i}\in\mathring{P}$
is given by
\[
m_{i}=\begin{cases}
\lambda_{i} & \text{if }i\leq M\\
\mu_{i-M} & \text{if }i>M,
\end{cases}
\]
where $\mu$ is the conjugate of the partition $(\lambda_{M+1},\lambda_{M+2},\dots)$
obtained by removing the first $M$ rows of $\lambda$.

For $1\leq i\leq M$ and $a\in\mathbf{k}^{\times}$, the $\mathring{\mathcal{U}}(\epsilon)$-module
$V(1^{i})$ associated with the single-column partition $(1^{i})$
extends to a $\mathcal{U}(\epsilon)$-module $V(1^{i})_{a}$ explicitly
described as follows:
\begin{align*}
V(1^{i})= & \bigoplus_{\mathbf{m}\in\mathbb{Z}_{+}^{n}(\epsilon_{M|N}^{c})}\mathbf{k}\ket{\mathbf{m}},\quad\mathbb{Z}_{+}^{n}(\epsilon_{M|N}^{c})=\left\{ (m_{i})\in\mathbb{Z}_{\geq0}^{n}\,|\,m_{i}\in\{0,1\}\text{ if }1\le i\leq M\right\} ,\\
e_{r}\ket{\mathbf{m}} & =a^{\delta_{r,0}}[m_{r+1}]_{q_{r+1}}\ket{\mathbf{m}+\mathbf{e}_{r}-\mathbf{e}_{r+1}},\\
f_{r}\ket{\mathbf{m}} & =a^{-\delta_{r,0}}[m_{r}]_{q_{r}}\ket{\mathbf{m}-\mathbf{e}_{r}+\mathbf{e}_{r+1}}\\
k_{r}\ket{\mathbf{m}} & =q_{r}^{m_{r}}q_{r+1}^{-m_{r+1}}\ket{\mathbf{m}}.
\end{align*}
where we assume $\ket{\mathbf{m}}=0$ if $\mathbf{m}$ does not meet
the condition and the lower indices $r\in I$ should be read modulo
$n$. Similarly, formally the same formula defines a $\mathcal{U}(\epsilon)$-action
on
\[
V(i)_{a}=\bigoplus_{\mathbf{m}\in\mathbb{Z}_{+}^{n}(\epsilon_{M|N})}\mathbf{k}\ket{\mathbf{m}},\quad\mathbb{Z}_{+}^{n}(\epsilon_{M|N})=\left\{ (m_{i})\in\mathbb{Z}_{\geq0}^{n}\,|\,m_{i}\in\{0,1\}\text{ if }M<i\leq n\right\} 
\]
which extends the simple $\mathring{\mathcal{U}}(\epsilon)$-module
$V(i)$ associated with the single-row partition $(i)$. Let us also
put $V(-i)_{a}=V(i)_{a}^{*}$ and $V(-1^{i})_{a}\coloneqq V(1^{i})_{a}^{*}$,
where $V^{*}$ denotes the (left) dual of a finite-dimensional $\mathcal{U}(\epsilon)$-module
$V$.
\begin{rem}
\label{rem:compare-fund-KL}The $\mathcal{U}(\epsilon)$-module $V(i)_{a}$
is denoted by $\mathcal{W}_{i}(a)$ and also dubbed a fundamental
representation in \cite[Section 3.1]{KL}, as it corresponds to the
$i$-th fundamental representation of the quantum affine algebra $\mathcal{U}(\epsilon_{0|N})=U_{-q^{-1}}(\widehat{\mathfrak{sl}}_{N})$
under the super duality. On the other hand, $V(1^{i})_{a}$, which
really is the fundamental representation in this paper by the proposition
below and also corresponds to the one of $\mathcal{U}(\epsilon_{M|0})=U_{q}(\widehat{\mathfrak{sl}}_{M})$
under the duality, is the Kirillov--Reshetikhin module $\mathcal{W}^{1,i}(c)$
in \cite[Section 6.6]{KL}.
\end{rem}

The resulting modules $V(1^{i})_{a}$, $V(i)_{a}$, $V(-i)_{a}$ and
$V(-1^{i})_{a}$ are already simple over $\mathring{\mathcal{U}}(\epsilon)$
and we shall compute their highest $\ell$-weights. To this end we
recall the inductive formula for $e_{k\delta-\alpha_{i}}$ (\ref{eq:ind-formula-xir})
and Lemma \ref{lem:formula-x-_i,1} which allows us to compute directly
the action of 
\[
\psi_{i,r}^{+}=o(i)^{r}(q-q^{-1})k_{i}\widetilde{e}_{(r\delta,i)}=(-1)^{ir}(q-q^{-1})k_{i}(e_{r\delta-\alpha_{i}}e_{i}-q_{i}^{-1}q_{i+1}^{-1}e_{i}e_{r\delta-\alpha_{i}}),
\]
(Remark \ref{rem:compare-rootv-Drinfeld}) on their highest weight
vectors. Since it is straightforward, we only record the result and
leave the proof to the readers.
\begin{prop}
\label{prop:fund-rep-l-wt} As $\mathcal{U}(\epsilon)$-modules, we
have
\begin{align*}
V(1^{i})_{a} & =L(Y_{i,(-1)^{i+1+M}q^{N-M}a}),\\
V(-i)_{a} & =L(\widetilde{Y}_{n-i,(-1)^{-i+1+n}q^{2N-2M}a}),\\
V(i)_{a} & =L(Y_{1,(-1)^{M}q^{i-1+N-M}a}Y_{1,(-1)^{M}q^{i-3+N-M}a}\cdots Y_{1,(-1)^{M}q^{1-i+N-M}a}),\\
V(-1^{i})_{a} & =L(\widetilde{Y}_{n-1,(-1)^{i+1+n}q^{i-1+2N-2M}a}\widetilde{Y}_{n-1,(-1)^{i+1+n}q^{i-3+2N-2M}a}\cdots\widetilde{Y}_{n-1,(-1)^{i+1+n}q^{1-i+2N-2M}a}).
\end{align*}
\end{prop}

Comparing the definitions we obtain the following identification of
left dual modules from which one can also clarify the right dual of
fundamental representations (see Lemma \ref{lem:double-dual}).
\begin{cor}
Let $Q=q^{M}\widetilde{q}^{N}=(-1)^{N}q^{M-N}$. We have 
\begin{align*}
L(Y_{i,1})^{*} & \cong L(\widetilde{Y}_{n-1,q^{i-1}Q^{-1}}\widetilde{Y}_{n-1,q^{i-3}Q^{-1}}\dots\widetilde{Y}_{n-1,q^{1-i}Q^{-1}}),\\
L(\widetilde{Y}_{n-j,Q^{-1}}) & \cong L(Y_{1,\widetilde{q}^{j-1}Q^{-1}}Y_{1,\widetilde{q}^{j-3}Q^{-1}}\cdots Y_{1,\widetilde{q}^{1-j}Q^{-1}})^{*}.
\end{align*}
\end{cor}

Note that the dual of a fundamental representation is not necessarily
a fundamental representation, but rather a Kirillov--Reshetikhin-type
module. This can be understood in terms of the following \textit{fusion
construction} of fundamental representations (\textit{cf.} \cite[Section 1.3]{Z3}).
\begin{prop}
\label{prop:SES-fund-rep}There are exact sequences in $\mathcal{F}(\epsilon)$
\begin{align*}
0\longrightarrow L(Y_{i+1,1})\longrightarrow L(Y_{1,q^{i}}) & \otimes L(Y_{i,q^{-1}})\longrightarrow L(Y_{i,q^{-1}})\otimes L(Y_{1,q^{i}})\longrightarrow L(Y_{i+1,1})\longrightarrow0,\\
0\longrightarrow L(Y_{1,q^{i}}Y_{1,q^{i-2}}\cdots Y_{1,q^{-i}}) & \longrightarrow L(Y_{1,q^{i-2}}Y_{1,q^{i-4}}\cdots Y_{1,q^{-i}})\otimes L(Y_{1,q^{i}})\\
\longrightarrow L(Y_{1,q^{i}}) & \otimes L(Y_{1,q^{i-2}}Y_{1,q^{i-4}}\cdots Y_{1,q^{-i}})\longrightarrow L(Y_{1,q^{i}}Y_{1,q^{i-2}}\cdots Y_{1,q^{-i}})\longrightarrow0,\\
0\longrightarrow L(\widetilde{Y}_{n-i-1,1})\longrightarrow L(\widetilde{Y}_{n-i,\widetilde{q}}) & \otimes L(\widetilde{Y}_{n-1,\widetilde{q}^{-i}})\longrightarrow L(\widetilde{Y}_{n-1,\widetilde{q}^{-i}})\otimes L(\widetilde{Y}_{n-i,\widetilde{q}})\longrightarrow L(\widetilde{Y}_{n-i-1,1})\longrightarrow0.
\end{align*}
\end{prop}

\begin{proof}
The first two are analogues of \cite[Lemma B.1]{AK} and can be verified
by a straightforward calculation based on the explicit module structure
above (see also \cite[Appendix A.1]{KL}, noting that we are using
a different comultiplication). In fact, in each of the sequences the
middle maps are normalized $R$-matrices (see Section \ref{subsec:univ-coeff-R}
below) and the exact sequences can also be deduced from their spectral
decomposition \cite[Theorem 3.15]{KL}. The last one follows from
the second one by taking the left dual.
\end{proof}
\begin{cor}
The category $\mathcal{F}(\epsilon)$ is the smallest full subcategory
of $\mathcal{U}(\epsilon)$-modules which contains the natural representations
$L(Y_{1,a})$ for $a\in\mathbf{k}^{\times}$ and is closed under taking
subquotients, extensions, tensor products and duals.
\end{cor}

\subsection{A rank 2 case $\epsilon=(001)$\label{subsec:rank2-case}}

In this section we assume $\epsilon=(001)$. The purpose of this section
is to give examples on finite-dimensional simple $\mathcal{U}(001)$-modules
and their $q$-characters, which will be used later in describing
Frenkel--Mukhin-type algorithm by restriction to rank 1 and rank
2 subalgebras (Section \ref{subsec:restriction-rk12}).

We shall take the same approach as in Proposition \ref{prop:fund-rep-l-wt}.
Recall again from Lemma \ref{lem:formula-x-_i,1} that
\begin{align*}
e_{\delta-\alpha_{1}} & =\left\llbracket e_{0},e_{2}\right\rrbracket =e_{0}e_{2}+qe_{2}e_{0},\\
e_{\delta-\alpha_{2}} & =\left\llbracket e_{0},e_{1}\right\rrbracket =e_{0}e_{1}-q^{-1}e_{1}e_{0}.
\end{align*}
In this low rank case it is even possible to compute all the $\ell$-weights
(hence the $q$-character) of several simple modules, including fundamental
representations.
\begin{prop}
The $q$-characters of positive fundamental representations are given
by
\begin{align*}
\chi_{q}(L(Y_{1,1})) & =Y_{1,1}+Y_{1,q^{2}}^{-1}Y_{2,q}+\widetilde{Y}_{2,-q}^{-1}\\
 & =Y_{1,1}(1+A_{1,q}^{-1}+A_{1,q}^{-1}A_{2,q^{2}}^{-1}),\\
\chi_{q}(L(Y_{2,1})) & =Y_{2,1}+Y_{1,q}\widetilde{Y}_{2,-1}^{-1}+Y_{1,q^{3}}^{-1}Y_{2,q^{2}}\widetilde{Y}_{2,-1}^{-1}+\widetilde{Y}_{2,-q^{2}}^{-1}\widetilde{Y}_{2,-1}^{-1}\\
 & =Y_{2,1}(1+A_{2,q}^{-1}+A_{2,q}^{-1}A_{1,q^{2}}^{-1}+A_{2,q}^{-1}A_{1,q^{2}}^{-1}A_{2,q^{3}}^{-1}).
\end{align*}
\end{prop}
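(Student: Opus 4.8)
The plan is to realize the two modules explicitly and read off the eigenvalues of the Drinfeld generators $\psi_{i,r}^{\pm}$ by hand, exactly as in the proof of Proposition~\ref{prop:fund-rep-l-wt}. Here $M=2$, $N=1$, so that proposition identifies $V(1^{1})_{a}$ (resp.\ $V(1^{2})_{a}$) as a highest $\ell$-weight module with highest $\ell$-weight $Y_{1,q^{-1}a}$ (resp.\ $Y_{2,-q^{-1}a}$). Since $V(1^{1})$ and $V(1^{2})$ are already irreducible over $\mathring{\mathcal{U}}(\epsilon)$ — being the quantizations of the exterior powers $\Lambda^{1}(\mathbb{C}^{2|1})$ and $\Lambda^{2}(\mathbb{C}^{2|1})$ — they remain irreducible over $\mathcal{U}(\epsilon)$, hence $L(Y_{1,1})\cong V(1^{1})_{q}$ is the $3$-dimensional natural representation and $L(Y_{2,1})\cong V(1^{2})_{-q}$ is $4$-dimensional. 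In both cases the $\mathring{P}$-weights of the spanning vectors $\ket{\mathbf{m}}$ are pairwise distinct, so these modules are multiplicity-free: every $\ket{\mathbf{m}}$ is automatically an $\ell$-weight vector and $\chi_{q}$ is a sum of $3$ (resp.\ $4$) monomials, one per $\ket{\mathbf{m}}$.

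The core of the argument is to compute, for each $\ket{\mathbf{m}}$, the generalized eigenvalue of $\psi_{i}^{\pm}(z)$. As $c$ acts as $1$, Remark~\ref{rem:compare-rootv-Drinfeld} gives $\psi_{i,r}^{+}=o(i)^{r}(q_{i}-q_{i}^{-1})k_{i}\bigl(e_{r\delta-\alpha_{i}}e_{i}-q_{i}^{-1}q_{i+1}^{-1}e_{i}e_{r\delta-\alpha_{i}}\bigr)$ and the mirror formula for $\psi_{i,-r}^{-}$. I would produce the real root vectors $e_{r\delta-\alpha_{i}}$ from the inductive rule (\ref{eq:ind-formula-xir}), $e_{(r+1)\delta-\alpha_{i}}=\bigl[e_{r\delta-\alpha_{i}},\widetilde{e}_{(\delta,i\pm1)}\bigr]$, with base cases $e_{\delta-\alpha_{1}}=e_{0}e_{2}+qe_{2}e_{0}$ and $e_{\delta-\alpha_{2}}=e_{0}e_{1}-q^{-1}e_{1}e_{0}$ recalled just before the statement. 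Plugging in the explicit matrices of $e_{0},e_{1},e_{2}$ (and $f_{0},f_{1},f_{2},k_{i}$) on $V(1^{j})_{a}$ and using multiplicity-freeness, one checks by induction on $r$ that $\widetilde{e}_{(\delta,i\pm1)}$ acts diagonally and that $e_{r\delta-\alpha_{i}}\ket{\mathbf{m}}$ is either $0$ or a single basis vector with coefficient $c_{r}=\kappa^{\,r-1}c_{1}$ for a fixed ratio $\kappa$ depending only on $\mathbf{m}$; hence $\widetilde{e}_{(r\delta,i)}$ is diagonal on the basis and $\psi_{i}^{+}(z)\ket{\mathbf{m}}=\sum_{r\ge0}\psi_{i,r}^{+}z^{r}\ket{\mathbf{m}}$ is the expansion at $z=0$ of an explicit product of linear fractions. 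Matching the poles and zeros of these rational functions against Definition~\ref{def:fund-l-wt} yields the monomials in the statement; the displayed $A$-factorizations then follow (in accordance with Corollary~\ref{cor:q-char-cone-prop}) after rewriting, e.g., $Y_{2,q}Y_{2,q^{2}}^{-1}\widetilde{Y}_{2,-q^{2}}^{-1}=\widetilde{Y}_{2,-q}^{-1}$ by means of the relation $Y_{M,a}\widetilde{Y}_{M,-a}=D$.

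The delicate step is the induction generating $e_{r\delta-\alpha_{i}}$: one must carefully track the cyclically indexed generators $e_{0},e_{1},e_{2}$ as they compose and the $q$-commutator defining $\widetilde{e}_{(r\delta,i)}$, so as to verify that the $r$-dependence really is geometric and that the resulting generating functions collapse to ratios of linear factors — essentially all the work sits in this finite computation. As an independent sanity check one can apply the restriction maps $\beta_{\{1\}}\colon\mathscr{Y}(\epsilon)\to\mathscr{Y}(\epsilon_{\{1\}})$ and $\beta_{\{2\}}$ and compare with the known $q$-characters over the rank-one subalgebras $\mathcal{U}(\epsilon_{\{1\}})\cong U_{q}(\widehat{\mathfrak{sl}}_{2})$ and $\mathcal{U}(\epsilon_{\{2\}})\cong\mathcal{U}(01)$ (Appendix~\ref{sec:GQG-RT-rank1}); combined with Theorem~\ref{thm:integral-l-wt} this gives a second, computation-light route that already constrains the spectral parameters appearing in the two formulas.
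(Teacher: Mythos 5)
Your proposal is correct and follows essentially the same route as the paper: identify $L(Y_{1,1})$ and $L(Y_{2,1})$ with the explicit evaluation-type modules $V(1^{1})_{q}$ and $V(1^{2})_{-q}$ via Proposition \ref{prop:fund-rep-l-wt}, and compute the eigenvalues of $\psi_{i,r}^{\pm}$ on each (multiplicity-free) weight vector using the $q$-bracket realizations $e_{\delta-\alpha_{1}}=e_{0}e_{2}+qe_{2}e_{0}$, $e_{\delta-\alpha_{2}}=e_{0}e_{1}-q^{-1}e_{1}e_{0}$ together with the inductive formula (\ref{eq:ind-formula-xir}) and Remark \ref{rem:compare-rootv-Drinfeld}. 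The rank-one restriction check you add is a reasonable extra safeguard but not a different method.
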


For $s\geq1$, the $\mathring{\mathcal{U}}(\epsilon)$-module $V(s,s)$
associated to a two-row partition $(s,s)$ has the following uniform
description: it is a 4-dimensional vector space spanned by $v_{12},\,v_{13},\,v_{23},\,v_{33}$
whose highest weight vector is $v_{12}$ with the weight $s\delta_{1}+s\delta_{2}$,
and the $\mathring{\mathcal{U}}(\epsilon)$-action is given by
\begin{align*}
 & f_{2}v_{12}=[s]v_{13},\quad f_{1}v_{13}=v_{23},\quad f_{2}v_{23}=v_{33},\\
 & e_{2}v_{13}=v_{12},\quad e_{1}v_{23}=v_{13},\quad e_{2}v_{33}=-[s+1]v_{23},
\end{align*}
and for any other combination of $e_{i}v_{kl}$ or $f_{i}v_{kl}$
vanishes. Pulling back by $\mathrm{ev}_{a}$ in Proposition \ref{prop:eval-hom}
for $a=-q^{1-s}$ we obtain a $\mathcal{U}(\epsilon)$-module whose
$e_{0},f_{0}$-action is given by
\begin{align*}
e_{0}v_{12} & =[s]v_{23},\quad f_{0}v_{23}=v_{12},\\
e_{0}v_{13} & =v_{33},\quad f_{0}v_{33}=-[s+1]v_{13}.
\end{align*}
We may compute its $q$-character in the same manner. It turns out
that its highest $\ell$-weight is $Y_{2,cq^{2-2s}}Y_{2,cq^{4-2s}}\cdots Y_{2,c}$
for some $c\in\mathbf{k}^{\times}$, and we obtain the following result.
\begin{prop}
For $s\geq1$, the $q$-character of $L(Y_{2,q^{2-2s}}Y_{2,q^{4-2s}}\cdots Y_{2,1})$
equals to
\begin{align*}
Y_{2,q^{2-2s}}\cdots Y_{2,q^{-2}}\cdot\chi_{q}(L(Y_{2,1})) & =Y_{2,q^{2-2s}}\cdots Y_{2,q^{-2}}(Y_{2,1}+Y_{1,q}\widetilde{Y}_{2,-1}^{-1}+Y_{1,q^{3}}^{-1}Y_{2,q^{2}}\widetilde{Y}_{2,-1}^{-1}+\widetilde{Y}_{2,-q^{2}}^{-1}\widetilde{Y}_{2,-1}^{-1}).\\
 & =Y_{2,q^{2-2s}}\cdots Y_{2,q^{-2}}Y_{2,1}(1+A_{2,q}^{-1}+A_{2,q}^{-1}A_{1,q^{2}}^{-1}+A_{2,q}^{-1}A_{1,q^{2}}^{-1}A_{2,q^{3}}^{-1}).
\end{align*}
\end{prop}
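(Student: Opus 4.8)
The plan is to realize $L(Y_{2,q^{2-2s}}\cdots Y_{2,1})$ explicitly as an evaluation module and then read off its $q$-character by the method of Proposition~\ref{prop:fund-rep-l-wt}. First I would take the four-dimensional irreducible $\mathring{\mathcal{U}}(\epsilon)$-module $V(s,s)$ described above and pull it back along $\mathrm{ev}_{-q^{1-s}}$ of Proposition~\ref{prop:eval-hom}, obtaining a $\mathcal{U}(\epsilon)$-module $W$ with the displayed $e_0,f_0$-action. Since $V(s,s)$ is already irreducible over $\mathring{\mathcal{U}}(\epsilon)$, the module $W$ is irreducible, and its four weight spaces --- for the weights $s(\delta_1+\delta_2)$, $s\delta_1+(s-1)\delta_2+\delta_3$, $(s-1)\delta_1+s\delta_2+\delta_3$ and $(s-1)(\delta_1+\delta_2)+2\delta_3$ of $v_{12},v_{13},v_{23},v_{33}$ --- are all one-dimensional. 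Hence each $v_{ab}$ is automatically an $\ell$-weight vector, so $\chi_q(W)$ is the sum of the four associated monomials; and since $x_{i,k}^{+}v_{12}$ would have $\mathring{P}$-weight strictly above every weight of $W$, it vanishes, so $v_{12}$ is a highest $\ell$-weight vector and $W\cong L(\Psi)$ with $\Psi$ its highest $\ell$-weight.

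Next I would compute the action of the Cartan currents $\psi_i^{\pm}(z)=\sum_{r\geq0}\psi_{i,\pm r}^{\pm}z^{\pm r}$ on the four weight lines. By Remark~\ref{rem:compare-rootv-Drinfeld} this reduces to evaluating $\widetilde{e}_{(r\delta,i)}=e_{r\delta-\alpha_i}e_i-q_i^{-1}q_{i+1}^{-1}e_ie_{r\delta-\alpha_i}$, where the affine root vectors $e_{r\delta-\alpha_i}$ are produced by the recursion~(\ref{eq:ind-formula-xir}), that is $e_{(r+1)\delta-\alpha_i}=\left\llbracket e_{r\delta-\alpha_i},\widetilde{e}_{(\delta,j)}\right\rrbracket$ with $j=i\mp1$, starting from $e_{\delta-\alpha_1}=e_0e_2+qe_2e_0$ and $e_{\delta-\alpha_2}=e_0e_1-q^{-1}e_1e_0$. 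Feeding in the explicit matrix coefficients of $e_0,e_1,e_2$ on $W$, one finds that on each weight line the scalar by which $e_{r\delta-\alpha_i}$ acts is a geometric progression in $r$, so $\psi_i^{\pm}(z)$ acts by a rational function of $z$, which is then identified with a Laurent monomial in the $Y_{i,a}^{\pm1},\widetilde{Y}_{j,a}^{\pm1}$. On $v_{12}$ this gives $\Psi=Y_{2,cq^{2-2s}}Y_{2,cq^{4-2s}}\cdots Y_{2,c}$ for a definite $c\in\mathbb{C}^{\times}$, so after the spectral-parameter shift normalizing $c=1$ (which merely rescales all spectral parameters uniformly on both sides) we obtain $W\cong L(Y_{2,q^{2-2s}}\cdots Y_{2,1})\in\mathcal{F}(\epsilon)$; and on $v_{13},v_{23},v_{33}$ the same computation --- or, more economically, Theorem~\ref{thm:integral-l-wt} applied to the chain $v_{12},v_{13},v_{23},v_{33}$ obtained by successively applying $f_2,f_1,f_2$, each step multiplying the $\ell$-weight by some $A_{i,a}^{-1}$, together with one coefficient $\psi_{i,1}^{+}$ to pin down each $a$ --- shows the four $\ell$-weights are precisely $Y_{2,q^{2-2s}}\cdots Y_{2,q^{-2}}$ times the four monomials of $\chi_q(L(Y_{2,1}))$ recorded above. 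Summing them yields the asserted identity, and the $A$-factored form follows from the definition~(\ref{eq:def-A-qCartan}).

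The main obstacle is the scalar bookkeeping in this last step: iterating~(\ref{eq:ind-formula-xir}) on a module where the node $i=M=2$ is odd (so $q_3=-q^{-1}$) mixes $q$-powers with signs, and the identifications of Remark~\ref{rem:compare-rootv-Drinfeld} introduce the twists $o(i)=(-1)^i$, so that tracking every scalar until $\psi_i^{\pm}(z)$ emerges in terms of the normalized variables $Y_{i,a}^{\pm1},\widetilde{Y}_{j,a}^{\pm1}$ is delicate, though finite and routine once the $r$-th scalar is recognized as a geometric progression. A convenient consistency check is that $\psi_1^{\pm}(z)$ acts trivially on $v_{12}$, forced by $\mathbf{q}(\alpha_1,s(\delta_1+\delta_2))=1$ together with $e_{\delta-\alpha_1}v_{12}=e_1v_{12}=0$, which matches the absence of any $Y_{1,a}$- or $\widetilde{Y}_{j,a}$-factor in $\Psi$.
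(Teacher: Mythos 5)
Your proposal is correct and follows essentially the same route as the paper: the paper also realizes $L(Y_{2,q^{2-2s}}\cdots Y_{2,1})$ as the pullback of $V(s,s)$ along $\mathrm{ev}_{-q^{1-s}}$ and computes the $\ell$-weights on the four one-dimensional weight lines directly via the $q$-bracket realization $e_{\delta-\alpha_1}=\left\llbracket e_0,e_2\right\rrbracket$, $e_{\delta-\alpha_2}=\left\llbracket e_0,e_1\right\rrbracket$ and the recursion (\ref{eq:ind-formula-xir}), exactly as in Proposition \ref{prop:fund-rep-l-wt}. Your use of Theorem \ref{thm:integral-l-wt} to pin down the lower $\ell$-weights is a minor economy on the same computation, not a different method.
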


Recall that the negative fundamental representation $L(\widetilde{Y}_{2,1})$
can be obtained as the dual of $V(1)_{a}$ for some $a$. We may consider
more generally the dual of $V(1^{s})_{a}$ to obtain simple modules
with the highest $\ell$-weight of the form $\widetilde{Y}_{2,\widetilde{q}^{2-2s}}\widetilde{Y}_{2,\widetilde{q}^{4-2s}}\cdots\widetilde{Y}_{2,1}$,
whose $q$-character can be again computed from explicit descriptions
of the $\mathcal{U}(\epsilon)$-module structure.
\begin{prop}
The $q$-character of $L(\widetilde{Y}_{2,\widetilde{q}^{2-2s}}\widetilde{Y}_{2,\widetilde{q}^{4-2s}}\cdots\widetilde{Y}_{2,1})$
equals to
\begin{align*}
(s=1)\quad & \widetilde{Y}_{2,1}+Y_{1,-q^{-1}}Y_{2,-1}^{-1}+Y_{1,-q}^{-1}=\widetilde{Y}_{2,1}(1+A_{2,\widetilde{q}}^{-1}+A_{2,\widetilde{q}}^{-1}A_{1,-1}^{-1}),\\
(s>1)\quad & \widetilde{Y}_{2,\widetilde{q}^{2-2s}}\widetilde{Y}_{2,\widetilde{q}^{4-2s}}\cdots\widetilde{Y}_{2,\widetilde{q}^{-4}}\cdot\chi_{q}(\widetilde{Y}_{2,\widetilde{q}^{-2}}\widetilde{Y}_{2,1})\\
 & \quad=\widetilde{Y}_{2,\widetilde{q}^{2-2s}}\widetilde{Y}_{2,\widetilde{q}^{4-2s}}\cdots\widetilde{Y}_{2,1}(1+A_{2,\widetilde{q}}^{-1}+A_{2,\widetilde{q}}^{-1}A_{1,-1}^{-1}+A_{2,\widetilde{q}}^{-1}A_{1,-1}^{-1}A_{2,\widetilde{q}^{-1}}^{-1})
\end{align*}
\end{prop}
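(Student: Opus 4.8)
The plan is to follow the same route as in Proposition~\ref{prop:fund-rep-l-wt} and the other propositions of this subsection: exhibit the module concretely and compute the generalized $\psi_{i,r}^{\pm}$-eigenvalues on each weight line by hand. Here $n=3$, $M=2$, $N=1$, and the unique odd node is $i=M=2$.

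First I would identify $L(\widetilde{Y}_{2,\widetilde{q}^{2-2s}}\widetilde{Y}_{2,\widetilde{q}^{4-2s}}\cdots\widetilde{Y}_{2,1})$ with the left dual $V(1^{s})_{a}^{*}$ of the explicitly described $\mathcal{U}(\epsilon)$-module $V(1^{s})_{a}$ of Section~\ref{subsec:fund-repn} (equivalently the pullback of the polynomial $\mathring{\mathcal{U}}(\epsilon)$-module $V(1^{s})$ along $\mathrm{ev}_{a}$ of Proposition~\ref{prop:eval-hom}), for a suitable $a\in\mathbb{C}^{\times}$. Indeed $V(1^{s})$ is $\mathring{\mathcal{U}}(\epsilon)$-irreducible, so $V(1^{s})_{a}$ and hence $V(1^{s})_{a}^{*}$ is finite-dimensional and simple; its highest $\ell$-weight vector has weight $-s\delta_{3}$, so by the classification of simple modules in $\mathcal{F}(\epsilon)$ (Section~\ref{subsec:classification-irrep}, uniqueness up to a spectral shift of the simple module whose highest $\ell$-weight line has a given weight) it must be $L(\widetilde{Y}_{2,b_{1}}\cdots\widetilde{Y}_{2,b_{s}})$ for some $b_{k}\in\mathbb{C}^{\times}$, and computing that highest $\ell$-weight pins down $a$ so that $(b_{1},\dots,b_{s})=(\widetilde{q}^{2-2s},\widetilde{q}^{4-2s},\dots,1)$. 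For $s=1$ this is just the negative fundamental representation $L(\widetilde{Y}_{2,1})\cong V(-1)_{-q^{2}}$ already singled out before Definition~\ref{def:fund-l-wt}.

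Next, since all weight spaces of $V(1^{s})_{a}^{*}$ are one-dimensional, $\psi_{i,r}^{\pm}$ acts by a scalar on each of its three (for $s=1$) or four (for $s\geq2$) weight lines, and collecting these scalars yields the $\ell$-weight of that line. For the even node $i=1$ this is the familiar $U_{q_{1}}(\widehat{\mathfrak{sl}}_{2})$-computation carried out with $e_{\delta-\alpha_{1}}=\llbracket e_{0},e_{2}\rrbracket=e_{0}e_{2}+qe_{2}e_{0}$. For the odd node $i=2$ I would use, as in Remark~\ref{rem:compare-rootv-Drinfeld}, $\psi_{2,r}^{+}=o(2)^{r}(q_{2}-q_{2}^{-1})k_{2}\widetilde{e}_{(r\delta,2)}$ with $\widetilde{e}_{(r\delta,2)}=e_{r\delta-\alpha_{2}}e_{2}-q_{2}^{-1}q_{3}^{-1}e_{2}e_{r\delta-\alpha_{2}}$, the recursion $e_{(r+1)\delta-\alpha_{2}}=\llbracket e_{r\delta-\alpha_{2}},\widetilde{e}_{(\delta,1)}\rrbracket$ from (\ref{eq:ind-formula-xir}), and $e_{\delta-\alpha_{2}}=\llbracket e_{0},e_{1}\rrbracket=e_{0}e_{1}-q^{-1}e_{1}e_{0}$ from Lemma~\ref{lem:formula-x-_i,1}; plugging in the (antipode-transposed) action of $e_{0},e_{1},e_{2}$ on $V(1^{s})_{a}$ gives, on each line, rational functions $f_{1}(z),f_{2}(z)$ whose expansions at $z^{\pm1}=0$ are the $\psi_{i}^{\pm}(z)$-eigenvalues, hence a monomial in the $Y_{i,a}$, $\widetilde{Y}_{j,a}$. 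These monomials I would rewrite using $A_{1,a}=Y_{1,aq}Y_{1,aq^{-1}}Y_{2,a}^{-1}$ and $A_{2,a}=D\,Y_{1,a}^{-1}$, where $D=Y_{2,b}\widetilde{Y}_{2,-b}$ is independent of $b$ (this is (\ref{eq:def-A-qCartan}) together with the relation defining $\mathscr{Y}(\epsilon)$). For $s=1$ the three lines give $\widetilde{Y}_{2,1}$, $Y_{1,-q^{-1}}Y_{2,-1}^{-1}=\widetilde{Y}_{2,1}A_{2,\widetilde{q}}^{-1}$, and $Y_{1,-q}^{-1}=\widetilde{Y}_{2,1}A_{2,\widetilde{q}}^{-1}A_{1,-1}^{-1}$; for $s\geq2$ the four lines give $\widetilde{Y}_{2,\widetilde{q}^{2-2s}}\cdots\widetilde{Y}_{2,1}$ times $1,\ A_{2,\widetilde{q}}^{-1},\ A_{2,\widetilde{q}}^{-1}A_{1,-1}^{-1},\ A_{2,\widetilde{q}}^{-1}A_{1,-1}^{-1}A_{2,\widetilde{q}^{-1}}^{-1}$. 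The factorized form in the statement then reflects that the $s$-dependence of the matrix coefficients of $V(1^{s})_{a}$ enters only through quantum integers $[m]_{q_{i}}$, which drop out of the two factors of $\widetilde{e}_{(r\delta,2)}$ once one passes to the renormalized imaginary root vectors of (\ref{eq:GQG-psi-halfVO}), so the $\psi_{i}^{\pm}(z)$-eigenvalues for general $s\geq2$ are those for $s=2$ times the scalar $\ell$-weight $\widetilde{Y}_{2,\widetilde{q}^{2-2s}}\cdots\widetilde{Y}_{2,\widetilde{q}^{-4}}$.

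The hard part will be the $r$-uniform bookkeeping at the odd node: verifying that the scalar by which $\psi_{2,r}^{\pm}$ acts on a given weight line is, for all $r$ at once, the relevant coefficient in the power-series expansion of the predicted rational function, while keeping track of the sign factors produced by $o(i)=(-1)^{i}$, by $q_{2}=q_{3}^{-1}=\widetilde{q}=-q^{-1}$, and by the antipode in passing to the dual. The second, more clerical point is to pin down $a$ exactly so that the highest $\ell$-weight is the advertised $\widetilde{q}$-string; once that is done, simplicity of $V(1^{s})_{a}^{*}$ and the fact that it lies in $\mathcal{F}(\epsilon)$ are automatic from, respectively, $\mathring{\mathcal{U}}(\epsilon)$-irreducibility of $V(1^{s})$ and the cone property of Corollary~\ref{cor:q-char-cone-prop}, so that no $\ell$-weights beyond the ones listed can occur.
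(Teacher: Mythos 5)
Your proposal is correct and follows essentially the same route as the paper: identify $L(\widetilde{Y}_{2,\widetilde{q}^{2-2s}}\cdots\widetilde{Y}_{2,1})$ with the dual of the explicitly described module $V(1^{s})_{a}$ for a suitable $a$, and compute the $\ell$-weight of each (one-dimensional) weight line directly via the $q$-bracket realizations $e_{\delta-\alpha_{1}}=\llbracket e_{0},e_{2}\rrbracket$, $e_{\delta-\alpha_{2}}=\llbracket e_{0},e_{1}\rrbracket$ and the recursion for $e_{r\delta-\alpha_{i}}$, exactly as in Proposition \ref{prop:fund-rep-l-wt} and the other computations of Section \ref{subsec:rank2-case}. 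The rewriting in terms of $A_{1,-1}^{-1}$, $A_{2,\widetilde{q}^{\pm1}}^{-1}$ and the pinning down of $a$ are the same bookkeeping the paper leaves implicit, so there is no substantive difference in method.
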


Observe that in the last two examples their normalized $q$-characters,
that is $q$-characters divided by their highest $\ell$-weights,
stabilize at $s=1$ and $s=2$ respecitvely. This super phenomenon
already manifests at the level of finite type, as the crystal \cite{BKK}
of the underlying $\mathring{\mathcal{U}}(\epsilon)$-module $V(M^{s})$
of $L(Y_{M,q^{2-2s}}Y_{M,q^{4-2s}}\cdots Y_{M,1})$ stabilizes from
$s\geq N$.
\begin{rem}
A simple module of the highest $\ell$-weight of the form $Y_{i,cq^{2-2s}}Y_{i,cq^{4-2s}}\cdots Y_{i,c}$
or $\widetilde{Y}_{j,c}\widetilde{Y}_{j,cq^{-2}}\cdots\widetilde{Y}_{j,cq^{2-2s}}$
is an analogue of Kirillov-Reshetikhin modules for quantum affine
algebras. A tableau sum formula is known for their $q$-characters
\cite[Theorem 2.4]{Z5}.
\end{rem}

\section{Transfer matrix construction of the $q$-character map and applications\label{sec:FR-FM-qchar}}

\subsection{Transfer matrix construction of the $q$-character map\label{subsec:qchar-constr-transferM}}

The multiplicative formula for the universal $R$-matrix allows us
to define the $q$-character map in a different way \cite{FR2}, motivated
from the construction of transfer matrices of quantum integrable systems.
From now on, we consider a slight extension of $\mathcal{U}(\epsilon)$
generated by $e_{i},f_{i}$ ($i\in I$) and $k_{\lambda}$ ($\lambda\in\mathring{P}$)
subject to 
\[
k_{0}=1,\quad k_{\lambda}k_{\mu}=k_{\lambda+\mu},\quad k_{\lambda}e_{i}k_{\lambda}^{-1}=\mathbf{q}(\lambda,\alpha_{i})e_{i},\quad k_{\lambda}f_{i}k_{\lambda}^{-1}=\mathbf{q}(\lambda,\alpha_{i})^{-1}f_{i},
\]
where the original generator $k_{i}$ is just $k_{\alpha_{i}}=k_{\delta_{i}}k_{\delta_{i+1}}^{-1}$.
As we are already concerning $\mathfrak{gl}$-weighted modules, this
does not affect our discussions in the last section.

We first recall the basic properties of the universal $R$-matrix,
which can be verified as in \cite[Section 4]{Lb}.
\begin{lem}[{cf. \cite[Proposition 4.2.2]{Lb}}]
\label{lem:R-mat-quasitri} The universal $R$-matrix $\mathcal{R}$
satisfies the following identities
\begin{align*}
\Delta^{\mathrm{op}}(x) & =\mathcal{R}\Delta(x)\mathcal{R}^{-1}\quad\quad{}^{\forall}x\in\mathcal{U}(\epsilon),\\
(\Delta\otimes\mathrm{id})\mathcal{R}= & \mathcal{R}^{13}\mathcal{R}^{\mathrm{23}},\quad(\mathrm{id}\otimes\Delta)\mathcal{R}=\mathcal{R}^{13}\mathcal{R}^{12}
\end{align*}
where for $A=\sum_{i}A_{i}^{\prime}\otimes A_{i}^{\prime\prime}\in\mathcal{U}(\epsilon)\otimes\mathcal{U}(\epsilon)$,
we use the notation $A^{12}=\sum_{i}A_{i}^{\prime}\otimes A_{i}^{\prime\prime}\otimes1\in\mathcal{U}(\epsilon)^{\otimes3}$
and similarly $A^{23}$, $A^{13}$.
\end{lem}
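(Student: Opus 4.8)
The plan is to reduce the three identities to the defining properties of the Hopf pairing and then to follow, essentially word for word, the treatment of the quasitriangularity of the canonical element of a nondegenerate skew-Hopf pairing in \cite[Section 4]{Lb} (see also \cite[Section 3.2]{KL}). First I would factor $\mathcal{R}=\Theta\,\overline{\Pi}_{\mathbf{q}}$, where $\Theta=\sum_{\beta\in Q^{+}}\sum_{i}v_{\beta}^{i}\otimes v_{\beta,i}\in\mathcal{U}^{+}(\epsilon)\widehat{\otimes}\mathcal{U}^{-}(\epsilon)$ is the \emph{quasi-$R$-matrix}; this is well defined and independent of the choice of dual bases because the Hopf pairing $\mathcal{U}^{\geq0}(\epsilon)\otimes\mathcal{U}^{\leq0}(\epsilon)\to\Bbbk$ is nondegenerate (the only place the hypothesis $M\neq N$ enters, through Lemma \ref{lem:Hopf-pair-bil-form}). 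All manipulations below take place in the $Q$-graded completion $\mathcal{U}(\epsilon)\widehat{\otimes}\mathcal{U}(\epsilon)$, where the infinite sums and the reorderings that occur are legitimate since each graded component is finite dimensional.

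For the two comultiplication identities I would first establish the corresponding statements for $\Theta$ alone. Pairing a dual pair of bases against products and using $(x,y_{1}y_{2})=(\Delta(x),y_{1}\otimes y_{2})$, $(x_{1}x_{2},y)=(x_{2}\otimes x_{1},\Delta(y))$ together with the explicit formula $\Delta(x^{+})=\sum x_{(1)}^{+}k_{\mathrm{cl}(\deg x_{(2)})}\otimes x_{(2)}^{+}$ recorded in the proof of Lemma \ref{lem:Hopf-pair-bil-form}, one obtains $(\Delta\otimes\mathrm{id})\Theta$ and $(\mathrm{id}\otimes\Delta)\Theta$ as $\Theta^{13}\Theta^{23}$ and $\Theta^{13}\Theta^{12}$ respectively, up to explicit Cartan corrections. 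Those corrections are exactly the ones produced by $\overline{\Pi}_{\mathbf{q}}$: since $\mathbf{q}$ is biadditive, a one-line weight computation gives $(\Delta\otimes\mathrm{id})\overline{\Pi}_{\mathbf{q}}=\overline{\Pi}_{\mathbf{q}}^{13}\overline{\Pi}_{\mathbf{q}}^{23}$ and $(\mathrm{id}\otimes\Delta)\overline{\Pi}_{\mathbf{q}}=\overline{\Pi}_{\mathbf{q}}^{13}\overline{\Pi}_{\mathbf{q}}^{12}$ on weight vectors, as well as the conjugation rules $\overline{\Pi}_{\mathbf{q}}(e_{i}\otimes1)\overline{\Pi}_{\mathbf{q}}^{-1}=e_{i}\otimes k_{i}^{-1}$, $\overline{\Pi}_{\mathbf{q}}(k_{i}\otimes e_{i})\overline{\Pi}_{\mathbf{q}}^{-1}=1\otimes e_{i}$ and $\overline{\Pi}_{\mathbf{q}}(1\otimes f_{i})\overline{\Pi}_{\mathbf{q}}^{-1}=k_{i}\otimes f_{i}$. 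Combining the $\Theta$-identities with these and commuting the Cartan factors past $\Theta$ (using that each $\Theta_{\beta}$ preserves total weight) yields $(\Delta\otimes\mathrm{id})\mathcal{R}=\mathcal{R}^{13}\mathcal{R}^{23}$ and $(\mathrm{id}\otimes\Delta)\mathcal{R}=\mathcal{R}^{13}\mathcal{R}^{12}$.

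For $\Delta^{\mathrm{op}}(x)=\mathcal{R}\Delta(x)\mathcal{R}^{-1}$ it is enough to check $x$ on the generators $k_{\lambda},e_{i},f_{i}$. The case $x=k_{\lambda}$ is immediate: $\Theta$ preserves total weight and $\overline{\Pi}_{\mathbf{q}}$ is diagonal in weights, so $\mathcal{R}$ commutes with $\Delta(k_{\lambda})=k_{\lambda}\otimes k_{\lambda}=\Delta^{\mathrm{op}}(k_{\lambda})$. For $x=e_{i}$ (and similarly $x=f_{i}$), the conjugation rules above turn the claim into a statement about $\Theta$ only, namely $\Theta\,(1\otimes e_{i}+e_{i}\otimes k_{i}^{-1})=(1\otimes e_{i}+e_{i}\otimes k_{i})\,\Theta$; I would prove this as in \cite[Section 4]{Lb}, by comparing $Q$-graded pieces and invoking the recursion that characterizes $\Theta_{\beta}$ in terms of the skew-derivations ${}_{i}r$, $r_{i}$ — equivalently, the commutation relations between $e_{i},f_{i}$ and the images of $\mathbf{f}(\epsilon)$ recalled in Section \ref{subsec:Lusztig-f-bil-form} (the analogue of \cite[Proposition 3.1.6]{Lb}) — the computation being identical to the non-super one.

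I expect no genuinely new obstacle here: the superalgebra structure, the braid groupoid, and the modified Serre relations are irrelevant for this lemma, because $\Delta$, the Hopf pairing and $\mathbf{q}$ are ordinary (bilinear, non-super) data, so the content is pure bookkeeping. The one point that does require attention is keeping each $\mathbf{q}$-scalar attached to the correct tensor leg when the coproducts are expanded, so that the Cartan corrections coming from the pairing axioms cancel exactly against $\overline{\Pi}_{\mathbf{q}}$; modulo that, and the routine check that all sums converge in the $Q$-graded completion, the proof is a transcription of \cite[Section 4]{Lb}.
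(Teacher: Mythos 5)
Your proposal is correct and follows essentially the same route as the paper, which proves this lemma precisely by the argument of \cite[Section 4]{Lb} (with $\overline{\Pi}_{\mathbf{q}}$ playing the role of Lusztig's $\Pi$ and the nondegenerate Hopf pairing supplying the quasi-$R$-matrix $\Theta$). Your conjugation rules for $\overline{\Pi}_{\mathbf{q}}$ and the reduction of the intertwining identity to the skew-derivation recursion (the analogue of \cite[Proposition 3.1.6]{Lb}) are exactly the bookkeeping the paper leaves to the reader.
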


Given a $\mathcal{U}(\epsilon)$-module $V$ and a formal variable
$z$, we let $V_{z}=V\otimes\mathbf{k}[z^{\pm1}]$ with a $\mathcal{U}(\epsilon)$-module
structure
\[
e_{i}(v\otimes F)=(e_{i}v)\otimes z^{\delta_{i,0}}F,\quad f_{i}(v\otimes F)=(f_{i}v)\otimes z^{-\delta_{i,0}}F,\quad k_{\lambda}(v\otimes F)=(k_{\lambda}v)\otimes F\quad\text{for }v\in V,\,F\in\mathbf{k}[z^{\pm1}].
\]
Denote by $\pi_{V_{z}}:\mathcal{U}(\epsilon)\rightarrow\mathrm{End}(V)[z^{\pm1}]$
the associated algebra homomorphism. We introduce the corresponding
transfer matrix
\[
t_{V}(z)=\mathrm{tr}_{V}\left((\pi_{V_{z}}\otimes\mathrm{id})\mathcal{R}\right)\in\mathcal{U}^{-}(\epsilon)\left\llbracket z\right\rrbracket .
\]
Following the standard argument we obtain the usual properties for
transfer matrices (\textit{cf.} \cite[Lemma 2]{FR2}):
\begin{enumerate}
\item For any finite-dimensional $\mathcal{U}(\epsilon)$-modules $V$ and
$W$, we have $[t_{V}(z),t_{W}(w)]=0$.
\item If there is a short exact sequence $0\rightarrow V\rightarrow W\rightarrow V^{\prime}\rightarrow0$,
then $t_{W}(z)=t_{V}(z)+t_{V^{\prime}}(z)$.
\item $t_{V\otimes W}(z)=t_{V}(z)t_{W}(z)$ and $t_{V_{a}}(z)=t_{V}(az)$
for any $a\in\mathbf{k}^{\times}$, where $V_{a}$ is the specialization
of $V_{z}$ at $z=a$.
\end{enumerate}
Consider the subalgebra $\widetilde{\mathcal{U}}(\epsilon)$ generated
by $x_{i,r}^{\pm}$, $k_{i}^{\pm1}$ and $h_{i,r}$ for $i\in\mathring{I}$
and $r\leq0$, which contains $\mathcal{U}^{-}(\epsilon)$. The triangular
decomposition (\ref{eq:GQGDr-tri-decomp}) with respect to loop generators
restricts to $\widetilde{\mathcal{U}}(\epsilon)$ as 
\[
\widetilde{\mathcal{U}}(\epsilon)\cong\widetilde{\mathcal{U}}^{\mathrm{Dr},-}(\epsilon)\otimes\widetilde{\mathcal{U}}^{\mathrm{Dr},0}(\epsilon)\otimes\mathcal{\widetilde{U}}^{\mathrm{Dr},+}(\epsilon)
\]
where $\widetilde{\mathcal{U}}^{\mathrm{Dr},\pm}(\epsilon)$ (resp.
$\widetilde{\mathcal{U}}^{\mathrm{Dr},0}(\epsilon)$) is generated
by $x_{i,r}^{\pm}$ (resp. $k_{i}^{\pm1}$ and $h_{i,r}$) for $i\in\mathring{I}$
and $r\leq0$. Namely, we have
\[
\widetilde{\mathcal{U}}(\epsilon)\cong\widetilde{\mathcal{U}}^{\mathrm{Dr},0}(\epsilon)\oplus\left(\widetilde{\mathcal{U}}^{\mathrm{Dr},-}(\epsilon)_{0}\cdot\widetilde{\mathcal{U}}^{\mathrm{Dr},0}(\epsilon)\right)\oplus\left(\widetilde{\mathcal{U}}^{\mathrm{Dr},0}(\epsilon)\cdot\mathcal{\widetilde{U}}^{\mathrm{Dr},+}(\epsilon)_{0}\right)
\]
where $\widetilde{\mathcal{U}}^{\mathrm{Dr},\pm}(\epsilon)_{0}$ is
the augmentation ideal of $\widetilde{\mathcal{U}}^{\mathrm{Dr},\pm}(\epsilon)$,
and we let $\mathbf{h}_{q}$ be the projection onto $\widetilde{\mathcal{U}}^{\mathrm{Dr},0}(\epsilon)$
along the other two components.

We redefine the $q$-character map
\begin{align*}
\chi_{q}:K(\mathcal{F}(\epsilon)) & \longrightarrow\widetilde{\mathcal{U}}^{\mathrm{Dr},0}(\epsilon)\left\llbracket z\right\rrbracket \\
V & \longmapsto\mathbf{h}_{q}(t_{V}(z))
\end{align*}
which is a well-defined ring homomorphism due to the properties above
and the argument in \cite[Lemma 3]{FR2}. As in the non-super case
\cite[Section 3.3]{FR2}, we claim that this agrees with the definition
of $\chi_{q}(V)$ above as the generating function of the $\ell$-weights
of $V$.

Indeed, recall the multiplicative decomposition of $\mathcal{R}=\mathcal{R}^{+}\mathcal{R}^{0}\mathcal{R}^{-}\overline{\Pi}$.
Since any term with non-trivial factor from $\mathcal{R}^{-}$ is
removed under the projection $\mathbf{h}_{q}$ and similarly $\mathcal{R}^{+}$
by taking the trace, we obtain
\[
\mathcal{\chi}_{q}(V)=\mathrm{tr}_{V}\left[\exp\left(-\sum_{r>0}\sum_{i\in I}\frac{r(q-q^{-1})^{2}}{q_{i}^{r}-q_{i}^{-r}}\pi_{V}(h_{i,r})\otimes\widetilde{h}_{i,-r}z^{r}\right)(\pi_{V}\otimes\mathrm{id})\overline{\Pi}\right]
\]
where 
\begin{equation}
\widetilde{h}_{i,-r}=\sum\widetilde{C}_{ji}^{-r}h_{j,-r}.\quad(\text{note: }\widetilde{C}_{ji}^{r}=\widetilde{C}_{ji}^{-r})\label{eq:def-htilde_ir}
\end{equation}
By Theorem \ref{thm:integral-l-wt}, an $\ell$-weight of $V\in\mathcal{F}(\epsilon)$
can be written as
\[
\prod_{i<M}\left(\prod_{k,l}Y_{i,a_{i,k}}Y_{i,b_{i,l}}^{-1}\right)\cdot\left(\prod_{k,l,k^{\prime},l^{\prime}}Y_{M,a_{M,k}}Y_{M,b_{M,l}}^{-1}\widetilde{Y}_{M,c_{M,k^{\prime}}}\widetilde{Y}_{M,d_{M,l^{\prime}}}^{-1}\right)\cdot\prod_{j>M}\left(\prod_{k,l}\widetilde{Y}_{j,a_{j,k}}\widetilde{Y}_{i,b_{j,l}}^{-1}\right),
\]
which means that the eigenvalue of $\psi_{i}^{\pm}(z)$ is given by
\begin{align*}
 & \prod_{k}q_{i}\frac{1-q_{i}^{-1}a_{i,k}z}{1-q_{i}a_{i,k}z}\cdot\prod_{l}\left(q_{i}\frac{1-q_{i}^{-1}b_{i,l}z}{1-q_{i}b_{i,l}z}\right)^{-1} & \text{if }i\neq M,\\
 & \prod_{k}q\frac{1-q^{-1}a_{M,k}z}{1-qa_{M,k}z}\cdot\prod_{l}\left(q\frac{1-q^{-1}b_{M,l}z}{1-qb_{M,l}z}\right)^{-1}\cdot\prod_{k^{\prime}}\widetilde{q}\frac{1-\widetilde{q}^{-1}c_{M,k^{\prime}}z}{1-\widetilde{q}c_{M,k^{\prime}}z}\cdot\prod_{l^{\prime}}\left(\widetilde{q}\frac{1-\widetilde{q}^{-1}d_{M,l^{\prime}}z}{1-\widetilde{q}d_{M,l^{\prime}}z}\right)^{-1} & \text{if }i=M.
\end{align*}
Accordingly, the eigenvalue of $h_{i,r}$ is
\begin{equation}
\begin{cases}
\frac{q_{i}^{r}-q_{i}^{-r}}{r(q-q^{-1})}\left(\sum_{k}a_{i,k}^{r}-\sum_{l}b_{i,l}^{r}\right) & \text{if }i\neq M,\\
\frac{q^{r}-q^{-r}}{r(q-q^{-1})}\left(\sum_{k}a_{M,k}^{r}-\sum_{l}b_{M,l}^{r}\right)+\frac{\widetilde{q}^{r}-\widetilde{q}^{-r}}{r(q-q^{-1})}\left(\sum_{k^{\prime}}c_{M,k^{\prime}}^{r}-\sum_{l^{\prime}}d_{M,l^{\prime}}^{r}\right) & \text{if }i=M.
\end{cases}\label{eq:typ-eigenval-h}
\end{equation}
\[
\]

Now substituting the eigenvalue of $h_{i,r}$ into the formula for
$\chi_{q}(V)$ above, each factor $Y_{i,a}$ and $\widetilde{Y}_{i,a}$
correspond to the following series
\begin{align}
Y_{i}(az) & =k_{\delta_{1}+\cdots+\delta_{i}}^{-1}\exp\left(-(q-q^{-1})\sum_{r>0}\widetilde{h}_{i,-r}z^{r}a^{r}\right)\quad\text{for }i\leq M,\label{eq:def-Y-series}\\
\widetilde{Y}_{i}(az) & =\begin{cases}
k_{\delta_{i+1}+\cdots+\delta_{M+N}}\exp\left(-(q-q^{-1})\sum_{r>0}(-1)^{r-1}\widetilde{h}_{i,-r}z^{r}a^{r}\right) & \text{for }i=M,\\
k_{\delta_{i+1}+\cdots+\delta_{M+N}}\exp\left(-(q-q^{-1})\sum_{r>0}\widetilde{h}_{i,-r}z^{r}a^{r}\right) & \text{for }i>M,
\end{cases}\label{eq:def-tY-series}
\end{align}
Note also that $Y_{M}(az)\widetilde{Y}_{M}(-az)=k_{-\delta_{1}-\cdots-\delta_{M}+\delta_{M+1}+\cdots\delta_{M+N}}$
is independent of $a$. Therefore, we may identify the series $Y_{i}(az),\widetilde{Y}_{i}(az)$
with the variables $Y_{i,a},\widetilde{Y}_{i,a}$ in $\mathscr{Y}(\epsilon)$
to conclude that both definitions of $\chi_{q}(V)$ coincide.
\begin{rem}
\label{rem:def-A-Cartan}Similarly, if we let
\[
A_{i}(az)=k_{i}^{-1}\exp\left(-(q-q^{-1})\sum_{s>0}h_{i,-n}z^{n}a^{n}\right),
\]
then $A_{i}(az)$ can be written as a product of $Y_{j}(az)^{\pm1}$
and $\widetilde{Y}_{j}(az)^{\pm1}$ as determined by the matrix $C(q,-q^{-1})$
and this recovers the definition of $A_{i,a}$ in terms of $Y_{i,a}$
(\ref{eq:def-A-qCartan}).
\end{rem}

\subsection{\label{subsec:univ-coeff-R}Universal coefficients of $R$-matrices
and a denominator formula}

Given $V,W\in\mathcal{F}(\epsilon)$, the universal $R$-matrix $\mathcal{R}$
defines a $\mathcal{U}(\epsilon)$-module homomorphism (see \cite[Section 3.2]{KL})
\[
\mathcal{R}_{V,W}(z):V_{z}\otimes W\longrightarrow\mathbf{k}\left\llbracket z\right\rrbracket \otimes(W\otimes V_{z}).
\]
Assume $V,W$ are simple and let $v,w$ be the highest $\ell$-weight
vector of $V,W$ respectively. Then there exists $a_{V,W}(z)\in\mathbf{k}\left\llbracket z\right\rrbracket $
such that $\mathcal{R}_{V,W}(z)(v\otimes w)=a_{V,W}(z)(w\otimes v)$,
which allows us to define a \textit{normalized $R$-matrix}
\[
\mathcal{R}_{V,W}^{\mathrm{norm}}(z)=a_{V,W}(z)^{-1}\mathcal{R}_{V,W}(z):V_{z}\otimes W\longrightarrow\mathbf{k}(z)\otimes(W\otimes V_{z})
\]
which is the unique $\mathbf{k}(z)\otimes\mathcal{U}(\epsilon)$-module
homomorphism mapping $v\otimes w$ to $w\otimes v$. The formal power
series $a_{V,W}(z)$ is called the \textit{universal coefficient}
of $V$ and $W$.

As an application of the transfer matrix construction of $\chi_{q}$,
we obtain a general method to compute the universal coefficient of
two simple $\mathcal{U}(\epsilon)$-modules in $\mathcal{F}(\epsilon)$
\cite[Section 4.3]{FR2}. Let $\Psi,\Psi^{\prime}$ be the highest
$\ell$-weights of simple $V,W\in\mathcal{F}(\epsilon)$ respectively.
From the computation below, it is clear that $a_{V,W}(z)$ is bimultiplicative
with respect to $\Psi$ and $\Psi^{\prime}$, namely 
\begin{align*}
a_{L(\Psi_{1}\Psi_{2}),L(\Psi^{\prime})}(z) & =a_{L(\Psi_{1}),L(\Psi^{\prime})}(z)a_{L(\Psi_{2}),L(\Psi^{\prime})}(z),\\
a_{L(\Psi),L(\Psi_{1}^{\prime}\Psi_{2}^{\prime})}(z) & =a_{L(\Psi),L(\Psi_{1}^{\prime})}(z)a_{L(\Psi),L(\Psi_{2}^{\prime})}(z).
\end{align*}
Hence, from the highest $\ell$-weight classification of the simples
in $\mathcal{F}(\epsilon)$, it is enough to compute it when $\Psi$
and $\Psi^{\prime}$ are either $Y_{i,a}$ or $\widetilde{Y}_{j,b}$.
We also write for $f,g\in\mathbf{k}\left\llbracket z\right\rrbracket $
\[
f\equiv g\quad\text{if }f=cg\text{ for some }c\in\mathbf{k}[z^{\pm1}]^{\times}=\{kz^{t}\,|\,k\in\mathbf{k}^{\times},t\in\mathbb{Z}\}
\]
and always work with this equivalence relation, as for the future
application we only need $a_{V,W}(z)$ up to a multiplication by an
element of $\mathbf{k}[z^{\pm1}]^{\times}$.

Let us recall the argument of \cite[Section 4.2,3]{FR2}. First, since
$w$ is a highest $\ell$-weight vector, it is an eigenvector of $t_{V}(z)$:
\[
t_{V}(z)\cdot w=\mathbf{h}_{q}(t_{V}(z))\cdot w=\chi_{q}(V)\cdot w.
\]
Here we regard $\chi_{q}(V)$ as an element of $\widetilde{\mathcal{U}}^{\mathrm{Dr},0}(\epsilon)\left\llbracket z\right\rrbracket $,
or more precisely a Laurent polynomial in the formal power series
$Y_{i}(az)$ and $\widetilde{Y}_{j}(bz)$ defined in the last section.
Next, take bases of generalized eigenvectors of $\psi_{i}^{\pm}(z)$
for $V$ and $W$ containing $v$ and $w$. From the definition of
$t_{V}(z)$, the eigenvalue of $t_{V}(z)$ on $w$ equals the sum
of diagonal entries of $\mathcal{R}_{V,W}(z)$. Such an entry corresponds
to a vector $v^{\prime}\otimes w$ for a basis vector $v^{\prime}\in V$
and so to a monomial occurring in $\chi_{q}(V)$ (counted with multiplicity).
In particular, we find the universal coefficient $a_{V,W}(z)$ which
is just the diagonal entry corresponding to $v\otimes w$ by evaluating
on $w$ the series $m(z)\in\widetilde{\mathcal{U}}^{\mathrm{Dr},0}(\epsilon)\left\llbracket z\right\rrbracket $
obtained by replacing each $Y_{i,a}$, $\widetilde{Y}_{j,b}$ in the
highest $\ell$-weight $\Psi$ of $V$ with $Y_{i}(az)$, $\widetilde{Y}_{j}(bz)$
respectively. 

The action of $m(z)$ on $w$ can be explicitly computed thanks to
the formula for $\widetilde{C}^{r}$. Suppose $\Psi=Y_{i,a}$ so that
$m(z)=Y_{i}(az)$ (\ref{eq:def-Y-series}) and $\Psi^{\prime}=Y_{j,b}$,
$j\leq M$ (resp.$\Psi^{\prime}=\widetilde{Y}_{j,b}$, $j\geq M$).
The latter means that $h_{k,-r}$ acts on $w$ by the scalar
\[
\delta_{k,j}\frac{q^{r}-q^{-r}}{r(q-q^{-1})}b^{-r}\quad(\text{resp. }\delta_{k,j}\frac{\widetilde{q}^{r}-\widetilde{q}^{-r}}{r(q-q^{-1})}b^{-r})
\]
and then 
\[
\widetilde{h}_{i,-r}\cdot w=\sum_{k}\widetilde{C}_{k,i}^{-r}h_{k,-r}\cdot w=\frac{q^{r}-q^{-r}}{r(q-q^{-1})}b^{-r}\widetilde{C}_{ji}^{-r}\quad(\text{resp. }\frac{\widetilde{q}^{r}-\widetilde{q}^{-r}}{r(q-q^{-1})}b^{-r}\widetilde{C}_{ji}^{-r}).
\]
From the formulas in Proposition \ref{prop:inv-qCartan}, $(q^{r}-q^{-r})\widetilde{C}_{ji}^{-r}$
(resp. $(\widetilde{q}^{r}-\widetilde{q}^{-r})\widetilde{C}_{ji}^{-r}$)
is of the form $f_{ji}^{r}/(Q^{r}-Q^{-r})$, where 
\[
f_{ji}^{r}=\sum_{k}\left(m_{ji}^{+}(k)(q^{k})^{r}+m_{ji}^{-}(k)(-q^{k})^{r}\right)
\]
for some $m_{ij}^{\pm}(k)\in\mathbb{Z}$ that are independent of $r$
(in fact, for each $i,j$, either $m_{ji}^{+}(k)=0$ for all $k$
or $m_{ji}^{-}(k)=0$ for all $k$) and $Q=q^{M}\widetilde{q}^{N}$.
It remains to substitute everything to (\ref{eq:def-Y-series}) to
conclude that each monomial $m_{ji}^{\pm}(k)(\pm q^{k})^{r}$ contributes
to the eigenvalue the factor
\[
(ab^{-1}(\pm q^{k})Q^{-1};Q^{-2})^{m_{ji}(k)},\quad\text{where }(x;y)_{\infty}=\prod_{m=0}^{\infty}(1-xy^{m}).
\]
The other case $\Psi=\widetilde{Y}_{i,a}$ ($i\geq M$) can be done
similarly, except that one has to be careful when $i=M$ as $\widetilde{Y}_{M}(z)$
is slight different from $\widetilde{Y}_{i}(z)$ for $i>M$ (\ref{eq:def-tY-series}).

For the universal coefficient for fundamental representations, let
us simplify the notation $a_{ij}(z)=a_{L(Y_{i,1}),L(Y_{j,1})}(z)$
($1\leq i,j\leq M$), $a_{i\widetilde{j}}(z)=a_{L(Y_{i,1}),L(\widetilde{Y}_{j,1})}(z)$
($1\leq i\leq M$, $M\leq j\leq n-1$), and so on. Then we obtain
the following formula by Proposition \ref{prop:inv-qCartan}.
\begin{prop}
\label{prop:univ-coeff-fund}The universal coefficients for fundamental
representations are given as follows:

\begin{align*}
a_{ij}(z) & \equiv\frac{(q^{\left|i-j\right|}Q^{-2}z;Q^{-2})(q^{-\left|i-j\right|}z;Q^{-2})}{(q^{i+j}Q^{-2}z;Q^{-2})(q^{-i-j}z;Q^{-2})}, & (1\leq i,j\leq M)\\
a_{i,\widetilde{j}}(z)=a_{\widetilde{j},i}(z) & \equiv\frac{((-1)^{M-j+1}q^{i-j+2N}z;Q^{-2})((-1)^{M-j+1}q^{-i+j-2M}z;Q^{-2})}{((-1)^{M-j+1}q^{i+j-2M}z;Q^{-2})((-1)^{M-j+1}q^{-i-j+2N}z;Q^{-2})}, & (i\leq M\leq j)\\
a_{\widetilde{i},\widetilde{j}}(z) & \equiv\frac{((-1)^{i+j}q^{\left|i-j\right|}z;Q^{-2})((-1)^{i+j}q^{-\left|i-j\right|}Q^{-2}z;Q^{-2})}{((-1)^{i+j}q^{i+j-4M}z;Q^{-2})((-1)^{i+j}q^{-i-j+4M}Q^{-2}z;Q^{-2})}. & (M\leq i,j\leq n-1)
\end{align*}
\end{prop}

On the other hand, there is another way to read the universal coefficient
from the denominator of normalized $R$-matrices \cite[Appendix A]{AK}.
Recall that the \textit{denominator} $d_{V,W}(z)$ of the normalized
$R$-matrix $\mathcal{R}_{V,W}^{\mathrm{norm}}(z)$ for simple $V,W\in\mathcal{F}(\epsilon)$
is the monic polynomial of minimal degree such that
\[
\mathrm{im}\left(d_{V,W}(z)\mathcal{R}_{V,W}^{\mathrm{norm}}(z)\right)\subset W\otimes V_{z}.
\]
We will use a similar notation $d_{i,j}(z)$, $d_{i,\widetilde{j}}(z)$,
... for the denominators for fundamental representations. The argument
relies on the following simple observations.
\begin{lem}
For $i,j\in\{1,\dots,M\}\sqcup\{\widetilde{M},\dots,\widetilde{n-1}\}$,
we have $d_{ji}(z)\equiv\overline{d_{ij}(z^{-1})}$ where $\overline{\cdot}$
is a ring automorphism of $\mathbf{k}[z^{\pm1}]$ defined by $\overline{q}=q^{-1},\,\overline{z}=z$.
\end{lem}

\begin{proof}
The algebra $\mathcal{U}(\epsilon)$ has a ring automorphism $a\mapsto\overline{a}$,
where
\[
\overline{e_{i}}=e_{i},\overline{f_{i}}=f_{i},\,\overline{k_{i}}=k_{i}^{-1},\,\overline{q}=q^{-1}.
\]
If we denote by $\overline{M}$ the pullback of a $\mathcal{U}(\epsilon)$-module
through the automorphism, we have
\[
\overline{M\otimes N}=\overline{N}\otimes\overline{M}\text{ and }\overline{L(Y_{i,1})}=L(Y_{i,1}),\,\overline{L(\widetilde{Y}_{j,1})}=L(\widetilde{Y}_{j,1})
\]
from which the assertion follows.
\end{proof}
\begin{lem}[{cf. \cite[Proposition A.1]{AK}}]
 For simple $V,W\in\mathcal{F}(\epsilon)$, we have
\[
a_{V,W}(z)a_{V,W^{*}}(z)\equiv\frac{d_{V,W}(z)}{d_{W^{*},V}(z^{-1})}.
\]
\end{lem}

\begin{proof}
The same proof as in \cite[Proposition A.1]{AK} applies here (except
we are using the left dual) which relies on the crossing symmetry
of the universal $R$-matrix. The latter formally follows from the
quasitriangularity (Lemma \ref{lem:R-mat-quasitri}), see for example
\cite[Proposition 6.3.2, 9.5.2]{EFK}. 
\end{proof}
\begin{lem}
\label{lem:double-dual}For $V\in\mathcal{F}(\epsilon)$, we have
$V^{**}\cong V_{Q^{-2}}$.
\end{lem}

\begin{proof}
For $x\in\mathcal{U}(\epsilon)_{\beta}$ with $\beta=\sum_{i\in I}c_{i}\alpha_{i}\in Q$,
we have
\[
S^{2}(x)=\prod_{i\in I}(q_{i}q_{i+1})^{-c_{i}}x
\]
which implies $S^{2}(\psi_{i,\pm r}^{\pm})=Q^{-2r}\psi_{i,\pm r}^{\pm}$
for $r>0$ and hence the statement follows.
\end{proof}
Applying the second lemma to $V,W$ and then to $V,W^{*}$, we obtain
a $q$-difference equation
\begin{equation}
\frac{a_{V,W}(z)}{a_{V,W}(Q^{2}z)}=\frac{a_{V,W}(z)}{a_{V,W^{**}}(z)}\equiv\frac{d_{V,W}(z)d_{W^{**},V}(z^{-1})}{d_{W^{*},V}(z^{-1})d_{V,W^{*}}(z)}.\label{eq:q-DE-univ-coeff}
\end{equation}
Therefore, if we know the denominators $d_{V,W}(z)$ and $d_{V,W^{*}}(z)$,
then we find the universal coefficient $a_{V,W}(z)$ by solving the
equation.

While the denominator is much harder to compute than the universal
coefficient, by comparing the methods above for $a_{V,W}(z)$ one
may deduce information on the denominator $d_{V,W}(z)$. Here we give
an exact formula for $d_{i,\widetilde{j}}(z)$ for $i\leq M\leq j$
(for the quantum affine superalgebra $U(\epsilon)$ it is also computed
directly from the explicit structure of fundamental representations
\cite[Section 3]{Z3}). First, we recall the denominator formula for
polynomial representations which can be lifted from the non-super
case \cite[Theorem 6.5]{OS} through the super duality functor, see
\cite[Section 4.2]{KL}. Note that in our convention (including \cite{KL})
$z$ is equal to $z^{-1}$ in \cite{AK,OS}.
\begin{prop}
\label{prop:denom-R-++--}For $M=L(Y_{i,1})$ and $N=L(Y_{1,q^{j-1}}Y_{1,q^{j-3}}\cdots Y_{1,q^{1-j}})$,
we have
\[
d_{M,N}(z)=d_{N,M}(z)=z-q^{-i-j}.
\]
Dually, we also have $d_{\widetilde{n-1},\widetilde{n-j}}(z)=z-\widetilde{q}^{-j-1}$.
\end{prop}

First we put $V=L(Y_{i,1})$ and $W=L(\widetilde{Y}_{j,1})$ into
(\ref{eq:q-DE-univ-coeff}), where $W^{*}=L(Y_{1,\widetilde{q}^{n-j-1}Q^{-1}}Y_{1,\widetilde{q}^{n-j-3}Q^{-1}}\cdots Y_{1,\widetilde{q}^{1+j-n}Q^{-1}})$.
If we write $d_{i,\widetilde{j}}(z)=\prod_{\nu}(z-p_{\nu})$, together
with the proposition, the $q$-difference equation reads
\begin{align*}
\frac{a_{i,\widetilde{j}}(z)}{a_{i,\widetilde{j}}(Q^{2}z)} & \equiv\frac{\prod_{\nu}(z-p_{\nu})(z^{-1}-Q^{2}\overline{p}_{\nu}^{-1})}{(z^{-1}-(-1)^{M-j-1}q^{-i+j-2N})(z-(-1)^{M-j-1}q^{-i+j-2M})}\\
 & \equiv\frac{\prod_{\nu}(1-p_{\nu}^{-1}z)(1-\overline{p}_{\nu}^{-1}Q^{2}z)}{(1-(-1)^{M-j-1}q^{-i+j-2N}z)(1-(-1)^{M-j-1}q^{i-j+2M}z)},
\end{align*}
which is (uniquely up to equivalence $\equiv$) solved by
\[
a_{i,\widetilde{j}}(z)=\frac{((-1)^{M-j-1}q^{-i+j-2N}Q^{-2}z;Q^{-2})((-1)^{M-j-1}q^{i-j+2M}Q^{-2}z;Q^{-2})}{\prod_{\nu}(p_{\nu}^{-1}Q^{-2}z;Q^{-2})(\overline{p}_{\nu}^{-1}z;Q^{-2})}.
\]
Compared with the second formula in Proposition \ref{prop:univ-coeff-fund},
as all the factors are of the form $(az;Q^{-2})$, we obtain $d_{i,\widetilde{j}}(z)=z-p_{ij}$
for some $p_{ij}\in\mathbf{k}$ and 
\[
\{(-1)^{j-M+1}q^{i+j-2M},(-1)^{j-M+1}q^{-i-j+2N}\}=\{p_{ij}^{-1}Q^{-2},\overline{p}_{ij}^{-1}\}.
\]
At this stage we cannot claim further, that is both 
\begin{equation}
d_{i,\widetilde{j}}(z)=z-(-1)^{j-M+1}q^{i+j-2M}\quad\text{and}\quad d_{i,\widetilde{j}}(z)=z-(-1)^{j-M+1}q^{-i-j+2N}\label{eq:denom-dichotomy}
\end{equation}
are plausible. Note that when $i=1$ and $j=n-1$, 
\[
(-1)^{j-M+1}q^{i+j-2M}=Q^{-1}=(-1)^{j-M+1}q^{-i-j+2N}
\]
so there is no ambiguity and we may conclude $d_{1,\widetilde{n-1}}(z)=z-Q^{-1}$. 

To determine the value of $p_{ij}$ for general $i$ and $j$, we
invoke fusion construction to proceed inductively from the base case
$i=1$, $j=n-1$. We shall use the following fact.
\begin{lem}[{\cite[Lemma C.15]{AK}}]
 Suppose we have a surjective map $V_{1}\otimes V_{2}\twoheadrightarrow V$
for simple $\mathcal{U}(\epsilon)$-modules $V,V_{1}$ and $V_{2}$.
Then for any simple $W$, we have
\[
\frac{d_{V_{1},W}(z)d_{V_{2},W}(z)a_{V,W}(z)}{a_{V_{1},W}(z)a_{V_{2},W}(z)d_{V,W}(z)},\,\frac{d_{W,V_{1}}(z)d_{W,V_{2}}(z)a_{W,V}(z)}{a_{W,V_{1}}(z)a_{W,V_{2}}(z)d_{W,V}(z)}\in\mathbf{k}[z^{\pm1}].
\]
\end{lem}

Consider the following surjective map
\[
L(\widetilde{Y}_{n-1,Q^{-1}q^{i+1}})\otimes L(Y_{i+1,q})\overset{1\otimes\iota}{\longrightarrow}L(\widetilde{Y}_{n-1,Q^{-1}q^{i+1}})\otimes L(Y_{1,q^{i+1}})\otimes L(Y_{i,1})\overset{\mathrm{ev}\otimes1}{\longrightarrow}L(Y_{i,1})
\]
where $\iota$ is the first map of the first exact sequence in Proposition
\ref{prop:SES-fund-rep} and $\mathrm{ev}$ is the evaluation for
$L(\widetilde{Y}_{n-1,Q^{-1}q^{i+1}})\cong L(Y_{1,q^{i+1}})^{*}$.
The lemma then asserts
\[
\frac{d_{\widetilde{n-1},\widetilde{j}}(Q^{-1}q^{i+1}z)d_{i+1,\widetilde{j}}(qz)a_{i,\widetilde{j}}(z)}{a_{\widetilde{n-1},\widetilde{j}}(Q^{-1}q^{i+1}z)a_{i+1,\widetilde{j}}(qz)d_{i,\widetilde{j}}(z)}\in\mathbf{k}[z^{\pm1}].
\]
Substituting the formulas for $d_{\widetilde{n-1},\widetilde{j}}(z)$,
$a_{i,\widetilde{j}}(z)$, $a_{i+1,\widetilde{j}}(z)$ and $a_{\widetilde{n-1},\widetilde{j}}(z)$,
this is equivalent to
\begin{equation}
\frac{(z-(-1)^{M-j+1}q^{-i-j-2+2M})\cdot d_{i+1,\widetilde{j}}(qz)}{d_{i,\widetilde{j}}(z)}\in\mathbf{k}[z^{\pm1}].\label{eq:denom-ind-hor}
\end{equation}
Similarly, from the composition
\[
L(\widetilde{Y}_{j-1,1})\otimes L(Y_{1,\widetilde{q}^{j-n}Q})\longrightarrow L(\widetilde{Y}_{j,\widetilde{q}})\otimes L(\widetilde{Y}_{n-1,\widetilde{q}^{j-n}})\otimes L(Y_{1,\widetilde{q}^{j-n}Q})\longrightarrow L(\widetilde{Y}_{j,\widetilde{q}})
\]
we deduce that
\begin{equation}
\frac{(z-(-1)^{M-j+1}q^{-i-j+2+2M})\cdot d_{i,\widetilde{j-1}}(\widetilde{q}z)}{d_{i,\widetilde{j}}(z)}\in\mathbf{k}[z^{\pm1}].\label{eq:denom-ind-vert}
\end{equation}
Note that for these to be true, $d_{i,\widetilde{j}}(z)$ must be
equal to one of the factors of each numerators, as $d_{i,\widetilde{j}}(z)$
is of degree 1.
\begin{thm}
\label{thm:denom-R-+-}For $i\leq M\leq j$, the denominator $d_{i,\widetilde{j}}(z)$
of the normalized $R$-matrix
\[
\mathcal{R}_{i,\widetilde{j}}^{\mathrm{norm}}(z):L(Y_{i,z})\otimes L(\widetilde{Y}_{j,1})\longrightarrow L(\widetilde{Y}_{j,1})\otimes L(Y_{i,z})
\]
is given by
\[
d_{i,\widetilde{j}}(z)=z-(-1)^{j-M+1}q^{i+j-2M}.
\]
\end{thm}

\begin{proof}
Recall that $d_{1,\widetilde{n-1}}(z)=z-(-1)^{N}q^{N-M}$. We first
observe by comparing the dichotomy of $d_{i,\widetilde{j}}(z)$ (\ref{eq:denom-dichotomy})
and the constraints (\ref{eq:denom-ind-hor}, \ref{eq:denom-ind-vert})
that 
\begin{align*}
i+j\neq2M-1\text{ nor }N\neq M-1 & \,\Longrightarrow\,d_{i,\widetilde{j}}(z)\neq z-(-1)^{M-j+1}q^{-i-j-2+2M}\,\Longrightarrow\,d_{i+1,\widetilde{j}}(z)\equiv d_{i,\widetilde{j}}(q^{-1}z),\\
i+j\neq2M+1\text{ nor }N\neq M+1 & \,\Longrightarrow\,d_{i,\widetilde{j}}(z)\neq z-(-1)^{M-j+1}q^{-i-j+2+2M}\,\Longrightarrow\,d_{i,\widetilde{j-1}}(z)\equiv d_{i,\widetilde{j}}(\widetilde{q}^{-1}z).
\end{align*}
Consider the graph defined as follows:
\begin{align*}
\{\text{vertices}\} & =\{(i,j)\in\mathbb{Z}^{2}\,|\,1\leq i\leq M\leq j\leq n-1\}\\
\{\text{edges}\} & =\{(i,j)-(i+1,j)\,|\,i+j\neq2M-1\}\cup\{(i,j)-(i,j-1)\,|\,i+j\neq2M+1\}.
\end{align*}
Suppose $N>M+1$ (so $N\neq M-1$ as well). From the observation,
if $(1+a,n-1-b)$ is connected to $(1,n-1)$, then we have $d_{1+a,\widetilde{n-1-b}}(z)=d_{1,n-1}(q^{-a}\widetilde{q}^{-b}z)$
as we desired. Hence it is enough to see that the graph is connected,
which is obvious.

In the case of $N=M+1$, it may happen that
\[
d_{i,\widetilde{j}}(z)=z-(-1)^{j-M+1}q^{-i-j+2N}=z-(-1)^{M-j+1}q^{-i-j+2+2M}.
\]
On the other hand, we have 
\[
d_{1+a,\widetilde{n-1}}(z)\equiv d_{1,\widetilde{n-1}}(q^{-a}z)\equiv z-(-1)^{N}q^{a+N-M}
\]
which falls into the first one of the dichotomy 
\[
d_{i,\widetilde{j}}(z)=z-(-1)^{j-M+1}q^{i+j-2M}\quad\text{or}\quad z-(-1)^{j-M+1}q^{-i-j+2N}.
\]
Note that these two are distinct if and only if $i+j\neq2M+1$, and
so for $j=n-1$ and $i\neq1$ we still have $d_{i,\widetilde{j-1}}(z)\equiv d_{i,\widetilde{j}}(\widetilde{q}^{-1}z)$.
Repeating this argument, we may consider the same graph to deduce
the same conclusion. Now the remaining case $N\leq M-1$ is parallel.
\end{proof}

\subsection{\label{subsec:restriction-rk12}Restriction to rank 1 and 2}

Let $J$ be a subset of $\mathring{I}$ of the form $\{p,p+1,\dots,p^{\prime}-1\}$
for some $p<p^{\prime}$, $\epsilon_{J}=(\epsilon_{p},\epsilon_{p+1},\dots,\epsilon_{p^{\prime}})$
the subsequence of $\epsilon$ and $\overline{J}=\mathring{I}\setminus J$.
Consider the subalgebra $\mathcal{U}(\epsilon)_{J}$ of $\mathcal{U}(\epsilon)$
generated by $x_{j,k}^{\pm}$, $h_{j,r}$ and $k_{\delta_{l}}^{\pm1}$
($j\in J$, $k\in\mathbb{Z}$, $r\in\mathbb{Z}\setminus\{0\}$, $l=p,\dots,p^{\prime}$).
We set
\[
\mathscr{Y}(\epsilon_{J})=\mathbb{Z}[Y_{i,a}^{\pm1},\widetilde{Y}_{j,a}^{\pm1}]_{p\leq i\leq M\leq j\leq p^{\prime}-1,a\in\mathbf{k}^{\times}}/(Y_{M,a}\widetilde{Y}_{M,-a}=Y_{M,b}\widetilde{Y}_{M,-b})_{a,b\in\mathbf{k}^{\times}}
\]
which is defined formally in the same way even if the numbers of $0$
and $1$ in $\epsilon_{J}$ are the same. Here we understand there
is no $Y_{i,a}$-variable if $p>M$ and hence no $D=Y_{M,a}\widetilde{Y}_{M,-a}$.
According to the construction in the previous section, given a $\mathcal{U}(\epsilon)$-module
$V$, the $q$-character of the restriction of $V$ to $\mathcal{U}(\epsilon)_{J}$
can be obtained by applying to $\chi_{q}(V)$ the specialization 
\begin{align*}
\beta_{J}:\mathscr{Y}(\epsilon) & \longrightarrow\mathscr{Y}(\epsilon_{J}),\\
Y_{i,a},\widetilde{Y}_{i,a} & \longmapsto\begin{cases}
Y_{i,a},\widetilde{Y}_{i,a} & \text{if }i\in J\\
1 & \text{if }i\in\overline{J}.
\end{cases}
\end{align*}

As in \cite[Section 3]{FM}, we can refine this procedure as follows.
Let $\mathcal{U}^{0}(\epsilon)_{J}^{\perp}$ be the subalgebra generated
by $k_{\delta_{l}}$ and $\widetilde{h}_{i,r}$ for $l\in\mathbb{I}\setminus\{p,\dots,p^{\prime}\}$,
$i\in\overline{J}$ and $r\in\mathbb{Z}\setminus\{0\}$, where $\widetilde{h}_{i,r}$
is defined in (\ref{eq:def-htilde_ir}). From the commutation relations
(\ref{eq:GQG-Drinfeld-hh}) and (\ref{eq:GQG-Drinfeld-hx}), we know
that $\mathcal{U}^{0}(\epsilon)_{J}^{\perp}$ commutes with $\mathcal{U}(\epsilon)_{J}$
inside $\mathcal{U}(\epsilon)$. The idea is to consider simultaneously
the eigenvalues of $\widetilde{h}_{i,r}$ for $i\in\overline{J}$
so that no information is lost in the course of restriction.

To this end, we first extend $\mathscr{Y}(\epsilon_{J})$ to 
\begin{align*}
\mathscr{Y}(\epsilon)^{(J)} & =\begin{cases}
\mathscr{Y}(\epsilon_{J})\otimes\mathbb{Z}[Z_{j,b}^{\pm1}]_{j\in\overline{J},b\in\mathbf{k}^{\times}} & \text{if }M\in J,\\
\mathscr{Y}(\epsilon_{J})\otimes\mathbb{Z}[Z_{j,b}^{\pm1},D^{\pm1}]_{j\in\overline{J},b\in\mathbf{k}^{\times}} & \text{if }M\in\overline{J}.
\end{cases}
\end{align*}
Then we define a ring homomorphism $\tau_{J}:\mathscr{Y}(\epsilon)\longrightarrow\mathscr{Y}(\epsilon)^{(J)}$
by
\[
\tau_{J}(Y_{i,a})=\begin{cases}
Y_{i,a}\prod_{j\in\overline{J}}\prod_{k\in\mathbb{Z}}Z_{j,aq^{k}}^{p_{ij}^{+}(k)}Z_{j,-aq^{k}}^{p_{ij}^{-}(k)}\\
\prod_{j\in\overline{J}}\prod_{k\in\mathbb{Z}}Z_{j,aq^{k}}^{p_{ij}^{+}(k)}Z_{j,-aq^{k}}^{p_{ij}^{-}(k)},
\end{cases}\quad\tau_{J}(\widetilde{Y}_{i,a})=\begin{cases}
\widetilde{Y}_{i,a}\prod_{j\in\overline{J}}\prod_{k\in\mathbb{Z}}Z_{j,aq^{k}}^{-p_{ij}^{-}(k)}Z_{j,-aq^{k}}^{-p_{ij}^{+}(k)} & i\in J\\
D\prod_{j\in\overline{J}}\prod_{k\in\mathbb{Z}}Z_{j,aq^{k}}^{-p_{ij}^{-}(k)}Z_{j,-aq^{k}}^{-p_{ij}^{+}(k)} & i\in\overline{J},
\end{cases}
\]
where $p_{ij}^{\pm}(k)\in\mathbb{Z}$ are given by
\[
\widetilde{C}_{ij}^{r}=\frac{1}{\det C(q^{r},\widetilde{q}^{r})}\sum_{k\in\mathbb{Z}}(p_{ij}^{+}(k)q^{kr}+p_{ij}^{-}(k)(-q^{k})^{r}).
\]
We note that $\tau_{J}(Y_{M,a}\widetilde{Y}_{M,-a})=D$ so it is well-defined.
The naive restriction map $\beta_{J}$ is then the composition of
$\tau_{J}$ and the specialization $Z_{j,b}=1$ for all $j,b$.
\begin{lem}[{cf. \cite[Lemma 3.4]{FM}}]
\label{lem:restriction-module} Suppose we write $\tau_{J}(\chi_{q}(V))=\sum P_{k}Q_{k}$
where $P_{k}\in\mathbb{Z}[Y_{i,a}^{\pm1}]_{i\in J,a\in\mathbf{k}^{\times}}$
and $Q_{k}$ is a pairwise distinct monomial in $Z_{j,b}^{\pm1}$
for $j\in\overline{J}$ and $b\in\mathbf{k}^{\times}$. Then the restriction
of $V$ to $\mathcal{U}(\epsilon)_{J}$ is isomorphic to $\bigoplus_{k}V_{k}$
for some $\mathcal{U}(\epsilon)_{J}$-modules $V_{k}$ with $\chi_{q}(V_{k})=P_{k}$.
\end{lem}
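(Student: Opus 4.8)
The plan is to follow the proof of \cite[Lemma 3.4]{FM}, exploiting that $\mathcal{U}^{0}(\epsilon)_{J}^{\perp}$ commutes with $\mathcal{U}(\epsilon)_{J}$ inside $\mathcal{U}(\epsilon)$ (as noted above, a consequence of (\ref{eq:GQG-Drinfeld-hh}) and (\ref{eq:GQG-Drinfeld-hx})). First I would refine the $\ell$-weight space decomposition $V=\bigoplus_{\Psi}V_{\Psi}$ by grouping $\ell$-weights according to the generalized eigenvalue with which $\mathcal{U}^{0}(\epsilon)_{J}^{\perp}$ acts on $V_{\Psi}$: since each generator $k_{\delta_{l}}^{\pm1}$, $\widetilde{h}_{j,r}$ of $\mathcal{U}^{0}(\epsilon)_{J}^{\perp}$ is determined by the currents $\psi_{i}^{\pm}(z)$, it acts on $V_{\Psi}$ with a single generalized eigenvalue. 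Writing $\tau_{J}(\Psi)=P_{\Psi}Q_{\Psi}$ with $P_{\Psi}$ a monomial in the $Y_{i,a}^{\pm1}$ ($i\in J$) and $Q_{\Psi}$ a monomial in the $Z_{j,b}^{\pm1}$ ($j\in\overline{J}$), the factor $Q_{\Psi}$ records exactly that $\mathcal{U}^{0}(\epsilon)_{J}^{\perp}$-eigenvalue on $V_{\Psi}$ --- this is what the definition of $\tau_{J}$, together with the free-field description of $\chi_{q}$ in Section \ref{subsec:qchar-constr-transferM} and the combinatorics of $C(q^{r},(-q^{-1})^{r})^{-1}$ from Proposition \ref{prop:inv-qCartan}, is set up to guarantee. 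Because $\mathcal{U}(\epsilon)_{J}$ commutes with $\mathcal{U}^{0}(\epsilon)_{J}^{\perp}$, it preserves each generalized eigenspace of the latter, so for each monomial $Q$ in the $Z_{j,b}^{\pm1}$ the subspace $V^{(Q)}=\bigoplus_{\Psi:\,Q_{\Psi}=Q}V_{\Psi}$ is a $\mathcal{U}(\epsilon)_{J}$-submodule, and $V=\bigoplus_{Q}V^{(Q)}$ as $\mathcal{U}(\epsilon)_{J}$-modules (a finite sum, since $\chi_{q}(V)$ is a finite expansion).

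Next I would compute the $q$-character of each $V^{(Q)}$ \emph{as a $\mathcal{U}(\epsilon)_{J}$-module}. Applying Theorem \ref{thm:integral-l-wt} to $\mathcal{U}(\epsilon)_{J}\subset\mathcal{U}(\epsilon)$, the $\ell$-weights of $\mathrm{Res}_{J}V$ are obtained from those of $V$ by discarding the components indexed by $\overline{J}$, and matching this with the construction of $\tau_{J}$ one sees that the $\mathcal{U}(\epsilon)_{J}$-$\ell$-weight underlying $\Psi$ is precisely $P_{\Psi}$. Hence $\chi_{q}(V^{(Q)})=\sum_{\Psi:\,Q_{\Psi}=Q}\dim(V_{\Psi})\,P_{\Psi}$, and summing over $Q$,
\[
\tau_{J}(\chi_{q}(V))=\sum_{\Psi}\dim(V_{\Psi})\,P_{\Psi}Q_{\Psi}=\sum_{Q}\Big(\sum_{\Psi:\,Q_{\Psi}=Q}\dim(V_{\Psi})\,P_{\Psi}\Big)Q=\sum_{Q}\chi_{q}(V^{(Q)})\,Q .
\]
This is an expansion in pairwise distinct $Z$-monomials $Q$, with $V^{(Q)}\neq0$ iff $\chi_{q}(V^{(Q)})\neq0$ (the $\ell$-weight spaces in $V^{(Q)}$ are nonzero), so by uniqueness of such a collected expansion it must coincide with the given $\sum_{k}P_{k}Q_{k}$. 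Therefore $\{Q_{k}\}$ is exactly $\{Q:V^{(Q)}\neq0\}$, and setting $V_{k}=V^{(Q_{k})}$ yields $\mathrm{Res}_{J}V\cong\bigoplus_{k}V_{k}$ with $\chi_{q}(V_{k})=P_{k}$. (Note $\tau_{J}$ is well defined because $\tau_{J}(Y_{M,a}\widetilde{Y}_{M,-a})=D$, as observed when $\tau_{J}$ was introduced, so the expansion is unambiguous, and each $P_{k}\in\mathscr{Y}(\epsilon_{J})$ since restricting an integral $\ell$-weight keeps it integral.)

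The main obstacle I expect is the identification, in the first paragraph, of the $Z$-degree of $\tau_{J}(\Psi)$ with the genuine $\mathcal{U}^{0}(\epsilon)_{J}^{\perp}$-eigenvalue on $V_{\Psi}$: one must trace through the free-field realizations $Y_{i,a}(z),\widetilde{Y}_{j,a}(z),A_{i,a}(z)$ of Section \ref{subsec:qchar-constr-transferM} and the explicit shape of $d_{M,N}^{r}D^{r}\widetilde{C}^{r}$ in Proposition \ref{prop:inv-qCartan} to verify that the exponents $p_{ij}(k),p_{ij}^{\prime}(k)$ appearing in the definition of $\tau_{J}$ are precisely those expressing the series $Y_{i,a}(z),\widetilde{Y}_{j,a}(z)$ in terms of the currents $\widetilde{h}_{j,-r}$ generating $\mathcal{U}^{0}(\epsilon)_{J}^{\perp}$. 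Once this dictionary is in place, everything else is the formal bookkeeping above, verbatim as in \cite{FM}; the only genuinely new input over the non-super case is the two-parameter nature of $C(q^{r},(-q^{-1})^{r})$, which is exactly what Proposition \ref{prop:inv-qCartan} is designed to control.
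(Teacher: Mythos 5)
Your proposal is correct and follows essentially the same route as the paper: the paper's proof likewise computes the eigenvalue of $\widetilde{h}_{j,r}$ on the $\ell$-weights $Y_{i,a}$, $\widetilde{Y}_{i,a}$ via (\ref{eq:typ-eigenval-h}) and Proposition \ref{prop:inv-qCartan}, identifies it with the $Z_{j,b}$-exponents in the definition of $\tau_{J}$ (the ``dictionary'' you flag as the main obstacle), and then restricts $V$ to $\mathcal{U}(\epsilon)_{J}\otimes\mathcal{U}^{0}(\epsilon)_{J}^{\perp}$ exactly as in \cite[Lemma 3.4]{FM}. The only inessential difference is your invocation of Theorem \ref{thm:integral-l-wt} where simply discarding the currents $\psi_{i}^{\pm}(z)$ with $i\in\overline{J}$ suffices.
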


\begin{proof}
(See \cite[Section 3.2]{FM}) Recall that the eigenvalue of $h_{j,r}$
(\ref{eq:typ-eigenval-h}) associated to the $\ell$-weight $Y_{i,a}$
(resp. $\widetilde{Y}_{i,a}$) is 
\[
\delta_{ij}\frac{q^{r}-q^{-r}}{r(q-q^{-1})}a^{r}\quad(\text{resp. }\delta_{ij}\frac{\widetilde{q}^{r}-\widetilde{q}^{-r}}{r(q-q^{-1})}a^{r})
\]
and accordingly the eigenvalue of $\widetilde{h}_{j,r}$ is
\[
\sum_{k\in I}\widetilde{C}_{kj}^{r}\delta_{ki}\frac{q^{r}-q^{-r}}{r(q-q^{-1})}a^{r}=\widetilde{C}_{ij}^{r}\frac{q^{r}-q^{-r}}{r(q-q^{-1})}a^{r}\quad(\text{resp. }\widetilde{C}_{ij}^{r}\frac{\widetilde{q}^{r}-\widetilde{q}^{-r}}{r(q-q^{-1})}a^{r})
\]
which we rewrite
\begin{align*}
 & \frac{q^{r}-q^{-r}}{r(q-q^{-1})\det C(q^{r},\widetilde{q}^{r})}\sum_{k\in\mathbb{Z}}\left(p_{ij}^{+}(k)(aq^{k})^{r}+p_{ij}^{-}(k)(-aq^{k})^{r}\right)\\
(\text{resp. } & \frac{q^{r}-q^{-r}}{r(q-q^{-1})\det C(q^{r},\widetilde{q}^{r})}\sum_{k\in\mathbb{Z}}\left(-p_{ij}^{+}(k)(-aq^{k})^{r}p_{ij}^{-}(k)(aq^{k})^{r}\right)).
\end{align*}
Therefore, compared with the definition above, $\tau_{J}(Y_{i,a})$
(resp. $\tau_{J}(\widetilde{Y}_{i,a})$) encodes the collection of
eigenvalues of $\widetilde{h}_{j,r}$ for $j\in\overline{J}$ and
of $h_{j,r}$ for $j\in J$ on the $\ell$-weight $Y_{i,a}$ (resp.
$\widetilde{Y}_{i,a}$). The extra generator $D$ in the case of $M\notin J$
compensates the difference between the eigenvalue of $k_{\delta_{l}}$'s
on $Y_{M,a}$ and $\widetilde{Y}_{M,-a}$. Now the statement follows
by restricting $V$ to $\mathcal{U}(\epsilon)_{J}\otimes\mathcal{U}^{0}(\epsilon)_{J}^{\perp}$
as in the proof of \cite[Lemma 3.4]{FM}.
\end{proof}
\begin{lem}[{cf. \cite[Lemma 3.3]{FM}}]
 \label{lem:restriction-inj}
\begin{enumerate}
\item If $J=\{j\}$ for $j\neq M$ or $J=\{M,M\pm1\}$, then $\tau_{J}$
is injective. 
\item If $J=\{M\}$, then $\tau_{J}$ has the kernel generated by $Y_{M-1,a}\widetilde{Y}_{M+1,a}$
for $a\in\mathbb{C}^{\times}$, where we assume $Y_{M-1,a}=1$ if
$M=1$ and $\widetilde{Y}_{M+1,a}=1$ if $N=1$.
\end{enumerate}
\end{lem}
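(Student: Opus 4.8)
The plan is to adapt the proof of \cite[Lemma 3.3]{FM}, the only genuinely new feature being the isotropic node $M$. First I would reduce everything to linear algebra on monomial lattices. The rings $\mathscr{Y}(\epsilon)$ and $\mathscr{Y}(\epsilon)^{(J)}$ are the integral group algebras $\mathbb{Z}[\mathsf{M}]$ and $\mathbb{Z}[\mathsf{M}^{(J)}]$ of the free abelian groups $\mathsf{M}$, $\mathsf{M}^{(J)}$ of Laurent monomials, with $\mathsf{M}$ free on the $Y_{i,a}$ ($i\le M$), the $\widetilde{Y}_{j,a}$ ($j>M$) and $D$ (after eliminating $\widetilde{Y}_{M,a}=DY_{M,-a}^{-1}$), and $\tau_{J}$ is the algebra homomorphism induced by a group homomorphism $\phi_{J}\colon\mathsf{M}\to\mathsf{M}^{(J)}$. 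Hence $\tau_{J}$ is injective iff $\phi_{J}$ is, and $\ker\tau_{J}$ is the ideal generated by $\{m-1\mid m\in\ker\phi_{J}\}$; the latter because $\mathsf{M}/\ker\phi_{J}\hookrightarrow\mathsf{M}^{(J)}$ is free, so $\ker\phi_{J}$ is a direct summand of $\mathsf{M}$ and $\tau_{J}$ is the augmentation on the corresponding $\mathbb{Z}[\ker\phi_{J}]$-factor. Thus (1) becomes $\ker\phi_{J}=0$ and (2) becomes: $\ker\phi_{\{M\}}$ is the subgroup generated by the $Y_{M-1,a}\widetilde{Y}_{M+1,a}$.

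Next I would unpack what $\phi_{J}$ records. By the transfer-matrix construction of $\chi_{q}$ (Section \ref{subsec:qchar-constr-transferM}) and the computation in the proof of Lemma \ref{lem:restriction-module}, for a monomial $m\in\mathsf{M}$, regarded as an $\ell$-weight, the $\mathscr{Y}(\epsilon_{J})$-part of $\phi_{J}(m)$ retains the $Y_{i,a}$- and $\widetilde{Y}_{i,a}$-exponents with $i\in J$ and the $D$-exponent (so $\phi_{J}(D)\neq1$), equivalently the eigenvalues $h_{i,r}(m)$ for $i\in J$, $r\neq0$, together with the weight, while the $Z_{j,b}$-exponents record the eigenvalues $\widetilde{h}_{j,r}(m)=\sum_{k\in\mathring{I}}\widetilde{C}^{r}_{kj}h_{k,r}(m)$ for $j\in\overline{J}$, $r\neq0$ (see (\ref{eq:def-htilde_ir})). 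Writing $m=\prod_{i\le M,a}Y_{i,a}^{c_{i,a}}\prod_{j>M,a}\widetilde{Y}_{j,a}^{\widetilde{c}_{j,a}}D^{c_{D}}$, the eigenvalue formula (\ref{eq:typ-eigenval-h}) shows that the data $\{h_{k,r}(m)\}_{r\neq0}$ recovers each $c_{k,a}$ (for $k\le M$) or each $\widetilde{c}_{k,a}$ (for $k>M$): the functions $r\mapsto a^{r}$ are linearly independent over distinct $a\in\mathbb{C}^{\times}$, and $q_{k}^{r}-q_{k}^{-r}\neq0$ since $q$ is not a root of unity; note that in this basis neither $D$ nor any $\widetilde{Y}_{M,a}$ contributes to $h_{M,r}$. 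Therefore $\phi_{J}(m)=1$ forces $c_{D}=0$, $h_{i,r}(m)=0$ for all $i\in J$, and — using the latter — that $(h_{k,r}(m))_{k\in\overline{J}}$ is a left null vector of the principal submatrix $\widetilde{C}^{r}_{\overline{J}\,\overline{J}}$ of $\widetilde{C}^{r}$, for every $r$.

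The crux is the linear algebra of these submatrices, which I would handle with Jacobi's identity relating complementary minors of $C(q^{r},\widetilde{q}^{r})$ and its inverse $\widetilde{C}^{r}$: since $J$ and $\overline{J}$ are complementary in $\mathring{I}$, $\det\widetilde{C}^{r}_{\overline{J}\,\overline{J}}=\det C(q^{r},\widetilde{q}^{r})_{JJ}/\det C(q^{r},\widetilde{q}^{r})$, and the needed entries of $C(q^{r},\widetilde{q}^{r})$ are read off at once: $C(q^{r},\widetilde{q}^{r})_{jj}=q_{j}^{r}+q_{j}^{-r}$ for $j\neq M$, $C(q^{r},\widetilde{q}^{r})_{MM}=0$ because $\alpha_{M}$ is isotropic, the adjacent entries $C(q^{r},\widetilde{q}^{r})_{M,M\pm1}$ and $C(q^{r},\widetilde{q}^{r})_{M\pm1,M}$ equal $\pm1$ or $\pm(-1)^{r}$, and $\det C(q^{r},\widetilde{q}^{r})$ is a unit (it is $\pm d_{M,N}^{r}$ in the notation of Proposition \ref{prop:inv-qCartan}, nonzero precisely because $M\neq N$). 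For $J=\{j\}$ with $j\neq M$ this gives $\det\widetilde{C}^{r}_{\overline{J}\,\overline{J}}=(q_{j}^{r}+q_{j}^{-r})/\det C(q^{r},\widetilde{q}^{r})\neq0$; for $J=\{M,M\pm1\}$ the $2\times2$ block $C(q^{r},\widetilde{q}^{r})_{JJ}$ has a zero on the diagonal but unit anti-diagonal entries, hence determinant $\pm1$ or $\pm(-1)^{r}\neq0$. In both cases $\widetilde{C}^{r}_{\overline{J}\,\overline{J}}$ is invertible, so $h_{k,r}(m)=0$ for all $k,r$, whence all exponents of $m$ vanish; this proves (1).

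For $J=\{M\}$ the submatrix is singular, so the plan is to show instead that $\mathrm{rank}\,\widetilde{C}^{r}_{\overline{J}\,\overline{J}}=|\overline{J}|-1$ by exhibiting a nonvanishing $(|\overline{J}|-1)$-minor; by Jacobi this again reduces to a $2\times2$ minor of $C(q^{r},\widetilde{q}^{r})$ located at the nodes $M-1,M,M+1$, whose determinant works out to $\pm1$. Since $Y_{M-1,a}\widetilde{Y}_{M+1,a}\in\ker\phi_{\{M\}}$ — its eigenvalue vector is a nonzero multiple of $\mathbf{e}_{M-1}-(-1)^{r}\mathbf{e}_{M+1}$, and one checks directly via (\ref{eq:GQG-Drinfeld-hh})--(\ref{eq:GQG-Drinfeld-hx}) that $\widetilde{h}_{j,r}$ annihilates it for all $j\in\overline{J}$ — the one-dimensional left kernel of $\widetilde{C}^{r}_{\overline{J}\,\overline{J}}$ must be spanned by that vector for each $r$. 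Translating back, $\phi_{\{M\}}(m)=1$ forces the $Y_{i,a}$-exponents for $i\neq M-1$ and the $\widetilde{Y}_{j,a}$-exponents for $j\neq M+1$ to vanish, $c_{D}=0$, and — comparing Fourier modes at the nodes $M-1$ and $M+1$ — the exponents of $Y_{M-1,a}$ and $\widetilde{Y}_{M+1,a}$ to coincide for each $a$, i.e.\ $m\in\langle Y_{M-1,a}\widetilde{Y}_{M+1,a}\rangle$; the degenerate conventions $Y_{M-1,a}=1$ ($M=1$) and $\widetilde{Y}_{M+1,a}=1$ ($N=1$) fall out of the same argument, since then $C(q^{r},\widetilde{q}^{r})_{MM}=0$ already makes the relevant one- or two-node block singular. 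All the rank computations may be performed over $\mathbb{C}$, and the integral statements follow because the $Y_{M-1,a}\widetilde{Y}_{M+1,a}$ extend to a $\mathbb{Z}$-basis of $\mathsf{M}$. I expect the bookkeeping of these $\widetilde{C}^{r}$-minors — and, in case (2), pinning down the kernel uniformly in $r$ — to be the most delicate part of the argument.
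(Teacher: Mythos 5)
Your proposal follows essentially the same route as the paper: both reduce the statement to the (co)rank of the principal submatrix of $\widetilde{C}^{r}$ indexed by $\overline{J}$, compute it from the complementary block of $C(q^{r},\widetilde{q}^{r})$ — your Jacobi complementary-minor identity is the same linear algebra as the paper's cofactor observation for $J=\{j\}$ and its Schur-complement computation for $J=\{M,M\pm1\}$, exploiting $C_{MM}=0$ and $\det C(q^{r},\widetilde{q}^{r})=\pm d^{r}_{M,N}\neq0$ — and, for $J=\{M\}$, identify the one-dimensional kernel with the direction of $Y_{M-1,a}\widetilde{Y}_{M+1,a}$ using the explicit entries of $d^{r}_{M,N}D^{r}\widetilde{C}^{r}$ (rows $M\pm1$ agree away from column $M$). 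You spell out the reduction from injectivity of $\tau_{J}$ to this rank statement (monomial group homomorphism, kernel as the ideal of $m-1$, eigenvalue encoding), which the paper leaves implicit.

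One point needs more care, and it is exactly the corner the paper also glosses over: your claim that ``$\phi_{J}(D)\neq1$'' when $M\notin J$, which is load-bearing in part (1) to exclude powers of $D$ from $\ker\phi_{\{j\}}$. This does not follow from the eigenvalue bookkeeping you set up: every $h_{i,r}$ and $\widetilde{h}_{j,r}$ with $r\neq0$ acts by zero on the $\ell$-weight $D=Y_{M,a}\widetilde{Y}_{M,-a}$ (the contributions of the two factors cancel, consistently with $D^{\pm}$ being one-dimensional), and for $M\notin J$ the ring $\mathscr{Y}(\epsilon_{J})$ contains no $D$-variable, so the ``eigenvalues plus weight'' data you invoke is not visibly recorded by $\tau_{J}$ as defined. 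To close this you must argue directly from the integer exponents $p_{Mj}(k),p_{Mj}'(k)$ in the definition of $\tau_{J}$ (equivalently from the zero modes $k_{\delta_{l}}$ retained in $\mathcal{U}^{0}(\epsilon)_{J}^{\perp}$), and note that these exponents are generally asymmetric in $p$ versus $p'$ along row $M$, so the treatment of the node-$M$ pair $Y_{M,a},\widetilde{Y}_{M,a}$ under $\tau_{J}$ has to be pinned down before the kernel computation; also beware that retaining the full $\mathring{P}$-weight, as your phrasing suggests, would be too much — it would contradict $Y_{M-1,a}\widetilde{Y}_{M+1,a}\in\ker\tau_{\{M\}}$ in part (2), where your membership check correctly uses only the $r\neq0$ data.
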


\begin{proof}
Let $D^{r}$ be the diagonal matrix whose $(i,i)$-th entry is $(-1)^{(r-1)\epsilon_{i}}$.
We shall consider the determinant of the submatrix of $\det C(q^{r},\widetilde{q}^{r})\cdot D^{r}\widetilde{C}^{r}$
obtained by removing the $j$-th rows and columns for $j\in J$. If
$J=\{j\}$ for some $j\in\mathring{J}$, it is just the $(j,j)$-th
entry of $C(q^{r},\widetilde{q}^{r})$ up to a non-zero scalar multiple.
Hence the submatrix is of full rank, equivalently $\tau_{\{j\}}$
is injective, if and only if $j\neq M$. 

For $J=\{M-1,M\}$, we use the Schur complement formula for block
matrices:
\begin{align*}
\text{for }A & =\begin{pmatrix}W & X\\
Y & Z
\end{pmatrix}\text{ with invertible }W,\quad\det A=\det W\cdot\det(Z-YW^{-1}X).
\end{align*}
Take as $A$ the matrix obtained from $C(q^{r},\widetilde{q}^{r})\cdot D^{r}$
by permuting the rows and columns so that the $(M-1)$-th and $M$-th
rows and columns are moved to the first and the second respectively,
and $W$ its principal $2\times2$ submatrix, which is nothing but
$\begin{pmatrix}q^{r}+q^{-r} & -1\\
-1 & 0
\end{pmatrix}$. Since both $A$ and $W$ are invertible, so is $Z-YW^{-1}X$. Then
from the Gaussian elimination of $A$, it turns out that the submatrix
we are seeking is (up to scalar multiple) exactly the inverse of $Z-YW^{-1}X$
and so invertible, as desired. The other case $J=\{M,M+1\}$ is parallel.

Finally, the last argument also shows that if we remove only the $M$-th
column then it has a $1$-dimensional cokernel. Together with the
formulas for $\widetilde{C}^{r}$, this implies the second statement.
\end{proof}
\begin{prop}
\label{prop:restriction-Ainv} Assume $J=\{j\}\text{ or }\{M,M\pm1\}$
and consider the following diagram, where the vertical map on the
right is the multiplication by $\overline{A}_{j,a}^{-1}=\beta_{J}(A_{j,a}^{-1})$,
$j\in J$:\begin{equation}\label{eq:comm-diag-restriction}\begin{tikzcd}
\mathscr{Y}(\epsilon) \arrow[r,"\tau_{J}"] \arrow[d] & \mathscr{Y}(\epsilon)^{(J)} \arrow[d, "\beta_J (A_{j,a}^{-1})"] \\
\mathscr{Y}(\epsilon) \arrow[r,"\tau_{J}"] & \mathscr{Y}(\epsilon)^{(J)}
\end{tikzcd}\end{equation}Then the multiplication by $A_{j,a}^{-1}$ (as a map $\mathscr{Y}(\epsilon)\rightarrow\mathscr{Y}(\epsilon)$)
makes the diagram commutative, and such map is unique unless $J=\{M\}$.
\end{prop}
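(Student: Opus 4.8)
The plan is to use that $\tau_{J}$ is a ring homomorphism, which collapses the commutativity of the square to a single identity. Since $\tau_{J}(A_{j,a}^{-1}\xi)=\tau_{J}(A_{j,a})^{-1}\tau_{J}(\xi)$ for all $\xi\in\mathscr{Y}(\epsilon)$, the square with left vertical arrow equal to multiplication by $A_{j,a}^{-1}$ commutes if and only if $\tau_{J}(A_{j,a})=\beta_{J}(A_{j,a})=\overline{A}_{j,a}$ (take $\xi=1$ for one implication, the other being immediate). So everything reduces to computing $\tau_{\{j\}}(A_{j,a})$. By definition $\overline{A}_{j,a}$ is the rank $1$ simple $\ell$-root obtained from (\ref{eq:def-A-qCartan}) by deleting the factors indexed by $\overline{J}$: it equals $Y_{j,aq}Y_{j,aq^{-1}}$ if $j<M$, $\widetilde{Y}_{j,a\widetilde{q}}\widetilde{Y}_{j,a\widetilde{q}^{-1}}$ if $j>M$, and $D$ if $j=M$.

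Next I would expand $\tau_{\{j\}}(A_{j,a})$ using the formulas defining $\tau_{J}$ and split the result into its $\mathscr{Y}(\epsilon_{J})$-part and its $\mathbb{Z}[Z_{l,b}^{\pm1}]_{l\in\overline{J},\,b\in\mathbb{C}^{\times}}$-part. Only the factor(s) of $A_{j,a}$ indexed by $j$ itself contribute to the $\mathscr{Y}(\epsilon_{J})$-part, and that part is visibly $\overline{A}_{j,a}$; so it remains to see that the $Z$-part is trivial. Fixing $l\in\overline{J}$ and $m\in\mathbb{Z}$ and collecting the spectral-parameter shifts $aq^{m\pm1}$ coming from the two $Y_{j,\cdot}^{\pm1}$ (resp. $\widetilde{Y}_{j,\cdot}^{\pm1}$) factors against the unshifted contributions of the two neighbouring factors of $A_{j,a}$, the exponent of $Z_{l,\pm aq^{m}}$ in $\tau_{\{j\}}(A_{j,a})$ is a fixed $\mathbb{Z}$-combination of $p_{j-1,l}(m)$, $p_{j,l}(m\pm1)$, $p_{j+1,l}(m)$ and their primed analogues from Proposition \ref{prop:inv-qCartan}. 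I would then observe that this combination is precisely the $q^{rm}$-coefficient of the $(j,l)$-entry of the matrix identity $\bigl(C(q^{r},\widetilde{q}^{r})D^{r}\bigr)\bigl(D^{r}\widetilde{C}^{r}\bigr)=I$, which vanishes because $l\neq j$ kills the Kronecker delta on the right-hand side. Indeed, reading Proposition \ref{prop:inv-qCartan}, for $j\neq M$ the $j$-th row of $C(q^{r},\widetilde{q}^{r})D^{r}$ is $(\dots,-1,q^{r}+q^{-r},-1,\dots)$ (with the off-diagonal $-1$'s replaced by $-(-1)^{r}$ when $j>M$, which merely interchanges $p$ with $p^{\prime}$), and for $j=M$ it is $(\dots,-1,0,1,\dots)$; in each case the resulting linear recurrence among the entries of $D^{r}\widetilde{C}^{r}$ is exactly the vanishing required. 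The only mildly delicate point is this bookkeeping of the shifts together with the modification of the row at the index $M$, but given Proposition \ref{prop:inv-qCartan} it is entirely mechanical.

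Finally, for the uniqueness statement when $j\neq M$: if $\phi\colon\mathscr{Y}(\epsilon)\to\mathscr{Y}(\epsilon)$ is any map making the square commute, then for every $\xi$ one has $\tau_{J}(\phi(\xi))=\overline{A}_{j,a}^{-1}\tau_{J}(\xi)=\tau_{J}(A_{j,a}^{-1})\tau_{J}(\xi)=\tau_{J}(A_{j,a}^{-1}\xi)$ by the identity just established, and since $\tau_{\{j\}}$ is injective for $j\neq M$ by Lemma \ref{lem:restriction-inj}(1), this forces $\phi(\xi)=A_{j,a}^{-1}\xi$, i.e.\ $\phi$ is multiplication by $A_{j,a}^{-1}$. (For $j=M$ uniqueness genuinely fails, in accordance with the nontrivial kernel of $\tau_{\{M\}}$ described in Lemma \ref{lem:restriction-inj}(2).)
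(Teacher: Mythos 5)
Your proposal is correct, and it supplies exactly the argument the paper leaves implicit (the proposition is stated without proof, following \cite{FM}): reducing commutativity to the single identity $\tau_{J}(A_{j,a})=\overline{A}_{j,a}$, verifying that the $Z$-exponents vanish because they are the off-diagonal coefficients of $\bigl(C(q^{r},\widetilde{q}^{r})D^{r}\bigr)\bigl(D^{r}\widetilde{C}^{r}\bigr)=I$ as packaged in Proposition \ref{prop:inv-qCartan}, and deducing uniqueness for $j\neq M$ from the injectivity in Lemma \ref{lem:restriction-inj}. The bookkeeping you describe (the $q^{\pm1}$ shifts matching multiplication by $q^{r}+q^{-r}$, the row at $M$ being $(-1,0,1)$, and the $(-1)^{r}$ exchange of $p$ and $p^{\prime}$ for $j>M$) is precisely what makes the identity work, so there is no gap.
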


The refined homomorphism $\tau_{J}$ allows us to reduce the various
problems, including the computation of $q$-characters, to rank 1
and rank 2 cases. First let $j\neq M$ and consider the restriction
of a given $\mathcal{U}(\epsilon)$-module to $\mathcal{U}(\epsilon)_{J}\cong U_{q_{j}}(\widehat{\mathfrak{sl}}_{2})$.
By Lemma \ref{lem:restriction-module} it is a direct sum of $U_{q_{j}}(\widehat{\mathfrak{sl}}_{2})$-modules,
and if two vectors $v,w$ in such a module has $\ell$-weights (over
$U_{q_{j}}(\widehat{\mathfrak{sl}}_{2})$) differing by $\overline{A}_{j,a}^{-1}$,
then their $\ell$-weights over $\mathcal{U}(\epsilon)$ also differ
by $A_{j,a}^{-1}$ by Proposition \ref{prop:restriction-Ainv}.

Suppose $j=M$ and restrict again to $\mathcal{U}(\epsilon)_{J}\cong\mathcal{U}(01)$
whose representation theory is summarized in Appendix \ref{sec:GQG-RT-rank1}.
We may take a similar approach, and since every simple module is of
highest $\ell$-weight, it is enough to consider the following case:
suppose we have two $\ell$-weight vectors $v,w$ in a $\mathcal{U}(\epsilon)_{J}$-module
such that $w\in\sum_{k\in\mathbb{Z}}x_{M,k}^{-}v$ so that their $\ell$-weights
differ by $\overline{A}_{M,a}^{-1}=Y_{a}^{-1}\widetilde{Y}_{a}^{-1}=D^{-1}$.
While $\tau_{J}$ is not injective in this case and the ratio of their
$\ell$-weights over $\mathcal{U}(\epsilon)$ can be of the form
\[
A_{j,a}^{-1}\cdot\prod_{i}(Y_{M-1,b_{i}}\widetilde{Y}_{M+1,b_{i}})^{\pm1},
\]
Theorem \ref{thm:integral-l-wt} ensures us that we do not have the
second, unexpected factor. 

However, as the $q$-character of the prime simple $\mathcal{U}(01)$-modules
in $\mathcal{F}(01)$ (\textit{i.e.} $V(i,j)_{a}$ in Appendix \ref{sec:GQG-RT-rank1})
are of the form $\Psi+\Psi D^{-1}$ regardless of the highest $\ell$-weight
$\Psi$, we cannot determine the exact value of the spectral parameter
$a$ by restriction to rank $1$. This can be settled by restricion
to $\mathcal{U}(\epsilon)_{\{M-1,M\}}\cong\mathcal{U}(001)$ (when
$M=1$, one may consider $J=\{M,M+1\}$ and $\mathcal{U}(011)$-modules
instead). Indeed, since every simple $\mathcal{U}(001)$-module in
$\mathcal{F}(001)$ can be obtained as a subquotient of a tensor product
of fundamental representations whose $q$-characters are completely
known, we can determine $a$ given the $\ell$-weight of $v$. 

From our discussion, we have the following lattice property of the
spectral parameters in the $\ell$-weights of fundamental representations,
and hence of simple modules in $\mathcal{F}(\epsilon)$.
\begin{prop}
We have for $1\leq r\leq M$
\[
\chi_{q}(L(Y_{r,a}))\in Y_{r,a}\cdot\mathbb{Z}[A_{i,aq^{2k+r-i}}^{-1},A_{j,aq^{M-i}\widetilde{q}^{2k+j-M}}^{-1}]_{i\leq M\leq j,k\in\mathbb{Z}},
\]
and for $M\leq r^{\prime}\leq n-1$
\[
\chi_{q}(L(\widetilde{Y}_{r^{\prime},a}))\in\widetilde{Y}_{r^{\prime},a}\cdot\mathbb{Z}[A_{i,aq^{2k+M-i}\widetilde{q}^{M-r^{\prime}}}^{-1},A_{j,a\widetilde{q}^{2k+j-r^{\prime}}}^{-1}]_{i\leq M\leq j,k\in\mathbb{Z}}.
\]
\end{prop}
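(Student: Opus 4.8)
The plan is to run the Frenkel--Mukhin argument, feeding the restriction machinery of Section \ref{subsec:restriction-rk12} into the cone property and into the inductive structure of highest $\ell$-weight modules. First I would reduce the statement to a claim about elementary lowering moves. By Corollary \ref{cor:q-char-cone-prop} every $\ell$-weight of $L(Y_{r,a})$ is $Y_{r,a}$ times a monomial in the $A_{i,b}^{-1}$, $i\in\mathring{I}$, and since $L(Y_{r,a})$ is generated by its highest $\ell$-weight vector over the subalgebra generated by the $x_{i,k}^{-}$, a standard connectivity argument (\textit{cf.} the arguments of \cite[Section 4]{FM}) shows that every $\ell$-weight $\Psi'$ is reached from $Y_{r,a}$ through a chain $Y_{r,a}=\Psi^{(0)},\Psi^{(1)},\dots,\Psi^{(m)}=\Psi'$ with $V_{\Psi^{(t+1)}}\cap\bigoplus_{k\in\mathbb{Z}}x_{i_t,k}^{-}V_{\Psi^{(t)}}\neq 0$ for suitable $i_t\in\mathring{I}$; Theorem \ref{thm:integral-l-wt} and a comparison of $\mathring{P}$-weights (recall $\mathrm{wt}(A_{i,b}^{-1})=-\alpha_i$) then force $\Psi^{(t+1)}=\Psi^{(t)}A_{i_t,b_t}^{-1}$ for some $b_t\in\mathbb{C}^{\times}$. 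Thus it suffices to show, by induction on $m$, that each $b_t$ lies in the coset of $\mathbb{C}^{\times}$ prescribed by the two families of generators in the statement.

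For a move at a node $i\neq M$ I would restrict the configuration to $\mathcal{U}(\epsilon)_{\{i\}}\cong U_{q_i}(\widehat{\mathfrak{sl}}_2)$. By Lemma \ref{lem:restriction-module} the restriction of $L(Y_{r,a})$ is a direct sum of $U_{q_i}(\widehat{\mathfrak{sl}}_2)$-modules whose $q$-characters are the $Y_i$-homogeneous parts of $\tau_{\{i\}}(\chi_q(L(Y_{r,a})))$; each summand is a subquotient of a tensor product of $\widehat{\mathfrak{sl}}_2$-evaluation modules, and for those the spectral parameters of the $A_i^{-1}$ that occur are $q_i$ times the spectral parameters of the $Y_i$ in the highest $\ell$-weight, modulo the lattice $q_i^{2\mathbb{Z}}$. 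Together with Proposition \ref{prop:restriction-Ainv}, which lifts a rank-one lowering by $\overline{A}_{i,b}^{-1}$ to an honest $A_{i,b}^{-1}$ over $\mathcal{U}(\epsilon)$, this pins $b_t$ down to a single $q^{2\mathbb{Z}}$-coset once one knows which $Y_{i,c}$ already appear. The induction is therefore genuinely \emph{simultaneous}: ``the $Y_i$-parameters occurring'' and ``the $A_i$-parameters occurring'' live in the same coset, whose base point is computed from the highest $\ell$-weight $Y_{r,a}$ (only the parameter $a$ at node $r$) by tracking the offset that propagates along the Dynkin diagram --- this is exactly the bookkeeping producing the exponents $aq^{2k+r-i}$ and, on the other side, $aq^{M-i}\widetilde{q}^{2k+j-M}$.

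The node $i=M$ needs the rank-two refinement. Over $\mathcal{U}(\epsilon)_{\{M\}}\cong\mathcal{U}(01)$ a lowering move at $M$ always multiplies by $\overline{D}^{-1}$ independently of $b$, so $\tau_{\{M\}}$, which is not injective by Lemma \ref{lem:restriction-inj}(2), cannot see the parameter; but Theorem \ref{thm:integral-l-wt} still excludes any spurious factor $Y_{M-1,c}\widetilde{Y}_{M+1,c}$, so the move over $\mathcal{U}(\epsilon)$ is a single $A_{M,b_t}^{-1}$. To locate $b_t$ I would restrict to $\mathcal{U}(\epsilon)_{\{M-1,M\}}\cong\mathcal{U}(001)$ (or to $\mathcal{U}(\epsilon)_{\{M,M+1\}}\cong\mathcal{U}(011)$ if $M=1$), where $\tau_{\{M-1,M\}}$ is injective by Lemma \ref{lem:restriction-inj}(1); since by Proposition \ref{prop:fund-gen-all-simple} every simple $\mathcal{U}(001)$-module is a subquotient of tensor products of fundamental representations, whose $q$-characters are the explicit ones of Section \ref{subsec:rank2-case}, the parameter $b_t$ is forced into the coset $aq^{M-r}\widetilde{q}^{2\mathbb{Z}+c}$ for the appropriate constant $c$, which is why $\widetilde{q}$ enters the second family. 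For the base case $r=1$ I would use $L(Y_{1,a})\cong V(1^1)_{a'}$ (Proposition \ref{prop:fund-rep-l-wt}), whose module structure is explicit: computing the action of $\psi_{i,s}^{+}$ via (\ref{eq:ind-formula-xir}) and Lemma \ref{lem:formula-x-_i,1} gives $\chi_q(L(Y_{1,a}))=Y_{1,a}\bigl(1+A_{1,aq}^{-1}+A_{1,aq}^{-1}A_{2,aq^2}^{-1}+\cdots\bigr)$, visibly in the claimed ring. The general $r$ then follows by realizing $L(Y_{r,a})$ as a constituent of a tensor product of natural representations $L(Y_{1,\cdot})$ via an AK-type short exact sequence \cite{AK} and using that $\chi_q$ is a ring homomorphism (Proposition \ref{prop:q-char-mult}), since the lattice of each factor, suitably shifted, lies inside the target lattice; the $L(\widetilde{Y}_{r',a})$-case is the mirror image, replacing $L(Y_{1,\cdot})$ by its dual $L(\widetilde{Y}_{n-1,\cdot})$, $V(1^i)_a$ by $V(-i)_a$, and swapping $q\leftrightarrow\widetilde{q}$ on the odd side.

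The main obstacle is making this simultaneous induction close up cleanly: verifying the connectivity statement behind the reduction to single lowering moves, and keeping the offsets coherent as moves cross the node $M$, where no soft argument seems to suffice and one must really invoke the explicit rank-two $q$-characters of Section \ref{subsec:rank2-case}.
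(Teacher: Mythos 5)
Your proposal is correct and follows essentially the same route as the paper, which deduces this proposition directly from the preceding restriction discussion: single lowering moves controlled by Theorem \ref{thm:integral-l-wt}, the rank-1 analysis via Lemma \ref{lem:restriction-module} and Proposition \ref{prop:restriction-Ainv} for nodes $i\neq M$, and the rank-2 restriction to $\mathcal{U}(001)$ (or $\mathcal{U}(011)$) with the explicit $q$-characters of Section \ref{subsec:rank2-case} to fix the spectral parameter at the node $M$. The extra ingredients you add (the explicit base case $r=1$ and the AK-type tensor-product realization for general $r$) are consistent with the paper but not needed for its argument.
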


\[
\]
This motivates us to introduce the following subcategory which plays
a role of the `skeletal' subcategory in $\mathcal{F}(\epsilon)$ (\textit{cf.}
Proposition \ref{prop:denom-R-++--} and Theorem \ref{thm:denom-R-+-}).
\begin{defn}
The category $\mathcal{F}_{\mathbb{Z}}(\epsilon)$ consists of finite-dimensional
$\mathcal{U}(\epsilon)$-modules whose $\ell$-weights are monomials
in $Y_{i,q^{i-1+2k}}^{\pm1}$ ($i=1,\dots,M$) and $\widetilde{Y}_{j,(-1)^{j-M+1}q^{j-1+2k}}^{\pm1}$
($j=M,\dots,n-1$) for $k\in\mathbb{Z}$. 

Equivalently, it is the full abelian subcategory of $\mathcal{F}(\epsilon)$
whose simple objects are $L(\Psi)$ for monomials $\Psi$ in $Y_{i,q^{i-1+2m}}$
and $\widetilde{Y}_{j,(-1)^{j-M+1}q^{j-1+2m}}$.
\end{defn}

\begin{rem}
If we consider the subcategory whose simple modules are those whose
highest $\ell$-weights are monomials in $Y_{i,q^{i-1+2m}}$ ($i=1,\dots,M$,
$m\in\mathbb{Z}$) only, then this is exactly the category $\mathcal{C}_{J}(\epsilon)$
studied in \cite[Section 6.1]{KL} where nice dualities such as a
quantum affine super analogue of Schur--Weyl duality and a super
duality are available. Our category $\mathcal{F}_{\mathbb{Z}}(\epsilon)$
is then the smallest full monoidal Serre subcategory of $\mathcal{F}(\epsilon)$
that contains $\mathcal{C}_{J}(\epsilon)$ and closed under left and
right duals.

On the other hand, through the algebra isomorphism $\tau$, $\mathcal{F}_{\mathbb{Z}}(\epsilon)$
corresponds to the category of finite-dimensional $U(\epsilon)$-modules
whose $\ell$-weights are monomials in $\mathrm{Y}_{i,q^{i-1+2k}}^{\pm1}$
($i\in\mathring{I}$, $k\in\mathbb{Z}$), see Remark \ref{rem:compare-fund-GQG-qasa}.
\end{rem}

\subsection{Frenkel--Mukhin-type algorithm for $q$-characters\label{subsec:FM-algorithm}}

We are ready to propose a super analogue of Frenkel--Mukhin algorithm
to compute the $q$-character of simple modules in $\mathcal{F}(\epsilon)$.
Since the flow of the algorithm is the same as non-super cases, we
refer the reader to the original paper \cite[Section 5.5]{FM} for
the detailed description and focus on the new rule on the `expansion
in the $M$-th direction' that comes from the representation theory
of $\mathcal{U}(01)$ (Appendix \ref{sec:GQG-RT-rank1}).

The algorithm roughly proceeds as follows. Begin from a given $\ell$-weight
$\Psi$ which we `color' with the $\mathring{I}$-tuple $(0,0,\dots,0)$
and give the multiplicity 1 in order to obtain the $q$-character
of $L(\Psi)$. We also assume $\Psi$ is a monomial in $Y_{i,a}$
and $\widetilde{Y}_{j,a}$ for $i\leq M\leq j$ and $a\in\mathbf{k}^{\times}$
as we are concerning the category $\mathcal{F}(\epsilon)$. At each
step, we take an $\ell$-weight $m$ with color $(s_{1},\dots,s_{n-1})$
and multiplicity $s$ which is the highest (in the usual partial order
on $\mathring{Q}$) among those given in previous steps with color
$(s_{1}^{\prime},\dots,s_{n-1}^{\prime})$ and multiplicity $s^{\prime}$
such that $s_{i}<s$ for some $i\in\mathring{I}$.

If $s_{i}<s$ for $i\neq M$, then we do as in the non-super case.
The algorithm fails if $\beta_{\{i\}}(m)$ is not dominant; otherwise
we add $\ell$-weights obtained by \textit{expanding} $m$ \textit{in
the $i$-th direction}. Namely, suppose the $q$-character of $\mathcal{U}(\epsilon)_{\{i\}}\cong U_{q_{i}}(\widehat{\mathfrak{sl}}_{2})$-module
$L(\beta_{\{i\}}(m))$, which is finite-dimensional by assumption,
is given by
\[
\chi_{q}(L(\beta_{\{i\}}(m)))=\beta_{\{i\}}(m)\cdot\sum_{k}c_{k}\prod_{l}\overline{A}_{i,a_{k,l}}^{-1}\in\beta_{\{i\}}(m)\cdot\mathbb{Z}_{\geq0}[\overline{A}_{i,a}^{-1}]_{a\in\mathbf{k}^{\times}}
\]
where $\overline{A}_{i,a}=\beta_{\{i\}}(A_{i,a})$ is the simple $\ell$-root
for $\mathcal{U}(\epsilon)_{\{i\}}$. For each $\ell$-weight $\beta_{\{i\}}(m)\cdot\prod\overline{A}_{i,a_{k,l}}^{-1}$
other than $\beta_{\{i\}}(m)$, we add the lifted $\ell$-weight $m\cdot\prod A_{i,a_{k,l}}^{-1}$
(Proposition \ref{prop:restriction-Ainv}). If the latter has not
been added in previous steps, we add it with color $(\dots,0,(s-s_{i})c_{k},0,\dots)$
and multiplicity $(s-s_{i})c_{k}$; if there already exists with color
$(t_{1},\dots,t_{n-1})$ and multiplicity $t$, then we change its
$i$-th color to $t_{i}+(s-s_{i})c_{k}$ and its multiplicity to $\max\{t,t_{i}+(s-s_{i})c_{k}\}$.

Now assume $i=M$. The rule of adding new $\ell$-weights are exactly
the same, but there are two differences. First, we do not have a notion
of dominance for $\beta_{\{M\}}(m)$, as the $\mathcal{U}(01)$-module
$L(\beta_{\{M\}}(m))$ is always finite-dimensional. Hence we never
fail when we expand any $\ell$-weight $m$ in the $M$-th direction.
Second, suppose again the $q$-character of $L(\beta_{\{M\}}(m))$
is given by
\[
\chi_{q}(L(\beta_{\{M\}}(m)))=\beta_{\{M\}}(m)\cdot\sum_{k}c_{k}\overline{D}^{-d_{k}}.
\]
To lift each $\ell$-weight $\beta_{\{M\}}(m)\cdot\overline{D}^{-d_{k}}$
to $m\cdot\prod_{l=1}^{d_{k}}A_{M,a_{k,l}}^{-1}$, we have to determine
the spectral parameters $a_{k,l}$. 

As explained in the last section, in order to do that we shall take
into account the restriction to $J=\{M-1,M\}$ (to $\{M,M+1\}$ if
$M=1$, where one should consider the representation theory of $\mathcal{U}(011)$
analogous to Section \ref{subsec:rank2-case}) at the same time. Recall
from the end of Appendix \ref{sec:GQG-RT-rank1} that we may decompose
uniquely
\[
\beta_{\{m\}}(m)=\overline{D}^{-s}\prod_{r}(\overline{Y}_{M,a_{r}}\overline{Y}_{M,a_{r}q^{2}}\cdots\overline{Y}_{M,a_{r}q^{2i_{r}-2}})\prod_{s}(\overline{\widetilde{Y}}_{M,b_{s}}\overline{\widetilde{Y}}_{M,b_{s}\widetilde{q}^{2}}\cdots\overline{\widetilde{Y}}_{M,b_{s}\widetilde{q}^{2j_{s}-2}})
\]
for some $s\in\mathbb{Z}$, $i_{r},j_{s}\in\mathbb{Z}_{>0}$ and $a_{r},b_{S}\in\mathbf{k}^{\times}$
such that each factors are pairwise in general position (see the appendix
for definition), so that
\[
L(\beta_{\{m\}}(m))\cong(D)^{\otimes-s}\otimes\left(\bigotimes_{r}L(\overline{Y}_{M,a_{r}}\cdots\overline{Y}_{M,a_{r}q^{2i_{r}-2}})\right)\otimes\left(\bigotimes_{s}L(\overline{\widetilde{Y}}_{M,b_{s}}\cdots\overline{\widetilde{Y}}_{M,b_{s}\widetilde{q}^{2j_{s}-2}})\right).
\]
Each factor is 2-dimensional and has a very simple $q$-character
\begin{align*}
\chi_{q}L(\overline{Y}_{M,a_{r}}\cdots\overline{Y}_{M,a_{r}q^{2i_{r}-2}}) & =\overline{Y}_{M,a_{r}}\cdots\overline{Y}_{M,a_{r}q^{2i_{r}-2}}(1+\overline{D}^{-1}),\\
\chi_{q}L(\overline{\widetilde{Y}}_{M,b_{s}}\cdots\overline{\widetilde{Y}}_{M,b_{s}\widetilde{q}^{2j_{s}-2}}) & =\overline{\widetilde{Y}}_{M,b_{s}}\cdots\overline{\widetilde{Y}}_{M,b_{s}\widetilde{q}^{2j_{s}-2}}(1+\overline{D}^{-1}).
\end{align*}
We are in a position to determine $a\in\mathbf{k}^{\times}$ to add
the $\ell$-weight $Y_{M,a_{r}}\cdots Y_{M,a_{r}q^{2i_{r}-2}}A_{M,a}^{-1}$
that lifts $\overline{Y}_{M,a_{r}}\cdots\overline{Y}_{M,a_{r}q^{2i_{r}-2}}\overline{D}^{-1}$.
From the $q$-character of the $\mathcal{U}(001)$-module $L(\beta_{\{M-1,M\}}(Y_{M,a_{r}}Y_{M,a_{r}q^{2}}\cdots Y_{M,a_{r}q^{2i_{r}-2}}))$
from Section \ref{subsec:rank2-case}, we see that $a=aq^{2i_{r}-1}$
in this case. Similarly, $\overline{\widetilde{Y}}_{M,b_{s}}\cdots\overline{\widetilde{Y}}_{M,b_{s}\widetilde{q}^{2j_{s}-2}}\overline{D}^{-1}$
should be lifted to $\widetilde{Y}_{M,b_{s}}\cdots\widetilde{Y}_{M,b_{s}\widetilde{q}^{2j_{s}-2}}A_{M,a\widetilde{q}^{2j_{s}-1}}^{-1}$.
Therefore, we understand which $\ell$-weights should be added by
expanding $m$ in $M$-th direction, and we assign the color and multiplicity
in the same way as above.

Below we give several examples of the implementation of the algorithm,
where we omit the color $(s_{i})$ for $\ell$-weights. We depict
the result of the algorithm by a directed graph whose vertices are
the $\ell$-weights with multiplicity obtained through the algorithm
and arrows are of the form $\Psi\overset{i,a}{\longrightarrow}\Psi^{\prime}$
to represent $\Psi^{\prime}=\Psi A_{i,a}^{-1}$.
\begin{example}
Consider $\mathcal{U}(\epsilon_{3|2})$-modules $L(Y_{1,1})$ and
$L(\widetilde{Y}_{4,1})$.

\begin{equation*}\begin{tikzcd}
Y_{1,1} \arrow[d,"{1,q}"]   &&  \widetilde{Y}_{4,1} \arrow[d,"{4,-q^{-1}}"] \\
Y^{-1}_{1,q^2} Y_{2,q} \arrow[d,"{2,q^2}"] && \widetilde{Y}_{3,-q^{-1}} \widetilde{Y}^{-1}_{4,q^-2} \arrow[d,"{3,q^{-2}}"] \\
Y^{-1}_{2,q^3} Y_{3,q^2} \arrow[d,"{3,q^3}"]  && Y_{2,q^{-2}} Y^{-1}_{3,q^{-1}}  \arrow[d,"{2,q^{-1}}"] \\
\widetilde{Y}^{-1}_{3,-q^2} \widetilde{Y}_{4,q^3}  \arrow[d,"{4,-q^2}"]  && Y_{1,q^{-1}} Y^{-1}_{2,1} \arrow[d,"{1,1}"] \\
\widetilde{Y}^{-1}_{4,q}  &&Y^{-1}_{1,q} 
\end{tikzcd}\end{equation*}
\end{example}

\begin{example}
Consider the $\mathcal{U}(\epsilon_{3|1})$-module $L(Y_{3,1})$.\begin{equation*}\begin{tikzcd}[column sep=-.1cm]
& Y_{3,1} \arrow[d,"{3,q}"] & \\
& Y_{2,q} \widetilde{Y}^{-1}_{3,-1} \arrow[d,"{2,q^2}"]   & \\
& Y_{1,q^2} Y^{-1}_{2,q^3} Y_{3,q^2} \widetilde{Y}^{-1}_{3,-1} \arrow[ld,"{1,q^3}"']\arrow[rd,"{3,q^3}"] & \\
Y^{-1}_{1,q^4} Y_{3,q^2} \widetilde{Y}^{-1}_{3,-1} \arrow[rd,"{3,q^3}"']&& Y_{1,q^2}  \widetilde{Y}^{-1}_{3,-q^2} \widetilde{Y}^{-1}_{3,-1}  \arrow[ld,"{1,q^3}"]\\
&  Y^{-1}_{1,q^4}  Y_{2,q^3}  \widetilde{Y}^{-1}_{3,-q^2} \widetilde{Y}^{-1}_{3,-1} \arrow[d,"{2,q^4}"] &\\
& Y^{-1}_{2,q^5}  Y_{3,q^4} \widetilde{Y}^{-1}_{3,-q^2} \widetilde{Y}^{-1}_{3,-1} \arrow[d,"{3,q^5}"] &\\
& \widetilde{Y}^{-1}_{3,-q^4} \widetilde{Y}^{-1}_{3,-q^2} \widetilde{Y}^{-1}_{3,-1} &
\end{tikzcd}\end{equation*}

When we expand at $\Psi=Y_{1,q^{2}}Y_{2,q^{3}}^{-1}Y_{3,q^{2}}\widetilde{Y}_{3,-1}^{-1}$
in the 3rd direction, as $\beta_{\{3\}}(\Psi)=Y_{3,q^{2}}\widetilde{Y}_{3,-1}^{-1}=Y_{3,q^{2}}Y_{3,1}D^{-1}$
we consider the 2-dimensional $\mathcal{U}(01)$-module $L(Y_{q^{2}}Y_{1}D^{-1})\cong L(Y_{q^{2}}Y_{1})\otimes D^{-}$.
Hence we add just one $\ell$-weight $Y_{1,q^{2}}Y_{2,q^{3}}^{-1}Y_{3,q^{2}}Y_{3,1}A_{3,q^{3}}^{-1}D^{-1}$,
where the spectral parameter $q^{3}$ of $A^{-1}$ is determined from
the $q$-character of the $\mathcal{U}(001)$-module $L(Y_{2,q^{2}}Y_{2,1})$. 
\end{example}

\begin{example}
The algorithm also applies to modules beyond fundamental representations.
For instance, the $q$-characters of a $5$-dimensional KR module
$L(Y_{1,q^{-2}}Y_{1,1})$ and a $7$-dimensional $L(Y_{1,q^{-4}}Y_{1,q^{-2}}Y_{1,1})$
over $\mathcal{U}(001)$ are computed correctly:\begin{equation*}\begin{tikzcd}[column sep=-.1cm]
Y_{1,q^{-2}} Y_{1,1}   \arrow[d,"{1,q}"'] & &\quad\quad  &   Y_{1,q^{-4}}Y_{1,q^{-2}} Y_{1,1}  \arrow[d,"{1,q}"']& \\
Y_{1,q^{-2}}Y^{-1}_{1,q^2}Y_{2,q} \arrow[d,"{1,q^{-1}}"']  \arrow[rd,"{2,q^2}"]  && \quad\quad  &  Y_{1,q^{-4}}Y_{1,q^{-2}}Y^{-1}_{1,q^2}Y_{2,q} \arrow[d,"{1,q^{-1}}"']  \arrow[rd,"{2,q^2}"]&\\
Y^{-1}_{1,1}Y^{-1}_{1,q^2}Y_{2,q^{-1}}Y_{2,q}  \arrow[rd,"{2,q^2}"] & Y_{1,q^{-2}}\widetilde{Y}^{-1}_{2,-q} \arrow[d,"{1,q^{-1}}"']   & \quad\quad & Y_{1,q^{-4}}Y^{-1}_{1,1}Y^{-1}_{1,q^2}Y_{2,q^{-1}}Y_{2,q} \arrow[d,"{1,q^{-3}}"] \arrow[rd,"{2,q^2}"]  &  Y_{1,q^{-4}}Y_{1,q^{-2}}\widetilde{Y}^{-1}_{2,-q} \arrow[d,"{1,q^{-1}}"] \\
& Y^{-1}_{1,1}Y_{2,q^{-1}}\widetilde{Y}^{-1}_{2,-q}    &\quad\quad&   Y^{-1}_{1,q^{-2}}Y^{-1}_{1,1}Y^{-1}_{1,q^2}Y_{2,q^{-3}}Y_{2,q^{-1}}Y_{2,q}  \arrow[rd,"{2,q^2}"] & Y_{1,q^{-4}}Y^{-1}_{1,1}Y_{2,q^{-1}} \widetilde{Y}^{-1}_{2,-q} \arrow[d,"{1,q^{-3}}"]  \\
&     & \quad\quad  &&  Y^{-1}_{1,q^{-2}}Y^{-1}_{1,1}Y_{2,q^{-3}}Y_{2,q^{-1}}\widetilde{Y}^{-1}_{2,-q}
\end{tikzcd}\end{equation*}One can easily generalize it to obtain the $q$-character of $L(Y_{1,q^{2-2s}}Y_{1,q^{4-2s}}\cdots Y_{1,1})$
which is $(2s+1)$-dimensional.
\end{example}

\begin{example}
\label{exa:FM-alg-fails}Consider $8$-dimensional $\mathcal{U}(\epsilon_{2|1})$-modules
$L(Y_{1,1}\widetilde{Y}_{2,-q^{-1}})$ and $L(Y_{1,1}\widetilde{Y}_{2,-q})$.\begin{equation*}\begin{tikzcd}[column sep=-.1cm]
&Y_{1,1}\widetilde{Y}_{2,-q^{-1}} \arrow[d,"{1,q^3}"'] \arrow[rd,"{2,q^{-2}}"]&&  \quad\quad && Y_{1,1}\widetilde{Y}_{2,-q} \arrow[ld,"{1,q}"']\arrow[rd,"{2,1}"] & \\
&Y^{-1}_{1,q^2} Y_{2,q} \widetilde{Y}_{2,-q^{-1}} \arrow[ld,"{2,q^2}"']\arrow[rd,"{2,q^{-2}}"']&   Y_{1,1}Y_{1,q^{-2}} Y^{-1}_{2,q^{-1}}  \arrow[d,"{1,q}"]&  \quad\quad &Y^{-1}_{1,q^2}Y_{2,q} \widetilde{Y}_{2,-q}  && Y^{2}_{1,1}Y^{-1}_{2,q} \arrow[ld,"{1,q}"'] \\
\widetilde{Y}_{2,-q^{-1}}\widetilde{Y}^{-1}_{2,-q} \arrow[rd,"{2,q^{-2}}"'] &&Y_{1,q^{-2}} Y^{-1}_{1,q^2} Y_{2,q} Y^{-1}_{2,q^{-1}} \arrow[ld,"{2,q^2}"'] \arrow[d,"{1,q^{-1}}"] &  \quad\quad && 2Y_{1,1}Y^{-1}_{1,q^2} \arrow[ld,"{1,q}"'] & \\
&Y_{1,q^{-2}} Y^{-1}_{2,q^{-1}}\widetilde{Y}^{-1}_{2,-q} \arrow[d,"{1,q}"']& Y^{-1}_{1,1} Y^{-1}_{1,q^2} Y_{2,q} \arrow[ld,"{2,q^2}"]&  \quad\quad  & Y^{-2}_{1,q^2}  Y_{2,q} \arrow[d,"{2,q^2}"] & &  \\
&  Y^{-1}_{1,1} \widetilde{Y}^{-1}_{2,-q} &&   \quad\quad  &Y^{-1}_{1,q^2} \widetilde{Y}^{-1}_{2,-q} &&
\end{tikzcd}\end{equation*}

For the first one, the algorithm does not fail and yields the correct
$q$-character. On the other hand, it fails for the other one at its
lowest $\ell$-weight $\Psi=Y_{1,q^{2}}^{-1}\widetilde{Y}_{2,-q}^{-1}$,
as it has the color $(0,1)$ with multiplicity $1$ but $\beta_{\{1\}}(\Psi)=Y_{1,q^{2}}^{-1}$
is not dominant. Indeed, the algorithm could not capture the missing
$\ell$-weight $Y_{1,1}Y_{2,q}^{-1}\widetilde{Y}_{2,-q}^{-1}$ which
is `dominant' in the sense that it gives rise to a finite-dimensional
simple module.
\end{example}

As the last example suggests, it is hard to determine for which $\ell$-weight
$\Psi$ this algorithm is \textit{well-defined}, that is it never
fails and ends in finite steps, and gives us the correct $q$-character
$\chi_{q}(L(\Psi))$. More precisely, in all known examples, the algorithm
is not well-defined when it fails to capture a `dominant' $\ell$-weight
of $L(\Psi)$. Indeed, in the non-super cases one sufficient condition
is that the module $L(\Psi)$ is \textit{minuscule,} that is the only
dominant $\ell$-weight of $L(\Psi)$ is the highest one \cite[Theorem 5.9]{FM}.
In our super case the situation is more subtle as the usual notion
of dominance as above is not strong enough (\textit{cf.} Remark \ref{rem:intble-fails}),
which results in more examples where the algorithm fails. Still, it
is possible to apply the argument therein to fundamental representations
of $\mathcal{U}(\epsilon)$ as follows.
\begin{thm}
For the fundamental representations $L(Y_{i,a})$ and $L(\widetilde{Y}_{j,a})$,
the algorithm is well-defined and gives the correct $q$-character.
\end{thm}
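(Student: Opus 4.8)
The plan is to adapt the proof of \cite[Theorem 5.9]{FM} for minuscule modules, exploiting that the fundamental representations of $\mathcal{U}(\epsilon)$ are ``thin'' and completely explicit. By Proposition \ref{prop:fund-rep-l-wt} the modules $L(Y_{i,a})$ and $L(\widetilde{Y}_{j,a})$ are, up to a shift of the spectral parameter, the modules $V(1^{i})_{a}$ and the duals $V(-j)_{a}$ of Section \ref{subsec:fund-repn}, realized via the evaluation homomorphism of Proposition \ref{prop:eval-hom}. Their underlying $\mathring{\mathcal{U}}(\epsilon)$-modules $V(1^{i})$, $V(i)$ have $1$-dimensional weight spaces (the basis $\ket{\mathbf{m}}$ is indexed by the $\mathfrak{gl}_{M|N}$-weight), hence every $\ell$-weight space is $1$-dimensional. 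First I would therefore compute $\chi_{q}(L(Y_{i,a}))$ and $\chi_{q}(L(\widetilde{Y}_{j,a}))$ explicitly, by feeding the action of $e_{0},f_{0}$ from Proposition \ref{prop:eval-hom} into the inductive formula \eqref{eq:ind-formula-xir} and Lemma \ref{lem:formula-x-_i,1} to evaluate the Cartan currents $\psi_{k,r}^{+}$ on each weight vector; this fixes once and for all the finite set $\mathfrak{M}$ of $\ell$-weights, each with multiplicity $1$.

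Next I would run the algorithm from the highest $\ell$-weight and prove, by induction along the root order, that it never fails, terminates, and outputs exactly $\mathfrak{M}$ with the correct multiplicities. Termination and the inclusion ``output $\subseteq\mathfrak{M}$'' follow because every monomial introduced by an expansion in direction $k$ has the form $mA_{k,a}^{-1}$ with $m$ already an $\ell$-weight, and such a monomial is again a genuine $\ell$-weight of the fundamental representation by Theorem \ref{thm:integral-l-wt} combined with the rank $1$ / rank $2$ analysis of Section \ref{subsec:restriction-rk12}; since $\mathfrak{M}$ is finite and bounded below in the root order, the recursion halts. For ``no failure'' in directions $k\neq M$: thinness forces the restriction of the fundamental representation to $\mathcal{U}(\epsilon)_{\{k\}}\cong U_{q_{k}}(\widehat{\mathfrak{sl}}_{2})$ to be a direct sum of $2$-dimensional and trivial modules (as one reads off the explicit weights), so $\beta_{\{k\}}(m)$ is automatically dominant whenever $m$ is expanded in direction $k$, and the argument of \cite{FM} applies; in direction $M$ there is no failure by construction. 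Finally, the inclusion ``$\mathfrak{M}\subseteq$ output'' uses that $L(Y_{i,a})$ is generated over the $x_{k,r}^{-}$ by the highest $\ell$-weight vector: by Theorem \ref{thm:integral-l-wt} every $\ell$-weight is connected to the top through a chain of steps $\Psi'\mapsto\Psi'A_{k,a}^{-1}$, and thinness guarantees that each such step is visible in the appropriate rank $1$ (rank $2$ for $k=M$) restriction, hence is reproduced by the algorithm.

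The main obstacle is the node $M$, where no notion of dominance is available and the rank $1$ restriction to $\mathcal{U}(01)\cong\mathcal{U}(\epsilon)_{\{M\}}$ is too coarse to determine the spectral parameter of the $A_{M,a}^{-1}$ added at an expansion (its $q$-character is always of the shape $\Psi+\Psi\overline{D}^{-1}$). Here I would invoke the rank $2$ computations of Section \ref{subsec:rank2-case}: restricting instead to $\mathcal{U}(\epsilon)_{\{M-1,M\}}\cong\mathcal{U}(001)$, or to $\mathcal{U}(\epsilon)_{\{M,M+1\}}\cong\mathcal{U}(011)$ when $M=1$, decomposing $\beta_{\{M\}}(m)$ into factors in general position as in Appendix \ref{sec:GQG-RT-rank1}, and reading the spectral parameter off the explicit $q$-characters of the Kirillov--Reshetikhin-type $\mathcal{U}(001)$- or $\mathcal{U}(011)$-modules computed there; Theorem \ref{thm:integral-l-wt} then rules out any extraneous factor of the form $Y_{M-1,b}\widetilde{Y}_{M+1,b}$. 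The remaining point to verify carefully is the compatibility of these rank $2$ data with the explicit $q$-characters of $V(1^{i})_{a}$ and $V(-j)_{a}$ at the odd node, which is a finite but slightly delicate matching of the crystal of $V(1^{i})$ (resp. $V(i)$) near $M$ against the rank $2$ computation.
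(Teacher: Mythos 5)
There is a genuine gap in your argument, concentrated in the completeness/``no failure'' step. The heart of the paper's proof is a structural claim you never isolate or prove: in the graph of $\ell$-weights of $L(Y_{i,a})$ (edges $\Psi\rightarrow\Psi A_{k,b}^{-1}$ inside each rank-$1$ component of the restriction), the highest $\ell$-weight is the \emph{only} vertex with no incoming arrow. This unique-source property is exactly what replaces the minuscule/dominance hypothesis of \cite[Theorem 5.9]{FM} and is what the induction of \cite{FM} actually runs on; the paper establishes it by a weight computation using the explicit weight structure of $V(1^{i})$ (the constraints $\lambda_{k}\in\{0,1\}$ for $k\leq M$, $\sum\lambda_{k}=i$) together with the explicit $f_{M}$-action producing the needed incoming arrow at node $M$. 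Your argument for ``$\mathfrak{M}\subseteq$ output'' instead asserts that Theorem \ref{thm:integral-l-wt} plus thinness makes every step $\Psi'\mapsto\Psi'A_{k,b}^{-1}$ ``visible in the rank $1$ restriction, hence reproduced by the algorithm''. Visibility in the restriction is not enough: the algorithm only expands a monomial $m$ as if $\beta_{\{k\}}(m)$ were the \emph{highest} $\ell$-weight of its rank-$1$ component, whereas in the actual module the component through $m$ may be larger and $m$ need not be $k$-dominant; this is precisely the failure mode of Example \ref{exa:FM-alg-fails}, so the claim cannot be waved through and must be derived from the unique-source (or an equivalent) property. In other words, your proposed completeness step is essentially circular, and the delicate part of the proof is missing.

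Two further points. First, your claim that for $k\neq M$ thinness forces the restriction of $L(Y_{i,a})$ to $\mathcal{U}(\epsilon)_{\{k\}}\cong U_{q_{k}}(\widehat{\mathfrak{sl}}_{2})$ to split into trivial and $2$-dimensional modules is false: for nodes $k>M$ the entries $m_{k},m_{k+1}$ of the basis $\ket{\mathbf{m}}$ of $V(1^{i})$ are unconstrained, so $\mathfrak{sl}_{2}$-strings of length up to $i+1$ occur as soon as $i\geq2$ (thinness of weight spaces does not bound string lengths); fortunately that claim is not what is needed, but it cannot be used as stated. Second, computing the full $q$-character of $V(1^{i})_{a}$ explicitly via the evaluation module, while plausible, does not by itself prove the theorem: one must still show the algorithm's output coincides with it, and that comparison again reduces to the graph-theoretic property above. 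Your handling of the node $M$ (restriction to $\mathcal{U}(001)$ or $\mathcal{U}(011)$, general-position decomposition, exclusion of $Y_{M-1,b}\widetilde{Y}_{M+1,b}$ factors via Theorem \ref{thm:integral-l-wt}) is consistent with the algorithm's definition in Section \ref{subsec:FM-algorithm}, but it addresses only the local rule, not the global induction that the proof requires.
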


\begin{proof}
We may draw a directed graph from $\chi_{q}(L(Y_{i,a}))$ as follows.
Its vertex set is the multiset of the $\ell$-weights of $\chi_{q}(L(Y_{i,a}))$.
For each $i\in\mathring{I}$ we first partition the vertex set so
that each component forms a $q$-character of a simple $\mathcal{U}(\epsilon)_{\{i\}}$-modules,
and then we have an arrow $\Psi\overset{i}{\longrightarrow}\Psi^{\prime}$
whenever $\Psi$ and $\Psi^{\prime}$ belong to the same component
and $\Psi^{\prime}=\Psi A_{i,b}^{-1}$ for some $b\in\mathbf{k}^{\times}$. 

We claim that the highest $\ell$-weight is the only vertex in the
graph without incoming arrows. Indeed, let $m$ be such a vertex.
This first implies that for each $i\neq M$, $\beta_{\{i\}}(m)$ is
dominant, that is a monomial in $\overline{Y}_{i,b}$ for $b\in\mathbf{k}^{\times}$.
Hence we may write $m$ in the following form 
\[
m=\prod_{r\in R}Y_{i_{r},b_{r}}\cdot\prod_{t\in T_{+}}Y_{M,c_{t}^{+}}\prod_{t\in T_{-}}Y_{M,c_{t}^{-}}^{-1}D^{k}\cdot\prod_{s\in S}\widetilde{Y}_{j_{s},d_{s}},\quad i_{r}<M<j_{s}
\]
so that its weight is
\[
\sum_{r}(\delta_{1}+\cdots+\delta_{i_{r}})+(\left|T_{+}\right|-\left|T_{-}\right|)(\delta_{1}+\cdots+\delta_{M})+k(\delta_{1}+\cdots+\delta_{M}-\delta_{M+1}-\cdots-\delta_{n})-\sum_{s}(\delta_{j_{s}+1}+\cdots+\delta_{n}).
\]
We invoke the well-known weight structure of the underlying $\mathring{\mathcal{U}}(\epsilon)$-module
$V(1^{i})$ of $L(Y_{i,a})$, namely any weight is of the form
\[
\sum_{k\in\mathbb{I}}\lambda_{k}\delta_{k}\in\bigoplus\mathbb{Z}_{\geq0}\delta_{k},\quad\sum\lambda_{k}=i,\,\lambda_{k}\in\{0,1\}\text{ if }k\leq M.
\]
Looking at the coefficients of $\delta_{1},\delta_{M},\delta_{M+1}$
and $\delta_{n}$ of the weight of $m$, we obtain inequalities
\[
0\leq\left|R\right|+\left|T_{+}\right|-\left|T_{-}\right|\leq1,\quad0\leq\left|T_{+}\right|-\left|T_{-}\right|+k\leq1,\quad k\leq0,\quad-k-\left|S\right|\ge0
\]
respectively. 

Suppose $S\neq\emptyset$ so that $k<0$. Then $\left|T_{+}\right|-\left|T_{-}\right|\geq-k>0$,
which forces $R=\emptyset$ and $\left|T_{+}\right|-\left|T_{-}\right|=1=-k$
and then $\left|S\right|=1$. Hence $m$ is of the form $m=Y_{M,c}D^{-1}\widetilde{Y}_{j,d}$
with the weight $\delta_{M+1}+\cdots+\delta_{j}$. However, we know
that $V(1^{a})$ has the unique vector $\ket{\mathbf{e}_{M+1}+\cdots+\mathbf{e}_{j}}$
(see Section \ref{subsec:fund-repn} for the notation) of such weight,
and this vector is obtained as 
\[
\ket{\mathbf{e}_{M+1}+\cdots+\mathbf{e}_{j}}=f_{M}\ket{\mathbf{e}_{M}+\mathbf{e}_{M+1}+\cdots+\mathbf{e}_{j}}.
\]
Hence $m$ has an incoming arrow with label $M$, which contradicts
our assumption.

Therefore, we must have $S=\emptyset$ and the sum of all coefficients
in the weight of $m$ is
\[
\sum i_{r}+(\left|T_{+}\right|-\left|T_{-}\right|+k)M-Nk
\]
which equals to $i$. Since $k\leq0$ and $i\leq M$, this forces
$\left|T_{+}\right|-\left|T_{-}\right|+k=0$ and so $m$ is either
$Y_{i,b}$ (if $k=0$) or $\prod_{t\in T_{+}}Y_{M,c_{t}^{+}}\prod_{t\in T_{-}}Y_{M,c_{t}^{-}}^{-1}D^{-1}$
with $\left|T_{+}\right|-\left|T_{-}\right|=1$ (if $k=-1$). In the
latter case, its weight is $\delta_{M+1}+\cdots+\delta_{n}$ which
again implies $m$ has an incoming arrow with label $M$ as in the
previous paragraph. We conclude $m=Y_{i,a}$, as desired.

The remaining induction argument of the proof of \cite[Theorem 5.9]{FM}
relies on this tree property of the directed graph, not on the specific
representation theory of $U_{q}(\widehat{\mathfrak{sl}}_{2}).$ Therefore
we may repeat it to conclude, and the proof for $L(\widetilde{Y}_{j,a})$
is parallel.
\end{proof}
More generally we have the following conjecture supported by a number
of examples, which asserts that a naive definition of dominance works
for polynomial modules (and also for their duals). 
\begin{conjecture}
Let $\Psi$ be a monomial in $Y_{i,a}$ for $1\leq i\leq M$, $a\in\mathbf{k}^{\times}$,
so that $L(\Psi)$ is a polynomial module. If $L(\Psi)$ has no $\ell$-weight
other than the highest one that is a monomial in $Y_{i,a}$, $\widetilde{Y}_{j,a}$
and $D^{\pm1}$ ($i\leq M<j$, $a\in\mathbf{k}^{\times}$), then the
algorithm is well-defined.
\end{conjecture}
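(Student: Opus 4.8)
The plan is to imitate the strategy already used for fundamental representations: encode the target $q$-character as a directed graph, show that the hypothesis of the conjecture forces this graph to have a unique source, and then feed this ``tree property'' into the inductive argument of \cite[Theorem 5.9]{FM}. Concretely, for a special $L(\Psi)$ I would build the graph $\Gamma$ whose vertex set is the multiset of $\ell$-weights of $\chi_q(L(\Psi))$, and for each $i\in\mathring I$ partition this vertex set into blocks, each carrying the $q$-character of a composition factor of the restriction $\mathrm{Res}_{\mathcal U(\epsilon)_{\{i\}}}L(\Psi)$ (a simple $U_{q_i}(\widehat{\mathfrak{sl}}_2)$-module when $i\neq M$, a simple $\mathcal U(01)$-module when $i=M$), using Lemma \ref{lem:restriction-module}; inside a block one draws an arrow $\Psi_1\overset{i}{\longrightarrow}\Psi_2$ whenever $\Psi_2=\Psi_1 A_{i,b}^{-1}$, with $b$ the spectral parameter pinned down by Theorem \ref{thm:integral-l-wt} when $i\neq M$ and by the rank~$2$ resolution of \S\ref{subsec:restriction-rk12} (together with \S\ref{subsec:rank2-case} and Appendix \ref{sec:GQG-RT-rank1}) when $i=M$. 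By the cone property (Corollary \ref{cor:q-char-cone-prop}) and a weight bound the graph is finite, so termination of the algorithm will not be the issue.

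The crucial step is to show $\Psi$ is the only vertex of $\Gamma$ without an incoming arrow. Suppose $m$ is such a vertex. For each $i\neq M$, having no incoming $i$-arrow forces $\beta_{\{i\}}(m)$ to be the highest $\ell$-weight of a composition factor of the $U_{q_i}(\widehat{\mathfrak{sl}}_2)$-restriction, hence dominant; thus $m$ has no negative power of $Y_{i,a}$ for $i<M$ and no negative power of $\widetilde Y_{j,a}$ for $j>M$. For $i=M$, $m$ must be the highest $\ell$-weight of its $\mathcal U(01)$-block, so by the classification in Appendix \ref{sec:GQG-RT-rank1} the monomial $\beta_{\{M\}}(m)$ is a product of general-position segments in $\overline Y_{M,\bullet}$ and $\overline{\widetilde Y}_{M,\bullet}$ times a nonnegative power of $\overline D^{-1}$, i.e. its only negative contribution is a power of $D^{-1}$. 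Combining these constraints, $m$ is a monomial in $Y_{i,a}$ ($i\le M$), $\widetilde Y_{j,a}$ ($j\ge M$) and $D^{\pm1}$ in the sense of the conjecture, so by hypothesis $m=\Psi$; since the highest weight space of $L(\Psi)$ is one-dimensional, this source is unique. The anti-dominant case (duals of polynomial modules) follows by applying the involution $\Omega$. Granting the tree property, the induction of \cite[Theorem 5.9]{FM}---which uses only this property together with the ring-homomorphism and restriction compatibilities of $\chi_q$ (Proposition \ref{prop:q-char-mult}, Lemma \ref{lem:restriction-module}, Proposition \ref{prop:restriction-Ainv})---then shows the algorithm never fails and outputs $\chi_q(L(\Psi))$, the only extra input being the $M$-th expansion rule, whose validity is exactly what \S\ref{subsec:rank2-case} and Appendix \ref{sec:GQG-RT-rank1} provide.

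The part I expect to be the genuine obstacle is the tree-property step, and more precisely the multiplicity and colour bookkeeping hidden in the choice of block decomposition used to build $\Gamma$. Finite-dimensional $U_{q_i}(\widehat{\mathfrak{sl}}_2)$- and $\mathcal U(01)$-modules are not completely reducible, so a monomial can occur as a highest $\ell$-weight of one admissible partition and as an interior vertex of another; one must show that the ``no spurious dominant $\ell$-weight'' hypothesis guarantees a single partition, simultaneously valid in all directions, compatible with the colours and multiplicities the algorithm assigns. This is exactly the point where \cite{FM} uses the minuscule hypothesis, and in the super setting it is aggravated at the node $M$: all prime $\mathcal U(01)$-modules have the two-term $q$-character $\Psi'+\Psi'D^{-1}$, so the spectral parameter of $A_{M,a}^{-1}$ is invisible to the rank~$1$ restriction and the needed consistency has to be imported from $\mathcal U(001)$ and $\mathcal U(011)$. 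Handling this uniformly, rather than case by case as for fundamental representations, is what keeps the statement at the level of a conjecture; a natural route to it would be to first establish a ``screening operator'' characterization of the image of $\chi_q$ along the lines of the deformed-Cartan-matrix discussion in \S\ref{subsec:def-Cartan}, which would make the reconstruction from $\Psi$ canonical.
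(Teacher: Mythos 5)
The statement you are addressing is stated in the paper as a conjecture, with no proof offered (the author only says it is supported by examples), so there is no argument of the paper to compare against; the question is whether your sketch closes it, and it does not. Your plan sensibly mirrors the proof given for fundamental representations (build the graph of $\ell$-weights, prove the highest $\ell$-weight is the unique source, then run the induction of \cite[Theorem 5.9]{FM}), and you are right that the genuine obstacle is the tree-property step together with the block/multiplicity bookkeeping; but the tree-property argument as you wrote it already has a concrete gap before those issues arise. From ``no incoming $i$-arrow for $i\neq M$'' you correctly get nonnegativity of the exponents of $Y_{i,a}$ ($i<M$) and $\widetilde Y_{j,a}$ ($j>M$), but the step ``for $i=M$, $m$ must be the highest $\ell$-weight of its $\mathcal U(01)$-block, hence its only negative contribution is a power of $D^{-1}$'' does not follow: in rank $1$ \emph{every} monomial in $\overline Y_{M,\bullet}^{\pm1},\overline{\widetilde Y}_{M,\bullet}^{\pm1}$ is the highest $\ell$-weight of some finite-dimensional simple $\mathcal U(01)$-module, so being a source in the $M$-direction imposes no shape constraint at all, and a source $m$ may still carry genuinely negative powers of $Y_{M,\bullet}$ (equivalently factors $\widetilde Y_{M,b}=DY_{M,-b}^{-1}$) that are not of the form allowed in the hypothesis. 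Hence ``combining these constraints, $m$ is a monomial in the sense of the conjecture, so $m=\Psi$'' is not justified. This is exactly the point where the proof for $L(Y_{i,a})$ in the paper had to abandon purely formal restriction arguments and invoke the explicit weight structure of the underlying $\mathring{\mathcal U}(\epsilon)$-module $V(1^i)$ (weights in $\bigoplus\mathbb Z_{\geq0}\delta_k$ with entries in $\{0,1\}$ for $k\leq M$, total size $i$) plus an explicit incoming $f_M$-arrow at the offending weights; for a general polynomial $L(\Psi)$ no comparably rigid weight information is available, and you do not supply a substitute.

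Beyond that, even granting the tree property, your appeal to the induction of \cite[Theorem 5.9]{FM} needs more than you state: that induction is driven by the minuscule (``unique dominant monomial'') hypothesis and by complete knowledge of which monomials can head a rank-$1$ block, and here the partition of the vertex set into blocks is not canonical (the restrictions are not semisimple in a way that pins down the blocks), the relevant notion of ``special'' monomial differs from dominance, and at the node $M$ the spectral parameter of $A_{M,a}^{-1}$ must be imported from rank $2$ consistently across overlapping blocks. You identify these difficulties yourself, which is to your credit, but it means the proposal is a program rather than a proof; the conjecture remains open on your account as well.
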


\appendix

\section{Existence of the braid action in rank 2 \label{sec:well-defd-T_i-rank2}}

In this section, we complete the proof of the existence of the braid
action $T_{i}$ \cite{Machida,Yu} by considering the remaining case
of $n=3$ and $M,N>0$. Thanks to the obvious symmetry it is enough
to consider the case $\epsilon=(001)$. Since the verification of
the braid relation is exactly the same as in the general case, we
only check that the formula (\ref{eq:def-braid-action}) for $T_{i}$
defines a well-defined algebra homomorphism.

First, the existence of $T_{1}$ can be shown exactly as in \cite[Chapter 37]{Lb}
(whose $T_{1,1}^{\prime\prime}$ corresponds to our $T_{1}$). Indeed,
an automorphism $T_{1}$ can be defined on any integrable $\mathcal{U}(001)$-modules
as in \cite[Chapter 5]{Lb}, which uniquely lifts to an algebra automorphism.
That $T_{1}$ is given by the formula (\ref{eq:def-braid-action})
can be verified as in \cite[37.2.2]{Lb}, thanks to the even Serre
relation (\ref{eq:GQG-DJ-evenSerre}) at $i=1$.

For $T_{2}:\mathcal{U}(001)\longrightarrow\mathcal{U}(010)$, we shall
verify that $T_{2}$ defined by (\ref{eq:def-braid-action}) preserve
the defining relations of $\mathcal{U}(001)$. Up to the computation
in \cite{Yu}, it suffices to check the quintic Serre relation
\[
\begin{array}{c}
[e_{0},[e_{2},[e_{0},[e_{2},e_{1}]_{q}]_{-1}]]_{-q^{-1}}=[e_{2},[e_{0},[e_{2},[e_{0},e_{1}]_{q}]_{-1}]]_{-q^{-1}}\end{array}.
\]
First consider the image of the lefthand side of the relation under
$T_{2}$. Since $T_{2}[e_{2},e_{1}]_{q}=-qe_{1}$, 
\[
T_{2}[e_{0},[e_{2},[e_{0},[e_{2},e_{1}]_{q}]_{-1}]]_{-q^{-1}}=-q[T_{2}e_{0},[T_{2}e_{2},[T_{2}e_{0},e_{1}]_{-1}]]_{-q^{-1}}.
\]
Recall that $T_{2}e_{2}=-f_{2}k_{2}$. It is easy to check that
\[
[f_{2},[T_{2}e_{0},e_{1}]_{-1}]=(e_{1}e_{0}-q^{-1}e_{0}e_{1})k_{2}^{-1},
\]
and so we have
\begin{align*}
-q[T_{2}e_{0},[T_{2}e_{2},[T_{2}e_{0},e_{1}]_{-1}]]_{-q^{-1}} & =q[T_{2}e_{0},(e_{1}e_{0}-q^{-1}e_{0}e_{1})]_{-q^{-1}}.
\end{align*}

Next, for the righthand side, we first compute
\[
\Delta\coloneqq T_{2}[e_{2},[e_{0},e_{1}]_{q}]_{-1}=q^{-1}e_{0}T_{2}(e_{1})-qT_{2}(e_{1})e_{0}+T_{2}(e_{0})e_{1}+q^{2}e_{1}T_{2}(e_{0})
\]
by invoking 
\[
T_{2}(e_{1}e_{2}-q^{-1}e_{2}e_{1})=e_{1},\quad T_{2}(e_{0}e_{2}+qe_{2}e_{0})=e_{0}.
\]
Then the image of the righthand side is
\[
[T_{2}e_{2},[T_{2}e_{0},\Delta]]_{-q^{-1}}=q^{-1}[f_{2},[T_{2}e_{0},\Delta]]k_{2}.
\]
One can readily check that $[f_{2},\Delta]=0$ and then
\begin{align*}
 & =q^{-1}[[f_{2},T_{2}e_{0}],\Delta]k_{2}\\
 & =q^{-1}[e_{0},\Delta]\\
 & =qe_{0}e_{1}T_{2}(e_{0})-qe_{1}T_{2}(e_{0})e_{0}+q^{-1}e_{0}T_{2}(e_{0})e_{1}-q^{-1}T_{2}(e_{0})e_{1}e_{0}\\
 & \quad+q^{-2}e_{0}^{2}T_{2}(e_{1})-q^{-2}e_{0}T_{2}(e_{1})e_{0}-e_{0}T_{2}(e_{1})e_{0}+T_{2}(e_{1})e_{0}^{2}.
\end{align*}

It remains to check that 
\begin{align*}
q[T_{2}e_{0}, & (e_{1}e_{0}-q^{-1}e_{0}e_{1})]_{-q^{-1}}\\
= & qe_{0}e_{1}T_{2}(e_{0})-qe_{1}T_{2}(e_{0})e_{0}+q^{-1}e_{0}T_{2}(e_{0})e_{1}-q^{-1}T_{2}(e_{0})e_{1}e_{0}\\
 & +q^{-2}e_{0}^{2}T_{2}(e_{1})-q^{-2}e_{0}T_{2}(e_{1})e_{0}-e_{0}T_{2}(e_{1})e_{0}+T_{2}(e_{1})e_{0}^{2}.
\end{align*}
Indeed, after the substitution $T_{2}(e_{0})=e_{2}e_{0}-q^{-1}e_{0}e_{2}$
and $T_{2}(e_{1})=e_{2}e_{1}+qe_{1}e_{2}$ in both sides and the cancellation
of common terms, we are left with
\begin{align*}
qe_{2}e_{0}e_{1}e_{0} & -e_{2}e_{0}^{2}e_{1}-q^{-1}e_{1}e_{0}^{2}e_{2}+q^{-2}e_{0}e_{1}e_{0}e_{2}\\
=e_{0}e_{1}e_{0} & e_{2}-q^{-1}e_{2}e_{0}e_{1}e_{0}+q^{-1}e_{0}^{2}e_{1}e_{2}+e_{2}e_{1}e_{0}^{2}
\end{align*}
which easily follows from the even Serre relation at $i=0$.

\section{Construction of the surjective map $\mathcal{B}$\label{sec:GQG-Drinfeld-constr}}

In this section, we explain a construction of the surjective algebra
homomorphism $\mathcal{B}:\mathcal{U}^{\mathrm{Dr}}(\epsilon)\rightarrow\mathcal{U}(\epsilon)$
following \cite{B2,Y}. Since the proofs are completely parallel with
those in \cite[Section 8]{Y}, we focus on describing the image of
loop generators and formulas adapted to $\mathcal{U}(\epsilon)$.
See Section \ref{subsec:aff-PBW} for (extended) affine Weyl groups
and corresponding braid action.

As in \cite{B2}, we define $\mathcal{B}$ on the loop generators
as
\begin{align*}
x_{i,k}^{+} & \longmapsto o(i)^{k}T_{\varpi_{i}}^{-k}(e_{i}),\quad x_{i,k}^{-}\longmapsto o(i)^{k}T_{\varpi_{i}}^{k}(f_{i}),\quad(k\in\mathbb{Z})\\
\psi_{i,r}^{+} & \longmapsto o(i)^{r}(q-q^{-1})k_{i}\overline{\psi}_{i,r},\,\psi_{i,0}^{+}\longmapsto k_{i},\quad\psi_{i,-r}^{-}\longmapsto\Omega(\mathcal{B}(\psi_{i,r}^{+}))\quad(r\geq0)
\end{align*}
where $o(i)=(-1)^{i}$ and 
\[
\overline{\psi}_{i,r}=c^{-r/2}\left(T_{\varpi_{i}}^{r}(-k_{i}^{-1}f_{i})e_{i}-\mathbf{q}(\alpha_{i},\alpha_{i})^{-1}e_{i}T_{\varpi_{i}}^{r}(-k_{i}^{-1}f_{i})\right)\in c^{-r/2}\mathcal{U}^{+}(\epsilon)\quad(r>0).
\]
The image of $h_{i,r}$ is determined accordingly by (\ref{eq:GQG-psi-halfVO}).
Below, for notational convenience, for $x\in\mathcal{U}^{\mathrm{Dr}}(\epsilon)$
we will just denote by the same symbol $x$ its image $\mathcal{B}(x)$
in $\mathcal{U}(\epsilon)$.

To show that $\mathcal{B}$ is a well-defined algebra homomorphism,
one first has to verify commutation relations between loop generators
(\ref{eq:GQG-Drinfeld-hh}-\ref{eq:GQG-Drinfeld-q-locality}). This
partly relies on the computation in the subalgebra of rank 1 affine
type, which is still valid in our case. For each $i\in\mathring{I}$,
let $\mathcal{U}_{i}$ be the subalgebra of $\mathcal{U}(\epsilon)$
generated by $e_{i},f_{i},k_{i},T_{\varpi_{i}}(-k_{i}^{-1}f_{i}),T_{\varpi_{i}}(-e_{i}k_{i})$
and $ck_{i}^{-1}$, and also $U_{q_{i}}(\widehat{\mathfrak{sl}}_{2})$
the quantum affine algebra associated to $\mathfrak{sl}_{2}$ with
quantum parameter $q_{i}$ (see for example \cite[Theorem 2.3]{CP}). 
\begin{prop}[{cf. \cite[Proposition 3.8]{B2}}]
 For $i\in\mathring{I}$ such that $p(i)=0$, there exists an algebra
isomorphism $h_{i}:U_{q_{i}}(\widehat{\mathfrak{sl}}_{2})\rightarrow\mathcal{U}_{i}$
given by
\begin{align*}
h_{i}(e_{1}) & =e_{i},\quad h_{i}(f_{1})=f_{i},\quad h_{i}(k_{1})=k_{i},\\
h_{i}(e_{0}) & =T_{\varpi_{i}}(-k_{i}^{-1}f_{i}),\quad h_{i}(f_{0})=T_{\varpi_{i}}(-e_{i}k_{i}),\quad h_{i}(k_{0})=T_{\varpi_{i}}(k_{i}^{-1})=ck_{i}^{-1}.
\end{align*}
Moreover, we have $T_{i}\restriction_{\mathcal{U}_{i}}=h_{i}\circ T_{1}\circ h_{i}^{-1}$
and $T_{\varpi_{i}}\restriction_{\mathcal{U}_{i}}=h_{i}\circ T_{\varpi_{1}}\circ h_{i}^{-1}$
where $T_{1},T_{\varpi_{1}}$ are braid symmetries on $U_{q_{i}}(\widehat{\mathfrak{sl}}_{2})$.
\end{prop}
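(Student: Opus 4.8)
The plan is to first build a surjective algebra homomorphism $h_i\colon U_{q_i}(\widehat{\mathfrak{sl}}_2)\to\mathcal{U}_i$, then prove it is injective, and finally verify the two braid-intertwining identities by evaluation on generators. The hypothesis $p(i)=0$, i.e. $\epsilon_i=\epsilon_{i+1}$, is used at every turn: it forces $\epsilon s_i=\epsilon$, so $T_i$ is an \emph{automorphism} of $\mathcal{U}(\epsilon)$ (not merely an isomorphism onto $\mathcal{U}(\epsilon s_i)$); it guarantees that no root in the string $\{m\delta\pm\alpha_i\}$ is odd, so $e_i,f_i$ are not nilpotent; and it makes the rank $1$ parameter $q_i=(-1)^{\epsilon_i}q^{(-1)^{\epsilon_i}}\in\{\pm q^{\pm1}\}$ exactly the one for which $U_{q_i}(\widehat{\mathfrak{sl}}_2)$ is the correct target. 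Recalling from Section~\ref{subsec:aff-PBW} that $e_{\delta-\alpha_i}=T_{\varpi_i}T_i^{-1}(e_i)=T_{\varpi_i}(-k_i^{-1}f_i)$ and, applying $\Omega$, that $f_{\delta-\alpha_i}=T_{\varpi_i}(-e_ik_i)$, the map $h_i$ is just $e_1\mapsto e_i$, $e_0\mapsto e_{\delta-\alpha_i}$, $f_1\mapsto f_i$, $f_0\mapsto f_{\delta-\alpha_i}$, $k_1\mapsto k_i$, $k_0\mapsto ck_i^{-1}=T_{\varpi_i}(k_i^{-1})$.

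To see $h_i$ is well defined I would verify the Drinfeld--Jimbo relations of $U_{q_i}(\widehat{\mathfrak{sl}}_2)$ on these images. Those involving only $e_i,f_i,k_i$ are part of the presentation of $\mathcal{U}(\epsilon)$. Those involving only the ``$0$'' generators follow by applying the automorphism $T_{\varpi_i}$ to the $T_i^{-1}$-images of the corresponding relations among $e_i,f_i,k_i$, using $T_i^{-1}(k_i)=k_i^{-1}$ and $T_{\varpi_i}(k_i)=c^{-1}k_i$; for instance $[e_0,f_0]=T_{\varpi_i}T_i^{-1}([e_i,f_i])=T_{\varpi_i}\bigl(\tfrac{k_i^{-1}-k_i}{q_i-q_i^{-1}}\bigr)=\tfrac{k_0-k_0^{-1}}{q_i-q_i^{-1}}$. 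The real content lies in the mixed relations: the $k$-commutations $k_0e_1k_0^{-1}=q_i^{-2}e_1$, etc., the vanishings $[e_i,f_{\delta-\alpha_i}]=0=[e_{\delta-\alpha_i},f_i]$, and the cubic Serre relations between $e_i$ and $e_{\delta-\alpha_i}$ (and the $f$-analogue). The $k$-commutations are immediate from the $Q$-grading since $\mathrm{cl}(\delta-\alpha_i)=-\alpha_i$. The remaining ones I would establish exactly as in \cite[Section~8]{Y} following \cite{B2}: they reduce to computations inside the standard rank $\le 2$ parabolic subalgebras, where convexity of $\prec$ and the Levendorskii--Soibelman formula (Theorem~\ref{thm:LS-formula}) control the commutators of $e_{\delta-\alpha_i}$ with $e_i$; alternatively one can transport the identities through $\tau$ to $U(\epsilon)$, where at an even vertex the braid action on Chevalley generators literally coincides with the non-super type $A$ formula (Remark~\ref{rem:tau-pres-q-bracket}), so the required relations are precisely the non-super $\widehat{\mathfrak{sl}}_2$ relations already verified in \cite{B2,Y}.

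Surjectivity of $h_i$ onto $\mathcal{U}_i$ is clear since the six displayed elements generate $\mathcal{U}_i$ by definition. For injectivity I would compare PBW bases: the affine root vectors $e_{m\delta+\alpha_i}$, the imaginary vectors $e_{(m\delta,i)}$, the vectors $f_{m\delta-\alpha_i}$ and $k_i^{\pm1}$ attached to the roots $m\delta\pm\alpha_i$ and $m\delta$ form part of the PBW basis of Theorem~\ref{thm:PBW-basis}, hence are linearly independent in $\mathcal{U}(\epsilon)$, and they are exactly the $h_i$-images of the PBW basis of $U_{q_i}(\widehat{\mathfrak{sl}}_2)$; since $h_i$ respects the triangular decomposition and is bijective on PBW monomials, it is injective. (Equivalently, one compares the graded characters of $U_{q_i}(\widehat{\mathfrak{sl}}_2)$ and $\mathcal{U}_i$.) This also shows $\mathcal{U}_i=h_i\bigl(U_{q_i}(\widehat{\mathfrak{sl}}_2)\bigr)$, so once the intertwining identities are known on generators, $T_i$ and $T_{\varpi_i}$ automatically preserve $\mathcal{U}_i$.

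For the identities $T_i\!\restriction_{\mathcal{U}_i}=h_i\circ T_1\circ h_i^{-1}$ and $T_{\varpi_i}\!\restriction_{\mathcal{U}_i}=h_i\circ T_{\varpi_1}\circ h_i^{-1}$, both sides are algebra homomorphisms, so it suffices to check them on $e_1,f_1,k_1,e_0,f_0,k_0$, i.e. on $e_i,f_i,k_i,e_{\delta-\alpha_i},f_{\delta-\alpha_i},ck_i^{-1}$. On $e_i,f_i,k_i$ the identity for $T_i$ is the rank $1$ formula, e.g. $T_i(e_i)=-f_ik_i=h_i(-f_1k_1)=h_i(T_1(e_1))$; on $e_{\delta-\alpha_i}$ etc. one uses the braid relations in $\widetilde W$ between $s_i$ and $t_{\varpi_i}$ (so that $T_iT_{\varpi_i}T_i^{-1}$ is again of the form $T_w$) to reduce $T_i(e_{\delta-\alpha_i})$ to an explicit element matching $h_i(T_1(e_0))$, and similarly for $T_{\varpi_i}$ versus $T_{\varpi_1}$. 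I expect the main obstacle to be the verification of the mixed cubic Serre relations relating $e_i$ and $e_{\delta-\alpha_i}$ (and the dual pair) together with $[e_i,f_{\delta-\alpha_i}]=0$: this is where one genuinely needs the braid combinatorics of $\varpi_i$ and the rank $2$ straightening, and it is the part that is ``completely parallel to \cite[Section~8]{Y}'' rather than a one-line consequence of the Drinfeld--Jimbo presentation.
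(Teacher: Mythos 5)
Your proposal is correct and follows essentially the same route the paper intends, since the paper gives no independent argument but defers to \cite{B2} and \cite{Y}: check the Drinfeld--Jimbo relations of $U_{q_i}(\widehat{\mathfrak{sl}}_2)$ on the displayed images (with the mixed Serre relations and $[e_{i},f_{\delta-\alpha_{i}}]=0$ as the only nontrivial verifications, handled exactly as in \cite[Section 8]{Y}), deduce surjectivity by definition of $\mathcal{U}_i$ and injectivity from the PBW basis of Theorem \ref{thm:PBW-basis}, and verify the intertwining identities on generators. One small organizational caveat: identifying the $h_i$-images of the $U_{q_i}(\widehat{\mathfrak{sl}}_2)$ root vectors with $e_{m\delta\pm\alpha_i}$ already uses $h_i\circ T_{\varpi_1}=T_{\varpi_i}\circ h_i$, so that identity (whose verification on generators needs neither injectivity nor $h_i^{-1}$) should be established before, not after, the PBW comparison.
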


For $i,j\in\mathring{I}$ and $k>0$, put $Q_{ij,k}=\frac{\mathbf{q}(\alpha_{i},\alpha_{j})^{k}-\mathbf{q}(\alpha_{i},\alpha_{j})^{-k}}{q-q^{-1}}$
and $b_{ij}=\mathbf{q}(\alpha_{i},\alpha_{j})c^{-1/2}$. Note that
in the statement (2) below, we do not know yet whether $T_{\varpi_{i}}(\overline{\psi}_{i,r})=\overline{\psi}_{i,r}$
when $p(i)=1$.
\begin{lem}[{cf. \cite[Lemma 8.3.2]{Y}}]
\begin{enumerate}
\item Let $i\in I$ be such that $p(i)=0$, $r>0$ and $m\in\mathbb{Z}$.
Then $T_{\varpi_{i}}(\overline{\psi}_{i,r})=\overline{\psi}_{i,r}$
and we have
\begin{align*}
[\overline{\psi}_{i,r},T_{\varpi_{i}}^{m}(f_{i})] & =-c^{1/2}Q_{ii,1}\left((q-q^{-1})\sum_{k=1}^{r-1}b_{ii}^{1-k}T_{\varpi_{i}}^{m+k}(f_{i})\overline{\psi}_{i,r-k}+b_{ii}^{1-r}T_{\varpi_{i}}^{m+r}(f_{i})\right),\\{}
[\overline{\psi}_{i,r},T_{\varpi_{i}}^{m}(e_{i})] & =c^{-1/2}Q_{ii,1}\left((q-q^{-1})\sum_{k=1}^{r-1}b_{ii}^{k-1}T_{\varpi_{i}}^{m-k}(e_{i})\overline{\psi}_{i,r-k}+b_{ii}^{r-1}T_{\varpi_{i}}^{m-r}(e_{i})\right).
\end{align*}
\item For $i,j\in\mathring{I}$ with $i\neq j$, $r>0$ and $s,m\in\mathbb{Z}$,
we have
\begin{align*}
[T_{\varpi_{i}}^{s}(\overline{\psi}_{i,r}),T_{\varpi_{j}}^{m}(f_{j})] & =c^{1/2}Q_{ij,1}\left((q-q^{-1})\sum_{k=1}^{r-1}(-b_{ij})^{1-k}T_{\varpi_{j}}^{m+k}(f_{j})T_{\varpi_{i}}^{s}(\overline{\psi}_{i,r-k})+(-b_{ij})^{1-r}T_{\varpi_{j}}^{m+r}(f_{j})\right),\\{}
[T_{\varpi_{i}}^{s}(\overline{\psi}_{i,r}),T_{\varpi_{j}}^{m}(e_{j})] & =-c^{-1/2}Q_{ij,1}\left((q-q^{-1})\sum_{k=1}^{r-1}(-b_{ij})^{k-1}T_{\varpi_{j}}^{m-k}(e_{j})T_{\varpi_{i}}^{s}(\overline{\psi}_{i,r-k})+(-b_{ij})^{r-1}T_{\varpi_{j}}^{m-r}(e_{j})\right).
\end{align*}
\end{enumerate}
\end{lem}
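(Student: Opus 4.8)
The plan is to follow \cite[Lemma 8.3.2]{Y} essentially verbatim; the two parts are of rather different character.

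For (1) I would first observe that $p(i)=0$ forces $\epsilon_{i}=\epsilon_{i+1}$, so $q_{i}\in\{q,\widetilde{q}\}$ and the algebra $U_{q_{i}}(\widehat{\mathfrak{sl}}_{2})$ appearing in the preceding proposition is a genuine (non-super) quantum affine $\mathfrak{sl}_{2}$. Since $U_{q_{i}}(\widehat{\mathfrak{sl}}_{2})$ is stable under $T_{\varpi_{1}}$ and $T_{\varpi_{i}}\restriction_{\mathcal{U}_{i}}=h_{i}\circ T_{\varpi_{1}}\circ h_{i}^{-1}$, the element $\overline{\psi}_{i,r}$ lies in $\mathcal{U}_{i}$; hence the $T_{\varpi_{i}}$-invariance and the two commutator identities of (1) are identities \emph{inside} $\mathcal{U}_{i}$. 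Transporting along $h_{i}^{-1}$ turns them into the corresponding statements relating the Cartan current of $U_{q_{i}}(\widehat{\mathfrak{sl}}_{2})$ to its $x_{1,m}^{\mp}$, which are the classical affine-$\mathfrak{sl}_{2}$ computations (cf. \cite{D1}, or the rank-one specialization of \cite[Section 8]{B2}); so (1) is immediate.

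For (2) I would argue by induction on $r$, carrying the prefix $T_{\varpi_{i}}^{s}$ throughout, since — as the statement itself flags — $T_{\varpi_{i}}(\overline{\psi}_{i,r})=\overline{\psi}_{i,r}$ is not yet available when $p(i)=1$. For $r=1$ one rewrites $\overline{\psi}_{i,1}=c^{-1/2}\left\llbracket T_{\varpi_{i}}(-k_{i}^{-1}f_{i}),e_{i}\right\rrbracket$ as a $q$-bracket of two real root vectors and commutes $T_{\varpi_{i}}^{s}(\overline{\psi}_{i,1})$ past $T_{\varpi_{j}}^{m}(f_{j})$, resp. $T_{\varpi_{j}}^{m}(e_{j})$, using the Levendorskii--Soibelman relations (Theorem \ref{thm:LS-formula}) together with the commutation relations among real root vectors established in the earlier part of the construction (cf. \cite{B2}) and the Serre relations of Definition \ref{def:GQG}; all of these are phrased through $q$-brackets, so the manipulation is formally identical to the non-super one by Remark \ref{rem:tau-pres-q-bracket}. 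The inductive step uses, exactly as in \cite[Lemma 8.3.2]{Y}, a recursion expressing $T_{\varpi_{i}}^{s}(\overline{\psi}_{i,r})$ in terms of $T_{\varpi_{i}}^{s'}(\overline{\psi}_{i,r'})$ with $r'<r$ plus real root vectors; pushing this through $T_{\varpi_{j}}^{m}(e_{j})$ or $T_{\varpi_{j}}^{m}(f_{j})$ and feeding in part (1) and the inductive hypothesis produces the asserted formulas, the scalars $Q_{ij,k}$ and $b_{ij}$ emerging as stated.

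The main obstacle will be purely organizational: in (2) the index $i$ runs over all of $\mathring{I}$, so one cannot pre-normalize $\overline{\psi}_{i,r}$ and must track $T_{\varpi_{i}}^{s}$ at every stage; since $T_{\varpi_{i}}$ is an algebra automorphism of $\mathcal{U}(\epsilon)$ preserving $q$-brackets (Lemma \ref{lem:W-pres-q}), this is bookkeeping rather than mathematics. If one prefers to avoid it altogether, an alternative is to deduce the whole lemma from \cite[Lemma 8.3.2]{Y} via $\tau$: since $\overline{\psi}_{i,r}$, $T_{\varpi_{i}}^{m}(e_{i})$ and $T_{\varpi_{i}}^{m}(f_{i})$ admit presentations as iterated $q$-brackets of Chevalley generators, $\tau$ carries them to the matching elements of $U(\epsilon)$ up to a nonzero scalar and a monomial in the $\sigma_{k}$, and the identities of \cite[Lemma 8.3.2]{Y} pull back with all $\sigma$-factors cancelling for degree reasons.
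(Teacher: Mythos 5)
Your proposal takes essentially the same route as the paper, which offers no independent proof of this lemma but defers entirely to \cite{Y} ("the proofs are completely parallel with those in [Y, Section 8]"): your reduction of part (1) to $\mathcal{U}_{i}\cong U_{q_{i}}(\widehat{\mathfrak{sl}}_{2})$ via $h_{i}$ and the compatibility $T_{\varpi_{i}}\restriction_{\mathcal{U}_{i}}=h_{i}\circ T_{\varpi_{1}}\circ h_{i}^{-1}$ from the preceding proposition, and your induction on $r$ for part (2) carrying the prefix $T_{\varpi_{i}}^{s}$ precisely because $T_{\varpi_{i}}$-invariance of $\overline{\psi}_{i,r}$ is not yet available for $p(i)=1$, is exactly Yamane's argument adapted as the paper prescribes. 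The only cosmetic deviation is the appeal to the Levendorskii--Soibelman formula in the base case of (2), which is unnecessary and by itself only constrains the shape of the expansion rather than the exact coefficients; those come from the direct $q$-bracket commutation computations as in \cite{Y,B2}, which you also invoke.
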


\begin{lem}[{cf. \cite[Lemma 8.3.3]{Y}}]
For $i\in\mathring{I}$ and $r\in\mathbb{Z}$, we have
\begin{align*}
[T_{\varpi_{i}}^{r}(f_{i}),f_{i}]_{\mathbf{q}(\alpha_{i},\alpha_{i})^{-1}} & =-[T_{\varpi_{i}}(f_{i}),T_{\varpi_{i}}^{r-1}(f_{i})]_{\mathbf{q}(\alpha_{i},\alpha_{i})^{-1}}\\{}
[e_{i},T_{\varpi_{i}}^{r}(e_{i})]_{\mathbf{q}(\alpha_{i},\alpha_{i})^{-1}} & =-[T_{\varpi_{i}}^{r-1}(e_{i}),T_{\varpi_{i}}(e_{i})]_{\mathbf{q}(\alpha_{i},\alpha_{i})^{-1}}
\end{align*}
Moreover, if $p(i)=0$, then $[T_{\varpi_{i}}^{r}(f_{i}),f_{i}]_{\mathbf{q}(\alpha_{i},\alpha_{i})^{-1}}=0=[T_{\varpi_{i}}^{r}(e_{i}),e_{i}]_{\mathbf{q}(\alpha_{i},\alpha_{i})^{-1}}$.
\end{lem}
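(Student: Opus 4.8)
The plan is to follow the argument of \cite[Section 8]{Y}, which is modelled on \cite{B2}. The $e$- and $f$-versions are established by the same reasoning (and are in any case related by the anti-involution $\Omega$, which commutes with every $T_{j}$ and hence with $T_{\varpi_{i}}=T_{t_{\varpi_{i}}}$, and which fixes $\mathbf{q}(\alpha_{i},\alpha_{i})^{\pm1}$ since $\mathbf{q}(\alpha_{i},\alpha_{i})\in\{q^{\pm2},-1\}$), so I shall concentrate on the $f$-version, and the proof splits according to the parity $p(i)$.

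When $p(i)=0$, the elements $f_{i}$ and $T_{\varpi_{i}}^{r}(f_{i})$ for all $r\in\mathbb{Z}$ lie in the affine rank-$1$ subalgebra $\mathcal{U}_{i}$, and the isomorphism $h_{i}\colon U_{q_{i}}(\widehat{\mathfrak{sl}}_{2})\to\mathcal{U}_{i}$ above (cf.\ \cite[Proposition 3.8]{B2}) carries $f_{1}$ to $f_{i}$ and intertwines $T_{\varpi_{1}}$ with $T_{\varpi_{i}}|_{\mathcal{U}_{i}}$. Pulling back by $h_{i}^{-1}$, the telescoping identity and the asserted vanishing become identities inside $U_{q_{i}}(\widehat{\mathfrak{sl}}_{2})$; there the telescoping identity is the Drinfeld $q$-locality relation of $\widehat{\mathfrak{sl}}_{2}$ rewritten through $T_{\varpi_{1}}$, and the vanishing follows from the loop relations. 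Both are available from Beck's construction of the Drinfeld presentation of $U_{q}(\widehat{\mathfrak{sl}}_{2})$ \cite{B2}, so the even case requires nothing new.

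The case $p(i)=1$ is the crux, because there is no affine rank-$1$ subalgebra to reduce to; here I would argue directly, in the style of the odd-vertex computations in \cite[Section 8]{Y}. Now $\mathbf{q}(\alpha_{i},\alpha_{i})=-1$, so the $q$-bracket is the anticommutator, and $f_{i}^{2}=0$ yields $T_{\varpi_{i}}^{r}(f_{i})^{2}=T_{\varpi_{i}}^{r}(f_{i}^{2})=0$ since $T_{\varpi_{i}}$ is an algebra automorphism. The base cases $r=0,\pm1$ are disposed of at once using $T_{\varpi_{i}}(f_{i})=-c^{-1}k_{i}e_{\delta-\alpha_{i}}$, the realization of $e_{\delta-\alpha_{i}}$ in Lemma \ref{lem:formula-x-_i,1} as an iterated $q$-bracket involving only the generators $e_{k}$ with $k\neq i$ (which therefore commutes with $f_{i}$ by (\ref{eq:GQG-DJ-ef})), and the relation $f_{i}k_{i}=-k_{i}f_{i}$; one then passes to all $r\in\mathbb{Z}$ by induction on $|r|$, using $T_{\varpi_{i}}$-equivariance and the symmetry of the anticommutator to reduce distance $r$ to distance $r-2$, which also gives the vanishing in this case. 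I expect the genuine obstacle to be the verification of the telescoping identity itself for $p(i)=1$: one must keep the super signs straight through $T_{\varpi_{i}}$ and invoke the quartic odd Serre relation (\ref{eq:GQG-DJ-oddSerre}), or the quintic relation (\ref{eq:GQG-DJ-deg5Serre}) when $n=3$. This is the single point in the construction of $\mathcal{B}$ where the analysis genuinely differs from the non-super case treated in \cite{B2}.
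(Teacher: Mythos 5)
Your overall route is the one the paper intends: the paper offers no argument of its own for this lemma (the appendix simply asserts that the proofs are ``completely parallel'' with \cite[Section 8]{Y}), and for $p(i)=0$ the mechanism is indeed the transport through $h_{i}$ into $U_{q_{i}}(\widehat{\mathfrak{sl}}_{2})$ and an appeal to Beck \cite{B2}, exactly as you say. The telescoping identities are fine on this route. The problem is your treatment of the ``Moreover'' clause: as printed (for $p(i)=0$) it is false, so it cannot ``follow from the loop relations''. At $r=0$ the bracket equals $(1-\mathbf{q}(\alpha_{i},\alpha_{i})^{-1})f_{i}^{2}\neq0$, since $f_{i}^{2}\neq0$ at an even node; and at $r=2$ the lemma's own first identity forces $[T_{\varpi_{i}}^{2}(f_{i}),f_{i}]_{\mathbf{q}(\alpha_{i},\alpha_{i})^{-1}}=-(1-\mathbf{q}(\alpha_{i},\alpha_{i})^{-1})T_{\varpi_{i}}(f_{i})^{2}\neq0$. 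In other words, Beck's locality relation in $U_{q_{i}}(\widehat{\mathfrak{sl}}_{2})$ contradicts, rather than yields, the asserted vanishing; the clause is evidently a misprint for $p(i)=1$, where it is precisely relation (\ref{eq:GQG-Drinfeld-quadSerre}) with $i=j$ and does follow from the telescoping identity together with $f_{i}^{2}=0$ and the symmetry of the bracket $[\,\cdot\,,\,\cdot\,]_{-1}$ --- essentially your own odd-case induction. You should have flagged the parity instead of claiming a proof of the false version. (A smaller slip of the same kind: $\Omega$ does not fix $\mathbf{q}(\alpha_{i},\alpha_{i})^{\pm1}$ when $p(i)=0$; it sends $q^{\pm2}$ to $q^{\mp2}$, so the $\Omega$-reduction between the $e$- and $f$-statements is only literally valid at odd nodes.)

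For $p(i)=1$, which you rightly identify as the crux, the proposal does not actually prove the telescoping identity. Your induction ``reduce $r$ to $r-2$ by $T_{\varpi_{i}}$-equivariance and symmetry of the anticommutator'' is circular: equivariance and symmetry alone only convert $[T_{\varpi_{i}}(f_{i}),f_{i}]_{-1}=0$ into $[T_{\varpi_{i}}^{-1}(f_{i}),f_{i}]_{-1}=0$; to reach $|r|\geq2$ you already need the identity being proved, or an independent computation of $[T_{\varpi_{i}}^{2}(f_{i}),f_{i}]_{-1}$ --- and note that $e_{2\delta-\alpha_{i}}$, unlike $e_{\delta-\alpha_{i}}$, does involve $e_{i}$ (through the factor $e_{\delta-\alpha_{j}}$, $j=i\pm1$, in (\ref{eq:ind-formula-xir})), so it does not commute with $f_{i}$ for free. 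Your expectation that the quartic/quintic odd Serre relations are the missing ingredient is also unsubstantiated: in the scheme of \cite{B2,Y} reproduced in this appendix, the level-raising at an odd node has to come from bracketing with the imaginary-root elements $\overline{\psi}_{j,r}$ attached to an adjacent even node $j$ (the surrounding lemmas), precisely because $[\overline{\psi}_{i,r},T_{\varpi_{i}}^{m}(f_{i})]=0$ when $p(i)=1$. So the odd case remains a statement of intent rather than a proof --- which, to be fair, is also all the paper provides, since it defers entirely to \cite[Lemma 8.3.3]{Y}.
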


\begin{lem}[{cf. \cite[Lemma 8.3.4, 8.3.5]{Y}}]
For $i\in\mathring{I}$ such that $p(i)=1$, we have $T_{\varpi_{i}}(\overline{\psi}_{i,r})=\overline{\psi}_{i,r}$
and
\[
[\overline{\psi}_{i,r},T_{\varpi_{i}}^{m}(e_{i})]=0=[\overline{\psi}_{i,r},T_{\varpi_{i}}^{m}(f_{i})].
\]
\end{lem}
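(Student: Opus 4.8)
The statement transcribes \cite[Lemmas 8.3.4, 8.3.5]{Y} to $\mathcal{U}(\epsilon)$, and the plan is to follow Yamane's argument, exploiting the collapse that occurs in the odd rank-one and rank-two subalgebras. Write $y_r=T_{\varpi_i}^r(-k_i^{-1}f_i)$, so that by definition $\overline{\psi}_{i,r}=c^{-r/2}[y_r,e_i]_{\mathbf{q}(\alpha_i,\alpha_i)^{-1}}$. Since $p(i)=1$ forces $\mathbf{q}(\alpha_i,\alpha_i)=-1$, this is the anticommutator
\[
\overline{\psi}_{i,r}=c^{-r/2}\bigl(y_r e_i+e_i y_r\bigr),
\]
and, recalling that $e_i^2=f_i^2=0$ when $p(i)=1$ (cf.\ Lemma \ref{lem:orrvector-sqzero}), these two facts will drive everything.

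First I would establish the $T_{\varpi_i}$-invariance $T_{\varpi_i}(\overline{\psi}_{i,r})=\overline{\psi}_{i,r}$ by induction on $r$, following \cite[Lemma 8.3.4]{Y}. The inductive step rewrites $T_{\varpi_i}(\overline{\psi}_{i,r})$ using $T_{\varpi_i}(y_r)=y_{r+1}$, $T_{\varpi_i}^r(k_i^{-1})=c^r k_i^{-1}$, the identities of the immediately preceding lemma (the $\mathcal{U}(\epsilon)$-analogue of \cite[Lemma 8.3.3]{Y}) specialized at $\mathbf{q}(\alpha_i,\alpha_i)^{-1}=-1$, and $e_i^2=f_i^2=0$; the base case $r=1$ is a computation inside the rank $\le 3$ subalgebra which invokes the odd Serre relation \eqref{eq:GQG-DJ-oddSerre} (or \eqref{eq:GQG-DJ-deg5Serre} when $n=3$). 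Throughout, Remark \ref{rem:tau-pres-q-bracket} is used to transport Yamane's computations, phrased via the super $q$-bracket on $U(\epsilon)$, to the $\mathbf{q}$-bracket of $\mathcal{U}(\epsilon)$. Granting the invariance, $T_{\varpi_i}^m(\overline{\psi}_{i,r})=\overline{\psi}_{i,r}$ for every $m$, so $[\overline{\psi}_{i,r},T_{\varpi_i}^m(e_i)]=T_{\varpi_i}^m\bigl([\overline{\psi}_{i,r},e_i]\bigr)$ and likewise for $f_i$, and it suffices to treat $m=0$.

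The bracket with $e_i$ is then immediate from the anticommutator form: $[\overline{\psi}_{i,r},e_i]=c^{-r/2}(y_r e_i^2-e_i^2 y_r)=0$. For $f_i$, I would expand $[\,y_r e_i+e_i y_r,\,f_i\,]$, substitute \eqref{eq:GQG-DJ-ef} in the form $[e_i,f_i]=h_i$ with $h_i=(k_i-k_i^{-1})/(q_i-q_i^{-1})$, and rearrange to
\[
[\overline{\psi}_{i,r},f_i]=c^{-r/2}\bigl([y_r,f_i]e_i+e_i[y_r,f_i]+h_i y_r+y_r h_i\bigr).
\]
Since $\mathrm{cl}(\mathrm{wt}\,y_r)=-\alpha_i$ and $\mathbf{q}(\alpha_i,-\alpha_i)=\mathbf{q}(\alpha_i,\alpha_i)^{-1}=-1$, one has $k_i y_r=-y_r k_i$, whence $h_i y_r+y_r h_i=0$, and what remains is $\{[y_r,f_i],e_i\}$, which vanishes once $[y_r,f_i]=0$. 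This last identity holds for $r=0$, where it reads $[-k_i^{-1}f_i,f_i]=0$ (immediate from $f_i^2=0$), and in general it is proved by a further induction on $r$ inside the odd affine $\mathfrak{sl}_2$-subalgebra as in \cite[Lemma 8.3.5]{Y}; alternatively, under the classical limit $[y_r,f_i]$ degenerates to the bracket of two copies of an odd isotropic root vector of $\widehat{\mathfrak{sl}}_{M|N}$, which vanishes, and one lifts this by a weight/PBW argument.

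The step I expect to be hardest is the pair of inductions — the $T_{\varpi_i}$-invariance and $[y_r,f_i]=0$ — whose base cases are genuine computations in the rank $2$ and $3$ subalgebras: they require the odd Serre relations and careful bookkeeping of the scalars $\mathbf{q}(\alpha_i,\alpha_j)$, which here carry the second parameter $\widetilde q=-q^{-1}$ rather than $q$. The remainder is the formal manipulation sketched above.
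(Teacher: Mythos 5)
Your route is the one the paper itself intends (the appendix offers nothing beyond ``completely parallel with \cite[Section 8]{Y}''), and your formal reductions are correct: for $p(i)=1$ one has $\mathbf{q}(\alpha_i,\alpha_i)=-1$, so $\overline{\psi}_{i,r}=c^{-r/2}(y_re_i+e_iy_r)$ with $y_r=T_{\varpi_i}^r(-k_i^{-1}f_i)$; the bracket with $e_i$ dies on $e_i^2=0$, the bracket with $f_i$ collapses, after the cancellation $h_iy_r+y_rh_i=0$, to the single claim $[y_r,f_i]=0$, and granting $T_{\varpi_i}$-invariance the case of general $m$ follows.

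The two places where the content actually sits are exactly the ones you defer, and one of your proposed substitutes does not work. The classical-limit argument for $[y_r,f_i]=0$ is not a proof: vanishing of the classical limit never forces vanishing of a quantum element, and the degree-$(r\delta-2\alpha_i)$ component of $\mathcal{U}(\epsilon)$ is far from small, so no weight/PBW count rescues it. Worse, after pulling $k_i^{-1}$ out of $y_r$, the identity $[y_r,f_i]=0$ is precisely the odd Drinfeld relation $x^-_{i,r}x^-_{i,0}+x^-_{i,0}x^-_{i,r}=0$, i.e.\ (\ref{eq:GQG-Drinfeld-quadSerre}) with $j=i$, which in the appendix is \emph{deduced from} these lemmas; quoting it would be circular, so it must be proved here by hand. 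It can be, with tools already on the table: for $p(i)=1$ the $\mathbf{q}(\alpha_i,\alpha_i)^{-1}$-bracket is the anticommutator, so the preceding lemma gives $\{T_{\varpi_i}^a(f_i),T_{\varpi_i}^b(f_i)\}=-\{T_{\varpi_i}^{b+1}(f_i),T_{\varpi_i}^{a-1}(f_i)\}$ for all $a,b$ (apply the automorphism $T_{\varpi_i}^b$), hence $\{T_{\varpi_i}^r(f_i),f_i\}=(-1)^s\{T_{\varpi_i}^{r-s}(f_i),T_{\varpi_i}^s(f_i)\}$; for even $r$ take $s=r/2$ and use $T_{\varpi_i}^{r/2}(f_i^2)=0$, and for odd $r$ reduce to $\{T_{\varpi_i}(f_i),f_i\}$, which vanishes because $T_{\varpi_i}(f_i)=-c^{-1}k_ie_{\delta-\alpha_i}$, where by Lemma \ref{lem:formula-x-_i,1} the vector $e_{\delta-\alpha_i}$ is an iterated bracket in the $e_j$ with $j\neq i$ and hence commutes with $f_i$, while $k_i$ anticommutes with $f_i$. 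Finally, the $T_{\varpi_i}$-invariance, on which your reduction to $m=0$ rests, is only asserted; it is the analogue of \cite[Lemma 8.3.4]{Y} and is a genuine computation (your ``rank $\le 3$'' description undersells it, since $T_{\varpi_i}$ and $e_{\delta-\alpha_i}$ involve all nodes), to be carried out with the earlier lemmas of the appendix or transported through $\tau$; as written, that half of the statement remains unproved in your outline.
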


\begin{lem}[{cf. \cite[Lemma 8.3.6]{Y}}]
 For $i,j\in\mathring{I}$ and $r,l>0$, we have $[\overline{\psi}_{i,r},\overline{\psi}_{j,l}]=0$.
\end{lem}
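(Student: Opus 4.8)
The plan is to reproduce the argument of \cite[Lemma~8.3.6]{Y}; the point is that every ingredient it uses has already been recorded above for both parities. I first isolate three facts that make the computation insensitive to the super structure: (i) $\overline{\psi}_{i,r}$ has trivial classical weight $\mathrm{cl}(r\delta)=0$, so it commutes with every $k_{\lambda}$ and with $c^{\pm1/2}$; (ii) for the same reason $\mathrm{ad}(\overline{\psi}_{i,r})$ is a derivation which carries each $q$-bracket $\llbracket\,\cdot\,,\,\cdot\,\rrbracket$ to the sum of the two $q$-brackets obtained by applying $\mathrm{ad}(\overline{\psi}_{i,r})$ to each slot (no $\mathbf{q}$-correction is produced); and (iii) $T_{\varpi_i}$ and $T_{\varpi_j}$ commute as automorphisms of $\mathcal{U}(\epsilon)$, since $t_{\varpi_i}t_{\varpi_j}=t_{\varpi_i+\varpi_j}=t_{\varpi_j}t_{\varpi_i}$ in $\widetilde{W}$. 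Writing
\[
\overline{\psi}_{j,l}=c^{-l/2}\llbracket T_{\varpi_j}^{l}(-k_j^{-1}f_j),\,e_j\rrbracket,\qquad T_{\varpi_j}^{l}(-k_j^{-1}f_j)=-\,T_{\varpi_j}^{l}(k_j^{-1})\cdot T_{\varpi_j}^{l}(f_j),
\]
where $T_{\varpi_j}^{l}(k_j^{-1})$ is a monomial in $k_j^{\pm1},c^{\pm1}$ (the braid action on the $k$'s mirrors the $\widetilde{W}$-action on weights, which does not see parity), the problem reduces by (i)--(ii) to the commutators of $\overline{\psi}_{i,r}$ with the real root vectors $e_j=T_{\varpi_j}^{0}(e_j)$ and $T_{\varpi_j}^{l}(f_j)$, which are exactly what the preceding lemmas (cf. \cite[Lemma~8.3.2,~8.3.4,~8.3.5]{Y}) compute in both the even and odd cases.

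Second, I would treat the diagonal case $i=j$ separately. If $p(i)=1$, then $\overline{\psi}_{i,l}$ lies in the subalgebra generated by $e_i,k_i^{\pm1},c^{\pm1/2}$ and the elements $T_{\varpi_i}^{m}(f_i)$, each of which commutes with $\overline{\psi}_{i,r}$: the real root vectors $T_{\varpi_i}^{m}(e_i),T_{\varpi_i}^{m}(f_i)$ by the odd-case relation above and the rest by (i); hence $[\overline{\psi}_{i,r},\overline{\psi}_{i,l}]=0$. If $p(i)=0$, I would pass to the rank-one affine subalgebra $\mathcal{U}_i$: under the isomorphism $h_i\colon U_{q_i}(\widehat{\mathfrak{sl}}_2)\to\mathcal{U}_i$, which satisfies $T_{\varpi_i}|_{\mathcal{U}_i}=h_i\circ T_{\varpi_1}\circ h_i^{-1}$ and in particular restricts $T_{\varpi_i}$ to an automorphism of $\mathcal{U}_i$, both $\overline{\psi}_{i,r}$ and $\overline{\psi}_{i,l}$ lie in $\mathcal{U}_i$ and $h_i^{-1}(\overline{\psi}_{i,r})$ is the element defined by the same formula in $U_{q_i}(\widehat{\mathfrak{sl}}_2)$; there $[\overline{\psi}_{1,r},\overline{\psi}_{1,l}]=0$ is a classical relation in the Drinfeld realization (see \cite{B2}). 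Alternatively one runs Yamane's direct computation, expanding $\overline{\psi}_{i,l}$ as a bracket and feeding in the even-case formula for $[\overline{\psi}_{i,r},T_{\varpi_i}^{m}(f_i)]$ together with the identity $[T_{\varpi_i}^{m}(f_i),f_i]_{\mathbf{q}(\alpha_i,\alpha_i)^{-1}}=-[T_{\varpi_i}(f_i),T_{\varpi_i}^{m-1}(f_i)]_{\mathbf{q}(\alpha_i,\alpha_i)^{-1}}$, after which the terms telescope.

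Finally, the off-diagonal case $i\ne j$. Using (ii) I expand $[\overline{\psi}_{i,r},\overline{\psi}_{j,l}]$ into four terms built from $[\overline{\psi}_{i,r},e_j]$ and $[\overline{\psi}_{i,r},T_{\varpi_j}^{l}(-k_j^{-1}f_j)]=-T_{\varpi_j}^{l}(k_j^{-1})[\overline{\psi}_{i,r},T_{\varpi_j}^{l}(f_j)]$, and substitute the formulas from the $i\ne j$ commutation relation above (cf. \cite[Lemma~8.3.2(2)]{Y}). Each substitution contributes one ``leading'' summand (proportional to $(-b_{ij})^{\pm(r-1)}$) and ``subleading'' summands carrying a factor $\overline{\psi}_{i,r-k}$ with $1\le k\le r-1$. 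I would then argue by induction on $r$, the base case $r=1$ (no subleading terms) being a short direct check. In the inductive step, the subleading contributions recombine --- after reorganizing the surrounding real root vectors into $q$-brackets and using (iii) --- into expressions of the form $[\overline{\psi}_{i,r-k},\overline{\psi}_{j,*}]$, which vanish by the inductive hypothesis together with the diagonal case, while the four leading contributions reorganize, again via (iii) and a short braid-type identity of the flavour used in the diagonal case, into zero. I expect this last step --- keeping exact track of the coefficients $\mathbf{q}(\alpha_i,\alpha_j)^{\pm1}$, the powers $(-b_{ij})^{\pm(k-1)}$, and the left/right placement of the $\overline{\psi}_{i,r-k}$ factors so that the four terms cancel in pairs --- to be the main obstacle; but since none of the identities invoked is sensitive to the parities (the affine root system, the braid action, and all the relevant commutators with real root vectors having already been established above for both even and odd simple roots), the non-super bookkeeping of \cite[Lemma~8.3.6]{Y} carries over verbatim.
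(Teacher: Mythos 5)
Your proposal is correct and follows essentially the same route as the paper: the paper gives no argument beyond noting that, once the super analogues of \cite[Lemmas 8.3.2--8.3.5]{Y} are in place (as they are, for both parities, in the preceding lemmas), Yamane's proof of \cite[Lemma 8.3.6]{Y} carries over verbatim, which is precisely the reduction you describe. Your extra detail — handling $i=j$ with $p(i)=1$ via the odd-case commutators and weight-zero considerations, and $i=j$ with $p(i)=0$ via the rank-one isomorphism $h_i$ to $U_{q_i}(\widehat{\mathfrak{sl}}_2)$ — is consistent with, and slightly more explicit than, what the paper records.
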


From this results, one can then verify (\ref{eq:GQG-Drinfeld-q-Heis}-\ref{eq:GQG-Drinfeld-quadSerre}).
The remaining Serre relations (\ref{eq:GQG-Drinfeld-eSerre}) and
(\ref{eq:GQG-Drinfeld-oSerre}) can be deduced inductively by applying
$T_{\varpi_{i}}^{\pm1}$ to the Serre relations between $x_{i,0}^{+}=e_{i}$'s,
as in the proof of \cite[Theorem 4.7]{B2}, concluding our construction
of an algebra homomorphism $\mathcal{B}$ from $\mathcal{U}^{\mathrm{Dr}}(\epsilon)$
to $\mathcal{U}(\epsilon)$.
\begin{rem}
While it is assumed in \cite[Section 8]{Y} that $n>3$, the same
argument applies to $n=3$ as well.
\end{rem}

Finally to prove the surjectivity of $\mathcal{B}$, we argue as in
\cite[Theorem 12.11]{D2} to show that $e_{0}$ is contained in the
image of $\mathcal{B}$. First, by Proposition \ref{prop:Tw-on-ef}
we have $T_{\varpi_{i}}(x)=x$ for any $x\in\mathcal{U}_{j}$, $j\neq i$,
and so the image of $\mathcal{B}$ is closed under $T_{\varpi_{i}}$
for each $i\in\mathring{I}$. Recall the following reduced expression
of $t_{\varpi_{1}}$
\[
t_{\varpi_{1}}=\tau s_{n-1}s_{n-2}\cdots s_{1}.
\]
Consider $f=T_{1}^{-1}T_{2}^{-1}\cdots T_{n-2}^{-1}(f_{n-1})\in\mathcal{U}(\epsilon)$,
which is made of $f_{i}=x_{i,0}^{-}$ for $i=1,\dots,n-1$ and hence
contained in the image of $\mathcal{B}$. Then 
\[
\mathrm{im}\mathcal{B}\ni T_{\varpi_{1}}(f)=ZT_{n-1}(f_{n-1})=Z(-e_{n-1}k_{n-1})=-e_{0}k_{0}=-e_{0}c\prod_{i=1}^{n-1}k_{i}^{-1}
\]
and consequently $e_{0}\in\mathrm{im}\mathcal{B}$ as well.
\begin{rem}
\label{rem:GQG-Dr-e_0}The last argument also yields an explicit description
of $e_{0}$ in terms of loop generators. Indeed, by definition of
$T_{i}^{-1}$ we have
\begin{align*}
f & =T_{1}^{-1}T_{2}^{-1}\cdots T_{n-2}^{-1}(f_{n-1})\\
 & =[f_{1},[\cdots,[f_{n-2},f_{n-1}]_{\mathbf{q}_{\epsilon s_{1}\cdots s_{n-3}}(\alpha_{n-2},\alpha_{n-1})^{-1}}\cdots]_{\mathbf{q}_{\epsilon s_{1}}(\alpha_{2},\alpha_{3}+\cdots+\alpha_{n-1})^{-1}}]_{\mathbf{q}_{\epsilon}(\alpha_{1},\alpha_{2}+\cdots+\alpha_{n-1})^{-1}}\\
 & =[f_{1},[\cdots,[f_{n-2},f_{n-1}]_{q_{n-1}}\cdots]_{q_{3}}]_{q_{2}}.
\end{align*}
Since $T_{\varpi_{1}}(f_{i})=f_{i}$ for $i=2,\dots,n-1$, we obtain
\begin{align*}
e_{0} & =-T_{\varpi_{1}}(f)c^{-1}\prod_{i=1}^{n-1}k_{i}\\
 & =-o(1)[x_{1,1}^{-},[f_{2},[\cdots,[f_{n-2},f_{n-1}]_{q_{n-1}}\cdots]_{q_{4}}]_{q_{3}}]_{q_{2}}c^{-1}\prod_{i=1}^{n-1}k_{i}.
\end{align*}
\[
\]
\end{rem}

\section{A rank 1 case $\mathcal{U}(01)$\label{sec:GQG-RT-rank1}}

In this section we summarize basic facts on the generalized quantum
groups $\mathcal{U}(01)$ associated to $\widehat{\mathfrak{sl}}_{1|1}$
and their finite-dimensional representations. We will keep the conventions
from Section \ref{sec:RT-GQG}, for instance our base field is $\mathbf{k}$
and every $\mathcal{U}(01)$-module below is finite-dimensional and
$\mathring{P}$-graded.
\begin{defn}
The generalized quantum group $\mathcal{U}(\epsilon)$ for $\epsilon=(01)$
is defined as the $\mathbf{k}$-algebra generated by $x_{k}^{\pm}$
($k\in\mathbb{Z}$), and $\psi_{\pm r}^{\pm}$ ($r\geq0$) subject
to the relations
\begin{align*}
\psi_{0}^{+}\psi_{0}^{-}=\psi_{0}^{-}\psi_{0}^{+} & =1,\quad[\psi_{r}^{\pm},\psi_{s}^{\pm}]=[\psi_{r}^{+},\psi_{s}^{-}]=0,\\
\psi_{r}^{\pm}x_{k}^{+}=(q_{1}q_{2})^{\pm1} & x_{k}^{+}\psi_{r}^{\pm},\quad\psi_{r}^{\pm}x_{k}^{-}=(q_{1}q_{2})^{\mp1}x_{k}^{-}\psi_{r}^{\pm}\\{}
[x_{k}^{+},x_{l}^{-}] & =\frac{\psi_{k+l}^{+}-\psi_{k+l}^{-}}{q-q^{-1}},\\
x_{k}^{\pm}x_{l}^{\pm} & +x_{l}^{\pm}x_{k}^{\pm}=0
\end{align*}
where we assume $\psi_{\pm m}^{\pm}=0$ for $m<0$. We also let $\mathring{\mathcal{U}}(01)$
be the subalgebra generated by $e=x_{0}^{+}$, $f=x_{0}^{-}$ and
$k^{\pm1}=\psi_{0}^{\pm}$. 

The algebra $\mathcal{U}(10)$ (and its representation theory below)
is obtained by exchaning the role of $q$ and $\widetilde{q}=-q^{-1}$
everywhere and hence we will focus on the case $\epsilon=(01)$. 
\end{defn}

Unlike the general case above, $\mathcal{U}(01)$ is defined directly
in terms of Drinfeld presentation. Indeed, as $\mathfrak{sl}_{1|1}$
is a nilpotent Lie superalgebra, its loop algebra $L\mathfrak{sl}_{1|1}$
cannot be generated by (finitely many) Chevalley generators. Still,
by the same reason, the structure of $\mathcal{U}(01)$ is exceptionally
simple even compared with $U_{q}(\widehat{\mathfrak{sl}}_{2})$. For
example, one can readily prove the following basic results.
\begin{prop}
The algebra $\mathcal{U}(01)$ has a Hopf algebra structure given
by
\begin{align*}
\Delta: & \begin{cases}
x^{+}(z)_{\pm}\longmapsto\psi^{\pm}(z)\otimes x^{+}(z)_{\pm}+x^{+}(z)_{\pm}\otimes1,\\
x^{-}(z)_{\pm}\longmapsto1\otimes x^{-}(z)_{\pm}+x^{-}(z)_{\pm}\otimes\psi^{\pm}(z),\\
\psi^{\pm}(z)\longmapsto\psi^{\pm}(z)\otimes\psi^{\pm}(z),
\end{cases}\\
S: & \begin{cases}
x^{+}(z)_{+}\longmapsto-\psi^{+}(z)^{-1}x^{+}(z)_{+},\quad x^{+}(z)_{-}\longmapsto-\psi^{-}(z)^{-1}x^{+}(z)_{-},\\
x^{-}(z)_{+}\longmapsto-x^{-}(z)_{+}\psi^{+}(z)^{-1},\quad x^{-}(z)_{-}\longmapsto-x^{-}(z)_{-}\psi^{-}(z)^{-1},\\
\psi^{\pm}(z)\longmapsto\psi^{\pm}(z)^{-1}
\end{cases}
\end{align*}
where we set 
\begin{align*}
x^{+}(z)_{+}= & \sum_{k\geq0}x_{k}^{+}z^{k},\:x^{+}(z)_{-}=\sum_{k<0}x_{k}^{+}z^{k},\\
x^{-}(z)_{+}= & \sum_{k>0}x_{k}^{-}z^{k},\;x^{+}(z)_{+}=\sum_{k\leq0}x_{k}^{-}z^{k},\\
\psi^{\pm}(z) & =\sum_{r\geq0}\psi_{\pm r}^{\pm}z^{r}.
\end{align*}
Note that the inverse of $\psi^{\pm}(z)$ is well-defined in $\mathcal{U}(01)\left\llbracket z\right\rrbracket $.
\end{prop}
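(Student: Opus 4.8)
The plan is to verify directly that the proposed formulas for $\Delta$ and $S$ respect the defining relations of $\mathcal{U}(01)$, which is feasible precisely because the algebra is so small: the only relations are the commutativity among the $\psi^{\pm}_r$, the $(q_1q_2)^{\pm1}$-conjugation rules between $\psi^{\pm}_r$ and $x^{\pm}_k$, the bracket $[x^+_k,x^-_l]=(\psi^+_{k+l}-\psi^-_{k+l})/(q-q^{-1})$, and the anticommutativity $x^{\pm}_kx^{\pm}_l+x^{\pm}_lx^{\pm}_k=0$. First I would recast everything in generating-series form, since the comultiplication is stated that way: set $x^{\pm}(z)=x^{\pm}(z)_++x^{\pm}(z)_-$ and note that the relations become $\psi^{\pm}(z)x^+(w)=(q_1q_2)^{\pm1}x^+(w)\psi^{\pm}(z)$ (as formal identities after the obvious grading bookkeeping), $[x^+(z),x^-(w)]=\delta$-type expression producing $\psi^+-\psi^-$, $\psi^{\pm}(z)\psi^{\pm}(w)=\psi^{\pm}(w)\psi^{\pm}(z)$, and $x^{\pm}(z)x^{\pm}(w)=-x^{\pm}(w)x^{\pm}(z)$. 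The key point that makes the Hopf structure consistent with the splitting into $x^{\pm}(z)_{\pm}$ is that $\psi^{\pm}(z)$ is invertible in $\mathcal{U}(01)\llbracket z\rrbracket$ (its constant term $\psi^{\pm}_0$ is a unit, by the very first relation), so $\psi^{\pm}(z)^{-1}x^+(z)_{\pm}$ genuinely lies in the completed algebra and the formulas for $S$ parse.

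The checks I would carry out, in order: (1) $\Delta$ is an algebra homomorphism. The $\psi$-$\psi$ relation is immediate since $\Delta(\psi^{\pm}(z))$ is grouplike. For the $\psi$-$x^+$ relation one computes $\Delta(\psi^{\pm}(z))\Delta(x^+(w)_{\pm})$ and commutes the two tensor legs separately, using $\psi^{\pm}(z)x^+(w)_{\pm}=(q_1q_2)^{\pm1}x^+(w)_{\pm}\psi^{\pm}(z)$ on each leg; the grouplike $\psi^{\pm}\otimes\psi^{\pm}$ passes through $x^+(w)_{\pm}\otimes1$ and $\psi^{\pm}(w)\otimes x^+(w)_{\pm}$ with exactly one factor of $(q_1q_2)^{\pm1}$ each time, giving the desired overall scalar. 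The $x^{\pm}x^{\pm}$ anticommutativity on tensor products needs the standard sign rule: $(a\otimes b)(c\otimes d)=(-1)^{|b||c|}(ac\otimes bd)$ with $x^{\pm}$ odd; one checks term by term that the four pieces of $\Delta(x^{\pm}(z)_{\pm})\Delta(x^{\pm}(w)_{\pm})$ anticommute with those of the $z\leftrightarrow w$ swap, the sign $-1$ arising from moving an odd past an odd. (2) The cross-relation $[x^+_k,x^-_l]=(\psi^+_{k+l}-\psi^-_{k+l})/(q-q^{-1})$: expand $\Delta(x^+(z)_{\pm})\Delta(x^-(w)_{\pm})\mp\Delta(x^-(w)_{\pm})\Delta(x^+(z)_{\pm})$ (with the appropriate $\pm$ giving the supercommutator), collect the leg-1 and leg-2 brackets $[x^+,x^-]=(\psi^+-\psi^-)/(q-q^{-1})$, and check that the $\psi$-weighting in front of the surviving terms assembles into $\psi^+(z)\otimes\psi^+(z)-\psi^-(z)\otimes\psi^-(z)=\Delta(\psi^+(z))-\Delta(\psi^-(z))$, matching $\Delta$ applied to the right-hand side. (3) Coassociativity and counit: since $\psi^{\pm}(z)$ is grouplike and the $x^{\pm}(z)_{\pm}$ are ``$\psi$-primitive'' of the displayed twisted type, $(\Delta\otimes\mathrm{id})\Delta=(\mathrm{id}\otimes\Delta)\Delta$ on each generator is a short direct computation, and $\varepsilon(x^{\pm}(z)_{\pm})=0$, $\varepsilon(\psi^{\pm}(z))=1$ works. (4) The antipode axiom $m(S\otimes\mathrm{id})\Delta=m(\mathrm{id}\otimes S)\Delta=\varepsilon(\cdot)1$: on $\psi^{\pm}(z)$ it is the identity $\psi^{\pm}(z)^{-1}\psi^{\pm}(z)=1$; on $x^+(z)_{\pm}$ one gets $S(\psi^{\pm}(z))x^+(z)_{\pm}+S(x^+(z)_{\pm})\cdot1=\psi^{\pm}(z)^{-1}x^+(z)_{\pm}-\psi^{\pm}(z)^{-1}x^+(z)_{\pm}=0=\varepsilon(x^+(z)_{\pm})$, and symmetrically for $x^-$.

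The main obstacle I anticipate is purely bookkeeping, concentrated in two places. First, keeping the split $x^+(z)=x^+(z)_++x^+(z)_-$ consistent: the relations for $\mathcal{U}(01)$ are stated for the full currents, but $\Delta$ treats $x^+(z)_+$ and $x^+(z)_-$ asymmetrically (one uses $\psi^+(z)$, the other $\psi^-(z)$), so one must confirm that the ``positive/negative mode'' decomposition of every relation is itself preserved — e.g. that $[x^+_k,x^-_l]$ for $k\ge0,l\le0$ lands in $\psi^+_{k+l}$ while for $k<0,l>0$ it lands in $\psi^-_{k+l}$, etc. — and that $\Delta$ of each such graded piece matches. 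This is exactly the phenomenon that $\psi^{\pm}(z)$ invertibility is there to handle, and I would organize the verification by the sign of the total mode index. Second, the super signs in the multiplication on $V\otimes V$: because $\mathcal{U}(01)$ here is treated as an ordinary (non-super) algebra in the body of the paper but this appendix reintroduces a $\mathbb{Z}/2$-grading with $x^{\pm}$ odd, I would state explicitly at the outset that the tensor product is the \emph{super} tensor product and fix the Koszul sign convention, then the $x^{\pm}x^{\pm}$ and $[x^+,x^-]$ checks go through cleanly. None of this requires any deep input — it is the standard verification that a Drinfeld-type current presentation of a Borel/Heisenberg-like superalgebra carries its evident topological Hopf structure — so once the conventions are pinned down the proof is a finite, if slightly tedious, computation on generators.
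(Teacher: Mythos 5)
Your overall strategy --- direct verification on generators written as generating series, followed by coassociativity, the counit and the antipode axioms, organized by the sign of the total mode index and using the invertibility of $\psi^{\pm}(z)$ --- is exactly what the paper intends (it gives no written proof, asserting only that these facts are readily verified). However, your treatment of signs contains a genuine error, and it is not a harmless choice of convention. The algebra $\mathcal{U}(01)$ is a generalized quantum group, hence an \emph{ordinary} algebra: the appendix does not introduce a $\mathbb{Z}/2$-grading, the bracket $[x_{k}^{+},x_{l}^{-}]$ in the defining relations is the ordinary commutator (supercommutators are used in the paper only in the presentation of the quantum affine superalgebra $U^{\mathrm{Dr}}(\epsilon)$), and $\Delta$ takes values in the ordinary, untwisted tensor product. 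The signs needed for the verification are already built into the relations: for $\epsilon=(01)$ one has $q_{1}q_{2}=q\cdot(-q^{-1})=-1$, so $\psi_{r}^{\pm}x_{k}^{\pm}=-x_{k}^{\pm}\psi_{r}^{\pm}$, i.e.\ the Cartan currents \emph{anticommute} with the root currents, and this is what makes the cross terms cancel.

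Concretely, in checking that $\Delta$ preserves $x_{k}^{+}x_{l}^{+}+x_{l}^{+}x_{k}^{+}=0$, the cross terms of $\Delta(x^{+}(z)_{+})\Delta(x^{+}(w)_{+})$ plus the same expression with $z$ and $w$ exchanged combine, with the ordinary product on $\mathcal{U}\otimes\mathcal{U}$, into $\bigl(\psi^{+}(z)x^{+}(w)_{+}+x^{+}(w)_{+}\psi^{+}(z)\bigr)\otimes x^{+}(z)_{+}$ plus the analogous term, and these vanish precisely because $\psi^{+}(z)x^{+}(w)_{+}=-x^{+}(w)_{+}\psi^{+}(z)$. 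If instead you use the Koszul-signed multiplication with $x^{\pm}$ declared odd, as you propose, the same cross terms become $\bigl(-\psi^{+}(z)x^{+}(w)_{+}+x^{+}(w)_{+}\psi^{+}(z)\bigr)\otimes x^{+}(z)_{+}=2\,x^{+}(w)_{+}\psi^{+}(z)\otimes x^{+}(z)_{+}\neq0$: the Koszul sign double-counts the sign already supplied by $q_{1}q_{2}=-1$, and the homomorphism check fails; the same mismatch appears in the $[x^{+},x^{-}]$ relation, where with ordinary conventions the terms $\psi^{+}(z)x^{-}(w)_{+}\otimes x^{+}(z)_{+}\psi^{+}(w)$ and $x^{-}(w)_{+}\psi^{+}(z)\otimes\psi^{+}(w)x^{+}(z)_{+}$ cancel against each other, while with super signs they do not. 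Your convention in effect replaces $\mathcal{U}(01)$ by the honest superalgebra (where $\psi$ would commute with $x^{\pm}$), which is a different object from the one defined here. Once you drop the super tensor product and the supercommutator reinterpretation and run the same computations with ordinary conventions, the rest of your outline --- the mode-index bookkeeping, coassociativity, counit, and the antipode cancellations --- goes through as you describe.
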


\begin{prop}
Let $X^{\pm}$ (resp. $H$) be the subalgebra of $\mathcal{U}(01)$
generated by $x_{k}^{\pm}$ for $k\in\mathbb{Z}$ (resp. $\psi_{\pm r}^{\pm}$
for $r\geq0$). As a vector space, we have
\[
\mathcal{U}(01)\cong X^{+}\otimes H\otimes X^{-}\cong X^{-}\otimes H\otimes X^{+}.
\]
\end{prop}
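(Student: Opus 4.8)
The statement is a triangular (PBW-type) decomposition, and the plan is to show that the multiplication map
\[
m\colon X^{+}\otimes_{\mathbb{C}}H\otimes_{\mathbb{C}}X^{-}\longrightarrow\mathcal{U}(01)
\]
is a $\mathbb{C}$-linear isomorphism; the second decomposition $X^{-}\otimes H\otimes X^{+}\cong\mathcal{U}(01)$ will follow by the mirror argument (reversing all chosen orderings). I would deduce this directly from the Drinfeld-type presentation of $\mathcal{U}(01)$ by Bergman's diamond lemma. Fix a total order on the generating set $\{x_{k}^{+}\}\cup\{\psi_{\pm r}^{\pm}\}\cup\{x_{l}^{-}\}$ in which every $x_{k}^{+}$ precedes every $\psi_{\pm r}^{\pm}$, which in turn precedes every $x_{l}^{-}$, refined by an order on $\mathbb{Z}$ within each block, and orient the defining relations as a rewriting system: the relations $x_{l}^{\pm}x_{k}^{\pm}\mapsto-x_{k}^{\pm}x_{l}^{\pm}$ ($k<l$) and $(x_{k}^{\pm})^{2}\mapsto0$ sort each $X^{\pm}$-block; $\psi_{\pm r}^{\pm}x_{k}^{+}\mapsto(q_{1}q_{2})^{\pm1}x_{k}^{+}\psi_{\pm r}^{\pm}$ and $x_{l}^{-}\psi_{\pm r}^{\pm}\mapsto(q_{1}q_{2})^{\pm1}\psi_{\pm r}^{\pm}x_{l}^{-}$ push the $\psi$'s into the middle; $x_{l}^{-}x_{k}^{+}\mapsto x_{k}^{+}x_{l}^{-}-(q-q^{-1})^{-1}(\psi_{k+l}^{+}-\psi_{k+l}^{-})$ moves $x^{+}$'s past $x^{-}$'s; and commutativity of the $\psi$'s together with $\psi_{0}^{+}\psi_{0}^{-}\mapsto1$, $\psi_{0}^{-}\psi_{0}^{+}\mapsto1$ sort the middle block.

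Termination is clear from a well-founded order on words (count, in decreasing priority, the number of $(x^{-},x^{+})$-inversions, the number of misplaced $\psi$'s, the out-of-order pairs within each block, and the word length), so every element of $\mathcal{U}(01)$ is a combination of the irreducible monomials, which are exactly the ordered products (ordered monomial in the $x_{k}^{+}$)(ordered monomial in $\psi_{0}^{\pm1}$, $\psi_{r}^{+}$ ($r>0$), $\psi_{-r}^{-}$ ($r>0$))(ordered monomial in the $x_{l}^{-}$). This already gives surjectivity of $m$, and shows $X^{\pm}$ is spanned by the square-free ordered monomials in the $x_{k}^{\pm}$ and $H$ by the ordered monomials in $\psi_{0}^{\pm1}$ and the higher $\psi$-modes. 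Linear independence of these monomials — equivalently injectivity of $m$, and simultaneously the facts that $X^{\pm}$ is the exterior algebra on $\{x_{k}^{\pm}\}$ and $H\cong\mathbb{C}[\psi_{0}^{\pm1},\psi_{r}^{+}\,(r>0),\psi_{-r}^{-}\,(r>0)]$ — will follow once every overlap ambiguity of the rewriting system is checked to resolve. The overlap types are: cubic overlaps within one block ($x_{k}^{\pm}x_{l}^{\pm}x_{m}^{\pm}$, and the $\psi\psi\psi$ overlaps including the $\psi_{0}^{+}\psi_{0}^{-}\psi_{0}^{+}$ type); the mixed overlaps $x_{m}^{-}x_{l}^{-}x_{k}^{+}$, $x_{l}^{-}x_{k}^{+}x_{j}^{+}$, $\psi_{\pm r}^{\pm}x_{k}^{+}x_{j}^{+}$, $x_{m}^{-}x_{l}^{-}\psi_{\pm r}^{\pm}$, $x_{l}^{-}\psi_{\pm r}^{\pm}x_{k}^{+}$; and the overlaps of the cross-relation $x^{-}x^{+}\mapsto x^{+}x^{-}-\psi/(q-q^{-1})$ with the $\psi$-shift rules. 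Each resolves by a short direct computation.

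I expect the overlap verification to be the only real content — in particular checking that the single cross-relation $x_{l}^{-}x_{k}^{+}\mapsto x_{k}^{+}x_{l}^{-}-(q-q^{-1})^{-1}(\psi_{k+l}^{+}-\psi_{k+l}^{-})$ is compatible simultaneously with the exterior relations in the two $X^{\pm}$-blocks and the scalar $\psi$-shifts; everything else is bookkeeping. A conceptual alternative to the diamond lemma is to exhibit a faithful action of $\mathcal{U}(01)$ on $\Lambda_{\mathbb{C}}[\xi_{k}^{+}]\otimes\mathbb{C}[\eta_{0}^{\pm1},\eta_{r}^{+},\eta_{-r}^{-}]\otimes\Lambda_{\mathbb{C}}[\xi_{k}^{-}]$ in which $x_{k}^{\pm}$ acts by exterior multiplication on the corresponding outer factor, $\psi$ acts through the middle factor (with the split of $\psi^{\pm}(z)$ into non-negative and non-positive modes) twisted by a power of $q_{1}q_{2}$ reading off the $\Lambda^{+}$-degree, and $x_{l}^{-}$ additionally carries a Clifford-type contraction on the $\Lambda^{+}$-factor tuned so that $[x_{k}^{+},x_{l}^{-}]$ reproduces $(q-q^{-1})^{-1}(\psi_{k+l}^{+}-\psi_{k+l}^{-})$; then $u\mapsto u\cdot(1\otimes1\otimes1)$ sends the ordered monomials above to a basis of this module, forcing their independence — with the delicate point migrating to fixing the contraction signs so that $x^{-}x^{-}+x^{-}x^{-}=0$ and the $[x^{+},x^{-}]$-relation hold at once. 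The mirror of either construction yields the decomposition $X^{-}\otimes H\otimes X^{+}\cong\mathcal{U}(01)$, completing the proof.
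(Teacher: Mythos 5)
Your plan is correct, and it is worth noting that the paper itself supplies no argument here: the proposition is listed in Appendix \ref{sec:GQG-RT-rank1} among the ``basic results'' that ``one can readily prove,'' so your diamond-lemma proof (or the alternative via a faithful module on $\Lambda[\xi^{+}]\otimes\mathbb{C}[\eta]\otimes\Lambda[\xi^{-}]$) is a legitimate way of making that routine verification precise. Your logic is sound: termination of the rewriting system gives surjectivity of the multiplication map, and resolvability of all ambiguities gives, via Bergman's lemma, linear independence of the ordered monomials, which simultaneously identifies $X^{\pm}$ as exterior algebras on $\{x_{k}^{\pm}\}$, $H$ as the Laurent/polynomial algebra on $\psi_{0}^{\pm1},\psi_{r}^{+},\psi_{-r}^{-}$, and the map $X^{+}\otimes H\otimes X^{-}\to\mathcal{U}(01)$ as injective; the mirror ordering handles the second decomposition.

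One point you should make explicit when you actually carry out the overlap checks: they are not pure bookkeeping, but use the specific scalar $(q_{1}q_{2})^{\pm1}=-1$ appearing in the quasi-commutation relations $\psi_{r}^{\pm}x_{k}^{+}=(q_{1}q_{2})^{\pm1}x_{k}^{+}\psi_{r}^{\pm}$, $\psi_{r}^{\pm}x_{k}^{-}=(q_{1}q_{2})^{\mp1}x_{k}^{-}\psi_{r}^{\pm}$ (recall $q_{1}q_{2}=q\widetilde{q}=-1$ for $\epsilon=(01)$). For instance, resolving $x_{l}^{-}x_{k}^{+}x_{j}^{+}$ along the two reduction paths produces, with $c=(q-q^{-1})^{-1}$, the terms $-c\,x_{k}^{+}\psi_{j+l}^{+}$ on one side and $c(q_{1}q_{2})\,x_{k}^{+}\psi_{j+l}^{+}$ on the other (and similarly for the $\psi^{-}$ and the $x_{j}^{+}\psi_{k+l}^{\pm}$ terms), so confluence holds precisely because $q_{1}q_{2}=-1$; the overlap $x_{m}^{-}x_{l}^{-}x_{k}^{+}$ behaves the same way, while $x_{l}^{-}\psi_{\pm r}^{\pm}x_{k}^{+}$ and the within-block overlaps are immediate. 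So the ``short direct computation'' you defer is exactly where the consistency of the presentation is used; once it is written out, the proof is complete.
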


\begin{prop}
For each $a\in\mathbf{k}^{\times}$, there exists an algebra homomorphism
\begin{align*}
\mathrm{ev}_{a}:\mathcal{U}(01) & \longrightarrow\mathring{\mathcal{U}}(01)\\
x_{l}^{+},\,x_{l}^{-},\,\psi_{0}^{\pm} & \longmapsto a^{l}k^{l}e,\,a^{l}fk^{l},\,k^{\pm1}\quad\text{respectively,}\\
\psi_{\pm s}^{\pm} & \longmapsto\pm(q-q^{-1})a^{s}k^{s}(ef-(-1)^{s}fe)\quad(s>0).
\end{align*}
\end{prop}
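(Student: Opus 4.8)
The plan is to prove existence by checking that the proposed assignment on generators respects every defining relation of $\mathcal{U}(01)$; since $\mathcal{U}(01)$ is given by an explicit presentation by generators and relations, this produces a well-defined algebra homomorphism, and its image visibly lies in $\mathring{\mathcal{U}}(01)$. I will work inside $\mathring{\mathcal{U}}(01)$ using the identities that follow from the presentation: $k^{\pm1}=\psi_0^{\pm}$ with $kk^{-1}=1$; $ke=-ek$ and $kf=-fk$ (because $q_1q_2=-1$ for $\epsilon=(01)$, so $(q_1q_2)^{\pm1}=-1$); $e^2=f^2=0$ (specialize $x_k^{\pm}x_l^{\pm}+x_l^{\pm}x_k^{\pm}=0$ at $k=l=0$, recalling that the base field is $\mathbb{C}$); and $ef-fe=(k-k^{-1})/(q-q^{-1})$. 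Two consequences will be used throughout: $k$ commutes with $ef$ and with $fe$ (e.g. $kef=-ekf=efk$), hence so does $k^{m}$ for any $m\in\mathbb{Z}$; and $ef\cdot fe=ef^2e=0=fe^2f=fe\cdot ef$, so $ef$ and $fe$ commute with one another, even though $(ef)^2$ need not vanish.

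For the Cartan relations, $\psi_0^{+}\psi_0^{-}=\psi_0^{-}\psi_0^{+}=1$ goes to $kk^{-1}=k^{-1}k=1$, while for $[\psi_r^{\pm},\psi_s^{\pm}]=[\psi_r^{+},\psi_s^{-}]=0$ one observes that each image $\mathrm{ev}_a(\psi_m^{\pm})$ is a scalar multiple of $k^{m}(ef-(-1)^{m}fe)$, with $\psi_0^{\pm}\mapsto k^{\pm1}$ the degenerate case; since $k^{m}$ commutes with $ef$ and with $fe$, and $(ef-(-1)^{m}fe)(ef-(-1)^{m'}fe)=(ef)^2+(-1)^{m+m'}(fe)^2$ is symmetric in $m,m'$ by the vanishing of the cross terms, all these images commute. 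For the braiding relations $\psi_r^{\pm}x_k^{+}=-x_k^{+}\psi_r^{\pm}$ and $\psi_r^{\pm}x_k^{-}=-x_k^{-}\psi_r^{\pm}$, the case $r=0$ is immediate from $ke=-ek$, $kf=-fk$; for $r>0$ one evaluates both sides, writing $u_r=ef-(-1)^{r}fe$: one gets $\mathrm{ev}_a(\psi_r^{+})\mathrm{ev}_a(x_k^{+})=(q-q^{-1})a^{r+k}k^{r+k}u_re=(q-q^{-1})a^{r+k}k^{r+k}efe$ since $u_re=efe$, while on the other side the net sign — a minus from the relation, a $(-1)^{r}$ from $ek^{r}=(-1)^{r}k^{r}e$, and the sign in $eu_r=-(-1)^{r}efe$ — multiplies out to $+1$, giving the same answer. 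The remaining three sub-cases (for $\psi^{-}$, and for $x_k^{-}$) are the mirror images of this one, using that $\mathrm{ev}_a(\psi_{-r}^{-})$ again has the form $(\text{scalar})\,k^{-r}u_r$ and that $f^2=0$.

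The heart of the argument, and the step needing the most care, is $[x_k^{+},x_l^{-}]=(\psi_{k+l}^{+}-\psi_{k+l}^{-})/(q-q^{-1})$. Here I would compute $[\mathrm{ev}_a(x_k^{+}),\mathrm{ev}_a(x_l^{-})]=[a^{k}k^{k}e,\,a^{l}fk^{l}]=a^{k+l}(ef-(-1)^{k+l}fe)k^{k+l}=a^{k+l}k^{k+l}(ef-(-1)^{k+l}fe)$, using that $k^{k}$ commutes with $ef$, that $k^{k+l}e=(-1)^{k+l}ek^{k+l}$, and that $k^{k+l}$ commutes with $ef-(-1)^{k+l}fe$; then one matches the right-hand side according to the sign of $m=k+l$ together with the convention $\psi_m^{+}=0$ for $m<0$ and $\psi_m^{-}=0$ for $m>0$: for $m>0$ this is exactly $\mathrm{ev}_a(\psi_m^{+})/(q-q^{-1})$, for $m<0$ it is $-\mathrm{ev}_a(\psi_m^{-})/(q-q^{-1})$, and for $m=0$ it collapses to $ef-fe=(k-k^{-1})/(q-q^{-1})=(\mathrm{ev}_a(\psi_0^{+})-\mathrm{ev}_a(\psi_0^{-}))/(q-q^{-1})$. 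Finally the anticommutation $x_k^{\pm}x_l^{\pm}+x_l^{\pm}x_k^{\pm}=0$ follows from the same bookkeeping: $a^{k}k^{k}e\cdot a^{l}k^{l}e=(-1)^{l}a^{k+l}k^{k+l}e^2=0$, and likewise with $f$, using $e^2=f^2=0$. With all relations verified, $\mathrm{ev}_a$ is a well-defined algebra homomorphism $\mathcal{U}(01)\to\mathring{\mathcal{U}}(01)$; there is no genuine obstruction here, only the need to track carefully the $\pm1$'s coming from $\mathbf{q}(\alpha_1,\alpha_1)=-1$ and to split the $[x^{+},x^{-}]$ identity into its three loop-degree cases.
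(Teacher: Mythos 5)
You never get to compare against the paper's argument because there is none --- the proposition is one of the results the appendix says ``one can readily prove'' --- and your plan of checking every relation of the presentation is exactly the intended route; your checks of the Cartan relations, of the $\psi$-versus-$x^{\pm}$ relations, and of the anticommutation relations are correct. The gap sits at the step you yourself call the heart of the argument, the relation $[x_{k}^{+},x_{l}^{-}]=(\psi_{k+l}^{+}-\psi_{k+l}^{-})/(q-q^{-1})$ in negative loop degree, and it is masked by an inconsistency in what you take $\mathrm{ev}_{a}(\psi_{-r}^{-})$ to be. The displayed formula, which you restate at the outset, sends $\psi_{-s}^{-}$ to $-(q-q^{-1})a^{s}k^{s}(ef-(-1)^{s}fe)$ with \emph{positive} exponents; in the middle of your argument you instead use the form $(\text{scalar})\,k^{-r}u_{r}$ (writing $u_{m}=ef-(-1)^{m}fe$ as you do); and for $m=k+l<0$ you assert, without computing, that $a^{m}k^{m}u_{m}$ equals $-\mathrm{ev}_{a}(\psi_{m}^{-})/(q-q^{-1})$. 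With the displayed exponents this is false: for $k=-1$, $l=0$ the left-hand side is $[a^{-1}k^{-1}e,f]=a^{-1}k^{-1}(ef+fe)$, while the right-hand side is $ak(ef+fe)$, and $(1-a^{2}k^{2})(ef+fe)\neq0$ in $\mathring{\mathcal{U}}(01)$: on the two-dimensional module $K(1,0)$ it acts on $v_{0}$ by $1-a^{2}q^{2}$, nonzero for generic $a$. Moreover, since the bracket of the proposed images of $x_{k}^{+}$ and $x_{l}^{-}$ always carries $k^{k+l}$, no rescaling of the $x$-images can repair this; the exponent of $k$ in the image of $\psi_{-s}^{-}$ must be $-s$.

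So the assignment as literally printed is not a homomorphism, and a complete proof must say so: the statement (and your verification) is rescued by reading the formula as $\psi_{\pm s}^{\pm}\mapsto\pm(q-q^{-1})a^{\pm s}k^{\pm s}(ef-(-1)^{s}fe)$, which is the reading forced by the convention $z^{\pm r}$ of (\ref{eq:GQG-psi-halfVO}) for the series $\psi^{\mp}$ and by the $\ell$-weight formula for $K(i,j)_{a}$ stated later in the appendix. With that correction your $m<0$ case does close, since $-\mathrm{ev}_{a}(\psi_{-s}^{-})/(q-q^{-1})=a^{-s}k^{-s}u_{s}=a^{m}k^{m}u_{m}$; note that the power of $a$ matters here as well, so recording only $(\text{scalar})\,k^{-r}u_{r}$ is not sufficient for this relation, even though it is sufficient for the braiding checks, where only the parity of the exponent of $k$ enters. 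Concretely: flag the misprint, fix the exponents of both $a$ and $k$ consistently, and carry out the $m<0$ matching explicitly; the remainder of your computations then stands as written.
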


From the triangular decomposition, one can develop the theory of highest
(and similarly lowest) $\ell$-weight modules for $\mathcal{U}(01)$
as well. Since the purpose of this appendix is to give a basis for
the Frenkel--Mukhin algorithm in Section , we will focus on the category
$\mathcal{F}(01)$ defined analogously and the $q$-characters of
simple modules contained in it. Before that, we record one observation
that follows from the comultiplication formula for $\psi^{\pm}(z)$,
whose proof can be found in \cite[Lemma 3.7]{Z2}.
\begin{lem}[{cf. \cite[Lemma 3.7]{Z2}}]
 Let $V_{+}$ (resp. $V_{-}$) be a highest (resp. lowest) $\ell$-weight
$\mathcal{U}(01)$-module generated by a highest (resp. lowest) $\ell$-weight
vector $v_{+}$ (resp. $v_{-}$). Then $V_{+}\otimes V_{-}$ is generated
by $v_{+}\otimes v_{-}$.
\end{lem}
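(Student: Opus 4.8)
The plan is to mimic the classical (and non-super) argument that a tensor product of a highest-weight and a lowest-weight module is cyclic, adapting it to the loop-generator coproduct for $\mathcal{U}(01)$. First I would fix highest and lowest $\ell$-weight vectors $v_+\in V_+$ and $v_-\in V_-$, so that $X^+v_+=0$, $\psi^{\pm}(z)v_+=\Psi^{\pm}(z)v_+$, and dually $X^-v_-=0$, $\psi^{\pm}(z)v_-=\Theta^{\pm}(z)v_-$. Let $W\subseteq V_+\otimes V_-$ be the $\mathcal{U}(01)$-submodule generated by $v_+\otimes v_-$; the goal is $W=V_+\otimes V_-$. Since $V_+=\mathcal{U}(01)v_+$ and, by the triangular decomposition $\mathcal{U}(01)\cong X^-\otimes H\otimes X^+$ together with $X^+v_+=0$, in fact $V_+=X^-v_+$ (and similarly $V_-=X^+v_-$), it suffices to show $X^-v_+\otimes X^+v_-\subseteq W$.

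The key computation is the coproduct formula: $\Delta(x^-(z)_{\pm})=1\otimes x^-(z)_{\pm}+x^-(z)_{\pm}\otimes\psi^{\pm}(z)$ and $\Delta(x^+(z)_{\pm})=\psi^{\pm}(z)\otimes x^+(z)_{\pm}+x^+(z)_{\pm}\otimes 1$. Applying $\Delta(x^-_k)$ to $v_+\otimes v_-$ gives $v_+\otimes x^-_k v_-+\sum_j (x^-_{k-j}v_+)\otimes(\psi^{\pm}_{\pm j}v_-)$; because $\psi$ acts on $v_-$ by the scalars of $\Theta^{\pm}(z)$, this is $v_+\otimes x^-_k v_- + (\text{scalar combination of }x^-_{k'}v_+)\otimes v_-$ modulo lower-order terms. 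Dually, applying $\Delta(x^+_l)$ to $v_+\otimes v_-$ produces $x^+_l v_+\otimes v_- + (\text{scalar combination of }x^+_{l'}v_-$ acting on the second factor$)$ — but $x^+_l v_+=0$, so we directly get $v_+\otimes x^+_{l'}v_-$ type terms in $W$, up to $\psi$-scalars on $v_+$. Thus I would first show, by induction on the length of words in the $x^+_l$'s applied to $v_-$, that $v_+\otimes X^+ v_-\subseteq W$: each application of $\Delta(x^+_l)$ to an element $v_+\otimes w$ with $w\in X^+v_-$ yields $v_+\otimes x^+_l w$ plus a term $\psi^{\pm}(z)v_+\otimes(\text{something})$ which, since $\psi^{\pm}$ acts diagonally on $v_+$, is again of the form $v_+\otimes(\cdot)$; so the induction closes and $v_+\otimes V_-\subseteq W$.

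Next, symmetrically, I would bring in the $x^-_k$'s to fill out the first tensor factor. Starting from $v_+\otimes w$ with $w\in V_-$ arbitrary (already in $W$ by the previous step), apply $\Delta(x^-_k)$: this gives $v_+\otimes x^-_k w + \sum_j (x^-_{k-j}v_+)\otimes(\psi^{\pm}_{\pm j}w)$. The first summand stays in $v_+\otimes V_-\subseteq W$; rearranging, the combinations $(x^-_{k-j}v_+)\otimes(\psi^{\pm}_{\pm j}w)$ lie in $W$. Since $\psi^{\pm}(z)$ preserves $\ell$-weight spaces of $V_-$ and (over $\mathbb C$, $q$ not a root of unity) the family $\{\psi^{\pm}_{\pm j}\}$ separates distinct $\ell$-weights appearing in $V_-$, I can decompose $V_-$ into generalized $\psi$-eigenspaces and, working one generalized eigenspace at a time and inducting on the degree $k$, solve for each $(x^-_{k}v_+)\otimes w'$ individually; this shows $X^-v_+\otimes V_-\subseteq W$. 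Combined with $V_+=X^-v_+$ this gives $W=V_+\otimes V_-$, as desired.

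The main obstacle I anticipate is the bookkeeping in the second step: disentangling the telescoping sum $\sum_j (x^-_{k-j}v_+)\otimes(\psi_{\pm j}w)$ to isolate the top term $(x^-_k v_+)\otimes(\psi_0 w)$. In $U_q(\widehat{\mathfrak{sl}}_2)$ one uses that $\psi_0=k^{\pm1}$ acts invertibly, so the leading coefficient is a unit; here $\psi_0^{\pm}=k^{\pm1}$ is still invertible, so the same triangular-solve works, but one must be careful that the ``lower-order'' terms $x^-_{k-j}v_+$ for $j>0$ are genuinely accounted for by the inductive hypothesis on degree — in the loop setting ``degree'' should be taken as $\mathbb Z$-grading in the $x^-$ index, and one has to check the induction is well-founded on the finite-dimensional module $V_+$ (only finitely many $x^-_k v_+$ are nonzero and they span $V_+$). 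A cleaner route, which I would actually pursue, is to invoke the already-cited \cite[Lemma 3.7]{Z2}: the statement is literally the $\mathcal{U}(01)$-analogue of that lemma, so after recording the coproduct formulas above one can simply transcribe its proof, since $\mathcal{U}(01)$ is defined by the same-shaped Drinfeld relations and the only input needed is the shape of $\Delta$ on $x^{\pm}(z)_{\pm}$ and $\psi^{\pm}(z)$.
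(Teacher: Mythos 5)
Your proposal is correct and matches the paper's treatment: the paper gives no independent argument but simply cites \cite[Lemma 3.7]{Z2}, remarking that the statement follows from the comultiplication formula for $\psi^{\pm}(z)$, which is exactly your ``cleaner route''. Your direct sketch is also sound — since the coproduct on the currents of $\mathcal{U}(01)$ is exact and $\psi_{0}^{\pm}=k^{\pm1}$ acts invertibly, the triangular solve by induction on the mode index works — though note that your second step must be iterated over monomials $x^{-}_{k_{1}}\cdots x^{-}_{k_{m}}v_{+}$ (a double induction on word length and mode index), which the sketch leaves implicit.
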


Given $i,j\in\mathbb{Z}$, let $K(i,j)$ be the 2-dimensional $\mathring{\mathcal{U}}(01)$-module
spanned by $v_{0}$ and $v_{1}$ of weight $i\delta_{1}+j\delta_{2}$
and $(i-1)\delta_{1}+(j+1)\delta_{2}$ respectively, with the action
given by
\[
ev_{0}=0,\,fv_{0}=v_{1},\quad ev_{1}=\frac{q_{1}^{i}q_{2}^{-j}-q_{1}^{-i}q_{2}^{j}}{q-q^{-1}}v_{0},\,fv_{1}=0.
\]
This is an analogue of the Kac module of $\mathfrak{sl}_{1|1}$ (see
for example \cite[Section 2.1.1]{CW}). Moreover, $K(i,j)$ is irreducible
if and only if $i+j=0$. We let $V(i,j)$ be the (unique) simple quotient
of $K(i,j)$, namely
\[
V(i,j)=\begin{cases}
K(i,j)/\mathbf{k}v_{1} & \text{if }i+j=0\\
K(i,j) & \text{otherwise}.
\end{cases}
\]
They exhaust all finite-dimensional $P$-graded $\mathring{\mathcal{U}}(01)$-modules.

For $a\in\mathbf{k}^{\times}$ and a $\mathring{\mathcal{U}}(01)$-module
$V$, the evaluation module $V_{a}$ is defined to be the $\mathcal{U}(01)$-module
obtained by pulling back $V$ through $\mathrm{ev}_{a}$. Since $K(i,j)_{a}$
is $2$-dimensional, it is very easy to compute its $\ell$-weights
using the formula for $\mathrm{ev}_{a}(\psi_{\pm r}^{\pm})$ above:
\[
\psi^{\pm}(z)v_{0}=q_{1}^{i}q_{2}^{-j}\frac{1-q_{1}^{-i}q_{2}^{j}az}{1-q_{1}^{i}q_{2}^{-j}az}v_{0},\quad\psi^{\pm}(z)v_{1}=q_{1}^{i-1}q_{2}^{-j-1}\frac{1+q_{1}^{-i+1}q_{2}^{j+1}az}{1+q_{1}^{i-1}q_{2}^{-j-1}az}v_{1}.
\]
Its simple quotient is also the evaluation module $V(i,j)_{a}$ of
$V(i,j)$, and hence we know its $q$-character. In the terminology
of Section \ref{subsec:classification-irrep} and \ref{subsec:fund-repn},
$V(1,0)_{a}$ (resp. $V(0,-1)_{a}$) is the fundamental representation
$L(Y_{a})$ (resp. $L(\widetilde{Y}_{a})$), where we are omitting
the index $i$ for $Y_{i,a}$ as we are in a rank 1 case. Also, $V(1,-1)_{a}=D^{+}$
is the 1-dimensional module independent of $a\in\mathbf{k}^{\times}$
and $D^{+}\otimes-$ has the inverse $D^{-}\otimes-$ for $D^{-}=V(-1,1)_{a}$.
\begin{lem}
For $i,j\in\mathbb{Z}$ and $a\in\mathbf{k}^{\times}$, we have $V(i,j)_{a}^{*}\cong V(-i+1,-j-1)_{-a}$. 
\end{lem}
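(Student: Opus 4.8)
The plan is to realize both sides as simple highest-$\ell$-weight $\mathcal{U}(01)$-modules and to match their highest $\ell$-weights, the isomorphism then being forced by uniqueness of the simple highest-$\ell$-weight module with a prescribed highest $\ell$-weight (available from the triangular decomposition recalled in Appendix~\ref{sec:GQG-RT-rank1}). I would first treat the generic case $i+j\neq 0$, where $V(i,j)_a$ is the $2$-dimensional Kac module $K(i,j)_a$ with basis $v_0$ (highest-$\ell$-weight vector, weight $i\delta_1+j\delta_2$) and $v_1$ (weight $(i-1)\delta_1+(j+1)\delta_2$), and with $\ell$-weights $\psi^\pm(z)v_k=\Psi_k(z)v_k$ for $k=0,1$ as listed in the excerpt, namely $\Psi_0(z)=q_1^iq_2^{-j}\frac{1-q_1^{-i}q_2^{j}az}{1-q_1^iq_2^{-j}az}$ and $\Psi_1(z)=q_1^{i-1}q_2^{-j-1}\frac{1+q_1^{1-i}q_2^{j+1}az}{1+q_1^{i-1}q_2^{-j-1}az}$. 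The remaining case $i+j=0$, in which $V(i,j)_a$ is one-dimensional and independent of $a$, reduces to an elementary comparison of the scalar by which $k=\psi_0^+$ acts and I would dispatch it separately.

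For the left dual $V^{*}=V(i,j)_a^{*}$ with action $(x\cdot\phi)(v)=\phi(S(x)v)$, the antipode formula $S(\psi^\pm(z))=\psi^\pm(z)^{-1}$ gives $\psi^\pm(z)\cdot v_k^{*}=\Psi_k(z)^{-1}v_k^{*}$, so $v_0^{*},v_1^{*}$ are $\ell$-weight vectors, of weights $-\mathrm{wt}(v_0)=-i\delta_1-j\delta_2$ and $-\mathrm{wt}(v_1)=(1-i)\delta_1+(-1-j)\delta_2=-\mathrm{wt}(v_0)+\alpha_1$. The crucial observation is that $v_1^{*}$ is a highest-$\ell$-weight vector of $V^{*}$: since $V^{*}$ is two-dimensional with only the weights $-\mathrm{wt}(v_0)$ and $-\mathrm{wt}(v_0)+\alpha_1$, and $v_1^{*}$ spans the higher one, while each $x_k^{+}$ has $Q$-degree $\alpha_1$, one gets $x_k^{+}v_1^{*}\in V^{*}_{-\mathrm{wt}(v_0)+2\alpha_1}=0$ for all $k$. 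Hence $V^{*}$ is a highest-$\ell$-weight module with highest $\ell$-weight $\Psi_1(z)^{-1}$. It remains to see that $\Psi_1(z)^{-1}$ is exactly the highest $\ell$-weight of $V(-i+1,-j-1)_{-a}$, i.e. the value of $\Psi_0(z)$ with $(i,j,a)$ replaced by $(-i+1,-j-1,-a)$; putting $p=q_1^{i-1}q_2^{-j-1}$ and using $q_1^{1-i}q_2^{j+1}=p^{-1}$, both series collapse to $p^{-1}\frac{1+paz}{1+p^{-1}az}$, so they coincide.

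Finally, $V(i,j)_a^{*}$ is simple, being the dual of the simple module $V(i,j)_a$, and $V(-i+1,-j-1)_{-a}$ is simple, being the pullback of the simple $\mathring{\mathcal{U}}(01)$-module $V(-i+1,-j-1)$ along the surjective homomorphism $\mathrm{ev}_{-a}$; since they are simple highest-$\ell$-weight modules with the same highest $\ell$-weight $\Psi_1(z)^{-1}$, they are isomorphic. The only genuine computation is the last identity, which the substitution $p=q_1^{i-1}q_2^{-j-1}$ renders routine; the one subtle point, which I expect to be the main thing to get right, is that $v_1^{*}$ is annihilated by every $x_k^{+}$ and not merely by $x_0^{+}$, and this is precisely what the weight argument above secures.
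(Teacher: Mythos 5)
Your treatment of the main case $i+j\neq 0$ is correct, and since the paper states this lemma without proof, your route is essentially the expected one: dualize $K(i,j)_a$, note that $v_1^{*}$ spans the top $\mathring{P}$-weight space so that every $x_k^{+}$ (all of which have weight $\delta_1-\delta_2$) kills it, read off its $\ell$-weight $\Psi_1(z)^{-1}$ from $S(\psi^{\pm}(z))=\psi^{\pm}(z)^{-1}$, check via the substitution $p=q_1^{i-1}q_2^{-j-1}$ that $\Psi_1(z)^{-1}=S_{-i+1,-j-1}(-a)(z)$, and conclude by simplicity of the dual (which also gives generation by $v_1^{*}$) together with uniqueness of the simple highest $\ell$-weight module afforded by the triangular decomposition of $\mathcal{U}(01)$. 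The highest weights in $\mathring{P}$ also agree, so the identification is compatible with the grading.

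The genuine gap is the deferred case $i+j=0$: the "elementary comparison of the scalar by which $k=\psi_0^{+}$ acts" does not dispatch it — it refutes it. There $V(i,-i)_a$ is one-dimensional of weight $i(\delta_1-\delta_2)$, with $k$ acting by $(q_1q_2)^{i}=(-1)^{i}$ and all $x_k^{\pm}$ and $\psi_{\pm s}^{\pm}$ ($s>0$) acting by zero; hence its dual is one-dimensional of weight $-i(\delta_1-\delta_2)$ with $k$-eigenvalue $(-1)^{i}$, i.e. it is $V(-i,i)_b$ (for any $b$). By contrast, $V(-i+1,-j-1)_{-a}=V(1-i,i-1)_{-a}$ is one-dimensional of weight $(1-i)(\delta_1-\delta_2)$ with $k$-eigenvalue $(q_1q_2)^{1-i}=(-1)^{1-i}$. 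These never coincide; concretely, for $(i,j)=(1,-1)$ the statement would assert $(D^{+})^{*}\cong V(0,0)_{-a}$, the trivial module, whereas $(D^{+})^{*}\cong D^{-}$. So your proof can only be completed after either restricting the statement to $i+j\neq 0$ (the only case used elsewhere, e.g. $V(1,0)_a^{*}\cong V(0,-1)_{-a}$ relating the two fundamental representations) or recording the corrected degenerate statement $V(i,-i)_a^{*}\cong V(-i,i)_{-a}$; as written, the promised separate dispatch of that case cannot succeed.
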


Let us denote the highest $\ell$-weight of $V(i,j)_{a}$ by
\[
S_{i,j}(a)(z)=q_{1}^{i}q_{2}^{-j}\frac{1-q_{1}^{-i}q_{2}^{j}az}{1-q_{1}^{i}q_{2}^{-j}az}\in\mathbf{k}(z)
\]
and say $S_{i,j}(a)$ and $S_{i^{\prime},j^{\prime}}(b)$ are \textit{in
general position} if $a/b\neq(q_{1}^{i+i^{\prime}}q_{2}^{-j-j^{\prime}})^{\pm1}$.
We assume that $S_{1,-1}(a)=Y_{a}\widetilde{Y}_{-a}=D$ is in general
position with any other $S_{i,j}(b)$. Note that this condition is
much weaker than the one for $U_{q}(\widehat{\mathfrak{sl}}_{2})$
\cite{CP} which can be understood from the fact that $V(i,j)_{a}$
(which plays the role of Kirillov--Reshetikhin modules here) is at
most 2-dimensional. The following theorem is a super analogue of \cite[Theorem 4.8]{CP}
and allows us to compute the $q$-character of any given finite-dimensional
simple $\mathcal{U}(01)$-module. Although it was given and proved
for modules over the Borel subalgebra of $U_{q}(\widehat{\mathfrak{gl}}_{1|1})$,
with two lemmas above the same argument works for our case as well.
\begin{thm}[{cf. \cite[Theorem 4.2]{Z2}}]
 Given a sequence $(S_{i_{k},j_{k}}(a_{k}))_{k=1,\dots,t}$, the
tensor product $V(i_{1},j_{1})_{a_{1}}\otimes\cdots\otimes V(i_{k},j_{k})_{a_{k}}$
is a simple $\mathcal{U}(01)$-module if and only if the entries of
the sequence are pairwise in general position. 
\end{thm}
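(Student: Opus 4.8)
The plan is to adapt the proof of \cite[Theorem 4.2]{Z2} to our setting, using the two lemmas stated just above (the one on generation of tensor products of highest/lowest $\ell$-weight modules, and the duality $V(i,j)_a^*\cong V(-i+1,-j-1)_{-a}$). The statement has two directions. For the ``only if'' direction, I would argue by contraposition: suppose two entries $S_{i_k,j_k}(a_k)$ and $S_{i_l,j_l}(a_l)$ are \emph{not} in general position, so $a_k/a_l=(q_1^{i_k+i_l}q_2^{-j_k-j_l})^{\pm1}$. Since the transfer-matrix / $q$-character considerations show the tensor product is independent of the ordering of the factors up to isomorphism at the level of composition series, it suffices to treat $t=2$, and there one computes directly with the explicit $2$-dimensional modules $V(i_k,j_k)_{a_k}$: the vectors $v_1\otimes v_0$ and $v_0\otimes v_1$ (in the notation of the Kac-module description) have the same $\ell$-weight precisely under the non-general-position condition, and using the comultiplication formula one checks that the submodule generated by the highest weight vector $v_0\otimes v_0$ is proper (or that the tensor product decomposes), so the product is not simple.

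For the ``if'' direction, assume the entries are pairwise in general position. I would first reduce to showing that the $\ell$-weight spaces of $W:=V(i_1,j_1)_{a_1}\otimes\cdots\otimes V(i_t,j_t)_{a_t}$ are all one-dimensional and that $W$ is generated by its highest $\ell$-weight vector; simplicity then follows because any submodule must contain a highest $\ell$-weight vector, and the general-position hypothesis forces the highest $\ell$-weight of $W$ (the product $\prod_k S_{i_k,j_k}(a_k)$) to occur with multiplicity one, so the only submodule containing it is $W$ itself. The generation statement is exactly where the first lemma above enters: one splits the factors into those that are ``positive-type'' ($V(i,j)_a$ with the highest vector on top) and can iterate the lemma on highest $\ell$-weight modules, together with its lowest-weight analogue, to conclude $v_+\otimes\cdots\otimes v_+$ generates $W$. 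Here I would also use the duality lemma to pass between $V(i,j)_a$ and its dual when needed to put all factors into a common ``highest-weight'' form, after which the generation lemma applies uniformly.

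The remaining point is to show that \emph{no} two distinct basis vectors of $W$ (products of the two-dimensional factors' basis vectors) share an $\ell$-weight, unless forced. Since each factor contributes a vector of $\ell$-weight either $S_{i_k,j_k}(a_k)$ or its ``$f$-shifted'' version $S_{i_k-1,j_k+1}(-a_k)$ (read off from the explicit action of $\psi^\pm(z)$ on the $2$-dimensional evaluation modules displayed in the excerpt), a coincidence of $\ell$-weights between two choices of signs amounts to an equality among products of these rational functions, which after clearing denominators reduces — factor by factor, since the poles/zeros of $S_{i,j}(a)$ and $S_{i,j}(b)$ are distinct when $a\ne b$ — to exactly a non-general-position relation between some pair of entries. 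This is essentially the same bookkeeping as in \cite[Theorem 4.8]{CP}/\cite[Theorem 4.2]{Z2}, simplified by the fact that each $V(i,j)_a$ is at most $2$-dimensional, so there is no internal multiplicity to track within a factor.

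The main obstacle I anticipate is the order-independence input: to legitimately reduce the ``only if'' part to $t=2$ and to run the multiplicity-one argument in the ``if'' part, one needs that the classes $[V(i_1,j_1)_{a_1}\otimes\cdots]$ in the Grothendieck ring are symmetric in the factors and that $\chi_q$ is injective, which we do have (Proposition \ref{prop:q-char-mult}), but one must be slightly careful that $\mathcal{F}(01)$ is defined and behaves as in Section \ref{sec:RT-GQG}; the commutativity of $K(\mathcal{F}(01))$ should follow from the same comultiplication-formula argument. A secondary subtlety is the special role of $D=S_{1,-1}(a)$, which is declared to be in general position with everything: one must check separately that tensoring by $D^{\pm}$ preserves simplicity (immediate, since $D^{\pm}$ is invertible and one-dimensional) so it can be stripped off at the start, reducing to the case where no factor equals $D^{\pm}$.
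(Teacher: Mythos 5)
Your plan founders on a point that is special to the super rank-one case and that the paper itself emphasizes elsewhere: the two $\ell$-weights inside a single factor $V(i,j)_a$ differ by a \emph{constant}. From the displayed action of $\psi^{\pm}(z)$ on $v_1$ and the relation $q_1q_2=-1$ one gets $S_{i-1,j+1}(-a)(z)=-S_{i,j}(a)(z)=S_{i,j}(a)(z)\,D^{\pm1}$, where $D$ is the constant $-1$, independent of $(i,j,a)$ (this is the same degeneration as $\overline{A}_{M,a}=\overline{D}$ being independent of $a$ in Section \ref{subsec:restriction-rk12}). Consequently $v_1\otimes v_0$ and $v_0\otimes v_1$ have the same $\ell$-weight \emph{always}, not "precisely under the non-general-position condition", and in $V(i_1,j_1)_{a_1}\otimes\cdots\otimes V(i_t,j_t)_{a_t}$ every pair of basis vectors with the same number of lowered factors shares its $\ell$-weight, so the $\ell$-weight spaces have dimensions $\binom{t}{k}$; indeed the $q$-character of the simple module is $\Psi$ times a power of $(1+D^{-1})$, as recorded at the end of the appendix. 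This breaks both halves of your argument as written: the "if" direction cannot be reduced to "all $\ell$-weight spaces of $W$ are one-dimensional" (they are not), and the bookkeeping step claiming that a coincidence of $\ell$-weights forces a non-general-position relation is vacuous, so the general-position hypothesis is never actually used where it must be; likewise the $t=2$ mechanism in the "only if" direction rests on the same false premise. In this rank-one case general position has to enter through the \emph{action of the currents}: the coefficients of $x^{\pm}(z)$ on $v_0\otimes v_0$ (resp.\ on the lowest vector) are expansions of rational functions whose poles are separated exactly when the factors are in general position, and this is what yields cyclicity on $v_+\otimes\cdots\otimes v_+$ and, after dualizing with $V(i,j)_a^{*}\cong V(-i+1,-j-1)_{-a}$ (general position is preserved under this transformation because $q_1^2q_2^2=1$), cocyclicity; that cyclic-plus-cocyclic scheme, with the lemma on highest$\,\otimes\,$lowest tensor products, is the Zhang-style argument the paper invokes.

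Two further cautions. First, your reordering step leans on Proposition \ref{prop:q-char-mult} and the transfer-matrix construction, but those are proved under the standing hypothesis $M\neq N$; for $M=N=1$ the Hopf pairing is degenerate and the deformed Cartan matrix is singular, which is exactly why $\mathcal{U}(01)$ is treated separately. Multiplicativity and injectivity of $\chi_q$ for $\mathcal{U}(01)$ can be established directly from the explicit triangular coproduct and the highest $\ell$-weight theory of the appendix, but that has to be argued, not cited. Alternatively you can avoid reordering in the "only if" direction altogether: if $a_l/a_k=(q_1^{i_k+i_l}q_2^{-j_k-j_l})^{\pm1}$ then $S_{i_k,j_k}(a_k)S_{i_l,j_l}(a_l)$ collapses to a single $S$-type function, so the full highest $\ell$-weight is a product of at most $t-1$ such functions (times a power of $D$); hence $L\bigl(\prod_k S_{i_k,j_k}(a_k)\bigr)$, being a quotient of a submodule of a tensor product of at most $t-1$ two-dimensional modules, has dimension $<2^t$, and a simple $2^t$-dimensional tensor product would have to be isomorphic to it.
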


Now we explain how to compute the $q$-character of any simple module
in $\mathcal{F}(01)$, whose highest $\ell$-weights $\Psi$ is a
monomial in $Y_{a}^{\pm1}$ and $\widetilde{Y}_{a}^{\pm}$ for $a\in\mathbf{k}^{\times}$.
First, $\Psi$ can be represented uniquely in the following form:
\[
\Psi=D^{-s}\prod Y_{a_{i}}\prod\widetilde{Y}_{b_{j}},\quad a_{i}\neq-b_{j}\,^{\forall}i,j,\ s\in\mathbb{Z}.
\]
Next, we rewrite the factor $\prod Y_{a_{i}}$ as a product of $\ell$-weights
of the form $S_{i_{k},0}(c_{i_{k}})$ which are pairwise in general
position. This can be done as in the case of $U_{q}(\widehat{\mathfrak{sl}}_{2})$
\cite[Section 4.7]{CP} as the condition for being in a general position
is weakened. Similarly we can do it for $\prod\widetilde{Y}_{b_{j}}$
in terms of $S_{0,-j_{l}}(d_{i_{l}})$ to obtain
\[
\Psi=D^{-s}\prod S_{i_{k},0}(c_{i_{k}})\prod S_{0,-j_{l}}(d_{j_{l}}).
\]
Then any two $\ell$-weights $S_{i_{k},0}(c_{i_{k}})$ and $S_{0,-j_{l}}(d_{j_{l}})$
should be in general position, otherwise it violates the above condition
$a_{i}\neq-b_{j}$ for all $i,j$. Therefore, we conclude
\[
\chi_{q}(L(\Psi))=D^{-s}\prod(S_{i_{k},0}(c_{i_{k}})+S_{i_{k},0}(c_{i_{k}})D^{-1})\prod(S_{0,-j_{l}}(d_{j_{l}})+S_{0,-j_{l}}(d_{j_{l}})D^{-1})
\]
where $D=A_{a}=Y_{a}\widetilde{Y}_{a}$, independent of $a$.

\end{document}